\documentclass[11pt]{article}
\usepackage{amsmath,amsthm,amssymb,mathtools,bm,mathrsfs,microtype}
\usepackage{graphicx}
\usepackage{enumitem}
\usepackage{stmaryrd}
\usepackage{aop_like_compact} 
\usepackage[colorlinks,citecolor=blue,linkcolor=blue,urlcolor=blue]{hyperref}
\theoremstyle{plain}
\newtheorem{theorem}{Theorem}[section]
\newtheorem{lemma}[theorem]{Lemma}
\newtheorem{prop}[theorem]{Proposition}
\newtheorem{coro}[theorem]{Corollary}
\theoremstyle{definition}
\newtheorem{definition}[theorem]{Definition}

\newtheorem*{openproblem}{Open problem}
\numberwithin{equation}{section}

\DeclareMathOperator{\dive}{div}

\DeclareMathOperator{\curl}{curl}
\DeclareMathOperator{\lip}{Lip}
\DeclareMathOperator{\var}{var}

\DeclareMathOperator{\loc}{loc}
\DeclareMathOperator{\supp}{supp}

\DeclareMathOperator{\dist}{dist}

\DeclareMathOperator{\rem}{rem}

\DeclareMathOperator{\app}{app}
\DeclareMathOperator{\rec}{rec}
\DeclareMathOperator{\sq}{sq}
\DeclareMathOperator{\inv}{inv}

\newcommand{\ssum}{\mathop{\textstyle\sum}\nolimits}

\newcommand{\fa}{{\mathfrak a}}
\newcommand{\fq}{{\mathfrak q}}
\newcommand{\pp}{{\mathfrak p}}
\newcommand{\PPPPP}{{\mathfrak P}}
\newcommand{\fz}{{\mathfrak z}}

\newcommand{\fn}{{\mathfrak n}}

\newcommand{\FM}{{\mathfrak M}}

\newcommand{\FI}{{\mathfrak I}}
\newcommand{\XXX}{{\mathfrak X}}
\newcommand{\fu}{{\mathfrak u}}
\newcommand{\fv}{{\mathfrak v}}

\newcommand{\FT}{{\mathfrak T}}

\newcommand{\ff}{{\mathfrak f}}
\newcommand{\LLL}{{\mathfrak L}}

\newcommand{\FJ}{{\mathfrak J}}

\newcommand{\fw}{{\mathfrak w}}

\newcommand{\fc}{{\mathfrak c}}
\newcommand{\FB}{{\mathfrak B}}

\newcommand{\fg}{{\mathfrak g}}

\newcommand{\E}{{\mathbb E}}

\newcommand{\I}{{\mathbb I}}

\newcommand{\C}{{\mathbb C}}
\newcommand{\R}{{\mathbb R}}
\newcommand{\Z}{{\mathbb Z}}

\newcommand{\PP}{{\mathbb P}}

\newcommand{\PPPP}{{\mathcal P}}
\newcommand{\LL}{{\mathcal L}}

\newcommand{\JJ}{{\mathcal J}}
\newcommand{\EE}{{\mathcal E}}
\newcommand{\FF}{{\mathcal F}}
\newcommand{\QQQ}{{\mathcal Q}}
\newcommand{\ZZ}{{\mathcal Z}}
\newcommand{\V}{{\mathcal V}}
\newcommand{\OO}{\mathcal{O}}
\newcommand{\RR}{{\mathcal R}}
\newcommand{\CC}{{\mathcal C}}
\newcommand{\DDD}{{\mathcal D}}

\newcommand{\KK}{{\mathcal K}}
\newcommand{\MM}{{\mathcal M}}

\newcommand{\II}{{\mathcal I}}

\newcommand{\YYY}{{\mathcal Y}}

\newcommand{\SSSS}{{\mathcal S}}
\newcommand{\GG}{{\mathcal G}}
\newcommand{\BB}{{\mathcal B}}
\newcommand{\A}{{\mathcal A}}

\newcommand{\KKK}{{\mathscr K}}
\newcommand{\CCC}{{\mathscr C}}
\newcommand{\LLLL}{{\mathscr L}}
\newcommand{\MMM}{{\mathscr M}}

\newcommand{\ddd}{{\mathrm d}}
\newcommand{\PPP}{{\mathrm P}}
\newcommand{\QQQQQ}{{\mathrm Q}}

\newcommand{\tg}{{\tilde g}}

\newcommand{\tv}{{\tilde v}}
\newcommand{\tu}{{\tilde u}}

\newcommand{\hv}{{\hat v}}

\newcommand{\bfPhi}{{\mathbf \Phi}}
\newcommand{\bfPsi}{{\mathbf \Psi}}
\newcommand{\uu}{{ \boldsymbol{u} }}

\title{Polynomial mixing for infinite-energy solutions of the white-forced Navier–Stokes equation in an infinite pipe}

\begin{aug}
\author[A]{\fnms{Meng}~\snm{Zhao}\ead[label=e1]{mzhao1009@gmail.com}}
\address[A]{Department of Mathematics, CY Cergy Paris University, CNRS UMR 8088, 2 avenue Adolphe Chauvin, 95302 Cergy-Pontoise, France\printead[presep={,\ }]{e1}}
\end{aug}

\date{\today}

\begin{document}
\maketitle
\begin{abstract}
We study the long-time behavior of the infinite-energy solutions to the white-forced Navier--Stokes equation in an infinite pipe $\R\times (-1,1)$ with no-slip boundary conditions. The equation is globally well-posed in uniformly local spaces satisfying the divergence-free condition. Under suitable degeneracy and spatial decay assumptions on the noise, we prove the uniqueness of the invariant measure and establish polynomial mixing in the dual-Lipschitz metric. This provides an affirmative answer to Open Problem 1 posed by Kuksin and Shirikyan \cite{KS17} in the case of zero cross-sectional flux.

\medskip
\noindent\textbf{AMS subject classifications:} 35Q30, 35R60, 37A25, 37L40, 60H15

\medskip
\noindent\textbf{Keywords:} stochastic Navier--Stokes equations, infinite-energy solutions, polynomial mixing, asymptotic compactness.
\end{abstract}
\tableofcontents
\section{Introduction}
We study the mixing properties of the infinite-energy solutions to the Navier--Stokes (NS, for short) equation perturbed by an additive white-in-time noise in an infinite pipe $\R\times (-1,1)$ with the no-slip boundary conditions
\begin{align}
    \label{INTRO1}
    \begin{cases}
        \partial_tu+ (u\cdot \nabla )u+\nabla \pp-\nu\Delta u=h+\eta,\qquad (x,y)\in D:=\R\times (-1,1),\\
        \dive u=0,\\
        u|_{t=0}=u_0,\qquad u|_{\partial D}=0,
    \end{cases}
\end{align}
where $u=(u_1,u_2)$ is the velocity field, $\pp$ is the pressure, and $\nu$ is the kinematic viscosity. Define the space 
\begin{align}
    \label{INTRO1-1}H:=\{u\in L^2(D)| \dive u=0,\ (u\cdot n)|_{\partial D}=0\}
\end{align}
endowed with the usual $L^2$-norm $\|\cdot\|$. The driving process is composed of two parts: $h=h(x)\in H$ is a deterministic forcing term and $\eta$ is a white-in-time noise in the form
\begin{align}\label{INTRO2}
    \eta(t,x):=\partial_t W(t,x),\qquad W(t,x):=\ssum_{j\ge 1}b_je_j(x)\beta_j(t),
\end{align}
where $\{e_j\}_{j\ge 1}$ is an orthonormal basis in $H$, $\{b_j\}_{j\ge 1}$ is a sequence of real numbers satisfying $\FB_0:=\ssum_{j\ge 1}b_j^2<\infty$, and $\{\beta_j\}_{j\ge 1}$ is a sequence of mutually independent standard Brownian motions defined on a filtered probability space $(\Omega,\FF,\FF_t,\PP)$ satisfying the usual conditions; see Definition 2.25 in \cite{KS91}.

\subsection{The main results}
\subsubsection{Global well-posedness in the uniformly local spaces}
For $1\le p< \infty$, we define the uniform-local Lebesgue space 
\begin{align}
    \label{INTRO3}
    L^p_b:=\left\{u\in L^p_{\loc}\bigg| \|u\|_{L^p_b}:=\sup_{\fz\in\R}\|u\|_{L^p(D_{\fz})}<\infty\right\},\qquad D_{\fz}:=(\fz,\fz+1)\times (-1,1),
\end{align}
and the uniform-local velocity class
\begin{align}\label{INTRO4}
    L^p_{b,\dive}:=\left\{u\in L^p_b\bigg| \dive u=0,\ (u\cdot n)|_{\partial D}=0,\ \int_{-1}^1u_1\ddd x_2=0\mbox{ for a.e. $x_1$}\right\}
\end{align}
with the norm inherited from $L^p_b$. For $p=2$, we write 
\begin{align}
    \label{INTRO4-1}H_b:=L^2_{b,\dive}.
\end{align} 
The condition on the cross-sectional flux $\int_{-1}^1u_1 \ddd x_2$ is prescribed, in order to ensure the uniqueness of the infinite-energy solutions; see Remark~5.3 in \cite{Zel07}. In this paper, we focus on the case of zero cross-sectional flux (zero-flux, for short). Introduce the phase space
\begin{align}\label{INTRO6}
    X:=(H_b,d_{\loc}),
\end{align}
where
\begin{align}\label{INTRO5}
    d_{\loc}(u,v):=\ssum_{k\ge 1}\frac{1\wedge\|u-v\|_{L^2(I_k)}}{2^k},\qquad I_k:=(-k,k)\times (-1,1).
\end{align}
Throughout, $X$ is equipped with its Borel $\sigma$-algebra
$\mathcal B(X)$. It should be noted that although $X$ is not a Polish space, it is a standard measurable space; see Definition A.2.1 in \cite{KS12}. Indeed,
\[X=\cup_{m\ge 1}\cap_{k\in\Z}\left\{u\in H_{\loc}| \|u\|_{L^2(D_k)}\le m\right\},\]
where the space $H_{\loc}$ is defined in \eqref{notation1}. Notice that each set on the right-hand side is a closed subset of the Polish space $(H_{\loc},d_{\loc})$, so $X$ is a Borel subset of $H_{\loc}$ and thus standard measurable. As a result, regular conditional probabilities are well-defined, which provides the measure-theoretic framework for studying $X$-valued Markov families.\footnote{See Section~1.3 in \cite{KS12} for the definition of a Markov family.}

The notion of infinite-energy solutions is stated as follows.
\begin{definition}
A progressively measurable process $u:\Omega\times [0,\infty)\times D\to \R^2$ is said to be an infinite-energy solution of \eqref{INTRO1}, if the following conditions are satisfied.
    \begin{enumerate}
        \item $u(\cdot)\in C([0,\infty);X)$ holds $\PP$-almost surely.
        \item For any $T>0$, it follows that
        \[\sup_{t\in[0,T]}\|u(t)\|^2_{L^2_b}+\sup_{\fz\in\R}\int_0^T\|\nabla u(t)\|^2_{L^2(D_{\fz})}\ddd t<\infty,\qquad 
        \PP\mbox{-a.s.}\]
        \item $u|_{\partial D}=0$ holds for almost every $(\omega,t)$.
        \item For any test function $\varphi\in C_0^\infty\cap H$, it follows that
        \begin{align*}
            \langle u(t),\varphi\rangle-\langle u_0,\varphi\rangle\!-\!\int_0^t\!\left(\langle u\otimes u,\nabla \varphi\rangle+\nu\langle u,\Delta \varphi\rangle+\langle h,\varphi\rangle\right)\ddd s\!=\!\ssum_{j\ge 1}\!\int_0^t b_j \langle e_j,\varphi\rangle \ddd\beta_j
        \end{align*}
        for any $t\ge 0$ $\PP$-almost surely.
    \end{enumerate}
    For the deterministic NS equation, infinite-energy solutions are defined analogously.
\end{definition}

Now we present the well-posedness result for equation \eqref{INTRO1} in the infinite-energy space $X$; see Appendix \ref{GWP} for its proof. 
\begin{theorem}\label{MT1}
    Suppose that $h\in H$, 
    \begin{align}
        \label{INTRO7}
        \FB_1:=\ssum_{j\ge 1} b_j^2\|e_j\|^2_{H^1}<\infty,
    \end{align}
    and the orthonormal basis $\{e_j\}_{j\ge 1}$ satisfies \eqref{INTRO9}. Then, for any $u_0\in X$, there is a unique global infinite-energy solution of \eqref{INTRO1}. Moreover, the following statements hold.
    \begin{enumerate}
        \item For any $\delta,T>0$, $\sup_{t\in[\delta,T]}\|u(t)\|_{\infty}<\infty$ holds $\PP$-almost surely.
        \item If $u_0\in X\cap L^\infty$, then the following decomposition holds
        \[u(t)=w(t)+u_{\rem}(t),\]
        where $w(t)$ is an infinite-energy solution of the homogeneous NS equation
        \begin{align}
            \label{INTRO8}
            \begin{cases}
            \partial_t w+(w\cdot \nabla)w+\nabla \fq-\nu\Delta w=0,\\
            \dive w=0,\\
            w|_{t=0}= u_0,\qquad w|_{\partial D}=0,
            \end{cases}
        \end{align}
        and $u_{\rem}(t)$ satisfies the path regularity
        \begin{align}
            \label{INTRO8-0}u_{\rem}(\cdot)\in C([0,\infty);H)\cap L^2_{\loc}(\R_+;H^1_0\cap H)
        \end{align}
        $\PP$-almost surely.
        \item The infinite-energy solutions define a Markov family $(u_t,\PP^X_u)$.
    \end{enumerate}
\end{theorem}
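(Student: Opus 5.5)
We reduce the stochastic problem to a deterministic Navier--Stokes equation with a random, regular forcing by subtracting the Ornstein--Uhlenbeck part. Let $z$ solve the linear Stokes problem
\[
\partial_t z-\nu\Delta z+\nabla \pp_z=\eta,\qquad z|_{t=0}=0,\quad z|_{\partial D}=0 .
\]
Since $\FB_1<\infty$ and $\{e_j\}$ satisfies \eqref{INTRO9}, $z$ is a continuous process in $H^1_0\cap H$. By the factorization method and the smoothing of the Stokes semigroup, it also lies in $L^\infty_{\loc}(\R_+;L^\infty)$, with $\int_{-1}^1 z_1\,\ddd x_2=0$ because the $e_j$ have zero flux. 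Then $v:=u-z$ solves, pathwise,
\[
\partial_t v+((v+z)\cdot\nabla)(v+z)+\nabla\pp-\nu\Delta v=h,\qquad v(0)=u_0 .
\]
This is a deterministic NS equation in the uniformly local class with a forcing $h$ and a transport perturbation by $z$. Both are finite-energy, since $h\in H$ and $z\in H^1$, and they are uniformly locally bounded.

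For existence and uniqueness of $v$ we follow Zelik's approach \cite{Zel07} for infinite-energy solutions in pipes, working with weighted energy estimates. Take weights $\phi_{\fz}(x_1)=(1+|x_1-\fz|^2)^{-\gamma}$, or exponential weights $e^{-\epsilon|x_1-\fz|}$, and test the $v$-equation with $\phi_{\fz}v$. The difficulty is the pressure, which is nonlocal and not controlled in $L^2_b$. Here the zero-flux condition is used. Since $\int_{-1}^1 v_1\,\ddd x_2=0$, one can write $v=\nabla^\perp\psi$ with $\psi$ vanishing on $\partial D$, or equivalently use the Bogovskii-type correction of Zelik. This turns the term $\langle\nabla\pp,\phi_{\fz}v\rangle=-\langle \pp, v\cdot\nabla\phi_{\fz}\rangle$ into one controlled by local $H^1$ norms of $v$ times $|\phi'_{\fz}|\lesssim\epsilon\phi_{\fz}$. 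The pressure itself is estimated locally through the Stokes problem in $D_{\fz}$-neighbourhoods. The cubic term $\langle (v\cdot\nabla) v,\phi v\rangle=-\tfrac12\langle |v|^2 v,\nabla\phi\rangle$ is bounded via Ladyzhenskaya's inequality on each strip and a Gronwall argument for the quantity $\sup_{\fz}\|v\|^2_{L^2_{\phi_\fz}}$. This yields a priori bounds in $L^\infty(0,T;L^2_b)$ with local $H^1$ control. Existence then follows by truncating $u_0$ to finite-energy data $u_0\chi_R$ (corrected to be divergence-free with zero flux), solving classically, and passing to the limit in $d_{\loc}$ by local compactness. Uniqueness and continuity in $X$ come from the same weighted estimate applied to the difference of two solutions. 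The terms involving $z$ are harmless because $z\in L^\infty$.

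For item 1 we run a local parabolic bootstrap on each $D_{\fz}$, uniformly in $\fz$. Localized $H^1$ and then $H^2$ estimates for $t\ge\delta$, using the 2D structure (vorticity near the boundary is unavailable, so we apply the Stokes operator locally with cutoffs), give $\sup_{\fz}\|v(t)\|_{H^2(D_{\fz})}<\infty$ for $t\in[\delta,T]$. The embedding $H^2\subset L^\infty$ on each cell then gives the bound, and adding $z\in L^\infty$ finishes it.

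For item 2 we take $u_0\in L^\infty$ and let $w$ be the deterministic infinite-energy solution of \eqref{INTRO8}, which exists by the $z=0,h=0$ case. Since the $L^\infty$ bounds now hold from $t=0$, $w\in L^\infty_{\loc}(\R_+;L^\infty)$ with $\nabla w$ uniformly locally $L^2$ in time. The remainder $u_{\rem}=u-w$ solves
\[
\partial_t u_{\rem}+(w\cdot\nabla)u_{\rem}+(u_{\rem}\cdot\nabla)w+(u_{\rem}\cdot\nabla)u_{\rem}+\nabla\tilde\pp-\nu\Delta u_{\rem}=h+\eta ,
\]
with zero initial data. We prove a global $H$-energy estimate, applying It\^o's formula to $\|u_{\rem}\|^2$ after a Galerkin or cutoff approximation. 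The term $\langle (u_{\rem}\cdot\nabla)w,u_{\rem}\rangle$ must be controlled by $\|w\|_\infty\|u_{\rem}\|\|\nabla u_{\rem}\|$ after integration by parts. The pressure disappears because $u_{\rem}\in H$ is a global finite-energy field. Gronwall then gives \eqref{INTRO8-0}.

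For item 3, measurability of the solution map $u_0\mapsto u$ in $X$ holds because the map is continuous on $d_{\loc}$-bounded sets of $L^2_b$. Combined with pathwise uniqueness and the standard argument of \cite{KS12}, Section~1.3, which uses independence of increments and the standard measurability of $X$, this gives the Markov property.

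The main obstacle is the weighted energy estimate with pressure control. That is where the zero-flux condition and \eqref{INTRO9} are essential, and where constants must be uniform in $\fz$.
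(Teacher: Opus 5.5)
Your architecture matches the paper's: subtract the Stokes part $z$, run weighted energy estimates in the spirit of Zelik and Anthony--Zelik, approximate by finite-energy data, use an It\^o energy estimate for $u_{\rem}$, and build a measurable solution map. However, the three load-bearing steps are circular or would fail as written.

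\emph{Pressure.} You propose to estimate $\langle \pp,\phi_\fz' v_1\rangle$ through a local Stokes estimate for $\pp$. That needs $\partial_t v$ in a local $H^{-1}$ norm, i.e.\ tested against non-solenoidal functions, which is $\nabla\pp$ again. A time-independent Bogovskii correction of $\phi_\fz v$ has the same defect, because it produces the term $\langle\partial_t v,\mathcal B(\phi_\fz' v_1)\rangle$. The paper avoids the pressure altogether. It tests with $\theta^2_{\varepsilon,\fz}w-v_{\varepsilon,\fz}$, where $v_{\varepsilon,\fz}$ solves the \emph{backward} Stokes problem \eqref{DE15}; zero flux is what makes its terminal datum vanish on $\partial D$. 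The test function is then solenoidal, $\langle w,\partial_t v_{\varepsilon,\fz}+\nu\Delta v_{\varepsilon,\fz}\rangle=0$ removes the corrector's time derivative, and \eqref{DE16} supplies the factor $\varepsilon$.

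\emph{Uniqueness.} It does not follow from ``the same estimate'' applied to the difference $g$. The term $\langle (g\cdot\nabla)(v+z),\phi_\fz g\rangle$ carries no $\varepsilon$, and with a decaying weight it gives a Gronwall coefficient like $\sup_\fz\|\nabla(v+z)(t)\|^2_{L^2(D_\fz)}$. The a priori bound only controls $\sup_\fz\int_0^T\|\nabla v\|^2_{L^2(D_\fz)}\ddd t$, not $\int_0^T\sup_\fz$. For $L^2_b$ data the latter can be infinite, and the former need not become small as $T\to0$. The paper closes this with four ingredients:
\begin{enumerate}
\item a compactly supported cut-off $\psi_\fz$, so that Gronwall runs with the time-integrable local coefficient $\|\nabla(u+z)\|^2_{L^2(D^*_\fz)}$;
\item the corrector $\hv_\fz$ of \eqref{GWP16}, whose terminal value is a gradient, so the now $O(1)$ terminal pairing vanishes;
\item short time steps;
\item a convolution in $\fz$ against $\theta^2_{\varepsilon,\cdot}$.
\end{enumerate}
This difference estimate, \eqref{GWP25}--\eqref{GWP26}, is also what supplies the $d_{\loc}$-continuity you invoke for existence and for measurability in item~3.

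\emph{Item~2.} The Gronwall step for $u_{\rem}$ needs $\int_0^T\|w(t)\|_\infty^2\ddd t<\infty$, which you get by asserting that for $u_0\in L^\infty$ ``the $L^\infty$ bounds hold from $t=0$''. Nothing in your argument propagates $L^\infty$ from the data: there is no maximum principle for the velocity, and vorticity is not controlled at a no-slip wall. Your local energy bootstrap from $L^2_b$ gives only $\|w(t)\|_\infty\lesssim t^{-1/2}$, as in \eqref{MIE13}, whose square is not integrable at $t=0$. The paper uses the $L^\infty$ assumption only through $\|u_0\|_{L^4_b}$. It first proves uniformly local $L^p$ bounds for the Stokes semigroup (Proposition~\ref{propSTOKES}, via the stream-function Stokes operator and a weighted analytic-semigroup perturbation argument). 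It then runs a fixed point in $\sup_{t\le T}(\|w(t)\|_{L^4_b}+t^{1/4}\|w(t)\|_\infty)$ (Proposition~\ref{propDE3}), which yields $\|w(t)\|_\infty\lesssim t^{-1/4}$ near $0$. Some such $L^p_b$ theory with $p>2$ is the missing ingredient.

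A milder version of the same issue affects item~1. A pathwise argument is a reasonable alternative to the paper's Lemma~\ref{lemmaMIE1} plus Markov iteration. But localized $H^2$ bounds up to a no-slip wall again meet the nonlocal pressure. The paper handles this by testing the projected equation with $-\dive(\psi_\fz^2\nabla u)$, controlling the commutator $[\Pi,\psi_\fz^2]$, and then reaching $L^\infty$ through Duhamel estimates in $W^{3/4,4}_b$.
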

\subsubsection{Mixing for the finite-energy solutions}
Next, we present a mixing result for the finite-energy solutions, which is an intermediate step towards the infinite-energy case. To formulate the result, we introduce the so-called admissible weight functions.
\begin{definition}
 A smooth function $\fw(x):\R\to [1,+\infty)$ is called an admissible weight, if 
 \begin{align}
     \label{INTRO8-1}\lim_{x\to\infty}\fw(x)=\lim_{x\to-\infty}\fw(x)=+\infty.
 \end{align}
\end{definition}
Throughout the paper, we shall fix an admissible weight $\fw$ and assume that the orthonormal basis $\{e_j\}_{j\ge 1}$ in \eqref{INTRO2} satisfies
\begin{align}
    \label{INTRO9}e_j\in H^2\cap H^1_0\cap H,\qquad \fw e_j \in L^2.
    \end{align}
Such an orthonormal basis can be constructed through standard density and Gram--Schmidt arguments; see Section 1.1 in \cite{NZ24-1}. The mixing result of the finite-energy solutions is presented below.
\begin{theorem}\label{MT2} For any $\RR>0$, there exists a positive integer $N=N(\RR,\nu,h)$ such that if
    \begin{align}
        \label{INTRO10}
        \FB_\fw:=\ssum_{j\ge 1} b_j^2(\|\fw e_j\|^2+\|e_j\|^2_{H^1})\le \RR^2,
    \end{align}
    and
    \begin{align}
        \label{INTRO11}b_j\neq 0,\qquad \forall j=1,\ldots,N,
    \end{align}
    then the stochastic NS equation \eqref{INTRO1} has a unique invariant measure $\mu\in \PPPP(H)$, where $\PPPP(H)$ denotes the space of all probability measures on $H$ endowed with the weak topology. Moreover, the following statements hold.
    \begin{enumerate}
    \item The invariant measure $\mu$ satisfies
    \begin{align}
        \label{INTRO12}\int_{H}\|v\|^2\mu(\ddd v)<\infty, \qquad \mu(H\cap L^\infty)=1.
    \end{align} 
    \item For any $q\ge 1$, it follows that 
        \begin{align}
            \label{INTRO13}\left|\E (f(u(t)))-\int_{H}f(u)\mu(\ddd u)\right|\le C_q\|f\|_{\lip}\left(1+t\right)^{-q}\left(1+ \|u_0\|^2\right)
        \end{align}
        for any $t\ge0$, $u_0\in H$, and bounded Lipschitz-continuous function $f:H\to \mathbb{R}$, where $C_q$ depends on $q,\nu,h$ and $\{b_j\}_{j\ge 1}$.
    \item In particular, for any probability measure $\lambda\in\PPPP(H)$,
    \begin{align}
        \label{INTRO14}\lim_{t\to\infty}\|\DDD(u(t;\lambda))-\mu\|^*_{\lip(H)}=0,
    \end{align}
    where $\|\cdot\|_{\lip(H)}^*$ is the dual-Lipschitz metric defined in \eqref{notation2} and $\DDD(u(t;\lambda))$ denotes the distribution of the solution $u(t)$ issued from the initial data $u_0$ distributed as $\lambda$.
    \end{enumerate}
    \end{theorem}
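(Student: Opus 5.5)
We prove Theorem~\ref{MT2} with the coupling method for degenerate noise, as developed by Kuksin--Shirikyan \cite{KS12}. Since the domain is unbounded we cannot use compactness of the Stokes resolvent, and we replace it by the spatial decay encoded in the admissible weight $\fw$. The proof has three ingredients: (i) moment and dissipation estimates for the finite-energy solutions; (ii) a ``squeezing'' or Foias--Prodi type estimate on a finite-dimensional subspace adapted to $\fw$; (iii) a coupling construction yielding recurrence and polynomial convergence. Since $D$ is unbounded, the Poincaré inequality still holds in the $x_2$-direction. This gives exponential dissipation of the $L^2$-energy: Itô's formula applied to $\|u\|^2$ yields $\E\|u(t)\|^2\le e^{-\alpha t}\|u_0\|^2+C(\nu,h,\FB_0)$. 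We also get exponential-moment bounds of the form $\E\exp(\gamma\sup_{t\le T}(\|u\|^2+\nu\int_0^t\|\nabla u\|^2-Ct))\le Ce^{\gamma\|u_0\|^2}$. Existence of an invariant measure follows from Krylov--Bogolyubov, with tightness in $H$ coming from a weighted energy estimate. Apply Itô's formula to $\|\fw u\|^2$, possibly with a truncated or regularized weight. Because $\FB_\fw<\infty$ and $\fw'$ can be controlled by $\fw$ after modification, we get uniform bounds on $\E\|\fw u\|^2$ and $\E\int\|\nabla u\|^2$, and hence compactness in $H$. The property $\mu(H\cap L^\infty)=1$ and the second moment in \eqref{INTRO12} follow from these bounds together with parabolic smoothing, as in Theorem~\ref{MT1}(1).

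The core step is the squeezing estimate. Let $P_N$ be the orthogonal projection onto $\mathrm{span}\{e_1,\dots,e_N\}$ and $Q_N=I-P_N$. Take two solutions $u,v$ driven by noises whose $P_N$-components may differ by a control, and set $z=u-v$. We seek a Foias--Prodi estimate
\[\|z(t)\|^2\le e^{-\int_0^t(\nu\lambda_N-C\|\nabla u\|^2)\ddd s}\|z(0)\|^2,\]
valid when $P_Nz$ is forced to vanish. Here $\lambda_N\to\infty$ quantifies $\|Q_Nw\|^2\le\lambda_N^{-1}(\|\nabla w\|^2+\|\fw w\|^2)$. This quantity is the substitute for the spectral gap, and it is where admissibility of $\fw$ enters: the set $\{\|\nabla w\|^2+\|\fw w\|^2\le1\}$ is compact in $H$ by Rellich, since $\fw\to\infty$ at $\pm\infty$. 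I expect the main obstacle here, because $\lambda_N$ controls the norm $\|\fw z\|$ only if we also propagate a weighted bound on the difference $z$. The nonlinear term $\langle (z\cdot\nabla)u,\fw^2z\rangle$ must be absorbed using the $L^\infty$ or $H^1$ control on $u$. My first attempt is to run the Foias--Prodi argument in the norm $\|\fw z\|^2+\|z\|^2$, using $|\nabla \fw|\lesssim \fw$ after a regularization of the weight, so that the commutator terms stay lower order.

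Given squeezing, we build the coupling as in \cite[Ch.~3]{KS12}. On the event that $\int\|\nabla u\|^2$ grows at most linearly, we construct a Girsanov-type control on the first $N$ modes. This uses $b_j\ne0$ for $j\le N$, which makes the control measure absolutely continuous with explicit total-variation bounds. The coupled trajectories then converge exponentially with high probability. The unboundedness of $D$ prevents exponential hitting times of small balls, since we only have estimates adapted to the weight rather than uniform compact absorbing sets. We therefore obtain only polynomial moments of the hitting time of the set $\{\|u\|,\|v\|\le R\}$ and of the coupling time. We get every polynomial moment by iterating the Lyapunov bound: each $\E\tau^q$ is finite with constants depending on $q$. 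Combining this with the Lipschitz property of $f$ gives \eqref{INTRO13} at rate $(1+t)^{-q}$ times $(1+\|u_0\|^2)$. Uniqueness of $\mu$ and \eqref{INTRO14} follow by integrating \eqref{INTRO13} against $\lambda$ and $\mu$. For \eqref{INTRO14} we first truncate $\lambda$ to a large ball and then use tightness.
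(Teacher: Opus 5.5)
There is a genuine gap in the squeezing step. You flag it yourself as the main obstacle, and the weighted Foia\c{s}--Prodi fix you propose does not close it.

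In the strip, no inequality $\|Q_Nz\|^2\le\lambda_N^{-1}\|\nabla z\|^2$ with $\lambda_N\to\infty$ can hold. Translate a fixed divergence-free bump to $x\to\infty$: its $P_N$-component tends to $0$ while $\|\nabla z\|/\|z\|$ stays fixed. So everything rests on controlling $\|\fw z\|$ for the difference $z=u-v$. Running Foia\c{s}--Prodi in the weighted norm, the term $\langle (z\cdot\nabla)u,\fw^2z\rangle$ gives at best
\[
\frac{\ddd}{\ddd t}\|\fw z\|^2+\nu\|\fw\nabla z\|^2\le C\|\nabla u\|^2\|\fw z\|^2+\cdots .
\]
The only way to turn $\|\fw\nabla z\|^2$ into $\|\fw z\|^2$ is the Poincar\'e inequality in $y$, with the fixed rate $\alpha_0$; by the same translation argument this rate does not improve with $N$. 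Hence $\|\fw z\|$, and through your inequality also $\|z\|$, is damped only if $C\int_0^t\|\nabla u\|^2\ddd s<\nu\alpha_0t$. That is a smallness condition on $h$ and the noise relative to $\nu$, which the theorem explicitly avoids. In addition, the pressure term $\langle\nabla\pi,\fw^2z\rangle$ no longer vanishes, and $h\in H$ carries no weight. For the same reason, conditioning the Girsanov step on ``$\int_0^t\|\nabla u\|^2\ddd s$ grows at most linearly'' is too weak: in an unbounded domain the controlling quantity must grow with a \emph{small} slope.

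The missing ingredient is dynamical rather than static compactness. The paper proves a localized lemma, $\|g\|_{L^2(\OO)}\le\epsilon\|\nabla g\|+C_\epsilon\|\PPP_Ng\|$ for $\OO\Subset D$, using Rellich only on bounded sets. It then splits $\langle B(g,u),g\rangle=\langle B(g,u-\fu_i),g\rangle+\langle B(g,\fu_i),g\rangle$, with $\fu_i$ in a finite set $K\subset C_0^\infty\cap H$. This gives the squeezing exponent $-\nu\alpha_0t+\CC_1\int_0^t\dist_V(u(s),K)^2\ddd s$.

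It then shows that $\int_0^t\dist_V(u(s),K)^2\ddd s\le\epsilon t+\rho$ outside an event of polynomially small probability. This uses asymptotic compactness of the transition kernels $P_{t_n}(u_n,\cdot)$, proved by an energy-equality argument, together with a Markov/martingale argument. That polynomial tail, not slow recurrence, is what makes the rate polynomial; return times to the ball actually have exponential moments.

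Finally, the weight $\fw$ plays no role in any energy estimate. It only makes $\{b:\FB_\fw\le\RR^2\}$ compact in $\ell^2$, because $\|e_j\|_{H^1}^2+\|\fw e_j\|^2\to\infty$. The finite set $K$ a priori depends on the particular coefficients through asymptotic compactness. Compactness of the coefficient set lets $K$, and hence $N$, be chosen uniformly in terms of $\RR,\nu,h$.
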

    Several remarks are in order. First, no size restriction is imposed on the viscosity $\nu$. Second, conditions \eqref{INTRO10} and \eqref{INTRO11} are compatible. Indeed, fix $\RR>0$ and let $N_\RR$ be the integer given by Theorem~\ref{MT2}. Given any sequence $\{b_j\}_{j\ge1}$ satisfying \eqref{INTRO10}, replace each vanishing coefficient among $b_1,\ldots,b_N$ by a nonzero number. Since only finitely many coefficients are modified, the weighted sum in \eqref{INTRO10} remains finite. Multiplying the resulting sequence by a sufficiently small positive constant then ensures that \eqref{INTRO10} holds, while preserving the nondegeneracy condition \eqref{INTRO11}.
    
    The fact that the number $N$ in \eqref{INTRO11} depends on the parameter $\RR$, instead of each noise coefficient $b_j$, plays an essential role in the above adding and rescaling argument. Such parameter dependence is ensured by the admissible weight function $\fw$. Indeed, it is shown in Lemma \ref{lemmaMFE5} that the set of coefficients $\{b_j\}_{j\ge 1}$ satisfying \eqref{INTRO10} is a compact subset of the sequence space $\ell^2$. Consequently, we are able to choose a uniform integer $N$ on this set; see Section \ref{UFDS}. 
    
    For the non-degenerate case, i.e., $b_j\neq 0$ for all $j\ge 1$, the above parameter dependence is not required, and therefore the assumption can be simplified: Theorem \ref{MT2} holds for any orthonormal basis $\{e_j\}_{j\ge 1}$ in $H$ satisfying $e_j\in H^1_0\cap H$ for all $j\ge1$ and any noise coefficients $\{b_j\}_{j\ge 1}$ satisfying $\FB_1<\infty.$
    
    As we shall see in Sections \ref{FDS} and \ref{MFE}, the proof of Theorem \ref{MT2} does not rely essentially on the particular shape of the underlying domain $D$. In fact, for any unbounded Poincar\'e-type domain, one can define an admissible weight function $\fw: D\to [1,+\infty)$ by replacing the condition \eqref{INTRO8-1} with 
    \[\fw(x,y)\to\infty, \qquad \mbox{for any $(x,y)\in D$ satisfying $(x,y)\to\infty$}.\]
    This, together with the proof presented in Sections \ref{FDS} and \ref{MFE}, yields polynomial mixing for finite-energy solutions of \eqref{INTRO1} in arbitrary Poincar\'e-type domain.

    Finally, it is noteworthy that the framework established in the previous paper \cite{NZ24-1} by Nersesyan and the author does not apply to the current situation, due to the presence of a physical boundary. Indeed, the Foia\c{s}--Prodi estimate established in Proposition 4.3 therein relies on a linear growth of the parabolic energy of the vorticity with high probability. Under the current no-slip boundary conditions, the boundary terms prevent a direct application of these vorticity estimates; see \cite{KV14} for the difficulties in obtaining strong-norm moment bounds. Thus, Theorem \ref{MT2} is not only an intermediate step towards the infinite-energy case, but also constitutes a substantial improvement of \cite{NZ24-1}.
    
\subsubsection{Mixing for the infinite-energy solutions}
    Finally, we present the mixing result for the infinite-energy solutions. 
    \begin{theorem}\label{MT3}
   For any $\RR>0$, there exists a positive integer $N=N(\RR,\nu,h)$ such that if \eqref{INTRO10} and \eqref{INTRO11} are satisfied, then the following statements hold for the invariant measure $\mu$ constructed in Theorem \ref{MT2}.
    \begin{enumerate}
    \item For any $q\ge 1$ and $R>0$, it follows that
    \begin{align}
        \label{INTRO15}\sup_{\|u_0\|_{L^2_b}\le R}\left|\E (f(u(t)))-\int_{X}f(u)\mu(\ddd  u)\right|\le C_{q,R} \|f\|_{\lip}\left(1+t\right)^{-q}
    \end{align}
    for any $t\ge0$ and bounded Lipschitz-continuous function $f: X\to \mathbb{R}$, where $C_{q,R}$ depends on $q,R,\nu,h,\{b_j\}_{j\ge 1}$.
    \item The probability measure $\mu$ is the unique invariant measure of the stochastic NS equation \eqref{INTRO1} in $\PPPP(X)$, where $\PPPP(X)$ denotes the space of all probability measures on $X$ endowed with the weak topology.
    \item In particular, for any probability measure $\lambda\in\PPPP(X)$,
    \begin{align}
        \label{INTRO16}\lim_{t\to\infty}\|\DDD(u(t;\lambda))-\mu\|^*_{\lip(X)}=0,
    \end{align}
    where $\DDD(u(t;\lambda))$ denotes the distribution of the solution $u(t)$ issued from the initial data $u_0$ distributed as $\lambda$. 
    \end{enumerate}
\end{theorem}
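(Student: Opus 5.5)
The plan is to reduce the infinite-energy problem to the finite-energy mixing of Theorem~\ref{MT2}, using the decomposition in Theorem~\ref{MT1}(2). Since solutions become bounded instantly (Theorem~\ref{MT1}(1)) and the process is Markov, we may condition at time $\delta=1$. So it suffices to treat initial data $u_0\in X\cap L^\infty$ with $\|u_0\|_{L^2_b}\le R$, provided we have a moment bound on $\|u(1)\|_\infty$ that is uniform over $\|u_0\|_{L^2_b}\le R$. Write $u(t)=w(t)+u_{\rem}(t)$, where $w$ solves the homogeneous equation \eqref{INTRO8} and $u_{\rem}\in C([0,\infty);H)$.

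\textbf{Step 1: local decay of $w$.} We first show that, for zero-flux data, the deterministic infinite-energy solution satisfies
\[d_{\loc}(w(t),0)\to 0\]
at a polynomial rate, uniformly over $\|u_0\|_{L^2_b}\le R$. The route is a weighted energy estimate. Multiply by $\phi_{\fz}w$, where $\phi$ is a localized weight, for instance $\phi(x)=(1+|x-\fz|^2)^{-m}$. By the Poincaré inequality in the cross-section, $\nu\|\nabla w\|^2_{L^2_\phi}$ dominates $\|w\|^2_{L^2_\phi}$. The pressure must be handled through the zero-flux condition: this condition lets us write $w=\nabla^\perp\psi$ with $\psi$ uniformly local, which yields local pressure estimates in the style of Zelik. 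The main difficulty is the cubic term, which grows in uniformly local norms. I would first prove that $\|w(t)\|_{L^2_b}$ stays bounded. Then, because the linear Stokes semigroup in the pipe has exponential decay of order $e^{-\nu\lambda_1 t}$ uniformly locally, a bootstrap gives $\sup_{\fz}\|w(t)\|_{L^2(D_{\fz})}\le C_R(1+t)^{-q}$, or even exponential decay once the energy is small.

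\textbf{Step 2: coupling with the finite-energy flow.} Let $v(t)$ be the finite-energy solution of \eqref{INTRO1} started from $0$. The difference $z=u-v=w+(u_{\rem}-v)$ satisfies a perturbed equation. Split time at $t/2$. On $[0,t/2]$, $w$ decays by Step 1. From $t/2$ onward, $u(t/2)$ equals a small-in-$X$ perturbation $w(t/2)$ of a finite-energy state $u_{\rem}(t/2)\in H$. Then:
\begin{itemize}
\item A local stability estimate in $d_{\loc}$ shows that the infinite-energy perturbation stays small locally with high probability. This uses the $L^\infty$ bounds from Theorem~\ref{MT1}, Gronwall in the weighted norm, and moments of $\|u_{\rem}(t/2)\|$.
\item Theorem~\ref{MT2} applied from $u_{\rem}(t/2)$ gives $|\E f(\cdot)-\int f\,\ddd\mu|\le C_q(1+t)^{-q}(1+\E\|u_{\rem}(t/2)\|^2)$.
\end{itemize}
We need $\E\|u_{\rem}(s)\|^2$ bounded polynomially in $s$, uniformly in $R$. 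I would obtain this from the Itô energy identity for $u_{\rem}$, in which the forcing $(w\cdot\nabla)w$-type cross terms are controlled by the decay from Step 1. Since $f$ is Lipschitz for $d_{\loc}$ and $\|f\|_\infty\le\|f\|_{\lip}$, the contribution of the small-probability bad events is absorbed. Note also that Lipschitz functions on $X$ restrict to Lipschitz functions on $H$, because $d_{\loc}\le C\|\cdot\|$. Altogether this yields \eqref{INTRO15}.

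\textbf{Steps 3--4: uniqueness and convergence.} Take any invariant $\lambda\in\PPPP(X)$. Integrate \eqref{INTRO15} over $\{\|u_0\|_{L^2_b}\le R\}$ and let $t\to\infty$; the complement contributes at most $2\|f\|_\infty\lambda(\|u_0\|_{L^2_b}>R)$, which vanishes as $R\to\infty$. Hence $\lambda=\mu$, which gives part 2. Part 3 follows by the same tightness-in-$R$ argument applied to $\DDD(u(t;\lambda))$.

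I expect the main obstacle to be Step 1, together with the cross terms in Step 2. The issue is obtaining decay of the homogeneous infinite-energy flow in the local metric uniformly on $L^2_b$-balls, despite the nonlinear term and the nonlocal pressure. Here the zero-flux assumption is essential.
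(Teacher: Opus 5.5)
Your overall architecture matches the paper's: the split $u=w+u_{\rem}$, moment bounds for $u_{\rem}$, restarting a finite-energy solution from $u_{\rem}$ at an intermediate time and applying Theorem~\ref{MT2} via the Markov property, reducing to bounded data by smoothing, and integrating over $L^2_b$-balls. The gap is in the comparison step, which fails as you set it up. Write $u_{\app}$ for the finite-energy solution restarted at $t/2$ from $u_{\rem}(t/2)$, so that $u-u_{\app}=w+\fg$ with $\fg=u_{\rem}-u_{\app}$. Then $\fg$ solves an equation forced by $B(w,u_{\rem})+B(u_{\rem},w)$, and any energy/Gronwall stability estimate, weighted or not, gives
\[
\|\fg(t)\|^2\lesssim\int_{t/2}^t\|w(s)\|_\infty^2\|u_{\rem}(s)\|^2\exp\Big(C\int_s^t\|\nabla u_{\rem}(r)\|^2\ddd r\Big)\ddd s .
\]
Since $\int_{t/2}^t\|\nabla u_{\rem}\|^2\ddd r$ grows linearly in $t$ (the noise injects energy $\FB_0$ per unit time), the exponential factor is typically $e^{ct}$. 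Polynomial decay of $w$, which is your Step~1 target, cannot compensate this. Even $\|w(s)\|_\infty\lesssim e^{-\alpha_1 s}$ compensates only if $\alpha_1$ dominates $c$, and nothing guarantees that: pathwise stability over a window of length $t/2$ is simply not available.

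The paper compares only on $[T,T+\sqrt T]$. On the event $\{\int_T^{T+\sqrt T}\|\nabla u_{\rem}\|^2\ddd s\le C^{-1}\alpha_1T\}$, whose complement is polynomially small by Chebyshev and \eqref{MIE2}, the Gronwall factor is at most $e^{\alpha_1T}$ and is beaten by $\|w\|_\infty^2\lesssim e^{-2\alpha_1T}$. Mixing over the short time $\sqrt T$ still gives any polynomial rate, because Theorem~\ref{MT2} holds for every $q$. This is why you need exponential decay of $\|w(t)\|_\infty$, not polynomial decay of $d_{\loc}(w(t),0)$. You also need $\int_0^\infty\|w\|_\infty^2\ddd t\lesssim1+\log(1+\|u_0\|_\infty)$, which near $t=0$ comes from Stokes semigroup bounds in $L^p_{b}$ and a fixed point. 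The reason is that the only cross term in the energy identity for $u_{\rem}$ is $\langle(u_{\rem}\cdot\nabla)u_{\rem},w\rangle\le\|w\|_\infty\|u_{\rem}\|\|\nabla u_{\rem}\|$; there is no $(w\cdot\nabla)w$ forcing, since $w$ solves \eqref{INTRO8} exactly. Consequently, moments of $u_{\rem}$ carry the factor $\exp(\frac{2q}{\nu}\int_0^\infty\|w\|_\infty^2\ddd s)$.

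Two further ingredients are missing. First, your Step~1 has no mechanism that works for large data. Bootstrapping from linear Stokes decay does not close when $R$ is large, because the quadratic term is not small in uniformly local norms. The paper tests \eqref{INTRO8} against $\theta_{\varepsilon,\fz}^2w-v_{\varepsilon,\fz}$, where $\theta_{\varepsilon,\fz}=(1+\varepsilon^2(x-\fz)^2)^{-1/2}$ is a slowly varying weight. Here $v_{\varepsilon,\fz}$ is a backward-Stokes corrector restoring the divergence-free condition; zero flux enters through it, via the stream function $\int_{-1}^y w_1\,\ddd y'$, which must vanish at $y=1$. Both the cubic term $\langle|w|^2,w_1\theta_{\varepsilon,\fz}\partial_x\theta_{\varepsilon,\fz}\rangle$ and the corrector carry a factor $\varepsilon$, while $\|\theta_{\varepsilon,\fz}w_0\|\lesssim\varepsilon^{-1/2}R$. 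The effective size is therefore $\varepsilon^{1/2}R$, and choosing $\varepsilon=\varepsilon(R,\nu)$ closes the bootstrap. The Poincar\'e inequality then gives exponential decay of $\|\theta_{\varepsilon,\fz}w\|$ uniformly in $\fz$, hence of $\|w\|_{L^2_b}$, and by smoothing of $\|w\|_\infty$ for $t\ge1$.

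Second, Theorem~\ref{MT1}(1) gives only almost sure finiteness of $\|u(1)\|_\infty$, not the uniform moment bound your reduction postulates. Meanwhile, the $L^\infty$-data estimate depends polynomially on $\|u_0\|_\infty$. The paper instead uses Lemma~\ref{lemmaMIE1}: on an event $\A_{t_0,1}$ with $\PP(\A_{t_0,1}^c)\lesssim\exp(-\alpha_3/(\FB_1\sqrt{t_0}))$, one has $\|u(t_0)\|_{L^2_b}\le C$ and $\|u(t_0)\|_\infty\le Ct_0^{-1/2}$. It then conditions at the shrinking time $t_0=t^{-c}$ with $c$ small. The factor $(1+\|u(t_0)\|_\infty)^{\alpha q}$ then costs only $t^{c\alpha q/2}$, while the bad event is superpolynomially small in $t$.
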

We make several comments below. In the review paper \cite{KS17}, Kuksin and Shirikyan posed an open problem concerning the mixing properties of the infinite-energy pipe flow, which is reformulated below.
\begin{openproblem}
    Let us consider the stochastic Navier--Stokes equation \eqref{INTRO1}. This equation is well-posed in $L^\infty$ spaces on $D$, with no decaying conditions at infinity; see \cite{AM05,AZ14}. Moreover, it is dissipative and possesses a global attractor in the deterministic setting. It follows that, at least in the case of a bounded stochastic forcing, the random dynamics has an invariant measure. A challenging problem is to prove its uniqueness and mixing.
\end{openproblem}
 Notice that
\[L^\infty_{\dive}:=\left\{u\in L^\infty\bigg| \dive u=0,\ (u\cdot n)|_{\partial D}=0,\ \int_{-1}^1u_1\ddd x_2=0\mbox{ for a.e. $x_1$}\right\}\subset X,\]
so each probability measure on $L^\infty_{\dive}$ can be extended to $\PPPP(X)$. Consequently, Theorem \ref{MT3} provides an affirmative answer to the above open problem in the zero-flux case. The case of non-zero flux requires additional treatments, which shall be addressed in a forthcoming paper. 

The conditions \eqref{INTRO10} and \eqref{INTRO11} for the degeneracy of the noise are posed for the same reason as discussed in the finite-energy case. Similarly, for the non-degenerate noise, the assumption can be simplified: Theorem \ref{MT3} holds for any orthonormal basis $\{e_j\}_{j\ge 1}$ in $H$ such that $e_j\in H^1_0\cap H$ for all $j\ge1$ and any non-degenerate noise coefficients $\{b_j\}_{j\ge 1}$ satisfying $\FB_1<\infty.$

\subsection{Literature review and the strategy of the proof}\label{LRSP}
\subsubsection{Literature review}
Since the 1990s, substantial progress has been made in understanding the ergodic properties of stochastic dissipative PDEs. The 2D NS equation has played a central role because of its importance in fluid dynamics and mathematical turbulence. Foundational contributions to its ergodic theory include \cite{FM95, EMS01, KS02, HM06, HM08}; comprehensive accounts and further references can be found in the monograph \cite{KS12} and the surveys \cite{KS17, D13}. Much of the classical theory concerns bounded domains or spatially periodic settings, where compact Sobolev embeddings and the discrete spectrum of the Stokes operator provide effective tools for studying invariant measures and mixing properties.

The analysis of ergodic properties in unbounded domains is substantially more delicate, as the compactness and discrete spectral structure are no longer available. In this setting, even the existence of an invariant measure becomes a nontrivial issue, which has been addressed in \cite{BL06, BMO17}. Concerning the uniqueness and mixing, Nersesyan \cite{N22} established uniqueness of the invariant measure and exponential mixing for the NS equation in unbounded Poincar\'e domains driven by bounded random forcing. More recently, Nersesyan and the author \cite{NZ24-1} proved uniqueness and polynomial mixing for the damped NS equation on $\mathbb{R}^2$ driven by additive white-in-time noise. As pointed out in the previous section, their proof is based on a weighted version of the Foia\c{s}--Prodi estimate and thus requires a linear growth of the parabolic energy of the vorticity with high probability. Such a probabilistic growth estimate has not yet been established, due to the presence of physical boundaries, therefore leaving additional difficulties in the study of mixing properties for pipe flows.

The above existence and uniqueness results in unbounded domains concern finite-energy solutions. In an infinite pipe, this restriction excludes every nontrivial shear flow $(U(t,y),0)$, whose total kinetic energy is infinite. This motivates the study of infinite-energy solutions, which provide a natural framework for describing these physically important pipe flows. 

In the infinite-energy setting, the existence of an invariant measure was established by Eckmann--Hairer \cite{EH01} and Rougemont \cite{Rou02} for stochastic Ginzburg--Landau equations with localized noise and homogeneous noise, respectively. As for the uniqueness and mixing, to the best of our knowledge, a substantial body of work beyond the finite-energy setting concerns Burgers equations and scalar conservation laws on $\R$. For the inviscid Burgers equation on $\mathbb R$, invariant measures and convergence to equilibrium in suitable classes with prescribed mean velocity were established under spatially homogeneous Poisson noise in \cite{BCK14}. Analogous results for the viscous equation were obtained in \cite{BL19,DGR21} under spatially homogeneous random kick forcing and spatially homogeneous white-in-time Gaussian potential forcing, respectively. For more general viscous scalar conservation laws, existence and uniqueness of ergodic, spatially homogeneous invariant measures for each mean in a certain unbounded set were proved in \cite{DDGLR23}. These results exploit structural properties specific to scalar equations, and extending such results to infinite-energy NS flows presents significant challenges.

\subsubsection{The strategy of the proof}
\subsubsection*{Mixing for the finite-energy solutions}
The proof of Theorem \ref{MT2} is based on the asymptotic compactness property of the finite-energy solutions. This property was established by Rosa \cite{Ros98} for the deterministic 2D NS equation on unbounded Poincar\'e domains, in order to prove the existence of a global attractor. This approach was subsequently extended to the stochastic setting by Brzeźniak and his collaborators \cite{BL06,BCLLLR13}, who employed asymptotic compactness to establish the existence of invariant measures and random attractors in unbounded Poincar\'e domains. More recently, this property was introduced by Nersesyan \cite{N22} to the study of mixing for the NS equation with bounded noise in unbounded Poincaré domains. Beyond the NS setting, Xiang and his collaborators \cite{LWXZZ24,CXZ26,CXZZ26} used control-theoretic methods combined with the asymptotic compactness property to establish exponential mixing for randomly forced dispersive equations in bounded domains, including the nonlinear wave and Schrödinger equations.

Although the precise formulation of asymptotic compactness varies across the literature, these formulations describe a common notion of compactness in the long-time dynamics. In the present paper, it refers specifically to the property that the solutions approach a compact set with high probability as $t\to\infty$; see Proposition \ref{propMFE3}.

To understand the role of asymptotic compactness in the proof, we recall the coupling framework used to establish mixing for Markov families; see Chapter~3 in \cite{KS12}. Mixing of a Markov family $(u_t,\PP_u)$ can be established by constructing a coupling $(\uu_t,\PP_{\uu})$ of two copies, $(u_t,\PP_u)$ and $(u'_t,\PP_{u'})$, that satisfies suitable conditions; see Proposition~\ref{propMFE5}. A key step in verifying these conditions is to establish a Foia\c{s}--Prodi estimate, which quantifies the stabilization of the NS dynamics under a suitable finite-dimensional perturbation. In the unbounded-domain setting, however, the classical Foia\c{s}--Prodi argument is no longer directly applicable, due to the loss of compactness of Sobolev embeddings and the presence of continuous spectrum for the Laplacian. In \cite{NZ24-1}, we overcame this difficulty on $\R^2$ by introducing a suitable space-time weight function. As discussed above, this approach does not extend to the present setting, because of the difficulties in controlling the vorticity in the presence of a physical boundary.

Rather than refining the weighted approach used in $\R^2$, we establish a new Foia\c{s}--Prodi estimate that can be directly extended to other unbounded Poincar\'e domains. Let $\{u(t)\}_{t\ge0}$ be a finite-energy solution of \eqref{INTRO1}, and let $\{v(t)\}_{t\ge0}$ solve the perturbed stochastic NS equation \eqref{MFE2}, obtained by adding the finite-dimensional feedback term $a\PPP_N(u-v)$. We observe that though the high-mode part $(I-\PPP_N)u$ cannot be bounded by $\epsilon\|\nabla u\|$ with $\epsilon\ll1$ in unbounded domains, a localized version still holds; see \eqref{MFE5}. This, together with the standard energy estimate, yields the Foia\c{s}--Prodi estimate \eqref{MFE3} in the unbounded-domain setting. As a result, 
\[\|u(t)-v(t)\|\lesssim e^{-ct},\]
provided that one can find a finite set $K\subset C_0^\infty\cap H$ such that
\begin{align}
    \label{SP0}\sup_{t\ge0}\left(\int_0^t\dist_V(u(s),K)^2\ddd s-\epsilon t\right)\le \rho,\qquad V:=H^1_0\cap H
\end{align}
with high probability. Such an estimate is obtained by using the asymptotic compactness property of the stochastic NS equation. Indeed, let $\{\fu_m\}_{m\ge 1}\subset C_0^\infty\cap H$ be a dense sequence in $V$, and write $K_m:=\{0,\fu_1,\ldots,\fu_m\}$. It is proved that the sequence
\[\Phi_m(u):=\E_u\int_0^1 \dist_V(u_t,K_m)^2\ddd t\]
is uniformly bounded and equicontinuous on bounded subsets of $H$, and for any $u\in H$, $\lim_{m\to\infty}\Phi_m(u)=0$. On the other hand, the asymptotic compactness property ensures the existence of a compact set $\KK$ that attracts the solution $u(t)$ with high probability, as $t\to\infty$. Consequently, there is $\delta>0$ and $m\ge 1$ such that
\[\sup_{\dist_H(u,\KK)\le \delta }\E_u\int_0^1 \dist_V(u_t,K_m)^2\ddd t<\frac{\epsilon}{2}.\]
Notice that $u_{\fn_0}$ belongs to $\{u\in H| \dist_H(u,\KK)\le \delta \}$ with high probability for sufficiently large $\fn_0$, due to  Proposition~\ref{propMFE3}. Therefore, an application of the Markov property ensures the small growth of the averaged distance functional, that is,
\begin{align}
    \label{SP0-1}\sup_{\|u\|\le R}\E_u\int_{\fn_0}^{\fn_0+1} \dist_V(u_t,K_m)^2\ddd t<\epsilon.
\end{align}
The long-time growth estimate \eqref{SP0} is established through a standard argument, using the Markov property and martingale techniques. Roughly speaking, by applying \eqref{SP0-1} combined with the Markov property and the moment estimate \eqref{A6}, one can obtain
\begin{align}
    \label{SP0-2}\E_u\int_{0}^{M} \dist_V(u_t,K_m)^2\ddd t\lesssim \epsilon M+\V(u)
\end{align}
for any $M\ge \fn_0$ and $u\in H$, where $\V(u)$ is a Lyapunov function satisfying 
\[\E_u\V(u_M)\le e^{-cM}\V(u)+C.\]
For simplicity, we temporarily assume $\V\equiv 0$ in \eqref{SP0-2}, since the general case can be handled with the help of the above Lyapunov estimate; see the proof of Proposition \ref{propMFE2}. For any $j\ge 1$, 
\[\int_{0}^{jM} \dist_V(u_t,K_m)^2\ddd t=\ssum_{k\le j-1} d_k,\qquad d_k:=\int_{kM}^{(k+1)M} \dist_V(u_t,K_m)^2\ddd t,\]
where, by the Markov property and \eqref{SP0-2}, it follows that
\begin{align}
    \label{SP0-3}\E_u(d_k|\FF_{kM})\lesssim \epsilon M.
\end{align}
Consequently, we get
\[\int_{0}^{jM} \dist_V(u_t,K_m)^2\ddd t\le j\epsilon M+\ssum_{k\le j-1} (d_k-\E_u(d_k|\FF_{kM})).\]
Notice that the second term on the right-hand side is a martingale satisfying polynomial moment estimates ensured by the relation $\dist_V(u,K_m)\le \|\nabla u\|$ and Proposition \ref{propA1}. By the Chebyshev inequality, the long-time growth estimate \eqref{SP0} holds. This yields the finite-dimensional stabilization mechanism of the 2D NS equation on any unbounded Poincar\'e domain, which implies Theorem \ref{MT2} via the classical coupling framework.
 
\subsubsection*{Mixing for the infinite-energy solutions}
In the infinite-energy case, the key observation is the following decomposition
\[u(t)=w(t)+u_{\rem}(t),\]
where $w(t)$ is an infinite-energy solution of the homogeneous NS equation \eqref{INTRO8} issued from $u_0$, and the remainder $u_{\rem}$ satisfies the stochastic perturbed NS equation around the background flow $w$:
\begin{align}
    \label{SP1}
    \begin{cases}
    \partial_t u_{\rem}+(u_{\rem}\cdot\nabla )u_{\rem}+(w\cdot\nabla)u_{\rem}+(u_{\rem}\cdot\nabla )w+\nabla \pp_{\rem}-\nu\Delta u_{\rem}=h+\eta,\\
    \dive u_{\rem}=0,\\
    u_{\rem}|_{t=0}=0,\qquad u_{\rem}|_{\partial D}=0.
    \end{cases}
    \raisetag{25pt}
\end{align}

These two parts are treated separately. For the deterministic part $w(t)$, we aim to establish the following decay estimate
\begin{align}
    \label{SP2}\int_0^\infty \|w(s)\|^2_\infty \ddd s<\infty,\qquad \|w(t)\|^2_\infty\lesssim e^{-\alpha_1t},\qquad \forall t\ge1.
\end{align}
Although the decay and smoothing property of infinite-energy solutions have been addressed in \cite{AZ14}, we notice that their estimates are not sufficient for our purpose. For example, it was shown in Section 6 therein that
\[\|w(t)\|_{L^2_b}\lesssim e^{-ct}+1,\qquad \forall t\ge 0,\]
and 
\[\|w(t)\|_{W^{2,2}_b}\lesssim t^{-1},\qquad t\in(0,1].\]
By interpolation, these estimates imply $\|w(t)\|^2_\infty\lesssim e^{-c t}+1$ for $t\ge 1$ and $\|w(t)\|^2_\infty\lesssim t^{-1}$ for $t\in (0,1]$, which is insufficient to obtain \eqref{SP2}. To this end, we first prove that the Stokes operator generates an analytic semigroup in a weighted Lebesgue space. As a result, we obtain a linear estimate for the Stokes semigroup in the uniformly local velocity class $L^p_{b,\dive}$ for all $p\in(1,\infty)$; see Proposition \ref{propSTOKES}. Together with a fixed point argument and interpolation, this ensures the integrability of $\|w(t)\|_\infty^2$ near $t=0$. On the other hand, by a refined weighted energy estimate, we obtain the exponential decay of $\|w(t)\|_\infty$ to $0$, as $t\to\infty$. Consequently, the desired estimate \eqref{SP2} holds. 

As for the remainder $u_{\rem}$, since $\|w(t)\|_\infty$ decays exponentially, the perturbation terms $(w\cdot\nabla)u_{\rem}$ and $(u_{\rem}\cdot\nabla )w$ in \eqref{SP1}$_1$ become asymptotically small in the long-time dynamics. Therefore, $u_{\rem}$ behaves asymptotically as a finite-energy solution of the stochastic NS equation. To be precise, in Section \ref{MIE}, we shall construct a family of finite-energy solutions $u_{\app}(t;T)$ that approximates the remainder $u_{\rem}(t)$; see Proposition \ref{propMIE2}. Consequently, by applying Theorem \ref{MT2}, we obtain the convergence of $\DDD (u_{\rem}(t))$ towards the invariant measure $\mu$ obtained in the finite-energy case. Together with \eqref{SP2}, this yields the convergence of $u(t)$ and the uniqueness of the invariant measure of \eqref{INTRO1} in $\PPPP(X)$.

\subsection{Further discussions on the whole-space case}
\label{sec:whole-space}

The framework developed here also applies to the damped NS equation in the whole space
    \begin{align}
        \label{R2}\begin{cases}
    \partial_t u+(u\cdot\nabla)u-\nu\Delta u+au+\nabla p=h+\eta, \qquad x=(x_1,x_2)\in\R^2, 
    \\ \dive u=0,\\
    u|_{t=0}=u_0,
    \end{cases}
    \end{align}
where $a,\nu>0$. In this subsection, we take
\[H:=\{u\in L^2(\R^2;\R^2)| \dive u=0\},\]
and
\[H_b:=\{u\in L^2_{\loc}(\R^2;\R^2)|\dive u=0,\ \|u\|_{L^2_b}<\infty\},\qquad\|u\|_{L^2_b}:=\sup_{x\in\R^2}\|u\|_{L^2(B(x,1))},\]
where $B(x,1):=\{x'\in \R^2||x'-x|<1\}$. The phase space is $X:=(H_b,d_{\loc})$ with $d_{\loc}$ defined in \eqref{INTRO5} with $I_k$ replaced by the rectangle $(-k,k)^2$. No spatial decay or zero-mean condition is imposed on the initial velocity and the phase space. The pressure gradient is fixed by the canonical uniformly local extension of the usual pressure-gradient operator; see Definition~3.2 and Remark~3.3 in \cite{Zel13}. In particular, no additional spatially constant and time-dependent pressure gradient is allowed.

We adopt the assumptions on the forcing term introduced in \cite{NZ24-1}. More precisely, $\eta:=\partial_t\sum_{j\ge1}b_j e_j\beta_j$, where $\{e_j\}_{j\ge1}$ is a fixed orthonormal basis in $H$ satisfying
\[e_j\in H^2,\qquad \varphi e_j\in H^1,\qquad \varphi(x):=(1+|x|^2)^{1/4}\]
for all $j\ge 1$. The deterministic force and the noise coefficients satisfy
\[h\in H,\qquad \varphi h\in L^2,\qquad\FB_\varphi:=\ssum_{j\ge1}b_j^2\big(\|\varphi e_j\|^2+\|\varphi\curl e_j\|^2\big)<\infty,\]
and $b_j\neq0$ for $j=1,\ldots,N_0$ with $N_0$ given in Main Theorem in \cite{NZ24-1}. The global well-posedness in $X$ and smoothing effect can be established by applying the decomposition strategy in Appendix~\ref{GWP} combined with the whole-space theory in \cite{Zel13}. For $u_0\in H_b\cap L^\infty$, let $u=w+u_{\mathrm{rem}}$, where $w$ solves the homogeneous damped NS equation with $w(0)=u_0$, and $u_{\mathrm{rem}}$ is the finite-energy remainder. The homogeneous estimates in \cite{Zel13}, combined with positive-time smoothing, give
\[\|w(t)\|_\infty\le C(u_0)e^{-ct},\qquad \forall t\ge1.\]
Consequently, by applying the comparison argument in Section \ref{MIE} combined with the mixing result in \cite{NZ24-1} and the smoothing property of \eqref{R2}, one may obtain polynomial mixing under the dual-Lipschitz metric associated with $d_{\loc}$, which gives uniqueness of the invariant measure in $\mathcal P(X)$. This provides an affirmative answer to Open Problem 2 in \cite{KS17}.

 \subsection*{Organization of the paper}
    The paper is organized as follows. Section~2 establishes finite-dimensional stabilization estimates through the asymptotic compactness property of the NS dynamics. This yields polynomial mixing for finite-energy solutions in Section~3. Section~4 derives decay estimates for infinite-energy solutions of the homogeneous NS equation, which are used in Section~5 to establish mixing for infinite-energy solutions. The Appendix collects a priori energy estimates, global well-posedness in uniformly local spaces, estimates for the Stokes semigroup in uniformly local spaces, and proofs of auxiliary results.

\subsection*{Acknowledgements}
    The author was supported by the ANR project DYNACQUS through the grant ANR-24-CE40-5714-01, and by the National Natural Science Foundation of China under Grant Nos. 12331008, 12571156. The author would like to express his sincere gratitude to Professors Armen Shirikyan and Vahagn Nersesyan for helpful comments and valuable suggestions.

\subsubsection*{Notations}
We shall use the following standard notations.
\begin{itemize}   
\item For $1\le p\le \infty$, $L^p:=L^p(D)$ is the classical Lebesgue space with the norm denoted by $\|\cdot\|_p$, $L^p_b$ is the uniform-local Lebesgue space defined in \eqref{INTRO3}, and $L^p_{b,\dive}$ is the uniform-local velocity class introduced in \eqref{INTRO4}. For $p=2$, we write $H_b:=L^2_{b,\dive}$.
\item Let $H$ denote the $L^2$-based velocity class introduced in \eqref{INTRO1-1}. For $p\in (1,\infty)$, we define
\[L^p_{\dive}:=\{u\in L^p(D)| \dive u=0,\ (u\cdot n)|_{\partial D}=0\}.\]
\item For $k\ge 1$ and $p\in[1,\infty]$, $W^{k,p}$ is the classical Sobolev space with the norm denoted by $\|\cdot\|_{k,p}$, $W^{1,p}_0$ is the space of all functions in $W^{1,p}$ with zero trace, and $W^{-1,p}:=(W^{1,p'}_0)^*$ is the dual of $W^{1,p'}_0$ with its norm denoted by $\|\cdot\|_{-1,p}$. Here, $p'$ is the H\"older conjugate exponent of $p$. 
\item For $k\ge 1$ and $p\in[1,\infty)$, we define the uniformly local Sobolev space
\[W^{k,p}_b:=\left\{u\in W^{k,p}_{\loc}\bigg| \|u\|_{W^{k,p}_b}:=\sup_{\fz\in\R}\|u\|_{W^{k,p}(D_{\fz})}<\infty\right\},\quad D_{\fz}:=(\fz,\fz+1)\times (-1,1).\]
The fractional uniformly local Sobolev space $W^{s,p}_b$ is defined in the same way.
\item For $p=2$, we write $H^s:=W^{s,2}$, $H^s_b:=W_b^{s,2}$, and $H^1_0:=W^{1,2}_0$. Define $V:=H^1_0\cap H$ endowed with the norm $\|u\|_V:=\|\nabla u\|.$ Let $V^*$ denote the dual space of $V$.
\item Let $X:=(H_b,d_{\loc})$, where the metric $d_{\loc}$ is defined in \eqref{INTRO5}. Let
\begin{align}
    \label{notation1}H_{\loc}=\left\{u\in L^2_{\loc}\bigg| \dive u=0,\ (u\cdot n)|_{\partial D}=0,\ \int_{-1}^1u_1\ddd x_2=0\mbox{ for a.e. $x_1$}\right\}.
\end{align}
The space $(H_{\loc},d_{\loc})$ is Polish.
\item For a metric space $(\XXX,d_{\XXX})$, $\A\subset \XXX$, and $x\in\XXX$, we define
\[\dist_{\XXX}(x,\A):=\inf_{y\in \A}d_{\XXX}(x,y).\]
\item For a separable metric space $(\XXX,d_{\XXX})$, $\PPPP(\XXX)$ denotes the space of all probability measures on $\XXX$ endowed with the weak topology. For $f\in C_b(\XXX)$ and $\mu\in \PPPP(\XXX)$, we write
\[\langle f,\mu\rangle=\int_{\XXX} f(y)\mu(\ddd y),\]
where $C_b(\XXX)$ is the space of all bounded continuous functions. The weak topology in $\PPPP(\XXX)$ is metrized by the dual-Lipschitz metric
\begin{align}
    \label{notation2}
    \|\mu_1-\mu_2\|_{\lip(\XXX)}^*:=\sup_{\substack{f\in\lip(\XXX)\\\|f\|_{\lip(\XXX)}\le1}}|\langle f,\mu_1\rangle-\langle f,\mu_2\rangle|.
\end{align}
Here, $\lip(\XXX)$ is the space of all bounded Lipschitz functions endowed with the norm
\[\|f\|_{\lip(\XXX)}:=\|f\|_\infty+\sup_{x,y\in\XXX,\ x\neq y}\frac{|f(x)-f(y)|}{d_{\XXX}(x,y)}.\]
\item For a metric space $(\XXX,d_{\XXX})$, 
\[B_\XXX(x,R):=\{y\in\XXX| d_{\XXX}(x,y)< R\},\qquad B_\XXX(R):=B_\XXX(0,R).\]
\item In the current domain $D:=\R\times (-1,1)$, one has the following Poincar\'e inequality
\[\|\nabla u\|^2\ge \frac{\pi^2}{4}\|u\|^2,\qquad \forall u\in H^1_0.\]
To track the dependence of our estimates on the Poincar\'e inequality, we write $\alpha_0:=\frac{\pi^2}{4}$.
\item We write
\[\FB_0:=\ssum_{j\ge 1}b_j^2,\qquad \FB_\fw:=\ssum_{j\ge 1} b_j^2(\|\fw e_j\|^2+\|e_j\|^2_{H^1}).\]
\end{itemize}

\section{Finite-dimensional stabilization}\label{FDS}
In this section, we study the finite-dimensional stabilization problem for the finite-energy solutions of \eqref{INTRO1}. By applying the Leray projection $\Pi$ and writing 
\begin{align}
    \label{MFE0}L:=\Pi\Delta,\qquad B(u):=B(u,u),\qquad B(u,v)=\Pi(u\cdot\nabla )v,
\end{align}
we eliminate the pressure and therefore consider \eqref{INTRO1} as an abstract evolutionary equation
\begin{align}
    \label{MFE1}
    \partial_t u-\nu Lu+B(u)=h+\eta
\end{align}
in the energy space $H$. It is known that \eqref{MFE1} defines a Markov family $(u^H_t,\PP_u^H)$ parameterized by the initial data $u\in H$. In particular, under $\PP_u^H$, the process $\{u^H_t\}_{t\ge 0}$ is distributed as the solution of \eqref{MFE1} issued from $u$. Throughout Sections \ref{FDS} and \ref{MFE}, we shall drop the superscript and denote this Markov family by $(u_t,\PP_u)$. This will not lead to any confusion. Define the transition operators associated with this Markov family by 
	\begin{align}
		\PPPPP_tf(u)&:=\int_{H}f(v)P_t(u,\ddd  v), &\PPPPP_t&:C_b(H)\to C_b(H),\label{Markov1}\\\label{Markov2}\PPPPP^{*}_t\lambda(\Gamma)&:=\int_{H}P_t(u,\Gamma)\lambda(\ddd  u),  &\PPPPP^{*}_t&:\PPPP(H)\to \PPPP(H),\end{align}
	where \begin{align}\label{Markov3}P_t(u,\Gamma):=\PP_u\left\{u_t\in\Gamma\right\}\end{align}
	is the transition function.

\subsection{Asymptotic compactness and finite-dimensional stabilization}
    The goal of this subsection is to establish finite-dimensional stabilization for \eqref{MFE1}, where the dimension $N$ depends on the noise coefficients $\{b_j\}_{j\ge 1}$. Such dependence shall be removed in the next subsection. 
 
    Let $\{v(t)\}_{t\ge0}$ be the solution of the auxiliary equation 
    \begin{align}
    \label{MFE2}
    \partial_t v-\nu Lv+B(v)+a\PPP_N(v-u)=h+\eta,\qquad v|_{t=0}=u'\in H,
    \end{align}
    where $a>0$, $\PPP_N$ is the projection operator onto the space spanned by $\{e_1,\ldots,e_N\}$, and $\{u(t)\}_{t\ge 0}$ is the solution of \eqref{MFE1} issued from $u\in H$. The main results are presented below.
\begin{prop}\label{propMFE1}
    For any finite set
    \[K:=\{\fu_1,\ldots,\fu_l\}\subset C_0^\infty\cap H,\]
    there is an integer $N\ge 1$ and a constant $a>0$ such that for any $u,u'\in H$,
    \begin{align}
    \label{MFE3}
    \|u(t)-v(t)\|^2\le\|u-u'\|^2\exp\left(-\nu\alpha_0 t+\CC_1 \int_0^t\dist_V(u(s),K)^2 \ddd s\right),
    \end{align}
    where $\CC_1$ depends only on $\nu$, $\alpha_0$ is the constant in the Poincar\'e inequality, and
    \begin{align}
    \dist_V(u,K):=\inf_{w\in K}\|u-w\|_{V}\label{MFE1-1}
    \end{align}
    with $V:=H_0^1\cap H$.
\end{prop}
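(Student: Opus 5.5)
Set $z:=u-v$ and subtract \eqref{MFE2} from \eqref{MFE1}; the noise and $h$ cancel, so $z$ solves pathwise the deterministic equation
\[\partial_t z-\nu Lz+B(u)-B(v)+a\PPP_N z=0,\qquad z(0)=u-u'.\]
Writing $B(u)-B(v)=B(z,u)+B(v,z)$ and using $\langle B(v,z),z\rangle=0$, the energy identity reads
\[\tfrac12\tfrac{\ddd}{\ddd t}\|z\|^2+\nu\|\nabla z\|^2+a\|\PPP_N z\|^2=-\langle B(z,u),z\rangle .\]
The whole point is to bound the trilinear term by $\dist_V(u,K)^2\|z\|^2$ plus absorbable pieces. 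Since $\langle B(z,w),z\rangle=-\langle B(z,z),w\rangle$ for any $w\in V$, we may subtract an arbitrary element $w\in K$:
\[\langle B(z,u),z\rangle=\langle B(z,u-w),z\rangle-\langle B(z,z),w\rangle.\]
By Ladyzhenskaya, $|\langle B(z,u-w),z\rangle|\le C\|z\|\,\|\nabla z\|\,\|u-w\|_V\le \tfrac{\nu}{4}\|\nabla z\|^2+\tfrac{C}{\nu}\|u-w\|_V^2\|z\|^2$. Minimising over $w\in K$ gives the $\dist_V(u,K)^2$ contribution with $\CC_1=C/\nu$.

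The remaining term $\langle B(z,z),w\rangle$ with $w$ ranging over the finite set $K\subset C_0^\infty$ is where the localisation enters. Since every $\fu_i$ is smooth and compactly supported in some fixed bounded set $\OO\Subset D$, we have $|\langle B(z,z),w\rangle|\le M\|\nabla z\|\,\|z\|_{L^2(\OO)}$ with $M:=\max_i\|\fu_i\|_\infty$. The key step is the localised Poincaré/Foiaş--Prodi inequality: for every $\epsilon>0$ there is $N$ with
\[\|z\|_{L^2(\OO)}\le \epsilon\|\nabla z\|+C_\epsilon\|\PPP_N z\|,\qquad z\in V.\]
I would prove it by contradiction and compactness. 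Otherwise there are $z_n$ with $\|\nabla z_n\|=1$, $\|\PPP_n z_n\|\to0$ and $\|z_n\|_{L^2(\OO)}\ge\epsilon$. Then $z_n$ is bounded in $V$ by Poincaré, so a subsequence converges weakly in $V$ to some $z$, and by Rellich strongly in $L^2(\OO)$, forcing $\|z\|_{L^2(\OO)}\ge\epsilon$. On the other hand, for each fixed $j$ we have $\langle z_n,e_j\rangle\to0$, so $\langle z,e_j\rangle=0$ for all $j$ and hence $z=0$, a contradiction. The constant $C_\epsilon$ is then harmless because we may take $a$ large.

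With this inequality in hand, choose $\epsilon$ with $M\epsilon\le\nu/4$, which yields
\[M\|\nabla z\|\|z\|_{L^2(\OO)}\le \tfrac{\nu}{4}\|\nabla z\|^2+\tfrac{\nu}{8}\|\nabla z\|^2+C\|\PPP_Nz\|^2,\]
and take $a\ge C$. We are left with $\tfrac{\ddd}{\ddd t}\|z\|^2+\tfrac{\nu}{4}\|\nabla z\|^2\cdot c\le \CC_1\dist_V(u,K)^2\|z\|^2$, where the constants must be arranged so that at least $\nu\|\nabla z\|^2$ survives, i.e. the absorption fractions sum to less than $1/2$. Poincaré then gives $\nu\|\nabla z\|^2\ge\nu\alpha_0\|z\|^2$, and Gronwall yields \eqref{MFE3}.

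The main obstacle is the localised inequality and its uniformity: $N$ and $a$ must depend only on $K$ (through $\OO$ and $M$) and not on $u$ or $u'$, which the compactness argument ensures. Rigour of the energy identity at the $H$/$V$ level follows from the standard Lions--Magenes lemma, since $z\in L^2(0,T;V)$ with $\partial_tz\in L^2(0,T;V^*)$ pathwise.
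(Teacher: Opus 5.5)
Your proof is correct and follows the paper's route: split $u=(u-\fu_i)+\fu_i$, use Ladyzhenskaya on the first part to produce $\CC_1\|u-\fu_i\|_V^2\|z\|^2$, and control the $\fu_i$ part with the localized inequality of Lemma~\ref{lemmaMFE1}, which you prove by the same weak-compactness/Rellich contradiction argument. Then absorb the $\PPP_N$ term by taking $a$ large, minimize over $i$, and finish with Poincar\'e and Gronwall. The only differences are cosmetic: you integrate by parts and use $\max_i\|\fu_i\|_\infty\|\nabla z\|\|z\|_{L^2(\OO)}$ where the paper uses $\max_i\|\nabla\fu_i\|_\infty\|z\|_{L^2(\OO)}^2$. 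Note also that your stated absorption fractions $\nu/4+\nu/4+\nu/8$ exceed $\nu/2$; shrink them (e.g.\ to $\nu/8$ each) to keep the exact rate $\nu\alpha_0$.
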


\begin{prop}\label{propMFE2}
    Suppose that $\FB_0:=\ssum_{j\ge 1}b_j^2<\infty$. Then, for any $\epsilon>0$, there is a finite set $K\subset C_0^\infty\cap H$ depending on $\epsilon,\nu,h,\{b_j\}_{j\ge1}$ such that for any $T,\rho\ge0$, $q\ge 2$, and $u\in H$, 
    \begin{align}
        \label{MFE4}
        \PP\left\{\sup_{t\ge T}\left(\int_0^t \dist_V(u(s),K)^2 \ddd s-\epsilon t\right)\ge \rho\right\}\le C_{q}\frac{1+\|u\|^{8q}}{(1+T+\rho)^q},
    \end{align}
    where $C_q$ depends on $q,\epsilon,\nu,h,\{b_j\}_{j\ge 1}$.
\end{prop}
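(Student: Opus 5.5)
The proof follows the strategy sketched in the introduction: asymptotic compactness, then a one-step estimate via the Markov property, then a martingale decomposition. I take as given the standard a priori bounds of the Appendix (Proposition~\ref{propA1}, estimate \eqref{A6}). These are the energy moment bounds $\E_u\sup_{t\le M}\|u_t\|^{2p}+\E_u(\int_0^M\|\nabla u_s\|^2\ddd s)^p\lesssim_{p,M} 1+\|u\|^{2p}$, together with the Lyapunov-type inequality $\E_u\|u_M\|^{2p}\le e^{-cM}\|u\|^{2p}+C_p$. I also take as given the asymptotic compactness result (Proposition~\ref{propMFE3}): for every $\gamma>0$ there is a compact $\KK\subset H$ such that $\PP_u\{\dist_H(u_t,\KK)>\delta\}<\gamma$ for all $t\ge \fn_0(R,\delta)$, uniformly in $\|u\|\le R$.

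\textbf{Step 1: a one-step estimate.} Fix a sequence $\{\fu_m\}\subset C_0^\infty\cap H$ that is dense in $V$, and set $K_m:=\{0,\fu_1,\dots,\fu_m\}$ and $\Phi_m(u):=\E_u\int_0^1\dist_V(u_t,K_m)^2\ddd t$.
\begin{itemize}
\item Since $\dist_V(u_t,K_m)\le\|\nabla u_t\|$, the functions $\Phi_m$ are bounded on bounded sets.
\item They are equicontinuous on bounded sets of $H$. This follows from Lipschitz dependence of $\int_0^1\|\nabla(u_t-u'_t)\|^2$ on $\|u-u'\|$, with a random constant controlled by $\exp(C\int_0^1\|\nabla u\|^2)$ and handled by localization on the event where that integral is bounded.
\item We have $\Phi_m\downarrow 0$ pointwise by dominated convergence.
\end{itemize}
By Dini/Arzelà-type arguments, $\Phi_m\to0$ uniformly on the closed $\delta$-neighbourhood of $\KK$ for some $\delta>0$. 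Combining this with Proposition~\ref{propMFE3}, the Markov property and a Cauchy--Schwarz bound on the bad event (using the moment bounds), we obtain $m$ and $\fn_0$ such that \eqref{SP0-1} holds. Iterating with the Lyapunov inequality then gives
\[\E_u\int_0^M\dist_V(u_t,K)^2\ddd t\le \epsilon M/4+C(1+\|u\|^2),\qquad M\ge\fn_0 .\]
Here the small-time contributions (the first $\fn_0$ units, and the units started from large $\|u_{k}\|$) are bounded by the energy inequality. We take $K:=K_m$.

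\textbf{Step 2: martingale decomposition.} Fix a large $M$ and put $d_k:=\int_{kM}^{(k+1)M}\dist_V(u_t,K)^2\ddd t$. By the Markov property, $\E(d_k|\FF_{kM})\le\epsilon M/4+C(1+\|u_{kM}\|^2)$. Write
\[\int_0^{jM}\dist_V^2\ddd t\le j\epsilon M/4+Cj+C\ssum_{k<j}\|u_{kM}\|^2+\mathcal M_j,\]
where $\mathcal M_j:=\ssum_{k<j}(d_k-\E(d_k|\FF_{kM}))$ is a martingale. The term $\ssum_{k<j}\|u_{kM}\|^2$ is not small. To handle it I replace the crude bound: iterating the Lyapunov estimate, the conditional bound becomes $\epsilon M/4+C(e^{-cM}\|u_{(k-1)M}\|^2+1)$ plus a martingale increment. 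The constant contribution $Cj$ is absorbed by taking $M\ge 4C/\epsilon$, so the total drift is at most $j\epsilon M/2+C\|u\|^2$ plus further martingales. Each martingale increment has $2q$-th moments bounded by $C(1+\E\|u_{kM}\|^{4q})\le C(1+\|u\|^{4q})$. Burkholder's inequality then gives $\E|\mathcal M_j|^{2q}\lesssim j^{q}(1+\|u\|^{4q})$.

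\textbf{Step 3: from discrete times to the supremum over $t\ge T$.} The event in \eqref{MFE4} is contained in the union over $j\ge T/M-1$ of the events $\{\mathcal M_j+C\|u\|^2\ge \epsilon jM/2+\rho\}$, since interpolating between $jM$ and $(j+1)M$ costs only $\epsilon M$. Chebyshev's inequality with exponent $2q'$ bounds each such event by $C j^{q'}(1+\|u\|^{8q'})/(\epsilon jM+\rho)^{2q'}$; the $\|u\|^2$ term is handled by splitting according to whether $\|u\|^2\ge \epsilon jM/4$. Summing over $j$ with $q'=q+1$ yields the bound $(1+\|u\|^{8q})(1+T+\rho)^{-q}$. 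For $T$ small, the claim follows from the case $T=0$, which itself comes from the same sum.

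\textbf{Main obstacle.} I expect the main obstacle to be Step 1, specifically the uniformity: passing from pointwise convergence $\Phi_m\to0$ to uniform convergence near $\KK$. This requires continuity of $u\mapsto\E_u\int_0^1\|\nabla u_t\|^2\ddd t$ in $H$, which must be obtained without compact embeddings; I would derive it from the energy equality for the difference of two solutions.
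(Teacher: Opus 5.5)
Your proposal is correct and follows essentially the paper's route: the one-step estimate of Lemma~\ref{lemmaMFE3} (asymptotic compactness, equicontinuity of $\Phi_m$ via \eqref{A8}, Markov property), then a block estimate with a Lyapunov remainder, a martingale decomposition controlled by Burkholder's inequality, and Chebyshev's inequality. The differences are bookkeeping: you absorb the Lyapunov drift through a second martingale and a geometric recursion instead of the paper's telescoping correction $\fa_M(\V(u_{(j+1)M})-\V(u_{jM}))$, and you use a union bound over blocks with $q'=q+1$ instead of dyadic blocks and the maximal inequality. This union-bound route reaches the stated $\|u\|^{8q}$ only if your block bound is genuinely linear in $1+\|u\|^2$ (which requires handling the first $O(\log(1+\|u\|^2))$ unit intervals separately, since the paper's cruder bound gives $\V(u)=(1+\|u\|^2)^2$ and would lead to $\|u\|^{8q+8}$), and only once the exponent in your Step~3 is corrected from $8q'$ to $4q'$.
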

\subsubsection{The Foia\c{s}--Prodi estimate}
This part is devoted to the proof of Proposition \ref{propMFE1}, which is based on the following lemma. 
\begin{lemma}\label{lemmaMFE1}
    For any $\epsilon>0$ and $\OO \Subset D$, there is an integer $N\ge 1$ and a constant $C_\epsilon>0$ such that 
    \begin{align}\|g\|_{L^2(\OO)}\le \epsilon\|\nabla g\|+C_\epsilon\|\PPP_N g\|
        \label{MFE5}
    \end{align}
    for any $g\in V$.
\end{lemma}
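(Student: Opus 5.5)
We argue by contradiction, using a compactness argument based on the Rellich theorem on the bounded set $\OO$. Suppose the claim fails for some $\epsilon>0$ and $\OO\Subset D$. Then for every $N\ge1$, taking $C_\epsilon=N$, there is $g_N\in V$ with
\[\|g_N\|_{L^2(\OO)}>\epsilon\|\nabla g_N\|+N\|\PPP_N g_N\|.\]
Since $\|g_N\|_{L^2(\OO)}>0$, the inequality is homogeneous and we may normalize $\|g_N\|_{L^2(\OO)}=1$. This yields the two bounds
\[\|\nabla g_N\|<\epsilon^{-1},\qquad \|\PPP_N g_N\|<N^{-1}.\]
By the Poincar\'e inequality, $\|g_N\|^2\le \alpha_0^{-1}\|\nabla g_N\|^2$, so $\{g_N\}$ is bounded in $V$.

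We then extract a subsequence converging weakly in $V$, and hence weakly in $H$, to some $g\in V$. Because $\OO$ is bounded, we may choose a bounded Lipschitz domain $\OO'$ with $\OO\subset\OO'\subset D$. The restrictions of $g_N$ to $\OO'$ are bounded in $H^1(\OO')$, so the Rellich compact embedding gives strong convergence $g_N\to g$ in $L^2(\OO)$ along the subsequence. Consequently $\|g\|_{L^2(\OO)}=1$, and in particular $g\neq0$.

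On the other hand, $g$ must vanish. Fix $j\ge1$. For every $N\ge j$,
\[|\langle g_N,e_j\rangle|\le\|\PPP_N g_N\|<N^{-1},\]
so weak convergence in $H$ gives $\langle g,e_j\rangle=\lim\langle g_N,e_j\rangle=0$. Since $\{e_j\}_{j\ge1}$ is an orthonormal basis of $H$ and $g\in H$, this forces $g=0$, which contradicts $\|g\|_{L^2(\OO)}=1$.

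The one step needing care is the compactness step. On the unbounded domain $D$ there is no compact embedding of $V$ into $H$, so we cannot pass to a strong limit globally. The localization to $\OO\Subset D$ is exactly what makes the local strong convergence, and hence the argument, work; everything else is routine. We also note that the constant $C_\epsilon$, and the integer $N$ it comes with, depend on both $\epsilon$ and $\OO$.
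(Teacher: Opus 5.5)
Your proof is correct and follows the paper's own argument: it proceeds by contradiction, normalizes in $L^2(\OO)$, extracts a weak limit in $V$ that is orthogonal to every $e_j$ and hence zero, and contradicts this using Rellich compactness near $\OO$. The only differences are cosmetic: you do the two steps in the opposite order, and you explicitly introduce a bounded Lipschitz domain $\OO'\supset\OO$ to justify the compact embedding.
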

\begin{proof}\textbf{Step I.} By contradiction, suppose that there is $\epsilon_0>0$ such that for any integer $N\ge 1$, one can find a function $g_N\in V$ satisfying
    \[\|g_N\|_{L^2(\OO)}>\epsilon_0 \|\nabla g_N\|+N\|\PPP_N g_N\|.\]
    Let $h_N:=\frac{g_N}{\|g_N\|_{L^2(\OO)}}$. Then, one has
    \begin{align}
        \label{MFE6}\epsilon_0 \|\nabla h_N\|+N\|\PPP_N h_N\|<1,
    \end{align}
    which, together with the Poincar\'e inequality, implies that there is a function $h\in V$ such that, up to a subsequence, $h_N$ converges weakly to $h$, as $N\to\infty$. In particular, for each $j\ge 1$, it follows that
    \[\langle h,e_j\rangle= \lim_{N\to\infty}\langle h_N,e_j\rangle.\]
    Combining this with \eqref{MFE6}, one gets
    \begin{align*}|\langle h,e_j\rangle|= \lim_{N\to\infty}|\langle h_N,e_j\rangle|\le \lim_{N\to\infty}\|\PPP_N h_N\|=0,
    \end{align*}
    and thus $h\equiv0$.

    \noindent\textbf{Step I\!I.} Since $\OO\Subset D$, by compact Sobolev embedding, it follows
    \[h_N\to h\]
    strongly in $L^2(\OO)$, as $N\to\infty$. Combining this with the fact that $\|h_N\|_{L^2(\OO)}=1$, one has
    \[\|h\|_{L^2(\OO)}=1,\]
    which is a contradiction. Therefore, estimate \eqref{MFE5} holds.
\end{proof}
Now we turn to the proof of Proposition \ref{propMFE1}.
\begin{proof}[Proof of Proposition \ref{propMFE1}]Let $g(t):=u(t)-v(t)$. The equation for $g$ is given by
\begin{align}
    \label{MFE7}\partial_t g-\nu Lg+B(g,u)+B(v,g)+a\PPP_Ng=0.
\end{align}
Therefore, by testing against $g$ on both sides, one has
\begin{align}\label{MFE8}
    \frac{1}{2}\frac{\ddd }{\ddd t}\|g\|^2+\nu\|\nabla g\|^2+a\|\PPP_N g\|^2=-\langle B(g,u),g\rangle.
\end{align}
Let $\OO:=\cup_{i=1}^l\supp \fu_i$. For any $i=1,\ldots,l$, it follows that
\begin{align}\label{MFE9}
    \langle B(g,u),g\rangle=\langle B(g,u-\fu_i),g\rangle+\langle B(g,\fu_i),g\rangle,
\end{align}
where, by the Ladyzhenskaya inequality, the Poincar\'e inequality, and Lemma \ref{lemmaMFE1}, one has
\begin{align}\label{MFE10}
    |\langle B(g,u-\fu_i),g\rangle|\le C \|g\|\|\nabla g\|\|\nabla (u-\fu_i)\|\le \frac{\nu}{4}\|\nabla g\|^2+\CC_1\|g\|^2\|u-\fu_i\|^2_V,
\end{align}
and 
\begin{align}\label{MFE11}
    |\langle B(g,u_i),g\rangle|&\le\left( \max_{i=1,\ldots,l}\|\nabla \fu_i\|_{\infty}\right)\|g\|_{L^2(\OO)}^2\le\epsilon^2C_{K}\|\nabla g\|^2+C_{K,\epsilon}\|\PPP_N g\|^2.
\end{align}
By plugging \eqref{MFE9}--\eqref{MFE11} into \eqref{MFE8} and choosing $\epsilon$ sufficiently small so that $\epsilon^2 C_{K}\le \frac{\nu}{8}$, it follows that
\begin{align*}
    \frac{\ddd }{\ddd t}\|g\|^2+\nu\|\nabla g\|^2+2a\|\PPP_N g\|^2\le \CC_1\|g\|^2\|u-\fu_i\|^2_V+ C_{\nu,K}\|\PPP_N g\|^2,
\end{align*}
which, together with setting $2a=C_{\nu,K}$, taking minimum in $i$, and applying the Poincar\'e and Gronwall inequalities, yields \eqref{MFE3}.
\end{proof}
\subsubsection{Asymptotic compactness}
We establish the following asymptotic compactness property for equation \eqref{MFE1}, which plays an important role in the proof of Proposition \ref{propMFE2}.
\begin{prop}\label{propMFE3}
    For any $R,\epsilon>0$, there exists a compact subset $\KK$ such that for any $\delta>0$, there is an integer $\fn_0\ge 1$ satisfying
    \begin{align}
        \label{MFE12}
        \sup_{\|u\|\le R}\PP\{\dist_H(u(t),\KK)>\delta\}\le \epsilon,\qquad \forall t\ge \fn_0,
    \end{align}
    where $\{u(t)\}_{t\ge0}$ is the solution of \eqref{MFE1} issued from $u$. Moreover, the compact set $\KK$ depends on $R,\epsilon,\nu,h,\{b_j\}_{j\ge1}$, and the integer $\fn_0$ additionally depends on $\delta$ and the choice of $\KK$.
\end{prop}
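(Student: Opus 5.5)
We propose to combine the Rosa energy-equation approach with tail estimates, in the stochastic form of Brzeźniak et al. The plan is to split $u(t)$ into a part that is compact in $H$ and a small remainder. For the compact part we use the tightness coming from $H^1$-regularity on bounded subdomains. For the remainder we use uniform smallness of $u$ far out in the pipe, $|x_1|\ge r$, with high probability.

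\textbf{Step 1: tail estimate.} Let $\psi_r(x_1)=\psi(x_1/r)$ be a smooth cutoff equal to $0$ on $|x_1|\le 1$ and $1$ on $|x_1|\ge2$. Apply It\^o's formula to $\|\psi_r u\|^2$. The cutoff commutator terms are:
\begin{itemize}
\item viscous: $\nu\langle\nabla u,\nabla\psi_r^2\otimes u\rangle$, bounded by $Cr^{-1}\|u\|\|\nabla u\|$;
\item transport: $\langle (u\cdot\nabla\psi_r^2) ,|u|^2\rangle$, bounded via Ladyzhenskaya by $Cr^{-1}\|u\|^2\|\nabla u\|$;
\item pressure: $\langle \pp, u\cdot\nabla\psi_r^2\rangle$.
\end{itemize}
For the pressure term we use the zero-flux structure: since $\dive u=0$ and $\int_{-1}^1 u_1\,\ddd x_2=0$, the quantity $u\cdot\nabla\psi_r^2=u_1\partial_1\psi_r^2$ can be written as $\dive$ of something supported in $r\le|x_1|\le2r$. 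Its coupling with $\pp$ is then bounded by $Cr^{-1}\|\nabla\pp\|_{L^{4/3}}$-type norms. Alternatively, we control $\pp$ in $L^2$ on the strip $r\le|x_1|\le 2r$ through the Stokes pressure estimate. The forcing terms are $\langle h,\psi_r^2u\rangle$ and the noise It\^o correction $\sum b_j^2\|\psi_r e_j\|^2$, and both tend to $0$ as $r\to\infty$. Using Poincar\'e on $\psi_r u\in H^1_0$, we then get
\[\frac{\ddd}{\ddd t}\E\|\psi_ru\|^2+\nu\alpha_0\E\|\psi_ru\|^2\le \theta(r)\,\E\big(1+\|u\|^4+\|\nabla u\|^2\big)+\ddd M,\]
with $\theta(r)\to0$. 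Integrating with the exponential weight and using the moment bounds of Proposition~\ref{propA1}, which are uniform in time for $\|u\|\le R$, yields
\[\sup_{\|u_0\|\le R}\E\|\psi_ru(t)\|^2\le e^{-\nu\alpha_0t}R^2+\theta'(r).\]

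\textbf{Step 2: interior compactness.} Again by the a priori estimates, $\E\int_{t-1}^t\|\nabla u\|^2\le C$. Combined with the smoothing/Lyapunov bound for $\|\nabla u(t)\|^2$, or by working at a time $s\in[t-1,t]$ chosen by averaging and then propagating over a unit interval, we get $\PP\{\|\nabla u(t)\|\le M\}\ge1-\epsilon/2$ for all $t\ge1$ and $M$ large. Set
\[\KK:=\{v\in V:\ \|\nabla v\|\le M,\ \|\psi_{r_k}v\|\le 1/k\ \ \forall k\},\]
where $r_k$ is chosen by Step 1 so that $\theta'(r_k)\le \epsilon 2^{-k}k^{-2}/C$. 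This set is closed in $H$ and totally bounded. Indeed, by Rellich on $(-2r_k,2r_k)\times(-1,1)$ plus the tail bound, it is compact in $H$. Given $\delta$, fix $k$ with $1/k<\delta$ and choose $\fn_0$ so that $e^{-\nu\alpha_0\fn_0}R^2$ is small. Then Chebyshev together with a union bound over finitely many tail conditions bounds $\PP\{\dist_H(u(t),\KK)>\delta\}$ by $\epsilon$. The infinitely many small-$1/k$ conditions are handled by projecting: the distance only involves the $k$ with $1/k\gtrsim\delta$. Alternatively, one can define $\KK$ with $M$ and $r_k$ and estimate the distance via the truncation $(1-\psi_{r})u$, which lies in a compact set.

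\textbf{Main obstacle.} The pressure term in the localized energy estimate on the pipe with no-slip boundary is the expected difficulty. We would first try to bound $\|\pp\|_{L^2(r\le|x_1|\le2r)}$ modulo constants by $C(\|u\|\|\nabla u\|+\|\nabla u\|+\|h\|)$ via the Ne\v{c}as/Bogovski\u{\i} estimate on unit cells. The zero-flux condition removes the constant ambiguity, since $\int u_1\partial_1\psi_r^2=0$ on each cross-section, and this gives the $r^{-1}$ gain. If that fails, we fall back on Rosa's energy-equation method, using weak continuity and the energy equality, in a stochastic version à la Brzeźniak--Li.
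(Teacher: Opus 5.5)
There is a genuine gap in Step~1, at exactly the point you flag as the main obstacle, and neither of your proposed remedies closes it. The zero flux lets you write $u_1\partial_1\psi_r^2=\dive F$ with $F=(0,\partial_1\psi_r^2\,\JJ u)\in H^1_0$ and $\|F\|_{H^1}\lesssim r^{-1}\|u\|$. You are then left with $\langle\nabla\pp,F\rangle$ for a non-solenoidal $F$. The zero flux removes the additive constant, but it gives no control of $\nabla\pp$ itself.

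For a finite-energy solution of the \emph{evolutionary} problem with no-slip walls, $\nabla\pp$ is not controlled in $H^{-1}$ by $\|u(t)\|$ and $\|\nabla u(t)\|$:
\begin{itemize}
\item Locally $\nabla\pp=-\partial_tu+\nu\Delta u-(u\cdot\nabla)u+h+\eta$. A Ne\v{c}as/Bogovski\u{\i} estimate on a cell therefore brings in $\|\partial_t u\|_{H^{-1}}$, which is white in time here.
\item The globally defined pressure $(I-\Pi)(\nu\Delta u-(u\cdot\nabla)u)$ contains a harmonic boundary-layer part driven by the normal trace of $\Delta u$ on $\partial D$. This needs $H^2$-type regularity; in general the pressure of such solutions is only a distribution in time.
\item Your variant $r^{-1}\|\nabla\pp\|_{L^{4/3}}$ needs moments of $\|u\|_{W^{2,4/3}}$. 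These are exactly the strong-norm moment bounds the paper stresses are unavailable under no-slip conditions, and they do not follow from Proposition~\ref{propA1}.
\end{itemize}
So the inequality $\frac{\ddd}{\ddd t}\E\|\psi_ru\|^2+\nu\alpha_0\E\|\psi_ru\|^2\le\theta(r)\E(1+\|u\|^4+\|\nabla u\|^2)$ is unproven.

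The localization device the paper does use does not rescue your It\^o computation either. It tests with $\theta_{\varepsilon,\fz}^2w-v_{\varepsilon,\fz}$, where a backward Stokes corrector makes the test function solenoidal and so removes the pressure. But $v_{\varepsilon,\fz}$ depends on the future of the solution and is not adapted. The paper only applies it to deterministic or pathwise random PDEs (Section~\ref{DE}, Appendix~\ref{GWP}).

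Step~2 has a secondary problem of the same kind. A bound $\PP\{\|\nabla u(t)\|\le M\}\ge1-\epsilon/2$ uniformly in $t\ge1$ needs an enstrophy estimate: an It\^o formula for $\|\nabla u\|^2$ (hence $\FB_1<\infty$) and a pathwise Gronwall argument for the term $\|u\|^2\|\nabla u\|^4$. The paper's argument uses only $\FB_0<\infty$.

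The missing idea is the one that lets the paper avoid localization and the pressure altogether; your ``fallback'' is in fact the whole proof. Step~I of Lemma~\ref{lemmaMFE2} shows: if $v_n(0)\rightharpoonup v(0)$ in $H$ almost surely with $\sup_n\E\|v_n(0)\|^4<\infty$, then $\limsup_n\E\|v_n(t)-v(t)\|^2\le e^{-\nu\alpha_0t}\limsup_n\E\|v_n(0)-v(0)\|^2$. The argument runs as follows.
\begin{itemize}
\item Identify the limit via Jakubowski--Skorokhod and pathwise uniqueness.
\item Use Vitali to get $v_n\to v$ in $L^2(\Omega\times(0,T);L^2(\OO))$ for every $\OO\Subset D$.
\item Obtain weak convergence in $L^2(\Omega\times(0,T);V)$, and of $v_n(t)$ in $L^2(\Omega;H)$.
\item Apply the \emph{global} energy equality \eqref{A1}, where the solution is tested against itself, so no pressure appears. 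Combine it with weak lower semicontinuity of the equivalent norm $\bigl(\E\int_0^te^{-\nu\alpha_0(t-s)}(2\nu\|\nabla v\|^2-\nu\alpha_0\|v\|^2)\,\ddd s\bigr)^{1/2}$.
\end{itemize}
Represent $P_{t_n-m}(u_n,\cdot)$ by weakly a.s.\ convergent variables and run the flow for time $m$. This shows $\{P_{t_n}(u_n,\cdot)\}$ is Cauchy along a subsequence in the dual-Lipschitz metric, hence relatively compact and tight. The set $\KKK_R$ of its limit points is then compact. Prokhorov gives $\KK$ with $\sup_{\lambda\in\KKK_R}\lambda(\KK^c)\le\epsilon/2$. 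Finally, a contradiction argument with the Portmanteau theorem on the closed set $\{\dist_H(\cdot,\KK)\ge\delta\}$ yields \eqref{MFE12} for all large $t$.
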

To prove the above proposition, we need the following lemma, which is a generalization of Proposition 2.4 in \cite{N22} to the case of white-in-time noise.
\begin{lemma}\label{lemmaMFE2}
    For any $\{u_n\}_{n\ge 1}$ and $\{t_n\}_{n\ge 1}$ satisfying 
    \[\sup_{n\ge 1}\|u_n\|<\infty,\qquad\lim_{n\to\infty}t_n=\infty,\]
    the sequence $\{P_{t_n}(u_n,\cdot)\}_{n\ge 1}$ is tight, where $P_t(u,\cdot)$ is the transition function defined in \eqref{Markov3}.
\end{lemma}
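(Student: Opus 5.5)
To prove tightness of $\{P_{t_n}(u_n,\cdot)\}_{n\ge1}$ in $H$, I will split the solution at the time $t_n$ into two parts. The first is a part that is small in $H$ with high probability, coming from the initial data and the far past. The second is a part that is, with high probability, confined to a fixed compact set of $H$. Compactness in the unbounded domain $D$ will be obtained from the usual two ingredients (Rosa's approach): uniform $V$-bounds, giving local compactness via Rellich on each $I_k$, and uniform smallness of the tails $\|u\|_{L^2(D\setminus I_k)}$ as $k\to\infty$. Any set of $H$ bounded in $V$ with uniformly small tails is relatively compact in $H$. So it suffices to show that for every $\epsilon>0$ there exist $M$ and $k(\cdot)$ such that, for all $n$ large, with probability at least $1-\epsilon$,
\[\|u(t_n)\|_V\le M\quad\text{and}\quad\|u(t_n)\|_{L^2(D\setminus I_k)}\le \gamma_k\to0 .\]

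\textbf{Step 1 (moments).} Using the a priori estimates of Appendix A (the energy estimate with Poincaré dissipation $\nu\alpha_0$, as in \eqref{A6}), I get $\E\|u(t)\|^2\le e^{-\nu\alpha_0 t}\|u_n\|^2+C$ and $\E\int_t^{t+1}\|\nabla u\|^2\le C(1+\|u_n\|^2)$. The $V$-bound at a single time does not follow from this directly, since enstrophy estimates with boundaries are delicate (cf.\ \cite{KV14}). I will therefore avoid pointwise $V$-bounds and instead use the following splitting.

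\textbf{Step 2 (splitting).} On $[t_n-1,t_n]$ I write $u=z+\tilde u$. Here $z$ solves the linear Stokes problem $\partial_tz-\nu Lz=\eta$ with $z(t_n-1)=0$; since $\FB_1<\infty$, $z(t_n)$ has a law independent of $n$ and concentrated on $V$ with finite moments, so it is tight in $H$. The remainder $\tilde u$ satisfies a random PDE without noise and with data $u(t_n-1)$, which is bounded in probability by Step 1. Parabolic smoothing for $\tilde u$, via the standard $t\|\nabla\tilde u\|^2$ argument on a unit interval using the Ladyzhenskaya inequality, gives $\|\tilde u(t_n)\|_V\le C(\|u(t_n-1)\|,\sup\|z\|_{V},h)$. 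Thus $\|u(t_n)\|_V$ is bounded in probability uniformly in $n$.

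\textbf{Step 3 (tails, main obstacle).} I test the equation against $\psi_k^2u$, where $\psi_k(x_1)=\psi(x_1/k)$ is a cutoff vanishing on $|x_1|\le k$ and equal to $1$ for $|x_1|\ge2k$. Itô's formula gives
\[\ddd\|\psi_ku\|^2+2\nu\|\psi_k\nabla u\|^2\ddd t\le\Bigl(\tfrac Ck\bigl(\|u\|\|\nabla u\|+\|u\|_{L^3}^3+|\pp|\text{-terms}\bigr)+2|\langle h,\psi_k^2u\rangle|+\ssum_j b_j^2\|\psi_ke_j\|^2\Bigr)\ddd t+\ddd M.\]
Poincaré applied to $\psi_ku$ yields dissipation $\nu\alpha_0\|\psi_ku\|^2$ up to $O(k^{-1})$ errors. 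Gronwall in expectation then gives
\[\E\|\psi_ku(t_n)\|^2\le e^{-\nu\alpha_0t_n}\|u_n\|^2+C\Bigl(k^{-1}(1+\sup_n\|u_n\|^2)+\|\psi_kh\|^2+\ssum_jb_j^2\|\psi_ke_j\|^2\Bigr).\]
The last two terms tend to $0$ as $k\to\infty$ by dominated convergence. The first term vanishes because $t_n\to\infty$; this is exactly where that hypothesis is used. The hard part is the pressure: $\langle\nabla\pp,\psi_k^2u\rangle=-\langle\pp,\nabla\psi_k^2\cdot u\rangle$. I plan to bound it using the zero-flux/divergence structure. Writing $u\cdot\nabla\psi_k^2=\dive(\cdot)$ via a Bogovskii correction on the strip $k<|x_1|<2k$, the term reduces to $\nabla\pp\in H^{-1}$, controlled by $\|u\|_{L^4}^2+\nu\|\nabla u\|+\|h\|$. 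Alternatively, I can test against $\psi_k^2u$ minus a divergence-correcting term, which costs only $O(k^{-1})$ times the energy and enstrophy, integrated via Step 1. Chebyshev then gives uniformly small tails with high probability. Combining Steps 2 and 3 yields a compact $\KK_\epsilon$ with $P_{t_n}(u_n,\KK_\epsilon)\ge1-\epsilon$ for all large $n$; the finitely many remaining $n$ are handled by adding individual compacts, so tightness follows.
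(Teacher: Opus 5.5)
The gap is the pressure in Step~3. You flag it as the hard part, but neither remedy resolves it.

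\textbf{First remedy.} It rests on the claim that $\nabla\pp$ is controlled in $H^{-1}$ by $\|u\|_{L^4}^2+\nu\|\nabla u\|+\|h\|$. This holds for the convective part. It fails for the Stokes part under no-slip conditions: $(I-\Pi)\Delta u=\nabla q$, where $q$ is harmonic with $\partial_n q=\pm\partial_x\curl u$ on $\partial D$. The size of $q$ is therefore governed by the trace of the vorticity on the boundary. That trace is controlled by $\|u\|_{H^2}$ but not by $\|\nabla u\|$; a boundary layer of width $\delta$ makes the ratio blow up like $\delta^{-1/2}$.

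Equivalently, in It\^o's formula $\ddd\|\psi_ku\|^2=2\langle\Pi(\psi_k^2u),\ddd u\rangle+\dots$, the field $\Pi(\psi_k^2u)$ has a nonzero tangential trace. So it is not in $V$, and its pairing with $\nu Lu\in V^*$ is not defined at the energy level.

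\textbf{Second remedy.} The natural corrector is $w_k=(0,\partial_1\psi_k^2\,\JJ u)$, and indeed $\psi_k^2u-w_k=\nabla^\perp(\psi_k^2\JJ u)\in V$ thanks to the zero flux. But $w_k$ depends on $u$, so you are left with $\langle\Pi w_k,\ddd u\rangle$. After integration by parts, $\langle\Pi w_k,\nu Lu\rangle$ contains $\nu\int_{\partial D}\Pi w_k\cdot\partial_n u$, which is the boundary vorticity tested against the tangential trace of $\Pi w_k$. This is not ``$O(k^{-1})$ times energy and enstrophy.''

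Closing your Gronwall argument in expectation would require something like $\sup_t\E\int_0^t e^{-\nu\alpha_0(t-s)}\|u\|\,\|u\|_{H^2}\,\ddd s<\infty$. That is a strong-norm moment bound with a physical boundary, which is exactly the obstruction the paper cites via \cite{KV14}. The device the paper does use for the strip pressure is the backward Stokes corrector $v_{\varepsilon,\fz}$ of Section~\ref{DE}. Its terminal data depend on the solution at the final time, so it is anticipating and does not plug into an It\^o--Gronwall argument.

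The paper's proof avoids localization and the pressure entirely. It realizes weakly convergent initial data via a Skorokhod representation and identifies the limit. It then combines the exact energy equality, weighted by $e^{-\nu\alpha_0(t-s)}$, with weak lower semicontinuity of an equivalent norm on $L^2(\Omega\times(0,t);V)$. This gives $\limsup_n\E\|v_n(t)-v(t)\|^2\le e^{-\nu\alpha_0t}\limsup_n\E\|v_n(0)-v(0)\|^2$. Comparing $P_{t_n}(u_n,\cdot)$ with $P_{t_n-m}(u_n,\cdot)$ then yields a subsequence that is Cauchy in $\|\cdot\|^*_{\lip(H)}$. Only divergence-free test functions appear, and only $\FB_0<\infty$ is used.

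Two further points. Your Step~2 needs $\FB_1<\infty$, which is more than Proposition~\ref{propMFE2} assumes; this is harmless for Theorem~\ref{MT2}. Also, in your tail bound the term $e^{-\nu\alpha_0t_n}\|u_n\|^2$ does not decay in $k$, so no single threshold in $n$ works for all tail levels at once. You would need to choose each cut-off $k_j$ large enough to also cover the finitely many $n$ below the $j$-th threshold, using that each fixed $u(t_n)$ has vanishing tails. That is fixable; the pressure term is the missing idea.
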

\begin{proof}
    \textbf{Step I.} In this step, we establish an auxiliary estimate. Let $\{v_n(t)\}_{t\ge0}$ be the solution of \eqref{MFE1} issued from $v_n(0)$. Suppose that 
    \begin{align}
        \label{MFE13}
        \MM:=\sup_n \E\|v_n(0)\|^4<\infty,
    \end{align}
    and
    \begin{align}\label{MFE14}
    v_n(0)\rightharpoonup v(0)\qquad \mbox{in $H$}
    \end{align}
    almost surely, as $n\to\infty$. Let $\{v(t)\}_{t\ge 0}$ be the solution of \eqref{MFE1} issued from $v(0)$. We claim that 
    \begin{align}
        \label{MFE15}
        \limsup_{n\to\infty}\E\|v_n(t)-v(t)\|^2\le e^{-\nu\alpha_0t}\limsup_{n\to\infty}\E\|v_n(0)-v(0)\|^2,
    \end{align}
    where $\alpha_0$ is the constant in the Poincar\'e inequality. 

    First, we identify the limit of $\{v_n(\cdot)\}_{n\ge 1}$. Let $W(t):=\int_0^t\eta(s)\ddd s$ denote the driving Wiener process in \eqref{MFE1}. Fix arbitrary $T>0$. By applying \eqref{A6} and \eqref{A7} with $q=1$ and $\gamma\in (0,\frac{1}{2})$, one has
    \[\sup_{n\ge 1}\E\left(\|v_n(\cdot)\|_{L^2(0,T;V)}+\|v_n(\cdot)\|_{W^{\gamma,2}(0,T;V^*)}\right)\lesssim_{\gamma,T,\nu,h,\FB_0}1+\MM.\]
    Together with \eqref{MFE13}, Theorem 2.1 in \cite{FG95}, and the Prokhorov theorem, this implies that the sequence $\DDD(v_n(\cdot),v(\cdot),v_n(0),v(0),W)$ is tight in 
    \[\PPPP\left([L^2(0,T;H_{\loc})\cap L^2(0,T;V)_{\textup{w}}]^2\times H_{\textup{w}}\times H_{\textup{w}}\times C([0,T];H)\right),\]
    where $H_{\textup{w}}$ stands for the space $H$ endowed with the weak topology and $L^2(0,T;V)_{\textup{w}}$ is understood in the same way. Thus, by applying the Jakubowski--Skorohod theorem, see Theorem 1.1 in \cite{Jak98}, and changing the probability space if necessary, there exists a sequence $(\tv_{n_j}(\cdot),\tv_{n_j}'(\cdot),\tv_{n_j}(0),\tv_{n_j}'(0),\tilde W_{n_j})$, which has the same law as $(v_{n_j}(\cdot),v(\cdot),v_{n_j}(0),v(0),W)$ and converges $\PP$-almost surely. Such convergence is sufficient to pass to the limit and therefore indicates that the limit of $\tv_{n_j}(\cdot)$ is a solution of \eqref{MFE1} driven by $\tilde W$ with the initial data distributed as $\DDD(v(0))$. This, together with the moment estimate
    \begin{align}
        \label{MFE15-1}\E\|v(0)\|^4\le \E\liminf_{n\to\infty}\|v_n(0)\|^4\le \liminf_{n\to\infty}\E\|v_n(0)\|^4\le \MM,
    \end{align}
    implies that the limit is a finite-energy solution. On the other hand, by \eqref{MFE14}, the limits of $\tv_{n_j}(0)$ and $\tv_{n_j}'(0)$ coincide. Together with the pathwise uniqueness of \eqref{MFE1}, one has 
    \[(\tv_{n_j}(\cdot),\tv'_{n_j}(\cdot))\to (\tv(\cdot),\tv(\cdot))\qquad \mbox{in $[L^2(0,T;H_{\loc})]^2$ a.s.}\]
    for some process $\tv$, which implies
    \begin{align}\label{MFE16-0}
        v_{n_j}(\cdot)\to  v(\cdot)\qquad\mbox{in $L^2(0,T;H_{\loc})$ and in probability},
    \end{align}
    as $j\to\infty$. Consequently, any subsequence of $\{v_n(\cdot)\}_{n\ge 1}$ has a subsequence converging to the same limit. Thus, \eqref{MFE16-0} holds for the whole sequence $\{v_n(\cdot)\}_{n\ge 1}$. Furthermore, by \eqref{A4} with $q=2$, the sequence $\{\|v_n-v\|^2_{L^2((0,T);L^2(\OO))}\}_{n\ge 1}$ is uniformly integrable. An application of the Vitali convergence theorem, see Theorem~4.5.4 in \cite{Bog07}, then implies 
    \begin{align}\label{MFE16}
        v_{n}(\cdot)\to  v(\cdot)\qquad\mbox{in $L^2(\Omega\times(0,T);L^2(\OO))$},
    \end{align}
    as $n\to\infty$ for any $\OO\Subset D$.
    
    Next, by applying \eqref{A6} with $q=1$, $\{v_n(\cdot)\}_{n\ge 1}$ is uniformly bounded in $L^2(\Omega\times(0,T); V)$. Thus, there exists a subsequence $n_k$ and a process $\bar{v}(\cdot)$ such that 
    \begin{align*}v_{n_k}(\cdot)\rightharpoonup \bar{v}(\cdot)\qquad\mbox{in $L^2(\Omega\times(0,T);V)$},
    \end{align*}
    as $k\to\infty$. Since $V\subset L^2(\OO)$ for any $\OO\Subset D$, one has 
    \[v_{n_k}(\cdot)\rightharpoonup \bar{v}(\cdot)\qquad\mbox{in $L^2(\Omega\times(0,T);L^2(\OO))$},\]
    which, together with \eqref{MFE16}, implies that $v(\cdot)=\bar{v}(\cdot)$. As $v(\cdot)$ is the only accumulation point of $\{v_n(\cdot)\}_{n\ge 1}$ in the weak topology of $L^2(\Omega\times(0,T); V)$, one gets
    \begin{align}
        \label{MFE17}v_{n}(\cdot)\rightharpoonup v(\cdot)\qquad\mbox{in $L^2(\Omega\times(0,T);V)$},
    \end{align}
    as $n\to\infty$.

    Finally, we claim that for any $t>0$,
    \begin{align}
        \label{MFE18}
        v_n(t)\rightharpoonup v(t)\qquad\mbox{in $L^2(\Omega;H)$},
    \end{align}
    as $n\to\infty$. Indeed, by density argument, to establish \eqref{MFE18}, it suffices to show
    \begin{align}
        \label{MFE19}
        \lim_{n\to\infty}\E|\langle v_n(t)-v(t),\varphi\rangle|=0
    \end{align}
    for any test function $\varphi\in C_0^\infty\cap H$. For simplicity, we write $z_n(t):=v_n(t)-v(t)$ and set $\OO:=\supp \varphi$. Then, for any $s\le t\le T$, it follows 
    \begin{align*}
        |\langle z_n(t)-z_n(s),\varphi\rangle|\le\nu\int_s^t|\langle \nabla z_n(r),\nabla \varphi\rangle|\ddd r+\int_s^t|\langle v_n\otimes v_n(r)-v\otimes v(r),\nabla \varphi\rangle|\ddd r,
    \end{align*}
    where, by applying \eqref{A6} with $q=1$, one has 
    \begin{align*}
        \E\int_s^t|\langle \nabla z_n(r),\nabla \varphi\rangle|\ddd r&\le \sqrt{t-s}\|\nabla \varphi\|\left(\E\int_0^T \|\nabla z_n(r)\|^2 \ddd r\right)^{\frac{1}{2}}\\&\lesssim_{T,\nu,h,\FB_0} \sqrt{t-s}\|\nabla \varphi\|(1+\MM),
    \end{align*}
    and 
    \begin{align*}
        \E\int_s^t|&\langle v_n\otimes v_n(r)-v\otimes v(r),\nabla \varphi\rangle|\ddd r\\&\le C\sqrt{\alpha_0} \sqrt{t-s}\|\nabla \varphi\|_{\infty} \left(\E\int_0^T\|\nabla z_n(r)\|^2\ddd r\right)^{\frac{1}{2}}\left(\E\sup_{r\le T}\left(\|v_n(r)\|^2+\|v(r)\|^2\right)\right)^{\frac{1}{2}}\\&\lesssim_{T,\nu,h,\FB_0}\sqrt{t-s}\|\nabla \varphi\|_{\infty}(1+\MM).
    \end{align*}
    Consequently, 
    \begin{align*}
        \E|\langle z_n(t)-z_n(s),\varphi\rangle| \lesssim_{\MM,T,\nu,h,\FB_0}\sqrt{t-s}\left(\|\nabla \varphi\|+\|\nabla\varphi\|_{\infty}\right).
    \end{align*}
    Together with \eqref{MFE16}, this implies 
    \begin{align*}
        \E|\langle z_n(t),\varphi\rangle|&\le \frac{1}{\delta}\int_{t-\delta}^t\E|\langle z_n(t)-z_n(s),\varphi\rangle|\ddd s+\frac{1}{\delta}\E\int_{t-\delta}^t|\langle z_n(s),\varphi\rangle|\ddd s\\&\lesssim_{\varphi,\MM,T,\nu,h,\FB_0}\frac{1}{\delta}\int_{t-\delta}^t \sqrt{t-s}\ddd s+\frac{1}{\sqrt{\delta}}\left(\E\int_{t-\delta}^t\|z_n(s)\|_{L^2(\OO)}^2 \ddd s\right)^{\frac{1}{2}}\\&\lesssim_{\varphi,\MM,T,\nu,h,\FB_0} \sqrt{\delta}+\frac{1}{\sqrt{\delta}}\left(\E\int_{0}^T\|z_n(s)\|_{L^2(\OO)}^2 \ddd s\right)^{\frac{1}{2}},
    \end{align*}
    which ensures \eqref{MFE19} and thus \eqref{MFE18}.
    
    Now we are in a position to prove \eqref{MFE15}. From \eqref{A1}, it follows 
    \begin{align}\label{MFE20}
        \E\|v_n(t)\|^2&=e^{-\nu\alpha_0 t}\E\|v_n(0)\|^2+\E\int_0^te^{-\nu\alpha_0 (t-s)}\left(2\langle v_n(s),h\rangle+\FB_0\right)\ddd s\notag\\&\qquad-\E\int_0^te^{-\nu\alpha_0 (t-s)}\left(2\nu\|\nabla v_n(s)\|^2-\nu\alpha_0\|v_n(s)\|^2\right)\ddd s.
    \end{align}
    By applying \eqref{MFE17}, one has 
    \begin{align}\label{MFE21}
        \lim_{n\to\infty}\E\int_0^te^{-\nu\alpha_0 (t-s)}\langle v_n(s),h\rangle\ddd s=\E\int_0^te^{-\nu\alpha_0 (t-s)}\langle v(s),h\rangle\ddd s.
    \end{align}
    Furthermore, by the Poincar\'e inequality, it follows that
    \[\nu\|\nabla v\|^2\le 2\nu\|\nabla v\|^2-\nu\alpha_0\|v\|^2\le 2\nu\|\nabla v\|^2.\]
    Consequently, the functional 
    \[u\mapsto\left(\E\int_0^te^{-\nu\alpha_0 (t-s)}\left(2\nu\|\nabla u(s)\|^2-\nu\alpha_0\|u(s)\|^2\right)\ddd s\right)^{\frac{1}{2}}\]
    defines an equivalent norm on $L^2((0,t)\times \Omega;V)$, which implies 
    \begin{align}
        \label{MFE22}
        \E\int_0^te^{-\nu\alpha_0 (t-s)}&\left(2\nu\|\nabla v(s)\|^2-\nu\alpha_0\|v(s)\|^2\right)\ddd s\notag\\&\le \liminf_{n\to\infty}\E\int_0^te^{-\nu\alpha_0 (t-s)}\left(2\nu\|\nabla v_n(s)\|^2-\nu\alpha_0\|v_n(s)\|^2\right)\ddd s.
    \end{align}
    Since the energy equality \eqref{MFE20} also holds for $\{v(t)\}_{t\ge0}$, by applying \eqref{MFE20}--\eqref{MFE22}, one obtains
    \begin{align}\label{MFE23}
        \limsup_{n\to\infty}\E\left(\|v_n(t)\|^2-\|v(t)\|^2\right)\le e^{-\nu\alpha_0 t}\limsup_{n\to\infty}\E\left(\|v_n(0)\|^2-\|v(0)\|^2\right).
    \end{align}
    To conclude the proof of \eqref{MFE15}, notice that $v(t)\in L^2(\Omega;H)$. By combining this with \eqref{MFE18}, one gets
    \begin{align}\label{MFE24}
        \limsup_{n\to\infty}\E\|v_n(t)-v(t)\|^2&=\limsup_{n\to\infty}\E\left(\|v_n(t)\|^2+\|v(t)\|^2-2\langle v_n(t),v(t)\rangle\right)\notag\\&=\limsup_{n\to\infty}\E\left(\|v_n(t)\|^2-\|v(t)\|^2\right).
    \end{align}
    On the other hand, by using \eqref{MFE14} and \eqref{MFE15-1}, the sequence $\{\langle v_n(0),v(0)\rangle\}_{n\ge 1}$ is uniformly integrable. Since 
    \[\langle v_n(0),v(0)\rangle\to\|v(0)\|^2,\]
    almost surely, as $n\to\infty$, an application of the Vitali convergence theorem then gives 
    \[\lim_{n\to\infty}\E\langle v_n(0),v(0)\rangle=\E\|v(0)\|^2.\]
    As shown in \eqref{MFE24}, this yields
    \[\limsup_{n\to\infty}\E\|v_n(0)-v(0)\|^2=\limsup_{n\to\infty}\E\left(\|v_n(0)\|^2-\|v(0)\|^2\right).\]
    By plugging this and \eqref{MFE24} into \eqref{MFE23}, one obtains \eqref{MFE15}.

    \noindent\textbf{Step I\!I.} Fix an arbitrary integer $m\ge 1$. For $t_n\ge m$, we write
    \[\mu_n:=P_{t_n}(u_n,\cdot),\qquad \lambda_{n,m}:=P_{t_n-m}(u_n,\cdot).\]
    By applying \eqref{A4} with $q=2$, it follows that
    \begin{align}\label{MFE25}
        \int_{H}\|v\|^4 \lambda_{n,m}(\ddd v)\le \|u_n\|^4+C_{\nu,h,\FB_0}\le \sup_{n\ge 1}\|u_n\|^4+C_{\nu,h,\FB_0}.
    \end{align}
    Together with the Jakubowski--Skorohod representation theorem, this implies that there are $H$-valued random variables $\{\fv_{n_k,m}\}_{k\ge 1}$ and $\fv_m$ satisfying
    \begin{align}
        \label{MFE26}\DDD(\fv_{n_k,m})=\lambda_{n_k,m},\qquad\forall k\ge 1,
    \end{align}
    and 
    \begin{align}
        \label{MFE27}\fv_{n_k,m}\rightharpoonup \fv_m \qquad \mbox{in $H$}
    \end{align}
    almost surely, as $k\to\infty$. After a diagonal argument, one may find a common subsequence (still denoted by) $n_k$ such that \eqref{MFE26} and \eqref{MFE27} hold for all $m\ge 1$. Let $\{\fv_{n_k,m}(t)\}_{t\ge 0}$ and $\{\fv_m(t)\}_{t\ge 0}$ denote the solutions of \eqref{MFE1} issued from $\fv_{n_k,m}$ and $\fv_{m}$, respectively. Since \eqref{MFE13} and \eqref{MFE14} hold for $\{\fv_{n_k,m}\}_{k\ge 1}$ and $\fv_m$, due to \eqref{MFE25} and \eqref{MFE27}, by applying \eqref{MFE15}, it follows
    \begin{align*}
        \limsup_{k\to\infty}\E\|\fv_{n_k,m}(t)-\fv_{m}(t)\|^2&\le e^{-\nu\alpha_0t}\limsup_{k\to\infty}\E\|\fv_{n_k,m}(0)-\fv_{m}(0)\|^2\\&\le e^{-\nu\alpha_0t}\left(\sup_{n\ge 1}\|u_n\|^4+C_{\nu,h,\FB_0}\right)^{\frac{1}{2}}.
    \end{align*}
    Combining this with the fact
    \[\DDD(\fv_{n_k,m}(m))=\mu_{n_k},\qquad \forall k\ge 1\]
    and the definition \eqref{notation2} of the dual-Lipschitz metric, one gets
    \begin{align}
        \label{MFE28}
        \limsup_{k\to\infty} \|\mu_{n_k}-\lambda_m\|^*_{\lip(H)}\le \left(\sup_{n\ge 1}\|u_n\|^4+C_{\nu,h,\FB_0}\right)^{\frac{1}{4}}\exp\left(-\frac{\nu\alpha_0}{2}m\right),
    \end{align}
    where $\lambda_m:=\DDD(\fv_m(m))$.

    \noindent\textbf{Step I\!I\!I.} Since $m$ is arbitrary in \eqref{MFE28}, by the triangle inequality, one has 
    \[\limsup_{k,j\to\infty} \|\mu_{n_k}-\mu_{n_j}\|^*_{\lip(H)}\le 2\left(\sup_{n\ge 1}\|u_n\|^4+C_{\nu,h,\FB_0}\right)^{\frac{1}{4}}\exp\left(-\frac{\nu\alpha_0}{2}m\right).\]
    Together with sending $m\to\infty$, this shows that $\{\mu_{n_k}\}_{k\ge 1}$ is a Cauchy sequence and thus that $\{\mu_n\}_{n\ge 1}$ is compact. An application of the Prokhorov theorem then completes the proof.
\end{proof}
Now we turn to the proof of Proposition \ref{propMFE3}.
\begin{proof}[Proof of Proposition \ref{propMFE3}] \textbf{Step I.} Fix $R,\epsilon>0$. Define $\KKK_R$ as the set of probability measures $\lambda$ such that there are $\{u_n\}_{n\ge 1}$ and $\{t_n\}_{n\ge 1}$ satisfying 
\[\sup_{n\ge 1}\|u_n\|\le R,\qquad \lim_{n\to\infty}t_n=\infty,\]
and 
\[\lambda=\lim_{n\to\infty}P_{t_n}(u_n,\cdot).\]
We claim that $\KKK_R$ is compact. To see this, for any sequence $\{\lambda_k\}_{k\ge 1}\subset \KKK_R$, choose 
\[\tilde \lambda_k:=P_{t_k}(u_k,\cdot)\]
such that $t_k\ge k$, $\|u_k\|\le R$, and 
\begin{align}
    \label{MFE29}\|\lambda_k-\tilde \lambda_k\|^*_{\lip(H)}\le \frac{1}{k}.
\end{align}
By Lemma \ref{lemmaMFE2}, $\{\tilde \lambda_k\}_{k\ge 1}$ has a convergent subsequence $\{\tilde \lambda_{k_j}\}_{j\ge 1}$. This, together with \eqref{MFE29}, implies the convergence of $\{\lambda_{k_j}\}_{j\ge 1}$ and thus the compactness of $\KKK_R$.

\noindent\textbf{Step I\!I.} By applying the Prokhorov theorem, there exists a compact subset $\KK=\KK_{R,\epsilon}$ of $H$ such that 
\[\sup_{\lambda\in \KKK_R}\lambda(\KK^c)\le \frac{\epsilon}{2}.\]
We claim that \eqref{MFE12} holds for the above set $\KK$. Indeed, by contradiction, suppose that there is $\delta_0>0$, $\{u_n\}_{n\ge 1}$, and $\{t_n\}_{n\ge 1}$ satisfying  
\[\sup_{n\ge 1}\|u_n\|\le R,\qquad \lim_{n\to\infty}t_n=\infty,\]
and 
\[\PP\{\dist_H(u(t_n;u_n),\KK)>\delta_0\}>\epsilon,\]
where $\{u(t;u)\}_{t\ge 0}$ denotes the solution of \eqref{MFE1} issued from $u$. Choose $\lambda\in\KKK_R$ such that, up to a subsequence,
\[\lambda=\lim_{n\to\infty}P_{t_n}(u_n,\cdot).\]
Since $F_\delta:=\{v|\dist_H(v,\KK)\ge \delta\}$ is closed and $F_\delta\subset \KK^c$, by the Portmanteau theorem, one has
\begin{align*}
    \epsilon&\le \limsup_{n\to\infty}\PP\{\dist_H(u(t_n;u_n),\KK)\ge\delta_0\}=\limsup_{n\to\infty}P_{t_n}(u_n,F_\delta)\\&\le\lambda(F_\delta)\le \lambda(\KK^c)\le \frac{\epsilon}{2},
\end{align*}
which is a contradiction. This gives the validity of \eqref{MFE12}.
\end{proof}
\subsubsection{Growth of distance functionals}
The goal of this part is to establish Proposition \ref{propMFE2}. To this end, we introduce the following lemma, which provides an estimate on the expected growth for the distance functional in \eqref{MFE4} over a unit time interval.
\begin{lemma}\label{lemmaMFE3}
    For any $R,\epsilon>0$, there is an integer $\fn_0\ge 1$ and a finite set 
    \[K:=\{0,\fu_1,\ldots,\fu_l\}\subset C_0^\infty\cap H\]
    such that 
    \begin{align}
        \label{MFE29-1}\sup_{\|u\|\le R}\E\int_{\fn_0}^{\fn_0+1}\dist_V(u(t),K)^2\ddd t\le\epsilon,
    \end{align}
    where $\{u(t)\}_{t\ge 0}$ is the solution of \eqref{MFE1} issued from $u\in H$. Moreover, the finite set $K$ and the integer $\fn_0$ depend on $R,\epsilon,\nu,h,\{b_j\}_{j\ge1}$.
\end{lemma}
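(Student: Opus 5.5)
The plan follows the sketch in the introduction. Fix a dense sequence $\{\fu_m\}_{m\ge1}\subset C_0^\infty\cap H$ in $V$, set $K_m:=\{0,\fu_1,\ldots,\fu_m\}$, and define
\[
\Phi_m(u):=\E_u\int_0^1\dist_V(u_t,K_m)^2\,\ddd t,\qquad u\in H.
\]
By the Markov property, $\E\int_{\fn_0}^{\fn_0+1}\dist_V(u(t),K_m)^2\ddd t=\E\,\Phi_m(u(\fn_0))$. It therefore suffices to make $\Phi_m$ small on a neighbourhood of the compact set $\KK$ given by Proposition~\ref{propMFE3}. The small-probability complement is then controlled by moment bounds.

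\textbf{Step 1 (properties of $\Phi_m$).} Since $0\in K_m$, we have $\dist_V(u,K_m)\le\|\nabla u\|$. The energy estimate \eqref{A6} then gives $\Phi_m(u)\le C(1+\|u\|^2)$ uniformly in $m$. Moreover, $\Phi_m$ is nonincreasing in $m$. For fixed $u$, $\dist_V(u_t,K_m)\to0$ for a.e. $(\omega,t)$ by density, because $u_t\in V$ a.e. Dominated convergence, with dominant $\|\nabla u_t\|^2$, then gives $\Phi_m(u)\to0$.

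Next, each $\Phi_m$ is continuous on $H$, and uniformly so on bounded sets. For solutions $u_t,u'_t$ driven by the same noise, $|\dist_V(u_t,K_m)^2-\dist_V(u'_t,K_m)^2|\le\|\nabla(u_t-u'_t)\|(\|\nabla u_t\|+\|\nabla u'_t\|)$. The difference $g=u-u'$ satisfies the standard estimate $\|g(t)\|^2+\nu\int_0^t\|\nabla g\|^2\le\|g(0)\|^2\exp(C\int_0^t\|\nabla u\|^2)$. I will localize this estimate on the event $\{\int_0^1\|\nabla u\|^2\le M\}$, whose complement has small probability uniformly for $\|u\|\le R$ by \eqref{A6}. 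Cauchy--Schwarz together with the moment bounds then yields $|\Phi_m(u)-\Phi_m(u')|\le\omega_R(\|u-u'\|)$, where the modulus $\omega_R$ does not depend on $m$. So the family is equicontinuous on balls.

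\textbf{Step 2 (uniform smallness near $\KK$).} The functions $\Phi_m$ are equicontinuous and decrease pointwise to $0$, so by Dini's theorem they converge to $0$ uniformly on the compact set $\KK$. Equicontinuity then extends this to a $\delta$-neighbourhood. Hence there are $m$ and $\delta$ such that $\sup_{\dist_H(u,\KK)\le\delta,\ \|u\|\le R'}\Phi_m(u)\le\epsilon/2$.

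\textbf{Step 3 (conclusion).} Apply Proposition~\ref{propMFE3} with radius $R$ and a small parameter $\epsilon'$ to get $\KK$. Choose $R'$ large, choose $m,\delta$ as in Step 2, and then take $\fn_0$ from Proposition~\ref{propMFE3}. Split $\E\Phi_m(u(\fn_0))$ according to the event $G:=\{\dist_H(u(\fn_0),\KK)\le\delta,\ \|u(\fn_0)\|\le R'\}$. On $G$ the integrand is at most $\epsilon/2$. On $G^c$, the bound $\Phi_m\le C(1+\|\cdot\|^2)$, Cauchy--Schwarz, and \eqref{A4} give a contribution of at most $C(1+R^2)(\epsilon'+CR'^{-2})^{1/2}$, which is at most $\epsilon/2$ for suitable $\epsilon',R'$. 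The parameters are chosen in the order $\epsilon',R'\to\KK\to m,\delta\to\fn_0$, so the dependencies are consistent.

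The main obstacle is the equicontinuity in Step 1. The Lipschitz bound for $g$ involves $\exp(C\int\|\nabla u\|^2)$, which is not integrable in general. Truncating on a high-probability event, as described above, is the way I would handle it, possibly with the exponential moment estimates from the appendix in place of \eqref{A6}.
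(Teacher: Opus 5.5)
Your proposal is correct and follows the paper's proof essentially step by step. It uses the same functionals $\Phi_m$ built from a dense sequence in $V$, the same bound $\Phi_m\lesssim 1+\|u\|^2$ together with $m$-uniform equicontinuity, uniform smallness on the compact set $\KK$ of Proposition~\ref{propMFE3} extended to a $\delta$-neighbourhood, and the same Markov-property splitting with Cauchy--Schwarz and \eqref{A4} on the bad event. The differences are cosmetic: Dini's theorem replaces the paper's equicontinuity-plus-pointwise-convergence argument on $\KK$, and you rederive by truncation the continuity estimate the paper cites as \eqref{A8} (proved there by the same truncation via \eqref{A5}).
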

\begin{proof}
    \textbf{Step I.} First, we construct the finite set $K$ in \eqref{MFE29-1}. Let $\{\fu_j\}_{j\ge 1}\subset C_0^\infty\cap H$ be a dense sequence in $V$. Define
    \[K_m:=\{0,\fu_1,\ldots,\fu_m\},\]
    and 
    \[\Phi_m(u):=\E_u\int_0^1\dist_V(u_t,K_m)^2\ddd t,\]
    where $(u_t,\PP_u)$ is the Markov family associated with \eqref{MFE1}. We claim that for any $R>0$, the sequence $\{\Phi_m\}_{m\ge 1}$ is uniformly bounded and equicontinuous on $B_H(R)$. Indeed, since $0\in K_m$, by applying \eqref{A6} with $q=1$, one has 
    \begin{align}\label{MFE33}
        \Phi_m(u)\le \E_u\int_0^1\|u_t\|^2_V\ddd t\lesssim_{\nu,h,\FB_0}1+\|u\|^2,
    \end{align}
    which ensures the uniform boundedness on $B_H(R)$. On the other hand, by \eqref{A8}, \eqref{MFE33}, and the Minkowski inequality, for any $u,u'\in B_H(R)$, it follows that 
    \begin{align*}
        |\Phi_m(u)-\Phi_m(u')|&\lesssim_{\nu,h,\FB_0} (1+\|u\|+\|u'\|)\left|\sqrt{\Phi_m(u)}-\sqrt{\Phi_m(u')}\right|\\&\lesssim_{\nu,h,\FB_0,R} \left(\E\int_0^1\|u(t)-u'(t)\|_V^2\ddd t\right)^{\frac{1}{2}}\\&\lesssim_{\nu,h,\FB_0,R}\|u-u'\|^{\frac{1}{2}}+\|u-u'\|^{\frac{c_0}{2}}.
    \end{align*}
    This gives the uniform equicontinuity of $\{\Phi_m\}_{m\ge 1}$ on $B_H(R)$.

    Now fix $R,\epsilon>0$. Let $\KK$ be the compact subset in Proposition \ref{propMFE3} corresponding to $R,\epsilon'$, where $\epsilon'$ is to be determined in the next step. Since $\{\fu_j\}_{j\ge 1}$ is dense in $V$, by the dominated convergence theorem combined with \eqref{MFE33}, it follows
    \[\lim_{m\to\infty}\Phi_m(u)=0\]
    for any $u\in H$. This, together with the uniform equicontinuity of $\{\Phi_m\}_{m\ge 1}$, gives
    \[\lim_{m\to\infty}\sup_{u\in \KK}\Phi_m(u)=0.\]
    In particular, choose $m$ sufficiently large so that 
    \begin{align}
        \label{MFE34}\sup_{u\in \KK}\Phi_m(u)<\frac{\epsilon}{4},
    \end{align}
    and set $K:=K_m$.

    \noindent\textbf{Step I\!I.} Since $\Phi_m$ is continuous and $\KK$ is compact, there is $\delta>0$ such that if $\dist_H(v,\KK)\le \delta$, then $\Phi_m(v)<\frac{\epsilon}{2}$. Let $\fn_0\ge 1$ be the integer in Proposition \ref{propMFE3} corresponding to $\delta$. Introduce 
    \[\GG_{\fn_0}:=\{\dist_H(u_{\fn_0},\KK)\le \delta\}.\]
    By the Markov property, Proposition \ref{propMFE3}, \eqref{MFE33}, and \eqref{A4} with $q=2$, for any $u\in B_H(R)$, it follows
    \begin{align*}
        \E_u\int_{\fn_0}^{\fn_0+1}\dist_V(u_t,K)^2 \ddd t&=\E_u\Phi_m(u_{\fn_0})=\E_u\left(\I_{\GG_{\fn_0}}\Phi_m(u_{\fn_0})\right)+\E_u\left(\I_{\GG^c_{\fn_0}}\Phi_m(u_{\fn_0})\right)\\&<\frac{\epsilon}{2}+C_{\nu,h,\FB_0}\sqrt{\epsilon'}\left(\E_u\left(1+\|u_{\fn_0}\|^4\right)\right)^{\frac{1}{2}}<\frac{\epsilon}{2}+C_{\nu,h,\FB_0,R}\sqrt{\epsilon'}
    \end{align*}
    Together with choosing $\epsilon'$ sufficiently small, this implies \eqref{MFE29-1}.
\end{proof}
Now we turn to the proof of Proposition \ref{propMFE2}. 
\begin{proof}
    \textbf{Step I.} We derive some auxiliary estimates. Fix arbitrary $\epsilon>0$. Applying Lemma~\ref{lemmaMFE3} with $\epsilon$ replaced by $\frac{\epsilon}{16}$ and $R$ to be chosen in the next step, there is an integer $\fn_0\ge 1$ and a finite set $K:=\{0,\fu_1,\ldots,\fu_l\}\subset C_0^\infty\cap H$ such that \eqref{MFE29-1} holds. On the other hand, for $u\notin \overline{B_H(R)}$, by the Markov property, \eqref{A4}, and \eqref{A6} with $q=1$, it follows 
    \begin{align*}
        \E_u \int_{\fn_0}^{\fn_0+1}\dist_V(u_t,&K)^2\ddd t\le \E_u \int_{\fn_0}^{\fn_0+1}\|u_t\|_V^2\ddd t=\E_u \left(\E_{u_{\fn_0}}\left(\int_{0}^{1}\|u_t\|_V^2\ddd t\right)\right)\\&\lesssim_{\nu,h,\FB_0}\E_u(1+\|u_{\fn_0}\|^2)\lesssim_{\nu,h,\FB_0}1+\|u\|^2\lesssim_{\nu,h,\FB_0}\frac{(1+\|u\|^2)^2}{R^2}.
    \end{align*}
    Consequently, by combining this with \eqref{MFE29-1}, for any $u\in H$, one has
    \begin{align}
        \label{MFE35}
        \E_u \int_{\fn_0}^{\fn_0+1}\dist_V(u_t,K)^2\ddd t\le \frac{\epsilon}{16}+\CC_2\frac{(1+\|u\|^2)^2}{R^2},
    \end{align}
    where the constant $\CC_2$ depends on $\nu,h,\FB_0$.

    \noindent\textbf{Step I\!I.} In this step, we derive an estimate for the expected growth of the distance functional in \eqref{MFE4} over larger time-blocks. Let $M\ge \fn_0$ be an integer. It follows that 
    \begin{align}\label{MFE36}
        \E_u\int_{0}^{M}\dist_V(u_t,K)^2\ddd t&=\E_u\int_{0}^{\fn_0}\dist_V(u_t,K)^2\ddd t\notag\\&\quad+\ssum^{M-1-\fn_0}_{j=0}\E_u\int_{\fn_0+j}^{\fn_0+1+j}\dist_V(u_t,K)^2\ddd t,
    \end{align}
    where, by the Markov property, \eqref{MFE35}, and \eqref{A4} with $q=2$, it follows 
    \begin{align*}
        \E_u\int_{\fn_0+j}^{\fn_0+1+j}\dist_V(u_t,K)^2\ddd t&=\E_u\left(\E_{u_{j}}\int_{\fn_0}^{\fn_0+1}\dist_V(u_t,K)^2\ddd t\right)\\&\le \frac{\epsilon}{16}+\frac{\CC_2}{R^2}\E_u\left(1+\|u_{j}\|^2\right)^2\\&\le \frac{\epsilon}{16}+\frac{C_{\nu,h,\FB_0}}{R^2}+\frac{\CC_2}{R^2}e^{-\nu\alpha_0j}\left(1+\|u\|^2\right)^2.
    \end{align*}
    On the other hand, by \eqref{A6} with $q=1$, one has
    \begin{align*}
        \E_u\int_{0}^{\fn_0}\dist_V(u_t,K)^2\ddd t\le C_{\nu,h,\FB_0}(\fn_0+\|u\|^2)\le C_{\nu,h,\FB_0}\fn_0\left(1+\|u\|^2\right)^2.
    \end{align*}
    Plugging the above estimates into \eqref{MFE36}, one gets
    \begin{align}\label{MFE36-1}
        \E_u\int_{0}^{M}\dist_V(u_t,K)^2\ddd t\le \CC_3\left(\fn_0+R^{-2}\right)\left(1+\|u\|^2\right)^2+\left(\frac{\epsilon}{16}+\CC_3 R^{-2}\right)M
    \end{align}
    for some constant $\CC_3$ depending on $\nu,h,\FB_0$. Choosing $R$ sufficiently large so that 
    \[\frac{\epsilon}{16}+\CC_3 R^{-2}\le \frac{\epsilon}{8},\]
    and writing 
    \[\V(u):=\left(1+\|u\|^2\right)^2,\]
    it follows
    \begin{align}
        \label{MFE37}
         \E_u\int_{0}^{M}\dist_V(u_t,K)^2\ddd t\le \CC_4\V(u)+\frac{\epsilon}{8}M,
    \end{align}
    where $\CC_4$ is a constant depending on $\epsilon,\nu,h,\{b_j\}_{j\ge 0}$.

    \noindent\textbf{Step I\!I\!I.} We shall divide the whole time interval into blocks of length $M$ and apply growth estimate on $[0,M]$ combined with the Markov property. However, estimate \eqref{MFE37} is not sufficient, since this would yield a sum of Lyapunov functionals $\ssum_{j\ge 0}\V(u_{jM})$. To this end, we aim to improve \eqref{MFE37}. Let $\PPPPP_t$ and $\PPPPP_t^{*}$ be the Markov operators introduced in \eqref{Markov1} and \eqref{Markov2}, respectively. Applying \eqref{A4} combined with the Markov property, it follows that 
    \begin{align}
        \label{MFE38}
        \PPPPP_M\V(u)\le \fc^M\V(u)+\frac{1-\fc^M}{1-\fc}C_{\nu,h,\FB_0},\qquad \fc:=e^{-\nu\alpha_0}.
    \end{align}
    By multiplying both sides by $\fa_M:=(1-\fc^M)^{-1}\CC_4$, this implies 
    \[\fa_M\left(\PPPPP_M\V(u)-\V(u)\right)\le -\CC_4\V(u)+\frac{\CC_4C_{\nu,h,\FB_0}}{1-\fc}.\]
    Adding the above estimate to \eqref{MFE37}, one eliminates the Lyapunov functional $\CC_4\V(u)$ on the right-hand side of \eqref{MFE37} and thereby obtains
    \begin{align*}
    \E_u\int_{0}^{M}\dist_V(u_t,K)^2\ddd t+\fa_M\left(\PPPPP_M\V(u)-\V(u)\right)\le \frac{\epsilon}{8}M+\frac{\CC_4C_{\nu,h,\FB_0}}{1-\fc}.
    \end{align*}
    Consequently, by choosing $M\ge 2$ sufficiently large, it follows
    \begin{align}
        \label{MFE39}
        \E_u\int_{0}^{M}\dist_V(u_t,K)^2\ddd t+\fa_M\left(\PPPPP_M\V(u)-\V(u)\right)\le \frac{\epsilon}{4}M.
    \end{align}

    \noindent \textbf{Step I\!V.} In this step, we derive a pathwise estimate for the distance functional
    \[\FI(kM):=\int_0^{kM}\dist_V(u_t,K)^2\ddd t. \]
    Define 
    \begin{align}\label{MFE40}
        \Gamma_{j}:=\FI((j+1)M)-\FI(jM)+\fa_M\left(\V(u_{(j+1)M})-\V(u_{jM})\right).
    \end{align}
    Applying the Markov property combined with \eqref{MFE39}, it follows 
    \begin{align}
        \label{MFE41}\E_u(\Gamma_{j}|\FF_{jM})\!=\E_{u_{jM}}\!\int_{0}^{M}\!\!\dist_V(u_t,K)^2\ddd t+\fa_M\!\left(\PPPPP_M\V(u_{jM})-\V(u_{jM})\right)\le \frac{\epsilon}{4}M.
    \end{align}
    By summing over $j$ from $0$ to $k-1$ on both sides of \eqref{MFE40} and using \eqref{MFE41}, one gets
    \begin{align}
        \label{MFE42}
        \FI(kM)+\fa_M\left(\V(u_{kM})-\V(u)\right)=\ssum_{j=0}^{k-1}\Gamma_{j}\le \frac{\epsilon kM}{4}+\ssum_{j=0}^{k-1}d_{j}
    \end{align}
    where 
    \[d_j:=\Gamma_j-\E_u(\Gamma_{j}|\FF_{jM})\]
    is a sequence of martingale differences.

    \noindent\textbf{Step V.} We establish moment estimates for the martingale differences. Fix $q\ge 2$. By the conditional Jensen inequality, \eqref{A4}, \eqref{A6}, the Markov property, and the fact that $0\in K$, one has 
    \begin{align*}
        \E_u(|d_j|^q|\FF_{jM})&\lesssim_q\E_u(|\Gamma_{j}|^q|\FF_{jM})\\&\lesssim_q \E_u\left(\left(\int_{jM}^{(j+1)M}\|u_t\|_V^2\ddd t\right)^q\bigg|\FF_{jM}\right)\\&\qquad+\fa_M^q\E_u\left(\V(u_{(j+1)M})^q\Big|\FF_{jM}\right)+\fa_M^q\V(u_{jM})^q\\&\lesssim_{q,\epsilon,\nu,h,\{b_j\}_{j\ge 1}}\V(u_{jM})^q.
    \end{align*}
    By taking expectation on both sides and using \eqref{A4}, it follows
    \begin{align}\label{MFE42-1}
        \sup_{j\ge 0}\E_u|d_j|^q\lesssim_{q,\epsilon,\nu,h,\{b_j\}_{j\ge 1}}\V(u)^q.
    \end{align}
    As a result, by the discrete-time Burkholder--Gundy--Davis inequality (Theorem 6.3.6 in \cite{Str10}) and the generalized Minkowski inequality, one obtains
    \begin{align}
        \label{MFE43}
        \E_u\left(\max_{l\le k} \left|\ssum_{j=0}^{l-1}d_j\right|^q\right)&\lesssim_q \E_u \left(\ssum_{j=0}^{k-1}|d_j|^2\right)^{\frac{q}{2}}\lesssim_q \left(\ssum_{j=0}^{k-1}\left(\E_u|d_j|^q\right)^{\frac{2}{q}}\right)^{\frac{q}{2}}\notag\\&\lesssim_{q,\epsilon,\nu,h,\{b_j\}_{j\ge 1}}k^{\frac{q}{2}}\V(u)^q.
    \end{align}

    \noindent\textbf{Step V\!I.} Finally, we are ready to prove \eqref{MFE4}. For $t\ge 0$, define $k(t):=\lceil \frac{t}{M} \rceil$. Then, from \eqref{MFE42}, it follows that
    \begin{align*}
        \FI(t)&\le \FI(k(t)M)\le \fa_M\V(u)+\frac{\epsilon}{4}k(t)M+\ssum_{j=0}^{k(t)-1}d_j \notag\\&\le \CC_5\V(u)+\frac{\epsilon}{4}t+\ssum_{j=0}^{k(t)-1}d_j
    \end{align*}
    for some constant $\CC_5$ depending on $\epsilon,\nu,h,\{b_j\}_{j\ge 1}$. This implies 
    \begin{align}
        \label{MFE44}
        \FI(t)-\epsilon t\le \CC_5\V(u)+\ssum_{j=0}^{k(t)-1}d_j-\frac{3\epsilon t}{4}.
    \end{align}
    Introduce a constant $\CC_6$, which is determined in \eqref{MFE46}. We distinguish two different cases.

    \noindent\textit{Case I.} Suppose that $1+T+\rho\le \CC_6 \V(u)$. Then, since $\V(u)\ge 1$, one has 
    \[\frac{\V(u)^{2q}}{(1+T+\rho)^q}\ge \CC_6^{-q}.\]
    This implies 
    \begin{align}
        \label{MFE45}\PP_u\left\{\sup_{t\ge T}\left(\int_0^t \dist_V(u(s),K)^2 \ddd s-\epsilon t\right)\ge \rho\right\}\le 1\lesssim_{q,\epsilon,\nu,h,\{b_j\}_{j\ge 1}} \frac{\V(u)^{2q}}{(1+T+\rho)^q}.
    \end{align}
    
    \noindent\textit{Case I\!I.} Now assume that $1+T+\rho> \CC_6 \V(u)$. Choose $\CC_6$ sufficiently large so that
    \begin{align}
        \label{MFE46} \CC_6\ge 2,\qquad \CC_5\CC_6^{-1}\le \frac{1}{8}\min\left\{1,\frac{3\epsilon}{4}\right\}.
    \end{align}
    This yields
    \begin{align}
        \label{MFE47}\CC_5\V(u)\le\frac{\fc_\epsilon}{8}\LLLL,\qquad \fc_\epsilon:=\min\left\{1,\frac{3\epsilon}{4}\right\},\qquad \LLLL:=1+T+\rho.
    \end{align}
    Notice that $\LLLL>T$. We introduce the following dyadic decomposition
    \[I_0:=[T,2\LLLL),\qquad I_j:=[2^j\LLLL,2^{j+1}\LLLL),\qquad j\ge 1.\]
    Suppose that there is $t\in I_0$ satisfying 
    \begin{align}
        \label{MFE47-1}\FI(t)-\epsilon t\ge \rho.
    \end{align}
    Then, by applying \eqref{MFE44} and \eqref{MFE47}, it follows
    \begin{align}\label{MFE48}
        \ssum_{j=0}^{k(t)-1}d_j\ge \rho-\CC_5\V(u)+\frac{3\epsilon t}{4}\ge \rho+\frac{3\epsilon T}{4}-\frac{\fc_\epsilon \LLLL}{8}.
    \end{align}
    On the other hand, since $1+T+\rho >\CC_6\V(u)\ge 2$, one has
    \[T+\rho >\frac{\LLLL}{2},\]
    which implies 
    \[\rho+\frac{3\epsilon T}{4}\ge \fc_\epsilon(\rho+T)>\frac{\fc_\epsilon \LLLL}{2}.\]
    Plugging the above estimate into \eqref{MFE48} gives
    \[\ssum_{j=0}^{k(t)-1}d_j\ge \frac{3\fc_\epsilon \LLLL}{8}.\]
    Consequently, by using 
    \[k(t)\le \frac{t}{M}+1<\frac{2\LLLL}{M}+1\le \CC_6 \LLLL,\]
    one gets
    \[\left\{\sup_{t\in I_0}\left(\FI(t)-\epsilon t\right)\ge \rho\right\}\subset \left\{\max_{l\le \CC_6\LLLL}\left|\ssum_{j=0}^{l-1}d_j\right|\ge \frac{3\fc_\epsilon \LLLL}{8}\right\}.\]
    An application of the Chebyshev inequality combined with \eqref{MFE43} yields
    \begin{align}
        \label{MFE49}
        \PP_u\left\{\sup_{t\in I_0}\left(\FI(t)-\epsilon t\right)\ge \rho\right\}&\lesssim_{q,\epsilon}\LLLL^{-2q}\E_u\max_{l\le \CC_6 \LLLL} \left|\ssum_{j=0}^{l-1}d_j\right|^{2q}\notag\\&\lesssim_{q,\epsilon,\nu,h,\{b_j\}_{j\ge 1}} \LLLL^{-q}\V(u)^{2q}.
    \end{align}
    Now assume that for $j\ge 1$, there is $t\in I_j$ satisfying \eqref{MFE47-1}. Then, one has
    \[\ssum_{j=0}^{k(t)-1}d_j\ge \fc_{\epsilon}2^j\LLLL-\frac{\fc_\epsilon \LLLL}{8}\ge 2^{j-1}\LLLL\fc_\epsilon,\]
    which, together with the estimate 
    \[k(t)\le \frac{t}{M}+1<\frac{2^{j+1}\LLLL}{M}+1\le 2^{j+1}\LLLL,\]
    implies 
    \[\left\{\sup_{t\in I_j}\left(\FI(t)-\epsilon t\right)\ge \rho\right\}\subset \left\{\max_{l\le 2^{j+1}\LLLL}\left|\ssum_{j=0}^{l-1}d_j\right|\ge  2^{j-1}\LLLL\fc_\epsilon\right\}\]
    and thus 
    \begin{align*}
        \PP_u\left\{\sup_{t\in I_j}\left(\FI(t)-\epsilon t\right)\ge \rho\right\}&\lesssim_{q,\epsilon}\LLLL^{-2q}2^{-2q(j-1)}\E_u\max_{l\le 2^{j+1}\LLLL} \left|\ssum_{j=0}^{l-1}d_j\right|^{2q}\\&\lesssim_{q,\epsilon,\nu,h,\{b_j\}_{j\ge 1}}\LLLL^{-q}2^{-qj}\V(u)^{2q}.
    \end{align*}
    Combining this with \eqref{MFE45} and \eqref{MFE49}, one obtains \eqref{MFE4}.
    \end{proof}
    
    \subsection{Uniform finite-dimensional stabilization via compactness of noise coefficients}\label{UFDS}
    It is noteworthy that the number $N$ of perturbed modes in \eqref{MFE2} depends on the noise coefficients $\{b_j\}_{j\ge 1}$. Indeed, the finite set $K$ in Proposition \ref{propMFE2} relies on $\{b_j\}_{j\ge 1}$, since the proof is based on Lemma \ref{lemmaMFE2}, namely the asymptotic compactness property for the transition kernels. Such dependence is not sufficient to establish mixing in the degenerate case through the coupling method.

    In this subsection, we aim to remove this dependence. More precisely, we show that $N$ can be chosen uniformly over each bounded subset of 
    \[\ell^2_\fw:=\{b_\cdot\in \ell^2|\|b_\cdot\|_{\ell^2_{\fw}}^2:= \FB_\fw<\infty\},\]
    where $\FB_\fw$ is introduced in \eqref{INTRO10}. Together with a Girsanov-type estimate carried out later, this is sufficient to prove Theorem \ref{MT2}. The main result of this subsection is presented below.
    \begin{prop}\label{propMFE4}
        For any $\epsilon,\RR>0$, there is a finite set $K\subset C^\infty_0\cap H$ depending only on $\epsilon,\RR,\nu,h$ such that for any $T,\rho\ge0$, $q\ge 2$, $u\in H$, and $\BB:=\{b_j\}_{j\ge 1}\in \overline{B}_{\ell^2_\fw}(\RR)$, 
        \begin{align}
        \label{MFE56}
        \PP^\BB_u\left\{\sup_{t\ge T}\left(\int_0^t \dist_V(u_s,K)^2 \ddd s-\epsilon t\right)\ge \rho\right\}\lesssim_{q,\RR,\epsilon,\nu,h}\frac{1+\|u\|^{8q}}{(1+T+\rho)^q},
    \end{align}
    where $(u_t,\PP_u^\BB)$ denotes the Markov family associated with \eqref{MFE1} whose noise coefficients are given by $\BB$.
    \end{prop}
    The proof is based on the following lemmas.
    \begin{lemma}\label{lemmaMFE5}
        $\ell^2_{\fw}$ is compactly embedded into $\ell^2$.
    \end{lemma}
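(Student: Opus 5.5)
The plan is the standard diagonal-type criterion for compactness in $\ell^2$: a bounded set is relatively compact if and only if its tails are uniformly small. Note first that $\|b_\cdot\|_{\ell^2}^2=\FB_0\le \FB_\fw$, since $\|e_j\|_{H^1}\ge\|e_j\|=1$. Hence $\ell^2_\fw\hookrightarrow\ell^2$ continuously, and it remains to show that the closed unit ball of $\ell^2_\fw$ is relatively compact (in fact compact) in $\ell^2$.

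The key step is the following claim:
\[
\lambda_j:=\|\fw e_j\|^2+\|e_j\|^2_{H^1}\to\infty \qquad\text{as } j\to\infty.
\]
We argue by contradiction. Suppose a subsequence $\{e_{j_k}\}$ satisfies $\lambda_{j_k}\le \Lambda$. This subsequence is orthonormal in $H$, so it converges weakly to $0$ in $H$. On the other hand, it is bounded in $H^1$ and in the weighted space $\{g:\fw g\in L^2\}$. Since $\fw(x)\to\infty$ as $|x|\to\infty$ by \eqref{INTRO8-1}, we can split $D$ into $I_L=(-L,L)\times(-1,1)$ and its complement. On the complement,
\[
\|e_{j_k}\|^2_{L^2(D\setminus I_L)}\le \Lambda\Big(\inf_{|x|\ge L}\fw(x)\Big)^{-2},
\]
which is small uniformly in $k$ once $L$ is large. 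On the bounded set $I_L$, Rellich's theorem gives $e_{j_k}\to 0$ strongly in $L^2(I_L)$ along a further subsequence, because the weak limit is $0$. Combining the two pieces yields $e_{j_k}\to0$ strongly in $L^2$, which contradicts $\|e_{j_k}\|=1$. This tightness argument is the only real obstacle, and it is exactly where admissibility of $\fw$ is used.

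Given the claim, let $b_\cdot^{(n)}$ be a sequence with $\sum_j \lambda_j (b^{(n)}_j)^2\le 1$. The tail bound
\[
\sum_{j>J}(b_j^{(n)})^2\le \Big(\inf_{j>J}\lambda_j\Big)^{-1}
\]
holds uniformly in $n$ and tends to $0$ as $J\to\infty$. By a diagonal argument, extract a coordinatewise convergent subsequence, with limit $b_\cdot$. Fatou's lemma gives $\sum_j\lambda_j b_j^2\le1$. Coordinatewise convergence together with uniformly small tails gives convergence in $\ell^2$. Therefore bounded sets of $\ell^2_\fw$ are relatively compact in $\ell^2$, and closed balls $\overline{B}_{\ell^2_\fw}(\RR)$ are compact in $\ell^2$, which is the form used in Section~\ref{UFDS}.
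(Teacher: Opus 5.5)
Your proof is correct and follows the same route as the paper. You first show $\|\fw e_j\|^2+\|e_j\|^2_{H^1}\to\infty$ by a tightness-plus-Rellich contradiction. The paper gets its contradiction from $\|e_j-e_k\|^2=2$ via a Cauchy subsequence rather than from weak convergence to $0$, which is an immaterial difference. You then use the uniform tail bound and a diagonal extraction on finitely many coordinates, exactly as the paper does. Your Fatou step also gives closedness of $\overline{B}_{\ell^2_\fw}(\RR)$ in $\ell^2$, which the paper records separately in the proof of Lemma~\ref{lemmaMFE4}.
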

    \begin{proof}
        \textbf{Step I.} We claim that 
        \begin{align}
            \label{MFE50}\lim_{j\to\infty}\left(\|e_j\|^2_{H^1}+\|\fw e_j\|^2\right)=\infty.
        \end{align}
        By contradiction, suppose that there is a subsequence $j_k$ such that 
        \[\sup_{k\ge 1}\left(\|e_{j_k}\|^2_{H^1}+\|\fw e_{j_k}\|^2\right)<\infty.\]
        Since $\fw$ is an admissible weight function, for any $\epsilon>0$, one may choose $R>0$ sufficiently large so that 
        \begin{align}
            \label{MFE51}\int_{-1}^1\int_{|x|>R}e_{j_k}^2\ddd x\ddd y\le \frac{1}{\inf_{|x|\ge R}\fw(x)^2}\|\fw e_{j_k}\|^2<\frac{\epsilon^2}{16},\qquad \forall k\ge 1.
        \end{align}
        On the other hand, by applying the compact Sobolev embedding theorem, $\{e_{j_k}\}_{k\ge 1}$ is compact in $L^2(\OO_R)$ with $\OO_R:=(-R,R)\times (-1,1)$. Consequently, up to a subsequence, there is a function $e$ satisfying
        \[e_{j_k}\to e,\qquad \mbox{in $L^2(\OO_R)$},\]
        as $k\to\infty$. In particular, there exists $k_0\ge 1$ such that 
        \[\|e_{j_k}-e_{j_l}\|_{L^2(\OO_R)}<\frac{\epsilon}{2},\qquad \forall k,l\ge k_0.\]
        Together with \eqref{MFE51}, this implies that
        \[\|e_{j_k}-e_{j_l}\|<\epsilon,\qquad \forall k,l\ge k_0.\]
        However, as $\{e_j\}_{j\ge 1}$ is an orthonormal basis in $H$, one has 
        \[\|e_j-e_k\|^2=\|e_j\|^2+\|e_k\|^2=2,\qquad \forall j\neq k.\]
        This is a contradiction. 

        \noindent\textbf{Step I\!I.} Now let $\{b_\cdot^{(n)}\}_{n\ge 1}$ be a bounded sequence of elements in $\ell^2_{\fw}$. For any integer $J\ge 1$, one has 
        \begin{align*}
            \ssum_{j\ge J} (b_j^{(n)})^2&\le \frac{1}{\inf_{j\ge J}(\|\fw e_j\|^2+\|e_j\|^2_{H^1})}\ssum_{j\ge J} (b^{(n)}_j)^2(\|\fw e_j\|^2+\|e_j\|^2_{H^1})\\&\le \frac{\sup_{n\ge 1}\|b^{(n)}_\cdot\|^2_{\ell^2_\fw}}{\inf_{j\ge J}(\|\fw e_j\|^2+\|e_j\|^2_{H^1})},
        \end{align*}
        which, together with \eqref{MFE50}, implies that for any $\epsilon>0$, there is $J\ge 1$ such that 
        \begin{align}\label{MFE52}
            \ssum_{j\ge J} (b_j^{(n)})^2\le\frac{\epsilon^2}{16},\qquad\forall n\ge 1.
        \end{align}
        For $j<J$, since 
        \[\sup_{n\ge 1}(b_j^{(n)})^2\le \sup_{n\ge 1}\|b^{(n)}_{\cdot}\|^2_{\ell^2_{\fw}},\]
        by compactness and a diagonal argument, up to a subsequence, $\{(b^{(n)}_1,\ldots,b^{(n)}_{J-1})\}_{n\ge 1}$ is convergent. In particular, there is $n_0\ge 1$ such that 
        \[\ssum_{j<J}(b_j^{(n)}-b_j^{(m)})^2\le \frac{\epsilon^2}{4},\qquad\forall n,m\ge n_0.\]
        Combining this with \eqref{MFE52}, $\{b_\cdot^{(n)}\}_{n\ge 1}$ is Cauchy. This yields the compact embedding of $\ell^2_\fw$ into $\ell^2$. 
    \end{proof}
    Next, we present a uniform version of Lemma \ref{lemmaMFE3}.
    \begin{lemma}\label{lemmaMFE4}
    For any $\RR,R,\epsilon>0$, there exist integers $\fn_0,\ldots,\fn_k\ge 1$ and a finite set 
    \[K:=\{0,\fu_1,\ldots,\fu_l\}\subset C_0^\infty\cap H\]
    such that for any $\BB=\{b_j\}_{j\ge 1}\in \overline{B}_{\ell^2_\fw}(\RR)$, 
    \begin{align}
        \label{MFE53}\min_{i=0,\ldots,k}\sup_{\|u\|\le R}\E^\BB_u\int_{\fn_i}^{\fn_i+1}\dist_V(u_t,K)^2\ddd t\le\epsilon.
    \end{align}
    \end{lemma}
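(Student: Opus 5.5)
We argue by compactness of the noise coefficients. By Lemma~\ref{lemmaMFE5}, the closed ball $\overline{B}_{\ell^2_\fw}(\RR)$ is compact in $\ell^2$; it is closed in $\ell^2$ because $\FB_\fw$ is lower semicontinuous under coordinatewise convergence. For each fixed $\BB$ in this ball, Lemma~\ref{lemmaMFE3} gives an integer $\fn_0(\BB)$ and a finite set $K(\BB)=K_{m(\BB)}=\{0,\fu_1,\ldots,\fu_{m(\BB)}\}$ with
\[\sup_{\|u\|\le R}\E^{\BB}_u\int_{\fn_0(\BB)}^{\fn_0(\BB)+1}\dist_V(u_t,K(\BB))^2\ddd t\le \frac{\epsilon}{2}.\]
The sets $K_m$ are nested, so enlarging $m$ only decreases the distance. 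The plan is to show that this estimate persists, with $\epsilon$ in place of $\epsilon/2$, for all $\BB'$ in an $\ell^2$-neighbourhood $U_\BB$ of $\BB$, keeping the same $\fn_0(\BB)$ and $K(\BB)$. Extracting a finite subcover $U_{\BB_0},\ldots,U_{\BB_k}$, we then set $\fn_i:=\fn_0(\BB_i)$ and $K:=K_{\max_i m(\BB_i)}$, which yields \eqref{MFE53} with the minimum over $i$.

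The key step is continuity of the noise coefficients in $\ell^2$. Fix $\BB$ and $\BB'$, and drive both equations with the same Brownian motions $\{\beta_j\}$. Let $u$ and $u'$ be the solutions issued from the same $u_0$ with $\|u_0\|\le R$. We subtract the stochastic convolutions: set $z:=\int_0^t e^{\nu(t-s)L}\ddd(W^\BB-W^{\BB'})$, whose $V$- and $H$-moments on $[0,T]$ are controlled by $\|\BB-\BB'\|_{\ell^2}$ via the It\^o isometry in $H$. Then $r:=u-u'-z$ satisfies a random PDE. An energy estimate for $r$, using the Ladyzhenskaya inequality and Gronwall's inequality as in \eqref{A8}, gives
\[\E\int_0^{T}\|u-u'\|_V^2\ddd t\to0\qquad\text{as }\|\BB-\BB'\|_{\ell^2}\to0,\]
uniformly in $\|u_0\|\le R$ for $T=\fn_0(\BB)+1$. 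Here the Gronwall factor $\exp\bigl(C\int_0^T\|u\|_V^2\bigr)$ is handled on the event where this integral is bounded, using the exponential moments of Proposition~\ref{propA1}; the complement is controlled by Chebyshev's inequality together with the uniform moment bounds \eqref{A4} and \eqref{A6}, which depend only on $\FB_0\le\RR^2$. Since $\dist_V(\cdot,K)$ is $1$-Lipschitz in $V$, we get by Cauchy--Schwarz
\[\bigl|\E\!\int\dist_V(u,K)^2-\E\!\int\dist_V(u',K)^2\bigr|\lesssim \Bigl(\E\!\int\|u-u'\|_V^2\Bigr)^{1/2}\bigl(1+R^2\bigr)^{1/2},\]
which closes the argument.

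The main obstacle is that the noise $z$ lies only in $V$ with $\FB_1$-type regularity. Moreover, the pathwise energy estimate involves the trilinear terms $B(z,u)$ and $B(r,z)$, which require $z\in L^4$ in space-time. For these we use $\FB_1\le\RR^2$ and $\|\BB-\BB'\|_{\ell^2}\to0$. Interpolating, $\sum_j(b_j-b'_j)^2\|e_j\|_{H^1}^2$ is not small, but $\sum_j(b_j-b'_j)^2\|e_j\|_{H^{1/2}}^2\lesssim\|\BB-\BB'\|_{\ell^2}\RR$ is. By the parabolic smoothing of the Stokes semigroup, this suffices for $z\to0$ in $L^4((0,T)\times D)$ and in $L^2(0,T;V)$ in mean. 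If this route fails, the fallback is to show weak continuity of the laws in $\BB$ as in Step~I of Lemma~\ref{lemmaMFE2}, using tightness and pathwise uniqueness, and then to upgrade to convergence of the energy functional through the energy equality \eqref{MFE20}.
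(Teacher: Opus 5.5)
Your proposal is correct and follows the paper's proof: compactness of $\overline{B}_{\ell^2_\fw}(\RR)$ in $\ell^2$ via Lemma~\ref{lemmaMFE5}, persistence of the Lemma~\ref{lemmaMFE3} estimate under small $\ell^2$-perturbations of $\BB$ (the paper cites \eqref{A8} for this), a finite subcover, and taking the union of the finite sets. The one difference is how continuity in $\BB$ is obtained. The paper applies It\^o's formula directly to $\|u-u'\|^2$, where the noise difference contributes only a martingale and the correction $\|\BB-\BB'\|_{\ell^2}^2$. Your stochastic-convolution splitting, the $\FB_1$/$H^{1/2}$ interpolation and the fallback are therefore unnecessary. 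Even on your route, $z$ is already small in mean in $L^\infty(0,T;H)\cap L^2(0,T;V)$, hence in $L^4((0,T)\times D)$ by Ladyzhenskaya.
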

    \begin{proof}\textbf{Step I.} Fix $\RR,R,\epsilon>0$. Let $u\in B_H(R)$ and   
    \[\BB:=\{b_j\}_{j\ge 1},\qquad\BB^{(0)}:=\{b^{(0)}_j\}_{j\ge 1}\]
    belong to $\overline{B}_{\ell^2_\fw}(\RR)$. Denote by $\{u^{(0)}(t)\}_{t\ge0}$ and $\{u(t)\}_{t\ge0}$ the solutions of \eqref{MFE1} issued from the same initial data $u$ with the noise coefficients given by $\BB^{(0)}$ and $\BB$, respectively. By applying Lemma \ref{lemmaMFE3}, there is an integer $\fn_0\ge 1$ and a finite set 
    \[K_0:=\{0,\fu_1^{(0)},\ldots,\fu_{l_0}^{(0)}\}\subset C_0^\infty\cap H\]
    depending on $R,\epsilon,\BB^{(0)}$ such that
    \begin{align}
        \label{MFE54}\sup_{\|u\|\le R}\E\int_{\fn_0}^{\fn_0+1}\dist_V(u^{(0)}(t),K_0)^2\ddd t\le\frac{\epsilon}{4}.
    \end{align}
    On the other hand, by using \eqref{A8}, there is $\delta_0>0$ depending on $R,\RR,\epsilon,\nu,h,\BB^{(0)}$ such that for any $\BB\in \ell^2$ satisfying 
    \begin{align*}\|\BB-\BB^{(0)}\|_{\ell^2}\le \delta_0,
    \end{align*}
    one has
    \[\E\int_0^{\fn_0+1}\|u(t)-u^{(0)}(t)\|_V^2\ddd t\le \frac{\epsilon}{4}.\]
    Together with \eqref{MFE54}, this gives
    \begin{align}
        \label{MFE55}\sup_{\|u\|\le R}\E_u^\BB\int_{\fn_0}^{\fn_0+1}\dist_V(u_t,K_0)^2\ddd t\le\epsilon,\qquad \forall \BB\in B_{\ell^2}(\BB^{(0)},\delta_0).
    \end{align}

    \noindent\textbf{Step I\!I.} By the definition \eqref{INTRO10} of $\FB_\fw$, the set $\overline{B}_{\ell^2_\fw}(\RR)$ is closed in $\ell^2$. This, together with Lemma \ref{lemmaMFE5}, implies the existence of $\BB^{(0)},\ldots,\BB^{(k)}\in \overline{B}_{\ell^2_\fw}(\RR)$ and $\delta_0,\ldots,\delta_k>0$ such that 
    \[\overline{B}_{\ell^2_\fw}(\RR)\subset \cup_{i=0}^kB_{\ell^2}(\BB^{(i)},\delta_i).\]
    For $i=0,\ldots,k$, we denote by $\fn_i$ and $K_i$ the integer and the finite set corresponding to $R,\epsilon,\BB^{(i)}$, as indicated in Lemma \ref{lemmaMFE3}. After relabeling, we write  
    \[K:=\cup_{i=0}^k K_i=\{0,\fu_1,\ldots,\fu_l\}\subset C^\infty_0\cap H.\]
    Now for any $\BB\in \overline{B}_{\ell^2_\fw}(\RR)$, there is $\fn_i$ such that
    \[\sup_{\|u\|\le R}\E_u^\BB\int_{\fn_i}^{\fn_i+1}\dist_V(u_t,K_i)^2\ddd t\le\epsilon.\]
    Since $\dist_V(u_t,K)\le \dist_V(u_t,K_i)$, this ensures \eqref{MFE53}.
    \end{proof}
    Now we turn to the proof of Proposition \ref{propMFE4}.
    \begin{proof}[Proof of Proposition \ref{propMFE4}]The main strategy is to proceed as in the proof of Proposition \ref{propMFE2} and avoid the dependence of the finite set $K$ in \eqref{MFE56} on the specific noise coefficients, through an application of Lemma \ref{lemmaMFE4}. Thus, we only indicate the necessary modifications. 

    Fix $\epsilon,\RR>0$ and $\BB\in \overline{B}_{\ell^2_\fw}(\RR)$. Applying Lemma \ref{lemmaMFE4} with $\epsilon$ replaced by $\frac{\epsilon}{16}$ and $R$ determined in \eqref{MFE56-1}, one may find integers $\fn_0,\ldots,\fn_m\ge 1$ and a finite set 
    \[K:=\{0,\fu_1,\ldots,\fu_l\}\subset C^\infty_0\cap H\]
    depending only on $\RR,\epsilon,\nu,h$ such that \eqref{MFE53} holds. In particular, there is $\fn_i$ such that
    \[\sup_{\|u\|\le R}\E_u^\BB\int_{\fn_i}^{\fn_i+1}\dist_V(u_t,K)^2\ddd t\le\frac{\epsilon}{16}.\]
    Together with the Markov property, \eqref{A4}, and \eqref{A6}, this implies that for any $u\in H$,
    \begin{align*}\E_u^\BB\int_{\fn_i}^{\fn_i+1}\dist_V(u_t,K)^2\ddd t\le\frac{\epsilon}{16}+\CC_7\frac{\V(u)}{R^2},
    \end{align*}
    where $\CC_7$ depends on $\RR,\nu,h$ and $\V(u):=(1+\|u\|^2)^2$. Let $M\ge \max_{i=0,\ldots,m}\fn_i$. Proceeding as in the proof of \eqref{MFE36-1}, it follows 
    \begin{align*}
        \E_u^\BB\int_0^M\dist_V(u_t,K)^2\ddd t\le \CC_8(\fn_i+R^{-2})\V(u)+\left(\frac{\epsilon}{16}+\CC_8R^{-2}\right)M
    \end{align*}
    for some constant $\CC_8$ depending on $\RR,\nu,h$. By taking $R$ large enough so that
    \begin{align}\label{MFE56-1}
        \frac{\epsilon}{16}+\CC_8 R^{-2}\le \frac{\epsilon}{8},
    \end{align}
    one has 
    \begin{align}
        \label{MFE57}\E_u^\BB\int_0^M\dist_V(u_t,K)^2\ddd t\le \CC_9\V(u)+\frac{\epsilon}{8}M,
    \end{align}
    where $\CC_9$ depends on $\RR,\epsilon,\nu,h$.

    Let $\PPPPP^\BB_t$ and $\PPPPP^{\BB*}_t$ denote the Markov operators associated with the Markov family $(u_t,\PP^\BB_u)$. Applying \eqref{A4} combined with the Markov property, one has 
    \begin{align*}
        \PPPPP^\BB_M\V(u)\le \fc^M\V(u)+\frac{1-\fc^M}{1-\fc}C_{\RR,\nu,h},\qquad \fc:=e^{-\nu\alpha_0},
    \end{align*}
    which, together with \eqref{MFE57} and choosing $M$ sufficiently large, yields
    \begin{align}\label{MFE58}
        \E^\BB_u\int_{0}^{M}\dist_V(u_t,K)^2\ddd t+\tilde\fa_M\left(\PPPPP^\BB_M\V(u)-\V(u)\right)\le \frac{\epsilon M}{4},
    \end{align}
    where $\tilde\fa_M:=(1-\fc^M)^{-1}\CC_9.$ Define $\FI(jM):=\int_0^{jM}\dist_V(u_t,K)^2\ddd t$ and
    \[\tilde \Gamma_j:=\FI((j+1)M)-\FI(jM)+\tilde \fa_M\left(\V(u_{(j+1)M})-\V(u_{jM})\right).\]
    An application of the Markov property combined with \eqref{MFE58} yields
    \[\E_u^\BB(\tilde\Gamma_j|\FF_{jM})\le \frac{\epsilon M}{4} .\]
    Consequently, it follows that
    \begin{align}
        \label{MFE59}\FI(kM)+\tilde \fa_M(\V(u_{kM})-\V(u))\le \frac{k\epsilon M}{4}+\ssum_{j=0}^{k-1}\tilde d_j,
    \end{align}
    where $\tilde d_j:=\tilde\Gamma_j-\E_u^\BB(\tilde\Gamma_j|\FF_{jM})$ is a sequence of martingale differences.

    Repeating the derivation of \eqref{MFE42-1}, for any $q\ge 2$, one has 
    \[\sup_{j\ge 0}\E^\BB_u|\tilde d_j|^q\lesssim_{q,\RR,\epsilon,\nu,h}\V(u)^q.\]
    Together with the pathwise estimate \eqref{MFE59} and the argument carried out in Step V\!I in the proof of Proposition \ref{propMFE2}, this completes the proof.
    \end{proof}
\section{Mixing for finite-energy solutions}\label{MFE}
    The goal of this section is to establish polynomial mixing for the finite-energy solutions of the stochastic NS equation \eqref{INTRO1}. The proof is based on the classical coupling framework; see Section 3.1.3 in \cite{KS12} and \cite{Shi08}. For the reader's convenience, we briefly describe the main ingredients of this framework below.

    Let $\{\uu_t\}_{t\ge 0}$ be a process valued in $H\times H$, and let $\PP_{\uu}$ be a probability measure on the path space $C([0,\infty);H\times H)$ parameterized by $\uu\in H\times H$. Given a Markov family $(u_t,\PP_u)$, the pair $(\uu_t,\PP_\uu)$ is said to be a coupling of two copies of $(u_t,\PP_u)$, if under $\PP_\uu$, the distributions of $\{\pi_1\uu_t\}_{t\ge 0}$ and $\{\pi_2\uu_t\}_{t\ge 0}$ coincide with that of $\{u_t\}_{t\ge 0}$ under $\PP_{\pi_1\uu}$ and $\PP_{\pi_2\uu}$, respectively. Here, for $i=1,2$, $\pi_i$ denotes the projection onto the $i$-th component. As presented in \cite{NZ24-1}, the uniqueness of the invariant measure and the polynomial convergence estimate \eqref{INTRO13} hold, provided that the following result is true.
    \begin{prop}
        \label{propMFE5}Under the assumption of Theorem \ref{MT2}, for any $q\ge 2$, there exists a time $T>0$, a closed ball $B\subset H$, a coupling $(\uu_t,\PP_{\uu})$ of $(u_t,\PP_u)$, and a stopping time $\sigma$ such that $(\uu_{kT},\PP_{\uu})$ is a Markov process satisfying the following properties.
    \begin{enumerate}
    \item \textbf{Recurrence.}
    There are constants $\delta,C_{\rec}>0$ such that
    \begin{align}\label{MFE60}
        \E_{\uu}e^{\delta\tau_B}\le C_{\rec}\big(1+\|u\|^2+\|u'\|^2\big)
    \end{align}
    for any $\uu=(u,u')\in H\times H$, where
    \begin{align}\label{MFE61}
        \tau_B:=\inf\{k\ge0:\uu_{kT}\in B\times B\}.
    \end{align}
    \item \textbf{Polynomial squeezing.}
    There is a constant $C_{\sq}>0$ such that
    \begin{align}\label{MFE62}
        \|u_t-u'_t\|^2\le C_{\sq}e^{-\frac{\nu\alpha_0}{2}t}\|u-u'\|^2,\qquad 0\le t\le\sigma
    \end{align}
    for any $\uu=(u,u')\in B\times B$, where
    \[u_t:=\pi_1\uu_t,\qquad u'_t:=\pi_2\uu_t.\] 
    Moreover, the stopping time $\sigma$ satisfies the following properties.
    \begin{enumerate}
        \item For any $\uu=(u,u')\in B\times B$,
        \begin{align}\label{MFE63}
        \PP_{\uu}\{\sigma=\infty\}\ge\tfrac{1}{2}.
        \end{align}
        \item There is a constant $C_q>0$
    such that
    \begin{align}
        \label{MFE64}
        \E_{\uu}[\I_{\{\sigma<\infty\}}\sigma^q]\le C_q,
    \end{align}
    and
    \begin{align}
        \label{MFE65}
        \E_{\uu}[\I_{\{\sigma<\infty\}}(\|u_{\sigma}\|^{2q}+\|u'_{\sigma}\|^{2q})]\le C_q
    \end{align}
    for any $\uu\in B\times B$.
    \end{enumerate}
    \end{enumerate}
    \end{prop}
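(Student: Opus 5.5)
We follow the coupling scheme of \cite{KS12,Shi08,NZ24-1}. The new ingredients are the Foia\c{s}--Prodi estimate of Proposition~\ref{propMFE1} and the uniform growth bound of Proposition~\ref{propMFE4}.

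\textbf{Choice of $K$, $N$, $a$.} Fix $\RR$ with $\FB_\fw\le\RR^2$. Apply Proposition~\ref{propMFE4} with $\epsilon:=\nu\alpha_0/(4\CC_1)$, which gives a finite set $K$ depending only on $\epsilon,\RR,\nu,h$. Proposition~\ref{propMFE1} applied to this $K$ then produces $N=N(\RR,\nu,h)$ and $a>0$. With this choice, on the event
\[\GG_\rho:=\Big\{\sup_{t\ge0}\Big(\int_0^t\dist_V(u(s),K)^2\ddd s-\epsilon t\Big)\le\rho\Big\}\]
estimate \eqref{MFE3} gives $\|u(t)-v(t)\|^2\le e^{\CC_1\rho}e^{-\frac{3\nu\alpha_0}{4}t}\|u-u'\|^2$.

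\textbf{Construction of the coupling.} We build $(\uu_t,\PP_\uu)$ on $[0,T]$ by a maximal-coupling construction. By the Girsanov theorem, the law of $v$ (solution of \eqref{MFE2}) differs from that of the solution $u'(t)$ issued from $u'$ only through the drift $a\PPP_N(u-v)$. This drift lies in $\mathrm{span}\{e_1,\dots,e_N\}$, where $b_j\neq0$ by \eqref{INTRO11}. The total variation distance between the two laws on $[0,T]$ is therefore controlled by
\[\Big(\E\int_0^T\ssum_{j\le N}b_j^{-2}a^2|\langle u-v,e_j\rangle|^2\ddd t\Big)^{1/2}\lesssim \|u-u'\|\]
on $\GG_\rho$ (Novikov's condition holds after stopping when $\int_0^t\|u-v\|^2$ exceeds its expected bound). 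We take a maximal coupling of $(\DDD(v),\DDD(u'))$, glue it with $u$, and iterate on the intervals $[kT,(k+1)T]$. The stopping time $\sigma$ is the first time at which either the coupling fails ($u'\ne v$) or the growth event $\GG_\rho$ is violated on the current block, i.e. $\int_0^t\dist_V^2-\epsilon t>\rho+L(t)$ with a slowly growing buffer.

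\textbf{Squeezing.} Property \eqref{MFE62} on $[0,\sigma]$ follows from the bound above with $C_\sq=e^{\CC_1\rho}$, since $-\frac{3}{4}\le-\frac{1}{2}$. For \eqref{MFE63}, choose $B$ as a small ball and $d:=\|u-u'\|$ small. The probability of coupling failure on the $k$-th block is then $\lesssim e^{-ckT}d$, which is summable. The probability that $\GG_\rho$ fails is $\lesssim(1+\rho)^{-q}$ by \eqref{MFE56}. Taking $\rho$ large and the radius of $B$ small gives \eqref{MFE63}.

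For \eqref{MFE64}, we need $\PP\{k\le\sigma<\infty\}\lesssim k^{-q-1}$. This is obtained from \eqref{MFE56} with $T=k$, together with the exponential decay of the coupling failure probabilities. Property \eqref{MFE65} follows from Hölder's inequality combined with the moment bounds \eqref{A4} evaluated at the random time $\sigma$ (sum over the blocks and use $\sup_{t\le n}$ moment estimates).

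\textbf{Recurrence.} Property \eqref{MFE60} is the standard consequence of the Lyapunov inequality \eqref{A4}, which brings both components into a large ball in exponential mean time. We then need to reach the small ball $B\times B$. For this we use approximate controllability to $0$: with $h$ present, one instead steers toward the point of the deterministic attractor and centers $B$ there. The noise has to be small on high modes; the irreducibility argument uses the support of the Wiener process. Concretely, we prove $\inf_{u,u'\in B_H(R)}\PP\{\uu_T\in B\times B\}>0$ and conclude by the Kac-type argument.

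\textbf{Main obstacle.} I expect the main obstacle to be this irreducibility step in the degenerate setting, since only finitely many modes are forced. I would first try to avoid it by letting $B$ be centered at a point of the form $\hat u$ (the solution with zero noise after a long time, from the deterministic dissipation) and using continuity in the noise path. Compatibility of the tail bound with the Girsanov step is comparatively routine.
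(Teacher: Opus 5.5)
Your overall architecture matches the paper's: $K$ from Proposition~\ref{propMFE4}, $N,a$ from Proposition~\ref{propMFE1}, a block-wise maximal coupling of $\DDD(v)$ and $\DDD(u')$, $\sigma$ as the minimum of the coupling-failure and growth-violation times, and \eqref{MFE65} via block-wise moments and Cauchy--Schwarz. Two steps, however, do not go through as stated.

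\textbf{Coupling failure on the $k$-th block.} Your argument does not justify the claim that the coupling fails on the $k$-th block with probability $\lesssim e^{-ckT}d$. Conditionally on $\uu_{kT}$, the failure probability on $[kT,(k+1)T)$ equals $\|\lambda_T(u_{kT},u'_{kT})-\lambda'_T(u'_{kT})\|_{\var}$, which is a function of $\uu_{kT}$ alone. Your Girsanov argument bounds it only after a truncation of the growth functional \emph{restarted at} $kT$. So, exactly as in Lemma~\ref{lemmaMFE6}, the bound contains the term $\CC_q(1+\|u_{kT}\|^{8q})(1+\rho)^{-q}$. This term does not depend on $d$ and, for fixed $\rho$, does not decay in $k$. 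Since $\PP_{\uu}\{\sigma\ge kT\}$ should stay $\ge\frac12$, the resulting bound on $\sum_k\PP_{\uu}\{\text{failure on block }k\}$ is infinite, and neither \eqref{MFE63} nor \eqref{MFE64} follows.

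Your global event $\GG_\rho$ and the buffer $L(t)$ in $\sigma$ do not help. They control $\|u_{kT}-u'_{kT}\|$, not the tail of the fresh growth functional on block $k$. The missing step is Lemma~\ref{lemmaMFE9}, \eqref{MFE82}--\eqref{MFE85}: apply Lemma~\ref{lemmaMFE6} at $\uu_{kT}$ with truncation level $\rho+\frac{\nu\alpha_0}{4\CC_1}kT$, growing linearly in $k$. After \eqref{A4} the tail term becomes $\lesssim(1+\rho+kT)^{-4q}$, which is summable and small. The price $e^{\frac{\nu\alpha_0}{8}kT}$ in the Girsanov term is paid by the contraction $\|u_{kT}-u'_{kT}\|\le e^{\frac{\CC_1\rho}{2}}e^{-\frac{\nu\alpha_0}{4}kT}\|u-u'\|$, which holds on $\{\sigma\ge kT\}$ by \eqref{MFE84}.

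\textbf{Recurrence.} This step is left open, and your proposed fallback does not work in this generality. No smallness of $h$ relative to $\nu$ is assumed, so the noise-free dynamics need not have a globally attracting point. The zero-noise state at a large time therefore depends on $u_0$, and continuity in the noise path gives no uniform lower bound for hitting a fixed small ball from $B_H(R)$. The paper obtains a common target $\fu$ from Lemma~\ref{lemmaMFE7}, which uses the nondegenerate modes $1,\dots,N$ with $N\ge N_2(\nu,h)$; this is why $\fu$ depends on $N$.

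Even with marginal irreducibility, you still need $\inf\PP_{\uu}\{\uu_T\in B\times B\}>0$. This does not follow automatically, because the two components of a maximal coupling need not enter $B$ simultaneously. The paper establishes it as Lemma~\ref{lemmaMFE8}, combining Lemmas~\ref{lemmaMFE6}, \ref{lemmaMFE7} and Proposition~\ref{propMFE4}. Only then does the Lyapunov function $1+\|u\|^2+\|u'\|^2$ yield \eqref{MFE60}, as in Proposition~\ref{propMFE6}.
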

    The proof of this proposition is carried out in Sections \ref{recurrence} and \ref{PS}. To prepare for the proof, we establish necessary groundwork in the next two subsections.

    \subsection{Construction of the coupling}\label{construction}
    In this subsection, we construct a coupling $(\uu_t,\PP_{\uu})$, following the approach developed in \cite{NZ24,NZ24-1}, which is briefly described below. 
    
    Introduce parameters $a>0$, $N\ge1$, and $T>0$, whose values will be specified later. For any $u,u'\in H$, we denote by $\{u(t)\}_{t\ge0}$ and $\{u'(t)\}_{t\ge0}$ the solutions of \eqref{MFE1} issued from $u$ and $u'$, respectively. Let $\{v(t)\}_{t\ge0}$ satisfy the auxiliary equation \eqref{MFE2}. Define
    \begin{align}
        \label{MFE65-1}
        \lambda_T(u,u'):=\DDD(\{v(t)\}_{t\in[0,T]}),\qquad \lambda'_T(u'):=\DDD(\{u'(t)\}_{t\in[0,T]}).
    \end{align}
    By Theorem~1.2.28 in~\cite{KS12}, these measures have a maximal coupling $(\V_T(u,u'),\V'_T(u,u'))$ on a probability space $(\tilde{\Omega},\tilde{\FF},\tilde{\PP})$. In particular,
    \begin{align}
        \label{MFE65-2}\tilde{\PP}\{\V_T(u,u')\neq\V'_T(u,u')\}=\|\lambda_T(u,u')-\lambda'_T(u')\|_{\var}.
    \end{align}
    Let $\{\tv(t)\}_{t\in[0,T]}$ and $\{\tu'(t)\}_{t\in[0,T]}$ denote the flows of $\V_T(u,u')$ and $\V'_T(u,u')$, respectively. Then, $\{\tv(t)\}_{t\in[0,T]}$ satisfies
    \begin{align}
    \label{MFE66}
    \partial_t \tv-\nu L\tv+B(\tv)+a\PPP_N\tv=h+\tilde\eta,\qquad \tv|_{t=0}=u',
    \end{align}
    where $\tilde\eta$ has the same distribution as that of $\eta+a\PPP_N u$. Define $\{\tu(t)\}_{t\in[0,T]}$ as the solution of
    \begin{align}
    \label{MFE67}
    \partial_t \tu-\nu L\tu+B(\tu)+a\PPP_N\tu=h+\tilde\eta,\qquad \tu|_{t=0}=u.
    \end{align}
    As presented in Section 2 of \cite{NZ24-1}, by the uniqueness-in-law of the above stochastic PDE, one has
    \[\DDD(\{\tu(t)\}_{t\in[0,T]})=\DDD(\{u(t)\}_{t\in[0,T]}).\]
    
    For any $u,u'\in H$, $\omega\in \tilde\Omega$, and $t\in[0,T]$, we define
    \[\mathcal{R}_t(u,u',\omega):=\tu(t),\qquad\mathcal{R}'_t(u,u',\omega):=\tu'(t),\qquad\mathcal{R}''_t(u,u',\omega):=\tv(t).\]
    Let $\{(\Omega^k,\FF^k,\PP^k)\}_{k\ge0}$ be independent copies of $(\tilde{\Omega},\tilde{\FF},\tilde{\PP})$ and let $(\Omega,\FF,\PP)$ denote their product. For $\omega=(\omega^0,\omega^1,\ldots)\in\Omega$ and $t=kT+s$ with $s\in(0,T]$, $k\ge0$, we recursively define
    \begin{align} \label{MFE67-1}\tu_t:=\mathcal{R}_s\big(\tu_{kT},\tu'_{kT},\omega^k\big),\quad\tu'_t:=\mathcal{R}'_s\big(\tu_{kT},\tu'_{kT},\omega^k\big),\quad\tv_t:=\mathcal{R}''_s\big(\tu_{kT},\tu'_{kT},\omega^k\big).
    \end{align}
    Let $\uu_t:=(\tu_t,\tu'_t)$ and denote the resulting family by $(\uu_t,\PP_{\uu})$. It is clear that this family is a coupling of two copies of $(u_t,\PP_u)$ and that $(\uu_{kT},\PP_{\uu})_{k\ge0}$ is a time-homogeneous Markov chain. In the following subsections, we shall choose $a$, $N$, and $T$ so that $(\uu_t,\PP_{\uu})$ satisfies the recurrence and polynomial squeezing properties stated in Proposition \ref{propMFE5}.
    \subsection{Girsanov estimate}
    In this subsection, we establish an estimate on the total variational distance between the distributions $\DDD(\{v(t)\}_{t\ge0})$ and $\DDD(\{u'(t)\}_{t\ge0})$. The proof is based on Propositions \ref{propMFE1} and \ref{propMFE4} combined with the Girsanov theorem.
    \begin{lemma}\label{lemmaMFE6}
    For any $\RR>0$, there is an integer $N_1\ge 1$ and a constant $a>0$ depending on $\RR,\nu,h$ such that for any $q\ge 2$, $T,\rho>0$, and $u,u'\in H$,
    \begin{align}\label{MFE68}
    \|\lambda_T(u,u')-\lambda'_T(u')\|_{\var}\le \CC_q\frac{1+\|u\|^{8q}}{(1+\rho)^q}+\frac{\sqrt{6}a}{b^*_{N}\sqrt{\nu\alpha_0}}e^{\frac{\CC_1\rho}{2}}\|u-u'\|,
    \end{align}
    provided that the conditions \eqref{INTRO10} and \eqref{INTRO11} are satisfied with $N\ge N_1$. Here, the probability distributions $\lambda_T(u,u'),\lambda'_T(u')$ are defined in \eqref{MFE65-1},
    \[b^*_{N}:=\min_{1\le j\le N}|b_j|>0,\]
    $\alpha_0$ is the constant in the Poincar\'e inequality, $\CC_1$ is the constant given in \eqref{MFE3}, and $\CC_q$ depends on $q,\RR,\nu,h$.
    \end{lemma}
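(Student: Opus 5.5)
The plan is to compare the two laws through the Girsanov theorem after stopping the processes on a bad event. First fix $\epsilon>0$ with $\CC_1\epsilon\le \frac{\nu\alpha_0}{2}$. Apply Proposition \ref{propMFE4} with this $\epsilon$ and the given $\RR$; this produces a finite set $K\subset C_0^\infty\cap H$ depending only on $\epsilon,\RR,\nu,h$, hence only on $\RR,\nu,h$. Then Proposition \ref{propMFE1} with this $K$ gives $N_1$ and $a>0$, again depending only on $\RR,\nu,h$. For $N\ge N_1$ the Foia\c{s}--Prodi estimate \eqref{MFE3} still holds with the same $a$, since Lemma \ref{lemmaMFE1} is monotone in $N$ ($\|\PPP_N g\|\le\|\PPP_{N'}g\|$ for $N\le N'$).

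Introduce the stopping time
\[\tau:=\inf\Big\{t\ge0\Big|\int_0^t\dist_V(u(s),K)^2\ddd s-\epsilon t\ge\rho\Big\}.\]
Proposition \ref{propMFE4} with $T=0$ gives $\PP\{\tau<\infty\}\le \CC_q(1+\|u\|^{8q})(1+\rho)^{-q}$, uniformly over $\BB\in\overline B_{\ell^2_\fw}(\RR)$. On $\{t\le\tau\}$, \eqref{MFE3} yields
\[\|u(t)-v(t)\|^2\le e^{\CC_1\rho}e^{-\frac{\nu\alpha_0}{2}t}\|u-u'\|^2 .\]
Next let $\hat v$ solve \eqref{MFE2} with the feedback replaced by $\I_{\{t\le\tau\}}a\PPP_N(u-v)$. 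Then $\hat v=v$ on $[0,\tau]$, so $\PP\{\hat v\neq v\ \text{on}\ [0,T]\}\le\PP\{\tau<\infty\}$, and therefore
\[\|\lambda_T(u,u')-\lambda'_T(u')\|_{\var}\le \PP\{\tau<\infty\}+\|\DDD(\hat v)-\DDD(u')\|_{\var}.\]

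For the last term, write $\hat v$ as a solution of \eqref{MFE1} issued from $u'$ with noise $\eta+\partial_t\int_0^t\fg\,\ddd s$, where $\fg:=\I_{\{t\le\tau\}}a\PPP_N(u-v)$. Because $b_j\ne0$ for $j\le N$ by \eqref{INTRO11}, this drift lies in the range of the noise, and the Girsanov density uses the process $\xi_j:=b_j^{-1}\langle\fg,e_j\rangle$ for $j\le N$. Its energy is bounded deterministically:
\[\int_0^\infty\ssum_{j\le N}|\xi_j|^2\ddd t\le \frac{a^2}{(b_N^*)^2}\int_0^\infty e^{\CC_1\rho}e^{-\frac{\nu\alpha_0}{2}t}\|u-u'\|^2\ddd t=\frac{2a^2e^{\CC_1\rho}}{(b_N^*)^2\nu\alpha_0}\|u-u'\|^2.\]
By the standard Girsanov-based total variation bound (e.g. Theorem A.10.1 in \cite{KS12}, as used in \cite{NZ24-1}), this gives $\|\DDD(\hat v)-\DDD(u')\|_{\var}\le C\big(\E e^{6\int|\xi|^2}-1\big)^{1/2}$, or a variant of it. Combined with the elementary bound $\sqrt{e^x-1}\le\dots$ for small $x$ (and the trivial bound $1$ for large $x$), this should produce the constant $\frac{\sqrt6 a}{b^*_N\sqrt{\nu\alpha_0}}e^{\CC_1\rho/2}\|u-u'\|$.

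I expect the main obstacle to be this last step: tracking the exact form of the Girsanov variation bound so that the constant $\sqrt6$ comes out. The other delicate point is the uniformity of $N_1,a,\CC_q$ over $\BB$, which is ensured by Proposition \ref{propMFE4}.
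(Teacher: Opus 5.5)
Your proposal is correct and follows the paper's proof essentially step by step: the same choice $\epsilon=\frac{\nu\alpha_0}{2\CC_1}$ and the resulting $K,N_1,a$ from Propositions~\ref{propMFE4} and~\ref{propMFE1}, the truncation of the feedback at $\tau^u_\rho$, the deterministic bound $\int_0^{\tau^u_\rho}\|\PPP_N(v^\rho-u)\|^2\ddd t\le\frac{2}{\nu\alpha_0}e^{\CC_1\rho}\|u-u'\|^2$, and a Girsanov total-variation estimate. The one difference is that the paper splits $\Omega=\PPP_N\Omega\oplus\QQQQQ_N\Omega$ via Lemma~3.3.13 of \cite{KS12} and applies the finite-dimensional Girsanov bound for each fixed $\omega^{(2)}$, whereas you apply it directly to $(\beta_1,\dots,\beta_N)$ with a drift adapted to the full filtration; both are legitimate.

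To close the step you left open, use the bound $\frac12\big((\E\exp(6\int_0^\infty|\xi|^2\ddd t))^{1/2}-1\big)^{1/2}$, which has the square root on the expectation. With your energy bound this is at most $\frac12(e^{s}-1)^{1/2}$ for $s=\frac{6a^2}{\nu\alpha_0(b_N^*)^2}e^{\CC_1\rho}\|u-u'\|^2$. The elementary inequality $1\wedge\frac12\sqrt{e^s-1}\le\sqrt s$ then gives exactly $\frac{\sqrt6\,a}{b_N^*\sqrt{\nu\alpha_0}}e^{\CC_1\rho/2}\|u-u'\|$. The form you wrote, without that square root and with $C=\frac12$, would only give $2\sqrt3$ in place of $\sqrt6$.
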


    \begin{proof} Let $\{u(t)\}_{t\ge0}$ be the solution of \eqref{MFE1} issued from $u$, and let $\{v(t)\}_{t\ge0}$ satisfy the auxiliary equation \eqref{MFE2}. The proof is divided into the following three steps.
    
    \noindent\textbf{Step I.} Fix $\RR>0$. Applying Proposition \ref{propMFE4} with $\epsilon=\frac{\nu\alpha_0}{2\CC_1}$, there is a finite set $K\subset C_0^\infty\cap H$ such that the probabilistic growth estimate \eqref{MFE56} holds. Let $N_1,a$ be the constants indicated in Proposition \ref{propMFE1}, which corresponds to the finite set $K$ and thus depends only on $\RR,\nu,h$. Suppose that the noise coefficients $\{b_j\}_{j\ge 1}$ satisfy \eqref{INTRO10} and \eqref{INTRO11} with $N\ge N_1$. Define
    \begin{align}
        \label{MFE68-1}\Lambda_u(t):=\int_0^t\dist_V(u(s),K)^2\ddd s-\frac{\nu\alpha_0}{2\CC_1}t,\qquad\tau^u_\rho:=\inf\{t\ge0| \Lambda_u(t)\ge \rho\}.
    \end{align}
    Introduce $\{v^\rho(t)\}_{t\ge0}$ as the solution of the truncated auxiliary equation
    \begin{align}
    \label{MFE69}
    \partial_t v^\rho-\nu Lv^\rho+B(v^\rho)+a\I_{\{t<\tau^u_\rho\}}\PPP_N(v^\rho-u)=h+\eta,\qquad v^\rho|_{t=0}=u'.
    \end{align}
    In particular, one has
    \begin{align}\label{MFE69-1}
        v^\rho(t)=v(t),\qquad 0\le t\le\tau^u_\rho.
    \end{align}
    Therefore, for any $t\le\tau^u_\rho$, an application of \eqref{MFE3} yields
    \begin{align}\label{MFE70}
    \|u(t)-v^\rho(t)\|^2&\le\|u-u'\|^2\exp\left(-\nu\alpha_0t+\CC_1\int_0^t\dist_V(u(s),K)^2\ddd s\right)\notag\\&\le e^{\CC_1\rho}e^{-\frac{\nu\alpha_0}{2}t}\|u-u'\|^2.
    \end{align}
    On the other hand, since $\{v^\rho(t)\}_{t\ge0}$ coincides with $\{v(t)\}_{t\ge0}$ on $[0,T]$ whenever $\tau^u_\rho>T$, by applying Proposition~\ref{propMFE4}, its distribution $\lambda^\rho_T(u,u'):=\DDD(\{v^\rho(t)\}_{t\in[0,T]})$ satisfies 
    \begin{align}\label{MFE71}
    \|\lambda^\rho_T(u,u')-\lambda_T(u,u')\|_{\var}&\le \mathbb P\{\tau^u_\rho\le T\}\notag\\&\le\mathbb P\Big\{\sup_{t\ge0}\Lambda_u(t)\ge\rho\Big\}\le C_{q,\RR,\nu,h}\frac{1+\|u\|^{8q}}{(1+\rho)^q}.
\end{align}

    \noindent\textbf{Step I\!I.} It remains to estimate the total variational distance between $\lambda^\rho_T(u,u')$ and $\lambda'_T(u')$. Such an estimate is obtained through the Girsanov theorem. Without loss of generality, we assume that the probability space $(\Omega,\FF,\PP)$ has the following specific structure. The sample space $\Omega:= C_0([0,\infty);H)$ is the space of continuous functions valued in $H$ and vanishing at $t=0$, endowed with the topology of uniform convergence on compact intervals. The probability $\PP$ is the distribution of the Wiener process
    \begin{align}\label{MFE72}
    W(t):=\ssum_{j\ge 1}b_je_j\beta_j(t),
    \end{align}
    and $\FF$ is the completion of the Borel $\sigma$-algebra of $\Omega$. Define a measurable transform
    \[\bfPhi^{u,u'}:\omega_t\mapsto \omega_t-a\int_0^t\I_{\{s<\tau^u_\rho\}}\PPP_N(v^\rho(s)-u(s))\ddd s,\qquad\Omega\to\Omega.\]
    By the pathwise uniqueness of the stochastic NS equation \eqref{MFE1}, one has 
    \[\PP\big\{u'(t,\bfPhi^{u,u'}(\omega))=v^\rho(t,\omega)\mbox{ for any $t\ge0$}\big\}=1.\]
    This yields
    \begin{align}
        \label{MFE73}
        \|\lambda^\rho_T(u,u')-\lambda'_T(u')\|_{\var}\le \|\PP-\bfPhi^{u,u'}_*\PP\|_{\var},
    \end{align}
    where $\bfPhi^{u,u'}_*\PP$ denotes the push-forward measure under the transform $\bfPhi^{u,u'}$. To bound the right-hand side of \eqref{MFE73}, we introduce the following decomposition 
    \[\Omega=\PPP_N\Omega \oplus \QQQQQ_N\Omega, \]
    where $\PPP_N$ and $\QQQQQ_N:=I-\PPP_N$ are understood as natural projections
    \begin{align*}
        \PPP_N:\Omega\to C_0([0,\infty);\PPP_NH),\qquad \QQQQQ_N:\Omega\to C_0([0,\infty);\QQQQQ_NH).
    \end{align*}
    Accordingly, we write $\omega=(\omega^{(1)},\omega^{(2)})$. Thus, the transformation~$\bfPhi^{u,u'}$ can be expressed as
    \[\bfPhi^{u,u'}(\omega^{(1)},\omega^{(2)})=(\bfPsi^{u,u'}(\omega^{(1)},\omega^{(2)}),\omega^{(2)}),\]
    where 
    \[\bfPsi^{u,u'}: \omega_t\mapsto\omega_t^{(1)}-a\int_0^t\I_{\{s<\tau^u_\rho\}}\PPP_N(v^\rho(s,\omega)-u(s,\omega))\ddd s,\qquad\Omega\to C_0([0,\infty);\PPP_NH).\]
    Let $\PP_N:=(\PPP_N)_*\PP$ and $\PP_N^{\perp}:=(\QQQQQ_N)_*\PP$. An application of Lemma 3.3.13 in \cite{KS12} implies
    \begin{align}\label{MFE74}
        \|\PP-\bfPhi^{u,u'}_*\PP\|_{\var}\le \int_{\QQQQQ_N\Omega}\|\bfPsi^{u,u'}_*(\PP_N,\omega^{(2)})-\PP_N\|_{\var}\PP^{\perp}_N(\mathrm{d}\omega^{(2)}).
    \end{align}
    Furthermore, by the Girsanov theorem, for each $\omega^{(2)}$, one has
    \begin{align}\label{MFE75}
    	\|&\bfPsi^{u,u'}_*(\PP_N,\omega^{(2)})-\PP_N\|_{\var} \notag\\&\quad\le\frac{1}{2}\left(\left(\E_N\exp\left(6a^2(b_N^*)^{-2}\int_0^{\tau^u_\rho}\|\PPP_N(v^\rho(t,\omega)-u(t,\omega))\|^2\mathrm{d} t\right)\right)^{\frac{1}{2}}-1\right)^{\frac{1}{2}},
    \end{align}
    if the following Novikov condition  
    \begin{align*}
        \E_N\exp\left(c\int_0^{\tau^u_\rho}\|\PPP_N(v^\rho(t,\omega)-u(t,\omega))\|^2\mathrm{d}t\right)<\infty
    \end{align*}
    is satisfied for any $ c>0$ and $\omega^{(2)}$. Here, $\E_N$ is the expectation with respect to $\PP_N$. 

    \noindent\textbf{Step I\!I\!I.} By applying \eqref{MFE70}, it follows that
    \begin{align*}
        \int_0^{\tau^u_\rho}\|\PPP_N(v^\rho(t)-u(t))\|^2\ddd t\le \int_0^{\tau^u_\rho}\|v^\rho(t)-u(t)\|^2\ddd t\le \frac{2}{\nu\alpha_0}e^{\CC_1\rho}\|u-u'\|^2,
    \end{align*}
    which verifies the Novikov condition  
    \begin{align}
        \label{MFE76}
        \E_N\exp\left(c\int_0^{\tau^u_\rho}\|\PPP_N(v^\rho(t,\omega)-u(t,\omega))\|^2\mathrm{d}t\right)\le \exp\left(\frac{2c}{\nu\alpha_0}e^{\CC_1\rho}\|u-u'\|^2\right).
    \end{align}
    Combining the above estimate and \eqref{MFE73}--\eqref{MFE75}, one has
    \begin{align*}
        \|\lambda^\rho_T(u,u')-\lambda'_T(u')\|_{\var}\le 1\wedge\left(\frac{1}{2}\left(\exp\left(\frac{6a^2}{\nu\alpha_0(b_N^*)^2}e^{\CC_1\rho}\|u-u'\|^2\right)-1\right)^{\frac{1}{2}}\right).
    \end{align*}
    This, together with \eqref{MFE71} and the estimate 
    \[1\wedge\left(\frac{1}{2}\sqrt{e^{s}-1}\right)\le \sqrt{s},\qquad \forall s\ge0,\]
    implies \eqref{MFE68} and thus completes the proof.
    \end{proof}

    \subsection{Recurrence}\label{recurrence}
    In this subsection, we verify the recurrence property stated in Proposition \ref{propMFE5}, by following the argument in Section~5.1 of~\cite{NZ24-1}. We begin with the following irreducibility property of the Markov family $(u_t,\PP_u)$ associated with the stochastic NS equation \eqref{MFE1} in the finite-energy setting.

    \begin{lemma}\label{lemmaMFE7} 
        There is an integer $N_2\ge 1$ depending only on $\nu,h$ such that if \eqref{INTRO11} is satisfied for some $N\ge N_2$, then there exists a function $\fu\in H^1$ with $\|\fu\|\le 1$ depending on $\nu,h,N$ and satisfying the following assertion. For any $R,d>0$, there is $T_0>0$ such that
        \begin{align}\label{MFE77}
        \inf_{u_0\in B_H(R)} P_{T}(u_0,B_{H}(\fu,d))\ge p>0
        \end{align}
        for any $T\ge T_0$. Here, $T_0$ depends on $R,d,\nu$ and $p$ depends on $T,d,\nu,h,N,\{b_j\}_{j\ge 1}$.
    \end{lemma}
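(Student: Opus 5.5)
The plan follows the standard irreducibility argument (cf. Section~5.1 of \cite{NZ24-1}) and combines three ingredients: dissipation of the deterministic dynamics, a Foia\c{s}--Prodi type stabilization by the first $N$ modes, and the support of the Gaussian noise in $\PPP_N H$. Let $\fu$ be the solution at a large time of the deterministic equation \eqref{MFE1} with $\eta\equiv0$, or more precisely a point of the global attractor of the deterministic NS flow. A natural choice is to run the deterministic flow from $0$ for a long time and take $\fu$ on the resulting trajectory, rescaled by the energy bound if needed. The normalization $\|\fu\|\le 1$ should be arranged by the choice of this reference point (for instance an $H^1$ point near the attractor); I would not worry about it beyond this.

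\textbf{Step 1: deterministic stabilization.} Let $S_t$ denote the deterministic NS flow with forcing $h$. I will show that controls of the form $\zeta(t)\in\PPP_NH$ can steer any $u_0\in B_H(R)$ close to $\fu$. The construction has two stages.
\begin{enumerate}
\item With $\zeta=0$ on $[0,T_1]$, energy dissipation gives $\|S_{T_1}u_0\|\le R_0(\nu,h)$ once $T_1\ge T_1(R,\nu)$.
\item Next apply the feedback $\zeta=-a\PPP_N(y-\bar y)$, where $\bar y$ is the trajectory whose value at the final time is $\fu$.
\end{enumerate}
The deterministic analogue of Proposition~\ref{propMFE1} then gives $\|y(t)-\bar y(t)\|\le e^{-ct}\|y-\bar y\|$. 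In the deterministic case with bounded data the trajectories have uniformly bounded $\int\|\nabla \bar y\|^2$ growth. Hence a simple Ladyzhenskaya plus Lemma~\ref{lemmaMFE1} argument yields $N_2(\nu,h)$ such that this stabilization works with $K=\{0\}$-type bounds; here one uses $\int_0^t\|\nabla\bar y\|^2\le C(\nu,h)(1+t)$ with the small constant absorbed by the choice of $N$. This is the step where $N_2$ depends only on $\nu,h$, and I expect it to be the \textbf{main obstacle}. The difficulty is that the energy equality gives $\nu\int_0^t\|\nabla\bar y\|^2\le \|\bar y(0)\|^2+t\|h\|^2/(\nu\alpha_0)$, and the linear growth term is not small. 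So I would instead localize: write $\bar y=(\bar y-\phi)+\phi$ with $\phi\in C^\infty_0$ approximating $\bar y$ in $V$ uniformly along the compact attractor (Rosa's asymptotic compactness), and apply Lemma~\ref{lemmaMFE1} on $\supp\phi$ as in the proof of Proposition~\ref{propMFE1}. Requiring $T\ge T_0(R,d,\nu)$ yields $\|y(T)-\fu\|<d/2$ uniformly in $u_0\in B_H(R)$.

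\textbf{Step 2: from control to noise.} The control $\zeta$ lies in $L^2(0,T;\PPP_NH)$ and its norm is bounded uniformly in $u_0\in B_H(R)$. Since $b_j\ne0$ for $j\le N$, the law of $\PPP_NW$ on $C([0,T];\PPP_NH)$ has full support. It follows that $\PP\{\sup_{t\le T}\|W(t)-\int_0^t\zeta\|<\kappa\}>0$. This probability is bounded below uniformly over the relevant family of $\zeta$, which I will obtain by Girsanov or directly from the Cameron--Martin formula, the controls being uniformly bounded. The high-mode part $\QQQQQ_NW$ only needs to be small, which also has positive probability by independence.

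\textbf{Step 3: continuity.} The solution map $W\mapsto u(T)$ is continuous in the sup-norm of $W$, after subtracting the Ornstein--Uhlenbeck part $z$ solving the Stokes equation driven by $W$. This continuity is uniform over $u_0\in B_H(R)$, by the standard estimate on $u-z$. Choosing $\kappa$ small so that the resulting perturbation is below $d/2$ gives \eqref{MFE77} with $p$ depending on $T,d,\nu,h,N,\{b_j\}$.
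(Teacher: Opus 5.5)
\noindent There is a genuine gap in the choice of the target point. You take $\fu$ on the global attractor of the deterministic equation with forcing $h$, and say that $\|\fu\|\le 1$ can be arranged or obtained by rescaling. Rescaling is not available, since a rescaled point no longer lies on a trajectory you can track. Moreover, in general no point of that attractor has norm at most one. For instance, for $h=\nu^2 f$ with $f\neq0$ small in $V^*$, the attractor is the single stationary point $\nu U$, where $-LU+B(U)=f$, and its norm exceeds $1$ for large $\nu$.

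The dependencies in the statement already point to a different construction: your $\fu$ would not depend on $N$, while your $T_0$ would depend on $h$ (through the absorbing radius, the attractor and the feedback gain). The missing idea is to let the noise imitate the \emph{constant, $u_0$-independent} control $\zeta\equiv-\PPP_N h$, which replaces $h$ by $\QQQQQ_N h:=(I-\PPP_N)h$. Since $\|\QQQQQ_N h\|\to0$, there is $N_2(\nu,h)$ such that for $N\ge N_2$ the stationary equation $-\nu L\fu+B(\fu)=\QQQQQ_N h$ has a solution with $\nu\|\nabla\fu\|^2=\langle\QQQQQ_N h,\fu\rangle$. Hence $\|\nabla\fu\|\le\|\QQQQQ_N h\|/(\nu\sqrt{\alpha_0})$ is as small as needed and $\|\fu\|\le1$. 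Now let $y$ solve the deterministic equation with forcing $\QQQQQ_N h$ and set $g:=y-\fu$. The Ladyzhenskaya and Poincar\'e inequalities give $|\langle B(g,\fu),g\rangle|\le C\alpha_0^{-1/2}\|\nabla\fu\|\|\nabla g\|^2\le\frac{\nu}{2}\|\nabla g\|^2$, so $\|y(t)-\fu\|^2\le e^{-\nu\alpha_0 t}(R+1)^2$ for every $u_0\in B_H(R)$. This yields exactly $N_2=N_2(\nu,h)$, $\fu=\fu(\nu,h,N)\in V$, and $T_0=T_0(R,d,\nu)$. It is also what the paper means when it says the Poincar\'e inequality supplies the dissipation that the damping provides in \cite{NZ24-1}.

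\noindent Consequently your Step~1, which you flag as the main obstacle, is unnecessary, and as written it is also incomplete. Rosa's asymptotic compactness gives compactness of the attractor in $H$, not the uniform approximation in $V$ by $C_0^\infty$ functions that you invoke; you would have to argue through time-averaged distances, as for $\Phi_m$ in the proof of Lemma~\ref{lemmaMFE3}. Even once repaired, it proves only a weaker statement, with $\|\fu\|\le C(\nu,h)$ and $T_0$ depending on $h$.

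Your Steps~2 and~3 are the right mechanism, and they simplify with the constant control. The path $-t\PPP_N h$ lies in the Cameron--Martin space of $W$ because $b_j\neq0$ for $j\le N$, so no uniformity over a family of $u_0$-dependent controls is needed. One correction there: closeness of $W$ in $C([0,T];H)$ alone does not control the nonlinear term. Take the small ball for the stochastic convolution in a space such as $C([0,T];H)\cap L^2(0,T;V)$; its Gaussian law still gives positive mass to every ball centred at a Cameron--Martin point.
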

    The proof follows that of Lemma~5.1 in~\cite{NZ24-1}, with the Poincar\'e inequality providing the dissipation supplied there by the damping term. We therefore omit the details. Moreover, combining Lemmas~\ref{lemmaMFE6} and~\ref{lemmaMFE7} with Proposition~\ref{propMFE4}, one can obtain the irreducibility of the coupling $(\uu_t,\PP_{\uu})$. The argument follows that of Proposition~4.2 in~\cite{NZ24} for the complex Ginzburg--Landau equation and is likewise omitted.
    \begin{lemma}\label{lemmaMFE8} 
        For any $\RR>0$, there is an integer $N_{\rec}\ge 1$ depending on $\RR,\nu,h$ such that if \eqref{INTRO10} and \eqref{INTRO11} are satisfied for some $N\ge N_{\rec}$, then there exists a function $\fu\in H^1$ with $\|\fu\|\le 1$ depending on $\nu,h,N$ and satisfying the following assertion. For any $R,d>0$, there is $T_0>0$ such that 
        \begin{align}\label{MFE78}
        \inf_{\uu\in B_H(R)\times B_H(R)}\PP_{\uu}\{\uu_T\in B_H(\fu,d)\times B_H(\fu,d)\}\ge p>0
        \end{align}
        for every $T\ge T_0$. Here, $T_0$ depends on $\RR,R,d,\nu,h,N,\{b_j\}_{j\ge1}$, while $p$ additionally depends on $T$.
    \end{lemma}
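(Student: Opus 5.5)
We prove Lemma~\ref{lemmaMFE8} by combining the single-copy irreducibility of Lemma~\ref{lemmaMFE7} with the total-variation bound of Lemma~\ref{lemmaMFE6}, along the lines of Proposition~4.2 in \cite{NZ24}. First we fix $\RR>0$ and take $N_{\rec}:=\max\{N_1,N_2\}$, where $N_1$ comes from Lemma~\ref{lemmaMFE6} and $N_2$ from Lemma~\ref{lemmaMFE7}; this also fixes $a$. The target state $\fu$ is the one given by Lemma~\ref{lemmaMFE7} for this $N$. The construction of Section~\ref{construction} is built from maximal couplings on each block of length $T$, and on a single block $\tu'$ coincides with $\tv$ outside an event of probability $\|\lambda_T(u,u')-\lambda'_T(u')\|_{\var}$. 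Since $\tu$ has the law of $u$ and $\tv$ stays close to $\tu$ by the Foia\c{s}--Prodi estimate \eqref{MFE70}, the natural plan is a two-stage argument: first bring both components close together, then drive the pair jointly near $\fu$.

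\textbf{Stage 1 (bringing the copies close).} Take $\uu=(u,u')\in B_H(R)\times B_H(R)$ and run the coupling for $k$ steps of length $T_1$. By Lemma~\ref{lemmaMFE7}, or simply by the dissipative bound \eqref{A4} and Chebyshev, both components land in $B_H(R_0)$ with probability at least $1/2$, where $R_0$ depends only on $\nu,h,\FB_0$. We then use Lemma~\ref{lemmaMFE7} with a small radius $d'$ on each marginal. With positive probability $p_1$ the first component $\tu_{T}$ lies in $B_H(\fu,d')$. On the event that the maximal coupling succeeds we have $\tu'_T=\tv_T$, and by \eqref{MFE70}, on $\{\tau^u_\rho>T\}$,
\[
\|\tu_T-\tv_T\|\le e^{\CC_1\rho/2}e^{-\nu\alpha_0T/4}\|u-u'\|\le 2R\,e^{\CC_1\rho/2}e^{-\nu\alpha_0T/4}.
\]
Choosing $T$ large makes this at most $d/2$, so both components lie in $B_H(\fu,d)$.

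\textbf{Stage 2 (probability accounting).} We must show that the intersection of the events ``$\tu_T\in B_H(\fu,d/2)$'', ``coupling succeeds'' and ``$\tau^u_\rho>T$'' has probability bounded below uniformly. The first event has probability at least $p$ by Lemma~\ref{lemmaMFE7}. The failure probabilities of the other two are bounded by \eqref{MFE71} and \eqref{MFE68}, giving $C_q(1+R^{8q})(1+\rho)^{-q}+Ca\,e^{\CC_1\rho/2}\|u-u'\|/b_N^*$. The first term can be made small by taking $\rho$ large. The second term, however, is small only when $\|u-u'\|$ is small, not for arbitrary $u,u'\in B_H(R)$.

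\textbf{Main obstacle and how to handle it.} The difficulty is precisely this second term: $p$ may be much smaller than the Girsanov error when $\|u-u'\|$ is of order $R$. To get around it, we first use one block to make the initial distance small. We run the two marginals independently in law, which the coupling permits; when the maximal coupling fails, the components are still marginally correct. Applying Lemma~\ref{lemmaMFE7} to each with radius $\delta\ll1$ puts both in $B_H(\fu,\delta)$ with probability at least $p(\delta)^2$. For this, the joint lower bound would need independence, which we do not have. We would instead use the inequality $\PP(A\cap A')\ge\PP(A)+\PP(A')-1$ after first sharpening Lemma~\ref{lemmaMFE7} to probability close to $1$. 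If that fails, we would follow \cite{NZ24}: use the Foia\c{s}--Prodi contraction \eqref{MFE62}-type estimate on the event of successful coupling, iterated over several blocks.

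Once $\|u-u'\|\le 2\delta$, we choose $\rho$ and then $\delta$ so that the right-hand side of \eqref{MFE68} is at most $p/4$. A second application of Stage~1 then gives \eqref{MFE78} with constant $p/2$, times the probability of the first step. Finally, the Markov property of $(\uu_{kT})$ lets us extend the conclusion from times of the form $T_0+T_1$ to all $T\ge T_0$, since the estimates above are uniform once $T$ exceeds a threshold.
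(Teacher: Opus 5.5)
You correctly identify the real difficulty. For $\uu=(u,u')\in B_H(R)\times B_H(R)$ with $\|u-u'\|$ of order $R$, the right-hand side of \eqref{MFE68} is not small, so ``$\tu_T$ is near $\fu$'' minus ``the coupling fails'' gives no positive lower bound. None of your three remedies closes this, so the proof has a gap at its decisive step.

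(i) The coupling of Section~\ref{construction} uses the maximal coupling of $\lambda_T(u,u')$ and $\lambda'_T(u')$ on every block, whatever the distance between the components. You are therefore not free to run the marginals independently. It is also false that on the failure event the components are ``still marginally correct'': conditionally on $\{\tv\neq\tu'\}$ they follow the normalised residual measures, not $\lambda_T(u,u')$ and $\lambda'_T(u')$.

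(ii) Lemma~\ref{lemmaMFE7} cannot be sharpened to a probability close to $1$. The noise keeps the law of $u_T$ spread out (for large $T$ it approaches $\mu$, which is not concentrated near $\fu$), so $\PP(A)+\PP(A')-1$ is negative for small balls.

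(iii) Iterating the Foia\c{s}--Prodi contraction ``on the event of successful coupling'' is circular. That event has probability bounded below only once the components are already close. More generally, your two-stage scheme reduces ``both components near $\fu$'' to ``both components near each other'', which is no easier. Finally, the lemma is a one-block statement for the coupling built with step $T$, so concatenating blocks of lengths $T_1$ and $T$ does not fit the construction.

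What is missing is a lower bound for the joint event that uses the structure of the maximal coupling rather than smallness of $\|\lambda_T(u,u')-\lambda'_T(u')\|_{\var}$. Maximality alone gives $\PP\{\tv=\tu'\text{ on }[0,T],\ \tv\in\Gamma\}=(\lambda_T\wedge\lambda'_T)(\Gamma)$. Indeed, the left-hand side, as a measure in $\Gamma$, is dominated by both marginals and has total mass $1-\|\lambda_T-\lambda'_T\|_{\var}$. One way to bound this from below runs as follows.

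Take $\Gamma$ to be the set of paths $y$ with $y(T)\in B_H(\fu,d/2)$ and $\int_0^t\dist_V(y,K)^2\ddd s\le\epsilon t+\rho$ for $t\le T$, where $\epsilon=\nu\alpha_0/(2\CC_1)$. Make Lemma~\ref{lemmaMFE7} uniform in $T\ge T_1$ via \eqref{A4} and the Markov property, so that $\rho$ can be fixed before $T$. Together with Proposition~\ref{propMFE4}, this gives $\lambda'_T(\Gamma)\ge p/2$.

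Along $y=u'$, the density $\ddd\lambda_T/\ddd\lambda'_T$ is a Girsanov exponential with drift $a\PPP_N(\hat u-u')$. Here $\hat u$ is the feedback solution started at $u$ and driven toward $u'$. Proposition~\ref{propMFE1}, with the two solutions exchanged, bounds the energy of this drift on $\Gamma$ by $M\lesssim a^2(b_N^*)^{-2}e^{\CC_1\rho}R^2$, and no smallness of $\|u-u'\|$ is needed. Chebyshev's inequality for the stopped stochastic integral then shows that the density exceeds some $\epsilon_0(M,p)>0$ on a part of $\Gamma$ of $\lambda'_T$-measure at least $p/4$. Hence $(\lambda_T\wedge\lambda'_T)(\Gamma)\ge\epsilon_0p/4$.

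On this event $\tu'_T=\tv_T\in B_H(\fu,d/2)$. Writing $B(\tu)-B(\tv)=B(\tu-\tv,\tv)+B(\tu,\tu-\tv)$ gives the Foia\c{s}--Prodi bound for $\tu-\tv$ in terms of $\dist_V(\tv,K)$. Hence $\|\tu_T-\tv_T\|\le 2Re^{\CC_1\rho/2}e^{-\nu\alpha_0T/4}\le d/2$ for $T$ large.

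If the maximal coupling is realised in the standard way, with independent residuals off the diagonal, there is a shortcut, and it is the correct form of your idea (i). With $m:=(\lambda_T\wedge\lambda'_T)(\Gamma)$ one gets $\PP\{\tv\in\Gamma,\ \tu'\in\Gamma\}\ge m+(\lambda_T(\Gamma)-m)(\lambda'_T(\Gamma)-m)\ge\min(p/2,p^2/4)$ as soon as both marginals charge $\Gamma$ with probability $p$.
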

    Finally, by \eqref{A4}, the function
    \[F(u,u'):=1+\|u\|^2+\|u'\|^2\]
    is a Lyapunov function for the coupling process $(\uu_{kT},\PP_{\uu})$. Combining this fact with \eqref{MFE78}, we obtain the following recurrence property, as explained in Section 3 in \cite{Shi08}.

    \begin{prop}\label{propMFE6}
    Fix arbitrary $\RR>0$ and let $N_{\rec}$ be the integer given in Lemma \ref{lemmaMFE8}. Suppose that \eqref{INTRO10} and \eqref{INTRO11} are satisfied for some $N\ge N_{\rec}$. Let $\fu$ be the corresponding function given in Lemma \ref{lemmaMFE8}. Then, for any $d>0$, there is $T_{\rec}>0$ such that for any $T\ge T_{\rec}$ and $\uu=(u,u')\in H\times H$,
    \begin{align}\label{MFE79}
    \E_{\uu}e^{\delta_T\tau_d}\le C_T\big(1+\|u\|^2+\|u'\|^2\big),
    \end{align}
    for some constants $\delta_T,C_T$. Here
    \[\tau_d:=\inf\left\{k\ge0|\uu_{kT}\in \overline B_H(\fu,d)\times\overline B_H(\fu,d)\right\},\]
    and the constant $T_{\rec}$ depends on $\RR,d,\nu,h,N,\{b_j\}_{j\ge1}$, while $\delta_T$ and $C_T$ additionally depend on $T$.
    \end{prop}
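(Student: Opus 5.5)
The argument combines the Lyapunov structure of the coupling with the uniform hitting estimate \eqref{MFE78}, in the spirit of Section 3 in \cite{Shi08}. Write $F(\uu):=1+\|u\|^2+\|u'\|^2$. The first step is a drift inequality for the chain $(\uu_{kT},\PP_{\uu})$. Each component of the coupling is distributed as a solution of \eqref{MFE1}, so \eqref{A4} applied to each marginal gives
\begin{align*}
\E_{\uu}F(\uu_T)\le e^{-\nu\alpha_0T}F(\uu)+C_0
\end{align*}
with $C_0$ depending only on $\nu,h,\FB_0$. I then fix $R$ so large that $e^{-\nu\alpha_0T}F+C_0\le \frac12F$ whenever $F(\uu)\ge R^2$ and $T\ge1$; for instance $R^2:=4C_0+4$ suffices. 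Set $\mathcal D:=B_H(R)\times B_H(R)$. By the standard argument, comparing $F(\uu_{kT})$ with the supermartingale $2^{k\wedge \tau_{\mathcal D}}F(\uu_{(k\wedge\tau_{\mathcal D})T})$, the first hitting time $\tau_{\mathcal D}$ of $\mathcal D$ satisfies $\E_{\uu}2^{\tau_{\mathcal D}}\le CF(\uu)$. Moreover, after any step the conditional expectation of $F(\uu_{(k+1)T})$ is bounded by a constant $C_1$ if $\uu_{kT}\in\mathcal D$.

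The second step applies Lemma \ref{lemmaMFE8} with this $R$ and the given $d$. It yields $T_0$ and, for each $T\ge T_{\rec}:=\max\{T_0,1\}$, a probability $p>0$ such that from any point of $\mathcal D$ the chain enters $\overline B_H(\fu,d)\times\overline B_H(\fu,d)$ in one step with probability at least $p$. I then construct successive return times $\sigma_0:=\tau_{\mathcal D}$ and $\sigma_{i+1}:=$ the first hitting time of $\mathcal D$ after $\sigma_i+1$. The strong Markov property of the chain, combined with the first step and the bound $C_1$, gives $\E[2^{\sigma_{i+1}-\sigma_i}\mid\FF_{\sigma_i}]\le C_2$ uniformly in $i$. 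At each time $\sigma_i$ the chain succeeds (hits the target at time $\sigma_i+1$) with conditional probability at least $p$, independently of the past. Hence $\tau_d\le\sigma_\xi+1$, where $\xi$ is dominated by a geometric variable with parameter $p$.

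The third step, the combination, is the main technical point. I choose $\delta_T>0$ small enough that, with $\theta:=\delta_T/\log 2\le 1$, Jensen's inequality gives $\E[e^{\delta_T(\sigma_{i+1}-\sigma_i)}\mid\FF_{\sigma_i}]\le C_2^{\theta}$, and additionally so that $(1-p)C_2^{\theta}<1$. Summing over $\{\xi=k\}$ with the usual splitting, namely the success event at step $k$ times the failures before it, gives
\begin{align*}
\E_{\uu}e^{\delta_T\tau_d}\le e^{\delta_T}\E_{\uu}e^{\delta_T\sigma_0}\ssum_{k\ge0}\big((1-p)C_2^{\theta}\big)^k\le C_TF(\uu),
\end{align*}
where $\E_{\uu}e^{\delta_T\sigma_0}\le (CF(\uu))^{\theta}\le CF(\uu)$ because $F\ge1$. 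This is \eqref{MFE79}. The care needed is in making $\delta_T$ depend only on $p$ and $C_2$, and hence only on $T$ and the parameters. The failure events must be handled through conditioning on $\FF_{\sigma_i+1}$ rather than by an independence assumption, since the coupling is only Markov in the discrete times $kT$.
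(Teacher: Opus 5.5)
Your argument is the standard Lyapunov-drift plus uniform-hitting scheme of Section~3 in \cite{Shi08}, which is exactly what the paper invokes. The paper only notes that $F(u,u')=1+\|u\|^2+\|u'\|^2$ is a Lyapunov function for $(\uu_{kT},\PP_{\uu})$ by \eqref{A4}, and combines this with \eqref{MFE78}. Your structure is right, but two steps are not correct as written; both are easy to repair.

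\textbf{The contraction step.} With $R^2=4C_0+4$, the inequality $e^{-\nu\alpha_0T}F+C_0\le\frac12F$ on $\{F\ge R^2\}$ needs $e^{-\nu\alpha_0T}\le\frac14$. This does not follow from $T\ge1$, and if $\nu\alpha_0\le\log 2$ no choice of $R$ makes it hold at $T=1$. Add $T\ge\log 4/(\nu\alpha_0)$ to $T_{\rec}$, or replace $\frac12$ by a factor $\gamma\in(e^{-\nu\alpha_0},1)$. Keep $R$ independent of $T$, so that the later application of Lemma~\ref{lemmaMFE8} with this $R$ is not circular.

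\textbf{The per-step factor.} The factor $(1-p)C_2^{\theta}$ is not justified. Let $E_i:=\{\uu_{(\sigma_i+1)T}\notin\overline B_H(\fu,d)\times\overline B_H(\fu,d)\}$ be the failure event. Both $E_i$ and the excursion length $\sigma_{i+1}-\sigma_i$ are driven by the landing point $\uu_{(\sigma_i+1)T}$, and they are typically positively correlated: a far landing point is both a failure and a long excursion. So $\E[\I_{E_i}e^{\delta_T(\sigma_{i+1}-\sigma_i)}\mid\FF_{\sigma_i}]$ does not factor. Conditioning on $\FF_{\sigma_i+1}$ only yields $\E[\I_{E_i}e^{\delta_T}F(\uu_{(\sigma_i+1)T})^{\theta}\mid\FF_{\sigma_i}]$. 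Use conditional H\"older with exponents $\frac{1}{1-\theta}$ and $\frac1\theta$ instead:
\[
\E\big[\I_{E_i}2^{\theta(\sigma_{i+1}-\sigma_i)}\,\big|\,\FF_{\sigma_i}\big]\le \PP(E_i\mid\FF_{\sigma_i})^{1-\theta}\big(\E[2^{\sigma_{i+1}-\sigma_i}\mid\FF_{\sigma_i}]\big)^{\theta}\le(1-p)^{1-\theta}C_2^{\theta}.
\]
This tends to $1-p<1$ as $\theta\to0$. Choosing $\theta$ with $(1-p)^{1-\theta}C_2^{\theta}<1$ then closes the geometric sum exactly as in your display, and $\delta_T=\theta\log2$ still depends only on $p$ and $C_2$.
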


    \subsection{Polynomial squeezing and proof of Theorem \ref{MT2}}\label{PS}
    This subsection is devoted to the verification of the polynomial squeezing property stated in Proposition \ref{propMFE5} for the coupling process $(\uu_t,\PP_\uu)$. To simplify the presentation, we denote by $u_t,u_t',v_t$ the processes constructed in \eqref{MFE67-1}. Let
    \begin{align}
        \label{MFE79-0}\sigma:=\sigma_1\wedge\tau,\qquad \sigma_1:=\inf\{t\ge0|u_t'\neq v_t\},\qquad\tau:=\tau^u_\rho\wedge\tau^{u'}_\rho,
    \end{align}
    where $\tau^u_\rho,\tau^{u'}_\rho$ are defined in \eqref{MFE68-1}. Introduce the random events
    \[\QQQ_k':=\{kT\le \sigma< (k+1)T,\ \sigma_1\ge \tau\},\quad \QQQ_k^{''}:=\{kT\le \sigma<(k+1)T,\ \sigma_1<\tau\},\quad k\ge 0.\]
    Here, we recall that $T>0$ is the time step used in the construction \eqref{MFE67-1}.
    \begin{lemma}\label{lemmaMFE9}
        For any $\RR>0$, there is an integer $N_{\sq}\ge 1$ such that if \eqref{INTRO10} and \eqref{INTRO11} are satisfied for some $N\ge N_{\sq}$, then the following assertion holds. For any $q\ge 2$, there exist constants $\rho>0$, $d\in(0,1]$, and $T_{\sq}>0$ depending on $q,\RR,\nu,h,N,\{b_j\}_{j\ge 1}$ such that 
        \begin{align}
            \label{MFE79-1}\PP_{\uu}(\QQQ_k')\vee \PP_{\uu}(\QQQ_k^{''})\le \frac{q-1}{16q}(k+1)^{-4q}
        \end{align}
        for any $k\ge 0$, $T\ge T_{\sq}$, and $\uu=(u,u')$ satisfying 
        \begin{align}
            \label{MFE79-2}\uu\in\overline{B}_{H}(0,2)\times \overline{B}_{H}(0,2),\qquad \|u-u'\|\le 2d.
        \end{align}
    \end{lemma}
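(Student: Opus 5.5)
We bound $\PP_{\uu}(\QQQ_k')$ and $\PP_{\uu}(\QQQ_k'')$ separately. Throughout, fix $\RR$ and take $N_{\sq}:=N_1$ from Lemma~\ref{lemmaMFE6}; that lemma supplies $a$ and the finite set $K$ with $\epsilon=\frac{\nu\alpha_0}{2\CC_1}$, both independent of the particular $\BB$ in $\overline B_{\ell^2_\fw}(\RR)$. The parameters are chosen in the order $\rho$, then $d$, then $T_{\sq}$.

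\textbf{The event $\QQQ_k'$.} On $\QQQ_k'$ we have $\sigma=\tau<\infty$ and $\tau\ge kT$. Hence either $\tau^u_\rho\in[kT,(k+1)T)$ or $\tau^{u'}_\rho\in[kT,(k+1)T)$, so that $\sup_{t\ge kT}\Lambda_u(t)\ge\rho$ or the same holds for $u'$. We need the laws of $u_t$ and $u'_t$ to be those of solutions of \eqref{MFE1} issued from $u,u'$; this holds because the construction \eqref{MFE67-1} is a coupling. Proposition~\ref{propMFE4}, with $T$ replaced by $kT$, then gives
\[
\PP_{\uu}(\QQQ_k')\le 2C_{q'}\frac{1+2^{8q'}}{(1+kT+\rho)^{q'}}.
\]
We take $q'=4q$ and use $1+kT+\rho\ge (k+1)\min\{T,1+\rho\}$. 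Choosing $\rho$ large, and then $T_{\sq}\ge1+\rho$, makes the right-hand side at most $\frac{q-1}{16q}(k+1)^{-4q}$ for all $k\ge0$.

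\textbf{The event $\QQQ_k''$.} On $\QQQ_k''$ we have $\sigma=\sigma_1\in[kT,(k+1)T)$ and $\sigma_1<\tau$. Since the processes are built block by block, $\sigma_1\in[kT,(k+1)T)$ means that $u'=v$ on $[0,kT]$ and that the maximal coupling at step $k$ fails. Moreover $\tau>kT$, so by \eqref{MFE70} the squeezing bound $\|u_{kT}-u'_{kT}\|^2\le e^{\CC_1\rho}e^{-\nu\alpha_0kT/2}(2d)^2$ holds at time $kT$. Conditioning on $\FF_{kT}$ and using \eqref{MFE65-2} together with Lemma~\ref{lemmaMFE6}, applied with the same $\rho$ at the state $(u_{kT},u'_{kT})$, we get
\[
\PP_{\uu}(\QQQ_k'')\le \E_{\uu}\Big[\I_{\{\tau>kT\}}\Big(\CC_q\frac{1+\|u_{kT}\|^{8q}}{(1+\tilde\rho)^{q}}+\frac{\sqrt6 a}{b_N^*\sqrt{\nu\alpha_0}}e^{\CC_1\tilde\rho/2}\|u_{kT}-u'_{kT}\|\Big)\Big].
\]

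This is the main obstacle. The first term does not decay in $k$ unless the truncation level in Lemma~\ref{lemmaMFE6} is allowed to depend on $k$, so we apply that lemma at step $k$ with a $k$-dependent level $\tilde\rho=\rho_k:=\rho+\frac{\nu\alpha_0}{4\CC_1}kT$. Equivalently, we use directly that on $\{\tau>kT\}$ the relevant Girsanov tail is controlled by $\PP\{\sup_{t\ge kT}\Lambda_u\ge\rho\}$, which Proposition~\ref{propMFE4} already bounds by $(1+kT+\rho)^{-4q}$. Either way the first term is bounded, via the moment estimate \eqref{A4}, by $C(1+kT+\rho)^{-4q}$. For the second term, the exponent is $e^{\CC_1\rho_k/2}e^{\CC_1\rho/2}e^{-\nu\alpha_0kT/4}\cdot 2d$, and the choice of $\rho_k$ makes it at most $2d\,e^{\CC_1\rho}e^{-\nu\alpha_0kT/8}$, which decays exponentially in $k$. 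Choosing $d$ small (depending on $\rho,a,b_N^*$) and then $T_{\sq}$ large absorbs every constant into $\frac{q-1}{16q}(k+1)^{-4q}$. The point to verify carefully is the bookkeeping of the $k$-dependent truncation level, namely that $\sigma_1<\tau$ is preserved when $\rho$ is replaced by $\rho_k$ inside the Girsanov step at block $k$.
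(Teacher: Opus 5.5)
Your proposal is correct and follows the paper's proof essentially step by step. For $\QQQ_k'$ you use Proposition~\ref{propMFE4} with exponent $4q$, and for $\QQQ_k''$ you use the Markov property at time $kT$, Lemma~\ref{lemmaMFE6} at the $k$-dependent level $\rho+\frac{\nu\alpha_0}{4\CC_1}kT$, the squeezing bound \eqref{MFE84}, and the moment estimate \eqref{A4}. The bookkeeping you flag at the end is a non-issue: Lemma~\ref{lemmaMFE6} bounds the $\rho$-independent quantity $\|\lambda_T(u_{kT},u'_{kT})-\lambda'_T(u'_{kT})\|_{\var}$, so the level used in it is a free parameter, unrelated to the fixed $\rho$ that defines $\tau$.
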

    \begin{proof}Fix arbitrary $\RR>0$. Let $N_{\sq}=N_1$, where $N_1$ is given in Lemma \ref{lemmaMFE6}. Suppose that \eqref{INTRO10} and \eqref{INTRO11} are satisfied for some $N\ge N_{\sq}$. 
    
        \noindent\textbf{Step I.} We begin with the estimate for the random event $\QQQ_k'$. Applying Proposition~\ref{propMFE4} with $q$ replaced by $4q$ and $\epsilon=\frac{\nu\alpha_0}{2\CC_1}$, where $\CC_1$ is the constant in \eqref{MFE3} and $\alpha_0$ is the constant in the Poincar\'e inequality, it follows 
        \begin{align}\label{MFE80}
            \PP_{\uu}(\QQQ_k')&\le \PP_{u}\{kT\le \tau^u_\rho<(k+1)T\}+\PP_{u'}\{kT\le \tau^{u'}_\rho<(k+1)T\}\notag\\&\le \PP_{u}\Big\{\sup_{t\ge kT}\Lambda_u(t)\ge\rho\Big\}+\PP_{u'}\Big\{\sup_{t\ge kT}\Lambda_{u'}(t)\ge\rho\Big\}\le \frac{C_{q,\RR,\nu,h}}{(1+kT+\rho)^{4q}}.
        \end{align}

        \noindent\textbf{Step I\!I.} Next, we turn to the random event $\QQQ_k''$. For $k=0$, by applying Lemma \ref{lemmaMFE6} combined with \eqref{MFE65-2}, one has 
        \begin{align}
            \label{MFE81}
            \PP_{\uu}(\QQQ_0'')&\le \PP_u\{0\le \sigma_1<T\}=\|\lambda_T(u,u')-\lambda'_T(u')\|_{\var}\notag\\&\le \frac{\CC_q}{(1+\rho)^{4q}}+\frac{\sqrt{6}a}{b^*_{N}\sqrt{\nu\alpha_0}}e^{\frac{\CC_1\rho}{2}}\|u-u'\|,
        \end{align}
        where $\CC_q,a$ are given in Lemma \ref{lemmaMFE6}. The case $k\ge 1$ is estimated by applying the Markov property, which gives
        \begin{align}\label{MFE82}
            \PP_{\uu}(\QQQ_k'')&=\PP(\QQQ_k''\cap\{\sigma\ge kT\})=\E_{\uu}\left(\I_{\{\sigma\ge kT\}}\E_{\uu}\left(\I_{\QQQ_k''}\big|\FF_{kT}\right)\right)\notag\\&\le\E_{\uu}\left(\I_{\{\sigma\ge kT\}}\PP_{\uu_{kT}}\{0\le \sigma_1<T\}\right),
        \end{align}
        where, by applying Lemma \ref{lemmaMFE6} with $q,\rho$ replaced by $4q,\rho+\frac{\nu\alpha_0}{4\CC_1}kT$, respectively, one has 
        \begin{align}\label{MFE83}
            \PP_{\uu_{kT}}\{0\le \sigma_1<T\}\le \CC_q\frac{1+\|u_{kT}\|^{32q}}{(1+\rho+\frac{\nu\alpha_0}{4\CC_1}kT)^{4q}}+\frac{\sqrt{6}a}{b^*_{N}\sqrt{\nu\alpha_0}}e^{\frac{\CC_1\rho}{2}}e^{\frac{\nu\alpha_0}{8}kT}\|u_{kT}-u_{kT}'\|.
        \end{align}
        On the other hand, by \eqref{MFE69-1}, \eqref{MFE70}, and the definition \eqref{MFE79-0} of $\sigma$, it follows that
        \begin{align}
            \label{MFE84}v^\rho_t=v_t=u'_t,\qquad \|u_t-v^\rho_t\|\le e^{\frac{\CC_1\rho}{2}}e^{-\frac{\nu\alpha_0}{4}t}\|u-u'\|
        \end{align}
        for any $t\le \sigma$. Here, $v^\rho_t$ is the solution of the truncated auxiliary equation \eqref{MFE69}. Thus, by combining \eqref{MFE82}--\eqref{MFE84} with the moment estimate \eqref{A4}, it follows that
        \begin{align}
            \label{MFE85}
            \PP_{\uu}(\QQQ_k'')\le \frac{\CC_qC_{q,\nu,h,\FB_0}}{(1+\rho+\frac{\nu\alpha_0}{4\CC_1}kT)^{4q}}+\frac{\sqrt{6}a}{b^*_{N}\sqrt{\nu\alpha_0}}e^{\CC_1\rho}e^{-\frac{\nu\alpha_0}{8}kT}\|u-u'\|.
        \end{align}
        Summarizing the estimates \eqref{MFE81} and \eqref{MFE85} gives
        \begin{align}
            \label{MFE86}
            \PP_{\uu}(\QQQ_k'')\le\frac{C_{q,\RR,\nu,h}}{(1+\rho+ kT)^{4q}}+\frac{C_{\RR,\nu,h}}{b^*_N}e^{\CC_1\rho}e^{-\frac{\nu\alpha_0}{8}kT}d.
        \end{align}
        \noindent\textbf{Step I\!I\!I.} Now, we choose $\rho,T_{\sq}$ sufficiently large so that 
        \[\frac{C_{q,\RR,\nu,h}}{\rho^{4q}\wedge T_{\sq}^{4q}}<\frac{q-1}{32q},\]
        and then take $d\ll1$ sufficiently small such that
        \[\frac{C_{\RR,\nu,h}}{b^*_N}e^{\CC_1\rho}e^{-\frac{\nu\alpha_0}{8}kT}d\le \frac{q-1}{32q}(1+k)^{-4q}.\]
        This, together with \eqref{MFE80} and \eqref{MFE86}, yields the desired estimate \eqref{MFE79-1}. 
    \end{proof}
    Now we are in a position to prove Proposition \ref{propMFE5}.
    \begin{proof}[Proof of Proposition \ref{propMFE5}]
        Fix arbitrary $q\ge 2$ and $\RR>0$. We choose the parameter $a$ as given in Lemma \ref{lemmaMFE6} and $N_0:=N_{\rec}\vee N_{\sq}$. Suppose that \eqref{INTRO10} and \eqref{INTRO11} are satisfied with $N\ge N_0$ and set $T:=T_{\sq}\vee T_{\rec}$ as the time step corresponding to $N,d$ with the parameter $d$ given in Lemma \ref{lemmaMFE9}. Let $(\uu_t,\PP_{\uu})$ be the coupling process constructed in Section \ref{construction}, which corresponds to these parameters $a,N_0,T$. Based on these parameters, Proposition \ref{propMFE6} and Lemma \ref{lemmaMFE9} are now applicable.

        The closed ball in Proposition \ref{propMFE5} is defined as $B:=\overline{B}_H(\fu,d)$, and the recurrence property is obtained by applying Proposition \ref{propMFE6}. We turn to the verification of the polynomial squeezing properties \eqref{MFE62}--\eqref{MFE65}. Since $\|\fu\|\le 1$ and $d\le 1$, every $\uu\in B\times B$ satisfies \eqref{MFE79-2}. This, together with \eqref{MFE84}, gives \eqref{MFE62}. As for \eqref{MFE63}, we apply Lemma \ref{lemmaMFE9}, which gives 
        \[\PP_{\uu}\{\sigma=\infty\}\ge 1-\ssum_{k=0}^{\infty}\PP_{\uu}\{\sigma\in[kT,(k+1)T)\}\ge \frac{1}{2}>0.\]
        To prove \eqref{MFE64}, we derive from Lemma \ref{lemmaMFE9} that
        \begin{align*}
            \E_{\uu}\left(\I_{\{\sigma<\infty\}}\sigma^q\right)&\le \ssum_{k=0}^{\infty}\E_{\uu}\left(\I_{\{\sigma\in[kT,(k+1)T)\}}\sigma^q\right)\\&\lesssim_{q,T} \ssum_{k=0}^{\infty}(k+1)^q\PP_{\uu}\{\sigma\in[kT,(k+1)T)\}\\&\lesssim_{q,T} \ssum_{k=0}^{\infty}(k+1)^{-3q}\le C_q.
        \end{align*}
        Finally, to establish \eqref{MFE65}, we need the following auxiliary estimate
        \begin{align}
            \label{MFE87}\sup_{k\ge 0} \E_u\sup_{t\in[kT,(k+1)T)}\|u_t\|^{4q}\le C_{q,T},
        \end{align}
        which follows from an application of \eqref{A4}, \eqref{A6}, and the Markov property. This, together with Lemma \ref{lemmaMFE9} and the Cauchy--Schwarz inequality, implies that
        \begin{align*}
        \E_{\uu}&\left(\I_{\{\sigma<\infty\}}\left(\|u_\sigma\|^{2q}+\|u'_\sigma\|^{2q}\right)\right)\\&\le \ssum_{k=0}^{\infty}\E_{\uu}\left(\I_{\{\sigma\in[kT,(k+1)T)\}}\sup_{t\in[kT,(k+1)T)}\left(\|u_t\|^{2q}+\|u'_t\|^{2q}\right)\right)\\&\lesssim_{q,T}\ssum_{k=0}^{\infty}\left(\PP_{\uu}\{\sigma\in[kT,(k+1)T)\}\right)^{\frac{1}{2}}\le C_{q,T}.
        \end{align*} 
        This completes the proof of Proposition \ref{propMFE5}.
    \end{proof}
    Finally, we prove Theorem \ref{MT2}.
    \begin{proof}[Proof of Theorem \ref{MT2}]
        The uniqueness of the invariant measure and the second statement are direct consequences of Proposition \ref{propMFE5}; see Section 2 in \cite{NZ24-1}. We turn to the third statement. Fix arbitrary $\lambda\in \PPPP(H)$ and function $f\in\lip(H)$ satisfying $\|f\|_{\lip(H)}\le 1$. An application of \eqref{INTRO13} gives  
    \begin{align*}
    |\langle f,\DDD(u(t;\lambda))\rangle-\langle f,\mu\rangle|&=|\langle f,\PPPPP^*_t\lambda\rangle-\langle f,\mu\rangle|=|\langle \PPPPP_tf,\lambda\rangle-\langle f,\mu\rangle|\\&\le \int_{B_{H}(R)}|\PPPPP_tf(u)-\langle f,\mu\rangle|\lambda(\ddd u)+2\lambda(B_{H}(R)^c)\\&\le C_{q}(1+R^2)(1+t)^{-q}+2\lambda(B_{H}(R)^c).
    \end{align*}
    Since $\lim_{R\to\infty}\lambda(B_{H}(R)^c)= 0$, by taking supremum in $f$, sending $t\to\infty$, and then letting $R\to\infty$, the convergence \eqref{INTRO14} holds. Now it remains to prove \eqref{INTRO12}. By the Portmanteau theorem, \eqref{INTRO13}, and \eqref{A4} with $q=1$, one has
    \begin{align*}
        \int_{H}\|u\|^2\ddd \mu\le \liminf_{t\to\infty}\E\|u(t;0)\|^2\le C_{\nu,h}(1+\FB_0),
    \end{align*}
    which, together with taking $t\to\infty$, implies
    \begin{align}\label{MFE88}
        \int_{H}\|u\|^2\ddd \mu<\infty.
    \end{align}
    To prove the support property of $\mu$, we apply the stationarity of the invariant measure $\mu$ again combined with \eqref{MFE88} and the energy equality \eqref{A1}, which gives
    \[\nu\int_{H} \|\nabla u\|^2\mu(\ddd u)=\nu\E\int_0^1\|\nabla u(t;\mu)\|^2\ddd t\lesssim_\nu \|h\|^2+\FB_0<\infty.\]
    Therefore, any stationary process $\{u_{\inv}(t)\}_{t\ge0}$ has $V$-valued initial data. Together with the fact $\FB_1<\infty$ and standard regularity result for the stochastic NS equation \eqref{MFE1}, this implies 
    \[u_{\inv}\in L^2_{\loc} (0,\infty;H^2\cap V).\]
    By stationarity again, it follows
    \[\mu(H^2\cap V)=\mathbb E\int_0^1\I_{\{u_{\inv}(t)\in H^2\cap V\}}\ddd t=1,\]
    which, combined with the Sobolev embedding, implies $\mu(H\cap L^\infty)=1$.
    \end{proof}

\section{Decay estimates for infinite-energy solutions of the homogeneous NS equation}\label{DE}
In this section, our goal is to establish decay estimates for infinite-energy solutions of the homogeneous NS equation \eqref{INTRO8}. The main result is presented below.
\begin{prop}\label{propDE0}
    Let $w(t)$ be an infinite-energy solution of the homogeneous NS equation \eqref{INTRO8} issued from the initial data $w_0\in H_b\cap L^\infty$. Then, for any $R>0$, there are constants $\CCC_1,\alpha_1$ depending on $R,\nu$ such that for any $w_0$ satisfying $\|w_0\|_{L^2_b}\le R$,
    \begin{align}
        \label{DE1}
        \int_0^\infty\|w(t)\|_{\infty}^2\ddd t\le \CCC_1\big(1+\log(1+\|w_0\|_{\infty})\big),
    \end{align}
    and 
    \begin{align}
        \label{DE2}\|w(t)\|_{L^2_b}+\|w(t)\|_{\infty}\le \CCC_1e^{-\alpha_1 t},\qquad \forall t\ge 1.
    \end{align}
\end{prop}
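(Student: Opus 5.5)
The proof splits into a short-time part, $t\in(0,1]$, and a long-time part, $t\ge1$. The short-time part gives the logarithmic integrability of $\|w(t)\|_\infty^2$ near $t=0$. The long-time part gives exponential decay of $\|w(t)\|_{L^2_b}$, which is then upgraded to decay of $\|w(t)\|_\infty$ by parabolic smoothing.

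\textbf{Step I (short time).} I would write $w$ in mild form, $w(t)=e^{\nu tL}w_0-\int_0^t e^{\nu(t-s)L}\Pi\dive(w\otimes w)(s)\,\ddd s$. I would use the linear estimates for the Stokes semigroup in the uniformly local classes $L^p_{b,\dive}$ (the result announced in the introduction as Proposition~\ref{propSTOKES}). The bounds I expect are $\|e^{\nu tL}f\|_{L^\infty}\lesssim \min\{\|f\|_\infty,\ t^{-1/2}\|f\|_{L^2_b}\}$ and $\|e^{\nu tL}\Pi\dive F\|_{L^p_b}\lesssim t^{-1/2-1/q+1/p}\|F\|_{L^q_b}$ for $t\le1$.

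A fixed-point argument in a space with norm $\sup_{t\le t_0}\big(\|w(t)\|_{L^2_b}+t^{1/2}\|w(t)\|_\infty\big)$ gives existence on a time $t_0=t_0(R,\nu)$ depending only on $\|w_0\|_{L^2_b}\le R$. On that interval $\|w(t)\|_\infty\lesssim t^{-1/2}$. Separately, an $L^\infty$ fixed point on $[0,\tau]$ with $\tau\sim \|w_0\|_\infty^{-2}$ gives $\|w(t)\|_\infty\lesssim\|w_0\|_\infty$ there. Combining the two bounds,
\[\int_0^{t_0}\|w\|_\infty^2\le \int_0^\tau C\|w_0\|_\infty^2\,\ddd t+\int_\tau^{t_0}\frac{C}{t}\,\ddd t\lesssim 1+\log(1+\|w_0\|_\infty),\]
which is the logarithm in \eqref{DE1}. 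On $[t_0,1]$, the uniqueness and smoothing in Theorem~\ref{MT1} together with the bound $\|w(t)\|_{L^2_b}\lesssim 1$ keep $\|w\|_\infty$ bounded by $C(R,\nu)$.

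\textbf{Step II (exponential decay in $L^2_b$).} This is where the zero-flux condition and the Poincar\'e inequality enter. The earlier bound $\|w\|_{L^2_b}\lesssim e^{-ct}+1$ is not enough, so I would run a refined weighted energy estimate. I test \eqref{INTRO8} against $\psi_{\fz}w$, with $\psi_{\fz}(x)=e^{-\gamma|x-\fz|}$ (smoothed) and $\gamma$ small.

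The terms to control are the following.
\begin{itemize}
\item The dissipation $\nu\int\psi|\nabla w|^2\ge \nu\alpha_0\int\psi|w|^2-C\gamma\nu\int\psi|w|^2$, using Poincar\'e in $y$ together with $|\psi'|\le\gamma\psi$.
\item The convective term $\int|w|^2 w\cdot\nabla\psi$.
\item The pressure term $\int \fq\, w\cdot\nabla\psi$.
\end{itemize}
For the pressure, zero flux lets me write $w_1=\partial_y\Theta$ with $\Theta$ a stream function vanishing on $\partial D$, so the weighted pressure can be handled through the standard pipe-pressure estimates of Zelik \cite{Zel07}. The cubic terms are bounded by $\gamma\|w\|_\infty\int\psi|w|^2$ (plus pressure analogues).

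Putting these together yields, with $E(t)=\sup_{\fz}\int\psi_{\fz}|w|^2$,
\[E'\le -(\nu\alpha_0-C\gamma-C\gamma\|w\|_\infty)E .\]
The bound $\|w\|_\infty\lesssim C(R)$ for $t\ge t_0$ is not enough by itself here. I would first absorb it via $\gamma\ll1/C(R)$, and then use the integrated bound on $\int\|w\|_\infty^2$ from Step I to close the Gronwall argument, obtaining $E(t)\le Ce^{-\alpha t}$. Since $\|w\|_{L^2_b}^2\lesssim_\gamma E$, this gives the $L^2_b$ part of \eqref{DE2}. This step is the main obstacle: handling the nonlocal pressure in the weighted, infinite-energy setting, and making the weight parameter uniform in $R$.

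\textbf{Step III (upgrade to $L^\infty$ and conclusion).} For $t\ge1$, I would apply the short-time smoothing of Step I on $[t-1/2,t]$ with initial datum $w(t-1/2)$, whose $L^2_b$ norm is small. The fixed point then gives $\|w(t)\|_\infty\lesssim \|w(t-1/2)\|_{L^2_b}\lesssim e^{-\alpha t}$, with the estimate linear because the data are small. Finally, $\int_1^\infty\|w\|_\infty^2<\infty$ adds a constant to Step I, which yields \eqref{DE1}.
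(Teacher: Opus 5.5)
\textbf{Main gap (Step II).} You flag the pressure as the main obstacle, and that is exactly where the key idea is missing. Testing with $\psi_\fz w$ leaves the term $\int \fq\, w_1\partial_x\psi_\fz$. In the no-slip strip, $\fq$ is not controlled at a fixed time by $\|\sqrt{\psi_\fz}\,w\|_{H^1}$: its Stokes part $\nabla\fq_S=\nu(I-\Pi)\Delta w$ is driven by the boundary trace of the vorticity (equivalently, reading $\fq$ off the equation requires $\partial_t w$ in $H^{-1}$, not merely in $V^*$). So the pointwise inequality $E'\le-(\nu\alpha_0-C\gamma-C\gamma\|w\|_\infty)E$ does not follow from pressure estimates. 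If instead you bound $\fq$ at each time by higher norms of $w$, which are only known to be bounded, you are left with a forcing term sublinear in $E$, and that does not give decay to $0$.

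Your stream-function remark is only the first step of the fix. Writing $w_1=\partial_y\Theta$, integrating by parts in $y$ and inserting the $w_2$-equation creates $\partial_t\Theta$, and the $w_1$-equation then brings $\partial_x\fq$ back. The paper closes this chain in one stroke by subtracting the solution $v_{\varepsilon,\fz}$ of the backward generalized Stokes problem \eqref{DE15}. Its divergence is $2\theta_{\varepsilon,\fz}\partial_x\theta_{\varepsilon,\fz}w_1$, and its terminal value $(0,2\theta_{\varepsilon,\fz}\partial_x\theta_{\varepsilon,\fz}\JJ w(T))$ vanishes on $\partial D$ precisely because the flux is zero. Then:
\begin{itemize}
\item $\theta_{\varepsilon,\fz}^2w-v_{\varepsilon,\fz}$ is divergence-free with zero trace, so the pressure disappears;
\item the linear terms reduce to $\frac{\ddd}{\ddd t}\langle w,v_{\varepsilon,\fz}\rangle$ in \eqref{DE17};
\item \eqref{DE16} makes every correction $O(\varepsilon)$.
\end{itemize}
With the Ladyzhenskaya bound \eqref{DE19}, no $L^\infty$ control of $w$ is needed. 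A bootstrap on $\sup_t\|\theta_{\varepsilon,\fz}w\|$ under $\varepsilon^{1/2}R\ll\nu$ gives \eqref{DE26}. Poincar\'e on time windows of length $T_\nu$ then halves $\|\theta_{\varepsilon,\fz}w\|^2$ on each window, which gives exponential decay. Once this decay is available, your Step III goes through (the paper simply uses Corollary 6.4 of \cite{AZ14} with \eqref{DE13-1}).

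\textbf{Second gap (Step I).} The logarithmic structure is right, but the bound $\|w(t)\|_\infty\lesssim t^{-1/2}$ with a constant depending only on $R$ cannot come from your fixed point.
\begin{itemize}
\item Both parts of the norm $\sup_{t\le t_0}(\|w(t)\|_{L^2_b}+t^{1/2}\|w(t)\|_\infty)$ are invariant under the NS scaling $w\mapsto\lambda w(\lambda x,\lambda^2t)$, which is an exact local symmetry at small scales.
\item Hence the Duhamel term is bounded by $C\|w\|^2$ with no positive power of $t_0$ (for instance, $\int_0^t(t-s)^{-1/2}s^{-1/2}\,\ddd s=\pi$).
\item Contraction therefore needs $\sup_{t\le t_0}t^{1/2}\|e^{\nu tL}w_0\|_\infty$ to be small, and this fails uniformly on the ball $\|w_0\|_{L^2_b}\le R$: a bump of $L^2$-mass $R$ concentrated at scale $\lambda^{-1}$ makes this quantity of order $R$ at $t\sim\lambda^{-2}$.
\end{itemize}
The $t^{-1/2}$ bound has to come from energy-type smoothing. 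Specifically, use $\|w(t)\|_{L^2_b}\le C_R$ and $\|w(t)\|_{W^{2,2}_b}\le C_Rt^{-1}$ on $(0,1]$ (Theorem 4.1 and Corollary 6.4 of \cite{AZ14}) together with \eqref{DE13-1}. The qualitative smoothing in Theorem \ref{MT1} cannot supply constants that are uniform in $w_0$.

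The paper runs its fixed point only in the subcritical norm $\sup_t(\|w\|_{L^4_b}+t^{1/4}\|w\|_\infty)$. The existence time is $T_0\sim[(1+\|w_0\|_\infty^2)(1+R^2)]^{-1}$, obtained from $\|w_0\|_{L^4_b}^2\le\|w_0\|_{L^2_b}\|w_0\|_\infty$. This also avoids the $L^\infty$ Stokes-semigroup bounds you assume, which Proposition \ref{propSTOKES} (valid only for $1<p<\infty$) does not provide.
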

We divide the proof into two steps, each treated in a separate subsection below. Throughout the paper, we need the following weight function 
\begin{align}
    \label{DE3}\theta_{\varepsilon,\fz}(x):=\frac{1}{\sqrt{1+\varepsilon^2(x-\fz)^2}},
\end{align}
which was introduced in \cite{Zel07}. The following properties play an important role. Although proofs of these properties can be found in \cite{Zel03,Zel07,AZ14}, we collect their proofs in Section~\ref{weight} for the reader's convenience.
\begin{prop}\label{propDE1}
    The following statements hold for the weight function $\theta_{\varepsilon,\fz}$.
    \begin{enumerate}
        \item For any $\varepsilon\in (0,1)$ and $p\in(1,\infty)$, it follows
        \begin{align}
            \label{DE4}
            \|f\|_{L^p_b}\lesssim_p \sup_{\fz\in\R}\|\theta_{\varepsilon,\fz}f\|_p\lesssim_p \varepsilon^{-\frac{1}{p}}\|f\|_{L^p_b}.
        \end{align}
        \item For any $\varepsilon\in (0,1)$, it follows
        \begin{align}
            \label{DE5}|\partial_x\theta_{\varepsilon,\fz}(x)|\le \varepsilon\theta_{\varepsilon,\fz}^2(x)
        \end{align}
        \item For any $\varepsilon\in (0,1)$ and integer $j\ge 1$, it follows
        \begin{align}
            \label{DE6}\|\partial_x^j\log \theta_{\varepsilon,\fz}\|_\infty\lesssim_j \varepsilon^j.
        \end{align}
    \end{enumerate}
\end{prop}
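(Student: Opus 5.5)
The three properties follow from explicit computation with $\theta:=\theta_{\varepsilon,\fz}$, using $\theta(x)=(1+\varepsilon^2(x-\fz)^2)^{-1/2}$. By translation invariance of the statements in $\fz$, it suffices to take $\fz=0$ for (2) and (3).

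For \eqref{DE5}, I differentiate directly:
\[\partial_x\theta=-\varepsilon^2x\,(1+\varepsilon^2x^2)^{-3/2}=-\varepsilon\,\frac{\varepsilon x}{\sqrt{1+\varepsilon^2x^2}}\,\theta^2 .\]
Since $|\varepsilon x|\le\sqrt{1+\varepsilon^2x^2}$, this gives $|\partial_x\theta|\le\varepsilon\theta^2$.

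For \eqref{DE6}, I write $\log\theta(x)=-\tfrac12\log(1+(\varepsilon x)^2)=g(\varepsilon x)$ with $g(s):=-\tfrac12\log(1+s^2)$. Then $\partial_x^j\log\theta=\varepsilon^j g^{(j)}(\varepsilon x)$, so it remains to show $g^{(j)}\in L^\infty(\R)$ for $j\ge1$. Now $g'(s)=-s/(1+s^2)$ is a rational function decaying like $s^{-1}$, and each further derivative of it is rational, smooth on $\R$, and decays at infinity. Hence every $g^{(j)}$ is bounded, with a bound depending only on $j$.

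For \eqref{DE4}, I tile the real line by the unit cells $D_k$, $k\in\Z$. The upper bound is the main computation. Write
\[\|\theta_{\varepsilon,\fz}f\|_p^p=\sum_{k\in\Z}\int_{D_k}\theta_{\varepsilon,\fz}^p|f|^p\le\|f\|_{L^p_b}^p\sum_{k\in\Z}\sup_{x\in(k,k+1)}(1+\varepsilon^2(x-\fz)^2)^{-p/2}.\]
The sum is comparable to $1+\int_\R(1+\varepsilon^2 s^2)^{-p/2}\ddd s=1+\varepsilon^{-1}\int_\R(1+r^2)^{-p/2}\ddd r$. The last integral is finite precisely because $p>1$, and since $\varepsilon<1$ this gives the bound $C_p\varepsilon^{-1}\|f\|^p_{L^p_b}$. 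Taking $p$-th roots yields the factor $\varepsilon^{-1/p}$.

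For the lower bound, I fix $\fz$ and note that on $D_\fz$ one has $|x-\fz|\le1$, so $\theta_{\varepsilon,\fz}\ge(1+\varepsilon^2)^{-1/2}\ge2^{-1/2}$. Therefore
\[\|f\|_{L^p(D_\fz)}\le2^{1/2}\|\theta_{\varepsilon,\fz}f\|_p .\]
Taking the supremum over $\fz$ gives the claim.

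The only delicate point is uniformity in $\fz$ in the tiling sum. The cell containing $\fz$ and its neighbours contribute $O(1)$. For the remaining cells, monotonicity of $\theta$ in $|x-\fz|$ lets me compare the supremum over $(k,k+1)$ with the integral over the adjacent cell closer to $\fz$. This gives a Riemann-sum bound independent of $\fz$.
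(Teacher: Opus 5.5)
Your proof is correct and follows the paper's argument. The lower bound uses $\theta_{\varepsilon,\fz}\ge 2^{-1/2}$ on $D_\fz$, the upper bound uses a unit-cell decomposition compared with $\int_\R\theta^p_{\varepsilon,0}\,\ddd x\lesssim_p\varepsilon^{-1}$, and items 2 and 3 are the same direct and rescaled computations. The only cosmetic difference is that the paper tiles by the $\fz$-adapted cells $D_{\fz+l}$, $l\in\Z$; there the supremum of the weight on each cell is explicit, so the uniformity in $\fz$ that you obtain by monotone comparison comes for free.
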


\subsection{Estimate on unit time interval}
We begin with an estimate of $w(t)$ on the unit time interval. The main result is given below.
\begin{prop}\label{propDE3}
    Let $w(t)$ be an infinite-energy solution of the homogeneous NS equation \eqref{INTRO8} issued from the initial data $w_0\in H_b\cap L^\infty$. Then, for any $R>0$, there is a constant $\CCC_2$ depending on $R,\nu$ such that for any $w_0$ satisfying $\|w_0\|_{L^2_b}\le R$,
    \begin{align}
        \label{DE7}
        \int_0^1\|w(t)\|_{\infty}^2\ddd t\le \CCC_2\big(1+\log(1+\|w_0\|_{\infty})\big).
    \end{align}
\end{prop}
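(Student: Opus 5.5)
The plan is to split $[0,1]$ at a small time $t_0\in(0,1]$, to be chosen depending on $\|w_0\|_\infty$. On $[0,t_0]$ the $L^\infty$ norm is controlled directly by the initial data; on $[t_0,1]$ it is controlled by the $L^2_b$-smoothing. The logarithm comes from integrating a $t^{-1}$-type singularity between the two regimes.

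\textbf{Step 1: short-time $L^\infty$ bound.} We first establish a local-in-time $L^\infty$ bound via a mild (Duhamel) formulation in uniformly local spaces. The Stokes semigroup $S(t)$ is analytic on $L^p_{b,\dive}$ (Proposition \ref{propSTOKES}, obtained from the weighted spaces with weight $\theta_{\varepsilon,\fz}$ and \eqref{DE4}). This gives the estimates
\[\|S(t)f\|_\infty\lesssim\|f\|_\infty,\qquad \|S(t)\Pi\dive F\|_{\infty}\lesssim t^{-1/2}\|F\|_\infty,\qquad \|S(t)f\|_\infty\lesssim t^{-1/2}\|f\|_{L^2_b}\quad (t\le 1),\]
where the last one is the two-dimensional $L^2_b\to L^\infty$ smoothing. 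Writing
\[w(t)=S(t)w_0-\int_0^tS(t-s)\Pi\dive(w\otimes w)\,\ddd s,\]
a fixed-point argument gives
\[\|w(t)\|_\infty\le 2C\|w_0\|_\infty\qquad\text{for } t\le t_*:=c\,(1+\|w_0\|_\infty)^{-2}.\]

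\textbf{Step 2: smoothing from $L^2_b$ data.} Next we show that, for data with $\|w(s)\|_{L^2_b}\le C_R$, one has
\[\|w(t)\|_\infty^2\lesssim_{R,\nu}(t-s)^{-1},\qquad t-s\le 1.\]
Uniform $L^2_b$ bounds on $[0,1]$ come from the weighted energy estimate, testing with $\theta_{\varepsilon,\fz}^2w$ and using \eqref{DE5}. The smoothing itself is obtained by running the same fixed-point argument in the space normed by $\sup_{t}\,t^{1/2}\|w(t)\|_\infty$, with the third semigroup estimate. Because the $L^2_b$ norm is critical rather than subcritical in 2D, this closes only on a time interval whose length depends on the data, not merely on its $L^2_b$ size. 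I would therefore combine it with the local energy inequality and interpolation:
\[\|w(t)\|_\infty^2\lesssim\|w(t)\|_{L^2_b}\,\|w(t)\|_{W^{2,2}_b},\qquad \|w(t)\|_{W^{2,2}_b}\lesssim t^{-1}\ \text{(the estimate of \cite{AZ14})}.\]
This yields $\|w(t)\|_\infty^2\lesssim t^{-1}$ for $t\in(0,1]$, with a constant depending only on $R$ and $\nu$.

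\textbf{Step 3: assembling the bound.} Splitting at $t_*$,
\[\int_0^1\|w(t)\|_\infty^2\,\ddd t\le t_*\cdot 4C^2\|w_0\|_\infty^2+C_R\int_{t_*}^1t^{-1}\,\ddd t\lesssim 1+\log\bigl(1+\|w_0\|_\infty\bigr),\]
which is \eqref{DE7}.

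\textbf{Main obstacle.} The main difficulty is Step 2: obtaining $\|w(t)\|_\infty\lesssim t^{-1/2}$ with a constant uniform over the $L^2_b$-ball. The $t^{-1}$ bound in $W^{2,2}_b$ from \cite{AZ14} should suffice when combined with the $L^2_b$ bound, since only the square of the $L^\infty$ norm is needed. If the interpolation is borderline, one may instead use $W^{1,p}_b$ bounds from the analytic-semigroup estimates of Proposition \ref{propSTOKES} with $p>2$.
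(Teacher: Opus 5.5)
Your overall architecture is the paper's. You split $[0,1]$ at a time that is polynomially small in $\|w_0\|_\infty$ and control $[0,t_*]$ by a Duhamel fixed point. On $[t_*,1]$ you combine $\|w(t)\|_{L^2_b}\le C_R$ and $\|w(t)\|_{W^{2,2}_b}\le C_Rt^{-1}$ from \cite{AZ14} with $\|f\|_\infty^2\lesssim\|f\|_{L^2_b}\|f\|_{W^{2,2}_b}$, so the logarithm comes from $\int_{t_*}^1t^{-1}\,\ddd t$. Your Steps 2 and 3 coincide with this, so Step 2 is not the real obstacle, and the critical fixed-point detour can be dropped. One small correction there: testing with $\theta_{\varepsilon,\fz}^2w$ alone does not give the $L^2_b$ bound. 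The field $\theta_{\varepsilon,\fz}^2w$ is not divergence-free, and the pressure of an infinite-energy solution is not controlled. You either quote \cite{AZ14}, as the paper does here, or use the divergence corrector $v_{\varepsilon,\fz}$ of \eqref{DE15}.

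The genuine gap is in Step 1. The bounds $\|S(t)f\|_\infty\lesssim\|f\|_\infty$ and $\|S(t)\Pi\dive F\|_\infty\lesssim t^{-1/2}\|F\|_\infty$ do not follow from Proposition~\ref{propSTOKES}, which is proved only in $L^p_b$ with $p\in(1,\infty)$. The Leray projection is unbounded on $L^\infty$, and an $L^\infty$ theory for the Dirichlet Stokes semigroup in the strip is a separate, much harder question that the paper neither proves nor uses. If you pass from the $L^p_b$ estimates to $L^\infty$ by Gagliardo--Nirenberg, you lose a power of $t$: $\|S(t)f\|_\infty\lesssim t^{-1/p}\|f\|_{L^p_b}$ and $\|S(t)\Pi\dive F\|_\infty\lesssim t^{-1/2-1/p}\|F\|_{L^p_b}$. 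So the conclusion $\|w(t)\|_\infty\le 2C\|w_0\|_\infty$ on $[0,t_*]$ is out of reach this way.

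The fix, which is exactly what the paper does, is to close the fixed point in the mixed norm $\sup_{0<t\le T}\bigl(\|w(t)\|_{L^4_b}+t^{1/4}\|w(t)\|_\infty\bigr)$. You use $\|w\otimes w\|_{L^4_b}\le\|w\|_{L^4_b}\|w\|_\infty$ together with the resulting Beta-function integrals. This closes when $T^{1/4}\|w_0\|_{L^4_b}$ is small and gives $\|w(t)\|_\infty\lesssim t^{-1/4}\|w_0\|_{L^4_b}$. Since $t^{-1/2}$ is integrable, you still get $\int_0^{T_0}\|w\|_\infty^2\,\ddd t\lesssim T_0^{1/2}\|w_0\|_{L^4_b}^2\lesssim1$. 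Because $\|w_0\|_{L^4_b}^4\le\|w_0\|_{L^2_b}^2\|w_0\|_\infty^2$, you may take $T_0\sim(1+R^2)^{-1}(1+\|w_0\|_\infty^2)^{-1}$. Then $\log T_0^{-1}$ is still bounded by the right-hand side of \eqref{DE7}.
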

\begin{proof}
    \textbf{Step I.} First, we establish an estimate around $t=0$ through the Duhamel formulation 
    \begin{align}
        \label{DE8}
        w(t)=S(t)w_0-\int_0^tS(t-s)\Pi\dive (w\otimes w)(s)\ddd s,
    \end{align}
    and the estimates for the Stokes semigroup in uniformly local spaces. For any $T>0$, define
    \[\|w\|_{\YYY_T}:=\sup_{0<t\le T}(\|w(t)\|_{L^4_b}+t^{\frac{1}{4}}\|w(t)\|_\infty).\]
    By applying \eqref{STOKES1} combined with the Gagliardo-–Nirenberg inequality
    \begin{align}
        \label{DE9}\|f\|_{\infty}\lesssim\|f\|^{\frac{1}{2}}_{L^4_b}\| f\|_{W^{1,4}_b}^{\frac{1}{2}}
    \end{align}
    and the Poincar\'e inequality, one has 
    \begin{align*}
        \|S(t)w_0\|_{L^4_b}\lesssim_{\nu}\|w_0\|_{L^4_b},\qquad \|S(t)w_0\|_\infty\lesssim_\nu t^{-\frac{1}{4}}\|w_0\|_{L^4_b},
    \end{align*}
    which implies
    \begin{align}\label{DE10}
        \|S(\cdot)w_0\|_{\YYY_T}\lesssim_{\nu} \|w_0\|_{L^4_b}.
    \end{align}
    Similarly, an application of \eqref{STOKES2} gives 
    \begin{align}\label{DE11}
        \|S(t-s)\Pi\dive (w\otimes w)(s)\|_{L^4_b}\!\lesssim_\nu \frac{1}{(t-s)^{\frac{1}{2}}}\|(w\otimes w)(s)\|_{L^4_b}\lesssim_\nu \!\frac{1}{s^{\frac{1}{4}}(t-s)^{\frac{1}{2}}}\|w\|^2_{\YYY_T},
    \end{align}
    and 
    \begin{align*}
        \|\nabla S(t-s)\Pi\dive (w\otimes w)(s)\|_{L^4_b}\lesssim_\nu \frac{1}{t-s}\|(w\otimes w)(s)\|_{L^4_b}\lesssim_\nu \frac{1}{s^{\frac{1}{4}}(t-s)}\|w\|^2_{\YYY_T}.
    \end{align*}
    Together with the Gagliardo--Nirenberg inequality \eqref{DE9}, this yields
    \begin{align*}
        \|S(t-s)\Pi\dive (w\otimes w)(s)\|_\infty\lesssim_\nu \frac{1}{s^{\frac{1}{4}}(t-s)^{\frac{3}{4}}}\|w\|^2_{\YYY_T}.
    \end{align*}
    As a result, by combining this and \eqref{DE11}, one has
    \begin{align*}
        \left\|\int_0^tS(t-s)\Pi\dive (w\otimes w)(s)\ddd s\right\|_{L^4_b}&\lesssim_\nu \|w\|^2_{\YYY_T}\int_0^ts^{-\frac{1}{4}}(t-s)^{-\frac{1}{2}}\ddd s\\&=t^{\frac{1}{4}}B\left(\tfrac{3}{4},\tfrac{1}{2}\right)\|w\|^2_{\YYY_T},
    \end{align*}
    and 
    \begin{align*}
        t^{\frac{1}{4}}\left\|\int_0^tS(t-s)\Pi\dive (w\otimes w)(s)\ddd s\right\|_{\infty}&\lesssim_\nu  t^{\frac{1}{4}}\|w\|^2_{\YYY_T}\int_0^ts^{-\frac{1}{4}}(t-s)^{-\frac{3}{4}}\ddd s\\&=t^{\frac{1}{4}}B\left(\tfrac{3}{4},\tfrac{1}{4}\right)\|w\|^2_{\YYY_T},
    \end{align*}
    where $B(\cdot,\cdot)$ denotes the Beta function. Consequently, it follows that 
    \begin{align*}
        \left\|\int_0^\cdot S(\cdot-s)\Pi\dive (w\otimes w)(s)\ddd s\right\|_{\YYY_T}\lesssim_\nu T^{\frac{1}{4}}\|w\|^2_{\YYY_T},
    \end{align*}
    which, combined with \eqref{DE10}, leads to
    \[\|w\|_{\YYY_T}\le C_\nu \|w_0\|_{L^4_b}+C_\nu T^{\frac{1}{4}}\|w\|^{2}_{\YYY_T}.\]
    This, together with a bootstrap argument, yields 
    \begin{align}\label{DE12}
        \|w\|_{\mathcal Y_T}\le 2C_\nu \|w_0\|_{L^4_b},
    \end{align}
    provided that $T^{1/4}\|w_0\|_{L^4_b}\le \frac{1}{8C_\nu^2}$.

    \noindent\textbf{Step I\!I.} Let 
    \[T_0:=1\wedge\frac{1}{4096 C_\nu^8(1+\|w_0\|_\infty^2)(1+R^2)}.\]
    By the H\"older inequality, it follows that 
    \[T_0^{\frac{1}{4}}\|w_0\|_{L^4_b}\le \frac{1}{8C_\nu^2},\]
    which, together with \eqref{DE12} and the H\"older inequality again, gives
    \begin{align}
        \label{DE13}\int_0^{T_0}\|w(t)\|^2_\infty \ddd t\le 8C^2_\nu \|w_0\|^2_{L^4_b} T_0^{\frac{1}{2}}\le \frac{1}{8C_\nu^2}.
    \end{align}
    We turn to the estimate of $w(t)$ on $[T_0,1]$. By applying Theorem 4.1 and Corollary 6.4 in \cite{AZ14}, one has 
    \[\|w(t)\|_{W^{2,2}_b}\le C_R t^{-1},\qquad \forall t\in(0,1],\]
    and 
    \[\|w(t)\|_{L^{2}_b}\le C_R,\qquad \forall t\ge 0.\]
    Together with the Gagliardo–-Nirenberg inequality
    \begin{align}
        \label{DE13-1}\|f\|_\infty\lesssim\|f\|^{\frac{1}{2}}_{L^2_b}\|f\|^{\frac{1}{2}}_{W^{2,2}_b},
    \end{align}
    this implies 
    \begin{align*}
        \int_{T_0}^1\|w(t)\|^2_\infty\ddd t\lesssim_R \log T_0^{-1}\lesssim_R C_\nu+\log(1+R^2)+\log(1+\|w_0\|^2_\infty)
    \end{align*}
    By combining this and \eqref{DE13}, we obtain \eqref{DE7} as desired.
\end{proof}
\subsection{Long-time estimate}
In this subsection, we aim to establish the following exponential decay estimate for $w(t)$ on $[1,\infty).$ The proof follows the idea of Theorem 6.1 in \cite{AZ14} with a refined treatment of weighted energy estimates. This yields exponential decay to $0$, when the cross-sectional flux vanishes, sharpening the dissipative bound stated in that theorem.
\begin{prop}\label{propDE4}
    Let $w(t)$ be an infinite-energy solution of the homogeneous NS equation \eqref{INTRO8} issued from the initial data $w_0\in H_b\cap L^\infty$. Then, for any $R>0$, there are constants $\CCC_1,\alpha_1$ depending on $R,\nu$ such that for any $w_0$ satisfying $\|w_0\|_{L^2_b}\le R$,
    \begin{align}
        \label{DE14}
        \|w(t)\|_{L^2_b}+\|w(t)\|_{\infty}\le \CCC_1e^{-\alpha_1 t}
    \end{align}
    for any $t\ge 1$.
\end{prop}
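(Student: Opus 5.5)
The plan is to prove exponential decay of a weighted energy that is uniform in the shift $\fz$. Decay in $L^2_b$ then follows from \eqref{DE4}, and decay in $L^\infty$ from parabolic smoothing on unit time intervals.

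Fix $\varepsilon\in(0,1)$, to be chosen small depending on $R,\nu$, and write $\theta:=\theta_{\varepsilon,\fz}$. Testing \eqref{INTRO8} against $\theta^2 w$ gives
\[
\frac12\frac{\ddd}{\ddd t}\|\theta w\|^2+\nu\|\theta\nabla w\|^2
=-2\nu\langle \nabla w,\, w\,\partial_x\theta\,\theta\rangle+\langle |w|^2 w_1,\theta\partial_x\theta\rangle+2\langle \fq\, w_1,\theta\partial_x\theta\rangle .
\]
By \eqref{DE5}, the first term is bounded by $\nu\varepsilon\|\theta\nabla w\|\|\theta w\|$. Since $\theta w\in H^1_0$ up to lower-order terms, the Poincar\'e inequality gives $\|\theta\nabla w\|^2\ge \alpha_0\|\theta w\|^2-C\varepsilon^2\|\theta w\|^2$. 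For the cubic term I will use $|\theta\partial_x\theta|\le\varepsilon\theta^3$, the interpolation $\|\theta^{3/2}w\|_3^3\lesssim \|\theta w\|\,\|\theta\nabla w\|^2\cdot(\text{bounded factor})$, and the uniform bound $\|w(t)\|_{L^2_b}\le C_R$ from \cite{AZ14}. Together these should give a bound $\varepsilon C_R\varepsilon^{-1/2}\|\theta\nabla w\|^2$, which is not small enough as it stands.

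The refinement is to handle the convective term using smallness for $t\ge1$. By \cite{AZ14} and \eqref{DE13-1}, $\|w(t)\|_\infty\le C_R$ for $t\ge1/2$. The cubic term is then at most $\varepsilon C_R\|\theta w\|^2$, which is absorbed into the dissipation once $\varepsilon\le\varepsilon_0(R,\nu)$.

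\textbf{Main obstacle.} The pressure term is where I expect the difficulty, because in the infinite-energy setting the pressure is only nonlocally defined. Here I will use the zero-flux condition from \eqref{INTRO4}. Since $\int_{-1}^1 w_1\,\ddd y=0$, we have $w_1=\partial_y\psi$ with a stream function $\psi$ vanishing on $\partial D$. Hence $\langle \fq\,w_1,\theta\partial_x\theta\rangle=-\langle \partial_y\fq\,\psi,\theta\partial_x\theta\rangle$, since $\theta$ depends only on $x$. The estimate $|\theta\partial_x\theta|\le\varepsilon\theta^3$ then applies. Because $\nabla\fq=\nu\Delta w-(w\cdot\nabla)w-\partial_tw$ enters only through its $y$-component, I expect to avoid needing any $L^2$ control of $\fq$ itself. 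I will bound $\|\theta\partial_y\fq\|_{H^{-1}}$ by weighted estimates for the Stokes/Leray problem in $\theta$-weighted spaces (Proposition~\ref{propSTOKES}, \cite{Zel07}), giving the pressure term a bound of the form $C\varepsilon(1+C_R)\|\theta\nabla w\|\,\|\theta w\|$. The $\partial_t w$ contribution is the delicate part; if it causes trouble, I will instead follow \cite{AZ14} and split $\fq$ into a local part and a harmonic part whose gradient decays exponentially in $|x-\fz|$.

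With all three terms controlled, for $\varepsilon$ small we obtain
\[
\frac{\ddd}{\ddd t}\|\theta w\|^2+\tfrac{\nu\alpha_0}{2}\|\theta w\|^2\le0,\qquad t\ge1/2 .
\]
This gives $\sup_\fz\|\theta_{\varepsilon,\fz}w(t)\|\le e^{-\nu\alpha_0(t-1/2)/4}\sup_\fz\|\theta_{\varepsilon,\fz}w(1/2)\|\lesssim \varepsilon^{-1/2}C_Re^{-\alpha_1t}$. By \eqref{DE4}, this yields the $L^2_b$ part of \eqref{DE14}. For the $L^\infty$ part, I will apply Step~I of the proof of Proposition~\ref{propDE3} on $[t-1,t]$ with initial data $w(t-1)$. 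Since $\|w(t-1)\|_{L^4_b}\lesssim \|w(t-1)\|_{L^2_b}^{1/2}\|w(t-1)\|_\infty^{1/2}$ is exponentially small, the smallness condition there holds with $T=1$. Then \eqref{DE12} gives $\|w(t)\|_\infty\lesssim\|w(t-1)\|_{L^4_b}\lesssim e^{-\alpha_1 t/2}$, and renaming $\alpha_1$ completes \eqref{DE14}.
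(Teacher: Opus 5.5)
Your treatment of the viscous cross term, of the cubic term (via $\|w(t)\|_\infty\le C_R$ for $t\ge 1/2$ and $|\theta\partial_x\theta|\le\varepsilon\theta^3$), and of the passage from weighted decay to $L^2_b$ and $L^\infty$ decay is fine. The gap is the pressure term $2\langle\fq\,w_1,\theta\partial_x\theta\rangle$, which is the whole difficulty of this proposition.

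Your bound $C\varepsilon(1+C_R)\|\theta\nabla w\|\,\|\theta w\|$ for it is asserted, not derived. Proposition~\ref{propSTOKES} only concerns $S(t)$ and $S(t)\Pi\dive$ on $L^p_b$ and contains no pressure estimate. Moreover, no inequality of the form $\|\theta\partial_y\fq\|_{H^{-1}}\lesssim\|\theta\nabla w\|+\cdots$ (or directly of your claimed form) holds for smooth solenoidal fields with no-slip conditions.

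To see this, take $w$ smooth, as for $t\ge 1/2$. Then $\nabla\fq=(I-\Pi)(\nu\Delta w-(w\cdot\nabla)w)$ and $\partial_t w$ has disappeared. The Stokes part of the pressure term is $-\nu\langle\Delta w,\nabla q\rangle$ with $\nabla q=(I-\Pi)(\theta^2w)$. Since $\nabla q$ has a nonzero tangential trace, this equals a harmless volume term plus $\pm\nu\int_{\partial D}\curl w\,\partial_x q$. Now take a bulk flow of size one plus a unit-amplitude boundary layer of width $\delta$. The boundary integral is then of order $\varepsilon\delta^{-1}$, while $\varepsilon\|\theta\nabla w\|\,\|\theta w\|\sim\varepsilon\delta^{-1/2}$. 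Both sides are quadratic in $w$, so rescaling to respect the $W^{2,2}_b$ bound does not help.

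Your $\partial_t w_2$ term is the same obstruction at the weak level. The field $\psi\,\theta\partial_x\theta$ is not solenoidal, so $\langle \partial_t w_2,\psi\,\theta\partial_x\theta\rangle$ is neither a total time derivative nor controlled by $H^1$ quantities. Using the uniform $W^{2,2}_b$ bound instead gives a contribution of lower homogeneity in $\theta w$: this yields boundedness, not decay to zero. The local/harmonic splitting you mention as a fallback is not carried out.

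The missing idea is the paper's duality device, which removes the pressure rather than estimating it. One tests \eqref{INTRO8} against $\theta^2w-v_{\varepsilon,\fz}$, where $v_{\varepsilon,\fz}$ solves the backward generalized Stokes problem \eqref{DE15}. This $v_{\varepsilon,\fz}$ has $\dive v_{\varepsilon,\fz}=2\theta\partial_x\theta\,w_1$, vanishes on $\partial D$, and has terminal value $(0,2\theta\partial_x\theta\,\JJ w(T))$. Zero flux enters exactly here: it makes $\JJ w$ (your $\psi$) vanish on both walls.

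The test function is then solenoidal with zero trace, so the pressure drops out. Because $v_{\varepsilon,\fz}$ solves the backward Stokes equation, $\langle w,\partial_t v_{\varepsilon,\fz}\rangle=-\nu\langle\Delta w,v_{\varepsilon,\fz}\rangle$ exactly cancels the viscous cross term. This gives the identity \eqref{DE17}, in which neither the pressure nor $\partial_t w$ appears except through $\frac{\ddd}{\ddd t}\big(\frac12\|\theta w\|^2-\langle w,v_{\varepsilon,\fz}\rangle\big)$.

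The new terms $\langle(w\cdot\nabla)w,v_{\varepsilon,\fz}\rangle$ and $\langle w,v_{\varepsilon,\fz}\rangle$ at $t=0,T$ are controlled by \eqref{DE16}, namely $\|\theta^{-2}v_{\varepsilon,\fz}\|_{L^2}+\|\theta^{-2}v_{\varepsilon,\fz}\|_{L^3}\lesssim\varepsilon\sup_{[0,T]}\|\theta w\|$. This bound is nonlocal in time, so one does not get a pointwise inequality like your $\frac{\ddd}{\ddd t}\|\theta w\|^2+\frac{\nu\alpha_0}{2}\|\theta w\|^2\le 0$. Instead, the paper closes a bootstrap on $[s,s+T]$ using the smallness of $\varepsilon^{1/2}R$. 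It then iterates $\|\theta w(s+T_\nu)\|^2\le\frac12\|\theta w(s)\|^2$, obtained from Poincar\'e and the integrated dissipation.

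For the $L^\infty$ part, the paper simply combines the uniform $W^{2,2}_b$ bound from \cite{AZ14} with \eqref{DE13-1}. Your route via Step I of Proposition~\ref{propDE3} also works once $L^2_b$ decay is available.
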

\begin{proof} \textbf{Step I.} Let $w(t)$ be an infinite-energy solution of \eqref{INTRO8} issued from $w_0$. Introduce an auxiliary function $v_{\varepsilon,\fz}(t)$ which satisfies the following backward generalized Stokes equation
\begin{align}
    \label{DE15}\begin{cases}
    -\partial_t v_{\varepsilon,\fz}+\nabla \pp_{\varepsilon,\fz}=\nu\Delta v_{\varepsilon,\fz},\\
    \dive v_{\varepsilon,\fz}=2\theta_{\varepsilon,\fz}\partial_x\theta_{\varepsilon,\fz}w_1,\\
    v_{\varepsilon,\fz}|_{t=T}=\left(0,2\theta_{\varepsilon,\fz}\partial_x\theta_{\varepsilon,\fz}\JJ w(T)\right),\qquad v_{\varepsilon,\fz}|_{\partial D}=0,
\end{cases}
\end{align}
where $\JJ w:=\int_{-1}^y w_{1}(x,y')\ddd y'$ denotes the stream function of the velocity field $w$. According to Corollary 3.4 in \cite{AZ14}, the above equation is well-posed and there exists a constant $\CCC_3$ depending only on $\nu$ such that 
\begin{align}
    \label{DE16}
    \|\theta^{-2}_{\varepsilon,\fz}v_{\varepsilon,\fz}\|_{C([0,T];L^2)}+\|\theta^{-2}_{\varepsilon,\fz}v_{\varepsilon,\fz}\|_{C([0,T];L^3)}\le \CCC_3\varepsilon \|\theta_{\varepsilon,\fz} w\|_{C([0,T];L^2)}.
\end{align}
By testing \eqref{INTRO8} against $\theta_{\varepsilon,\fz}^2 w-v_{\varepsilon,\fz}$ and using the fact
\[\dive (\theta_{\varepsilon,\fz}^2 w-v_{\varepsilon,\fz})=0\]
and \eqref{DE15}$_1$, it follows that
\begin{align}
    \label{DE17}
    \frac{\ddd}{\ddd t}\left(\frac{1}{2}\|\theta_{\varepsilon,\fz}w\|^2-\langle w,v_{\varepsilon,\fz}\rangle\right)+\nu\langle \nabla w,\nabla (\theta_{\varepsilon,\fz}^2 w)\rangle=-\langle (w\cdot\nabla) w, \theta_{\varepsilon,\fz}^2 w-v_{\varepsilon,\fz}\rangle.
\end{align}
The above weighted energy identity can be rigorously justified by an approximation argument; see Corollary 3.5 in \cite{AZ14}. We estimate the terms in \eqref{DE17} in the following way. First, by applying \eqref{DE5}, the fact $\theta_{\varepsilon,\fz}\le 1$, and the Poincar\'e inequality, one has
\begin{align}\label{DE18}
        \langle \nabla w,\nabla (\theta_{\varepsilon,\fz}^2 w)\rangle&=\|\theta_{\varepsilon,\fz}\nabla w\|^2+2\langle \partial_xw,\theta_{\varepsilon,\fz}\partial_x\theta_{\varepsilon,\fz} w\rangle\notag\\&\ge\|\theta_{\varepsilon,\fz}\nabla w\|^2-2\varepsilon\|\theta_{\varepsilon,\fz} \nabla w\|\|\theta_{\varepsilon,\fz}  w\|\ge (1-\sqrt{\alpha_0}\varepsilon)\|\theta_{\varepsilon,\fz}\nabla w\|^2.
\end{align}
Next, by integrating by parts combined with an application of \eqref{DE5} and the Ladyzhenskaya inequality, it follows
\begin{align}
    \label{DE19}
    |\langle (w\cdot\nabla) w, \theta_{\varepsilon,\fz}^2 w\rangle|&=|\langle |w|^2, w_1\theta_{\varepsilon,\fz}\partial_x \theta_{\varepsilon,\fz} \rangle|\notag\\
    &\le \varepsilon\|\theta_{\varepsilon,\fz} w\|\|\theta_{\varepsilon,\fz} w\|_4^2\lesssim \varepsilon\|\theta_{\varepsilon,\fz} w\|^2\|\theta_{\varepsilon,\fz}\nabla w\|.
\end{align}
On the other hand, by using the fact $\theta_{\varepsilon,\fz}\le 1$ and the Sobolev embedding $H^1\hookrightarrow L^6$, one has 
\begin{align}
    \label{DE20}
    |\langle (w\cdot\nabla) w, v_{\varepsilon,\fz}\rangle|\le \|\theta_{\varepsilon,\fz} w\|_6\|\theta_{\varepsilon,\fz} \nabla w\|\|\theta^{-2}_{\varepsilon,\fz}v_{\varepsilon,\fz}\|_3\lesssim\|\theta_{\varepsilon,\fz} \nabla w\|^2\|\theta^{-2}_{\varepsilon,\fz}v_{\varepsilon,\fz}\|_3,
\end{align}
and 
\begin{align}
    \label{DE21}
    |\langle w, v_{\varepsilon,\fz}\rangle|\le\|\theta_{\varepsilon,\fz} w\|\|\theta^{-2}_{\varepsilon,\fz}v_{\varepsilon,\fz}\|.
\end{align}
Combining \eqref{DE16}--\eqref{DE21} and choosing $\varepsilon>0$ such that $1-\sqrt{\alpha_0}\varepsilon>\frac{1}{2}$, it follows that for any $T>0$,
\begin{align*}
    \|\theta_{\varepsilon,\fz}w(T)\|^2+&\nu\int_0^T\|\theta_{\varepsilon,\fz}\nabla w\|^2\ddd t\lesssim \|\theta_{\varepsilon,\fz} w(T)\|\|\theta^{-2}_{\varepsilon,\fz}v_{\varepsilon,\fz}(T)\| +\|\theta_{\varepsilon,\fz} w_0\|\|\theta^{-2}_{\varepsilon,\fz}v_{\varepsilon,\fz}(0)\|\\&+\|\theta_{\varepsilon,\fz}w_0\|^2+\varepsilon \int_0^T\|\theta_{\varepsilon,\fz} w\|^2\|\theta_{\varepsilon,\fz}\nabla w\|\ddd t+\int_0^T\|\theta_{\varepsilon,\fz} \nabla w\|^2\|\theta^{-2}_{\varepsilon,\fz}v_{\varepsilon,\fz}\|_3\ddd t,
\end{align*}
which, together with \eqref{DE15}$_3$, \eqref{DE16}, and \eqref{DE5}, implies that 
\begin{align*}
    \|\theta_{\varepsilon,\fz}w(T)\|^2+\nu\int_0^T\|\theta_{\varepsilon,\fz}\nabla w\|^2\ddd t&\le \CCC_4\varepsilon\|\theta_{\varepsilon,\fz}w(T)\|^2+\CCC_4\varepsilon\MMM_T \|\theta_{\varepsilon,\fz} w_0\|\\&\quad+\|\theta_{\varepsilon,\fz}w_0\|^2+\CCC_4\varepsilon\MMM_T \int_0^T\|\theta_{\varepsilon,\fz}\nabla w\|^2\ddd t,
\end{align*}
where $\CCC_4$ depends only on $\nu$ and 
\[\MMM_T:= \|\theta_{\varepsilon,\fz} w\|_{C([0,T];L^2)}.\]
Consequently, by shrinking $\varepsilon>0$ so that $\CCC_4\varepsilon<\frac{1}{4}$, one gets 
\begin{align}\label{DE22}\|\theta_{\varepsilon,\fz}w(T)\|^2+(\nu-\CCC_4\varepsilon\MMM_T)\int_0^T\|\theta_{\varepsilon,\fz}\nabla w\|^2\ddd t\le \CCC_4\varepsilon\MMM_T \|\theta_{\varepsilon,\fz} w_0\|+\|\theta_{\varepsilon,\fz} w_0\|^2.\end{align}

\noindent\textbf{Step I\!I.} We close the above weighted energy estimate via a standard bootstrap argument. Fix $R>0$ and initial data $w_0\in H_b\cap L^\infty$ satisfying $\|w_0\|_{L^2_b}\le R$. Choose $\varepsilon>0$ sufficiently small so that 
\begin{align}
    \label{DE23}1-\sqrt{\alpha_0}\varepsilon>\frac{1}{2},\qquad \CCC_4\varepsilon<\frac{1}{4},\qquad 2\CCC_4C\varepsilon^{\frac{1}{2}} R<\frac{\nu}{2},
\end{align}
where $C$ denotes the constant in \eqref{DE4}. As a result, one has 
\begin{align}
    \label{DE24}2\CCC_4\varepsilon\|\theta_{\varepsilon,\fz} w_0\|\le 2\CCC_4C'\varepsilon^{\frac{1}{2}}\|w_0\|_{L^2_b}<\frac{\nu}{2}.
\end{align}
Suppose that at time $T$, it follows
\[\MMM_T\le 2\|\theta_{\varepsilon,\fz} w_0\|,\]
which, together with \eqref{DE22}--\eqref{DE24}, leads to
\begin{align}
    \label{DE25}\|\theta_{\varepsilon,\fz}w(T)\|^2+\frac{\nu}{2}\int_0^T\|\theta_{\varepsilon,\fz}\nabla w\|^2\ddd t\le \frac{3}{2}\|\theta_{\varepsilon,\fz} w_0\|^2.
\end{align}
Applying the above estimate to $T_*:=\inf\{t\ge 0| \MMM_T=2\|\theta_{\varepsilon,\fz} w_0\|\}$ gives
\[\MMM_{T_*}^2=\sup_{t\le T_*}\|\theta_{\varepsilon,\fz}w(t)\|^2\le \frac{3}{2}\|\theta_{\varepsilon,\fz} w_0\|^2<4\|\theta_{\varepsilon,\fz} w_0\|^2.\]
This contradiction indicates that $T_*=\infty$ and \eqref{DE25} holds for any $T>0$. In particular, one has 
\begin{align}
    \label{DE26}
    \sup_{t\le T}\|\theta_{\varepsilon,\fz}w(t)\|^2 +\frac{\nu}{2}\int_0^T\|\theta_{\varepsilon,\fz}\nabla w\|^2\ddd t\le 3\|\theta_{\varepsilon,\fz} w_0\|^2,\qquad \forall T\ge 0.
\end{align}

\noindent\textbf{Step I\!I\!I.} In this step, we establish the exponential decay of $\|w(t)\|_{L^2_b}$. First, for any $s\ge0$, by shrinking $\varepsilon>0$ so that 
\begin{align}
    \label{DE27}2\CCC_4\varepsilon\|\theta_{\varepsilon,\fz} w(s)\|\le 2\sqrt{3}\CCC_4C'\varepsilon^{\frac{1}{2}}\|w_0\|_{L^2_b}\le2\sqrt{3}\CCC_4C'\varepsilon^{\frac{1}{2}}R<\frac{\nu}{2},
\end{align}
and repeating the bootstrap argument in Step I\!I, it follows that
\begin{align}
    \label{DE28}
    \sup_{s\le t\le s+T}\|\theta_{\varepsilon,\fz}w(t)\|^2 +\frac{\nu}{2}\int_s^{s+T}\|\theta_{\varepsilon,\fz}\nabla w\|^2\ddd t\le 3\|\theta_{\varepsilon,\fz} w(s)\|^2,\qquad \forall T\ge 0.
\end{align}
In particular, one has 
\[\|\theta_{\varepsilon,\fz}w(s+T)\|^2\le 3\|\theta_{\varepsilon,\fz} w(t)\|^2,\qquad \forall t\in[s,s+T].\]
Integrating the above inequality with respect to $t$ yields
\[T\|\theta_{\varepsilon,\fz}w(s+T)\|^2\le 3\int_s^{s+T}\|\theta_{\varepsilon,\fz} w(t)\|^2\ddd t,\]
which, combined with the Poincar\'e inequality and \eqref{DE28}, leads to
\[T\|\theta_{\varepsilon,\fz}w(s+T)\|^2\le 3\alpha^{-1}_0\int_s^{s+T}\|\theta_{\varepsilon,\fz} \nabla w(t)\|^2\ddd t\le \frac{18}{\nu\alpha_0}\|\theta_{\varepsilon,\fz} w(s)\|^2.\]
By choosing $T_\nu:= \max\{1,36\nu^{-1}\alpha_0^{-1}\}$, it follows
\begin{align*}\|\theta_{\varepsilon,\fz}w(s+T_\nu)\|^2\le \frac{1}{2}\|\theta_{\varepsilon,\fz} w(s)\|^2.
\end{align*}
Iterating the above inequality gives
\[\|\theta_{\varepsilon,\fz}w(nT_\nu)\|^2\le \frac{1}{2^n}\|\theta_{\varepsilon,\fz} w_0\|^2,\qquad \forall n\ge 0.\]
Consequently, for any $t\ge0$, by applying the above inequality and \eqref{DE28}, one gets
\begin{align*}
    \|\theta_{\varepsilon,\fz}w(t)\|^2\le 3\|\theta_{\varepsilon,\fz}w(\lfloor tT_\nu^{-1}\rfloor T_\nu)\|^2\le 3e^{-\lfloor tT_\nu^{-1}\rfloor\log 2}\|\theta_{\varepsilon,\fz} w_0\|^2\!\le 6 e^{-(T_\nu^{-1}\log 2)t}\|\theta_{\varepsilon,\fz} w_0\|^2.
\end{align*}
Since $\varepsilon$ depends only on $R,\nu$, see \eqref{DE23} and \eqref{DE27}, by combining the above estimate and \eqref{DE4}, it follows
\begin{align}
    \label{DE29}
    \|w(t)\|_{L^2_b}\lesssim_{R,\nu}e^{-\alpha_1t},\qquad \forall t\ge 0.
\end{align}

\noindent \textbf{Step I\!V.} Finally, we estimate $\|w(t)\|_\infty$. For any $t\ge 1$, by applying Corollary 6.4 in \cite{AZ14}, there is an increasing function $Q:\R_+\to\R_+$ such that 
\begin{align*}
    \|w(t)\|_{W^{2,2}_b}\le 2Q(\|w(t-2^{-1})\|_{L^2_b}),
\end{align*}
which, combined with \eqref{DE29}, yields
\begin{align}
    \label{DE30}\|w(t)\|_{W^{2,2}_b}\le C_{R,\nu}.
\end{align}
Together with \eqref{DE29} and the Gagliardo--Nirenberg inequality \eqref{DE13-1}, this implies
\[ \|w(t)\|_{\infty}\lesssim_{R,\nu}e^{-\frac{\alpha_1}{2}t}\]
for any $t\ge 1$, and thus completes the proof.
\end{proof}

Now we turn to the proof of Proposition \ref{propDE0}.
\begin{proof}[Proof of Proposition \ref{propDE0}] Notice that estimate \eqref{DE2} is obtained in Proposition \ref{propDE4}. As for estimate \eqref{DE1}, by applying Propositions \ref{propDE3} and \ref{propDE4}, one has
\begin{align*}
    \int_0^\infty \|w(t)\|^2_\infty\ddd t&=\int_0^1\|w(t)\|^2_\infty\ddd t+\int_1^\infty \|w(t)\|^2_\infty\ddd t\\& \le C_{R,\nu}\big(1+\log(1+\|w_0\|_\infty)\big)+C_{R,\nu}.
\end{align*}
This completes the proof.
\end{proof}

\section{Mixing for infinite-energy solutions}\label{MIE}
The goal of this section is to establish Theorem \ref{MT3}. To this end, we introduce the following decomposition 
\[u(t)=w(t)+u_{\rem}(t),\]
where $w(t)$ is an infinite-energy solution of the homogeneous NS equation \eqref{INTRO8} issued from $u_0$ and $u_{\rem}(t)$ solves the white-forced NS-type equation \eqref{SP1} with zero initial data.
\subsection{Probabilistic estimates for the remainder}
In this subsection, we establish probabilistic estimates for the finite-energy remainder $u_{\rem}$ that will be used in the proof of Theorem \ref{MT3}.
\begin{prop}\label{propMIE1}
    Suppose that $u_0\in H_b\cap L^\infty$ and $\FB_0<\infty$. Then, the following statements hold.
    \begin{enumerate}
        \item For any $q\in[1,\infty)$, it follows that
        \begin{align}
            \label{MIE1}
            \sup_{t\ge0}\E\|u_{\rem}(t)\|^{2q}\lesssim_{q,\nu,h,\FB_0}\exp\left(\frac{2q}{\nu}\int_0^\infty\|w(s)\|^2_\infty\ddd s\right).
        \end{align}
        \item For any $q\in[1,\infty)$ and $t,T\ge0$, it follows that 
        \begin{align}
            \label{MIE2}
            \E&\left(\sup_{t\le s\le t+T}\|u_{\rem}(s)\|^2+\nu\int_t^{t+T}\|\nabla u_{\rem}(s)\|^2\ddd s\right)^q\notag\\&\quad\qquad\qquad\qquad\qquad\qquad\lesssim_{q,\nu,h,\FB_0}(1+T)^q\exp\left(\frac{2q}{\nu}\int_0^\infty\|w(s)\|^2_\infty\ddd s\right).
        \end{align}
    \end{enumerate}    
\end{prop}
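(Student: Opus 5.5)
We apply Itô's formula to $\|u_{\rem}\|^2$ using equation \eqref{SP1}. The regularity \eqref{INTRO8-0} from Theorem~\ref{MT1} justifies this, after the usual Galerkin or stopping-time approximation. Since $\dive u_{\rem}=\dive w=0$ and both vanish on $\partial D$, the terms $\langle (u_{\rem}\cdot\nabla)u_{\rem},u_{\rem}\rangle$ and $\langle (w\cdot\nabla)u_{\rem},u_{\rem}\rangle$ vanish. The only dangerous term is $\langle (u_{\rem}\cdot\nabla) w,u_{\rem}\rangle$. We integrate it by parts to $-\langle (u_{\rem}\cdot\nabla)u_{\rem},w\rangle$ and bound it by $\|w\|_\infty\|u_{\rem}\|\|\nabla u_{\rem}\|\le \frac{\nu}{2}\|\nabla u_{\rem}\|^2+\frac{1}{2\nu}\|w\|_\infty^2\|u_{\rem}\|^2$. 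This gives
\[\ddd\|u_{\rem}\|^2+\nu\|\nabla u_{\rem}\|^2\ddd t\le \Big(\tfrac{1}{\nu}\|w\|^2_\infty\|u_{\rem}\|^2+2\langle h,u_{\rem}\rangle+\FB_0\Big)\ddd t+\ddd M_t,\]
where $M_t=2\sum_j b_j\int_0^t\langle u_{\rem},e_j\rangle\ddd\beta_j$. The function $\|w(t)\|_\infty^2$ is deterministic and, by Proposition~\ref{propDE0}, integrable on $[0,\infty)$. This makes an integrating-factor argument natural. We multiply by $\exp(-\frac{1}{\nu}\int_0^t\|w\|_\infty^2)$, or more precisely use the weight $\Theta(t):=\exp\big(\frac{2}{\nu}\int_0^t\|w\|_\infty^2\ddd s\big)$ chosen with a margin, so the growth term is absorbed. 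The total weight is bounded above by $\Theta_\infty:=\exp(\frac{2}{\nu}\int_0^\infty\|w\|^2_\infty)$ and below by $1$.

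For \eqref{MIE1} we work with the weighted process $Y(t):=\Theta(t)^{-1/2}\|u_{\rem}(t)\|^2$, or an analogous power weighting for the $q$-th moment. We apply Itô's formula to $Y^q$. The term $\|w\|_\infty^2$ is cancelled by the derivative of the weight, and the remaining dissipation $\nu\|\nabla u_{\rem}\|^2\ge\nu\alpha_0\|u_{\rem}\|^2$ controls the forcing $2\langle h,u_{\rem}\rangle+\FB_0$ and the Itô correction $2q(q-1)Y^{q-2}\Theta^{-1}\sum_j b_j^2\langle u_{\rem},e_j\rangle^2\le C_q\FB_0\,\Theta^{-1}\|u_{\rem}\|^2 Y^{q-2}$. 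The usual Young-inequality argument then gives $\frac{\ddd}{\ddd t}\E Y^q\le -c\,\E Y^q+C_{q,\nu,h,\FB_0}$, uniformly in $t$ because $\Theta\ge1$, after localizing with stopping times so that the martingale has zero mean. Since $u_{\rem}(0)=0$, this yields $\sup_t \E Y^q\lesssim 1$. Undoing the weight gives $\E\|u_{\rem}(t)\|^{2q}\le \Theta_\infty^{q/2}\sup\E Y^q$, which is stronger than \eqref{MIE1}. The factor $2q/\nu$ in the exponent is generous, so the exponent bookkeeping should not be tight.

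For \eqref{MIE2} we integrate the differential inequality on $[t,t+T]$. This bounds $\sup_{[t,t+T]}\|u_{\rem}\|^2+\nu\int_t^{t+T}\|\nabla u_{\rem}\|^2$ by
\[\|u_{\rem}(t)\|^2+C(1+T)+\frac{1}{\nu}\int_t^{t+T}\|w\|_\infty^2\|u_{\rem}\|^2+\sup_{[t,t+T]}|M_s-M_t|.\]
Gronwall's inequality in the pathwise form handles the $\|w\|_\infty^2$ term at the cost of a factor $\exp(\frac1\nu\int_0^\infty\|w\|_\infty^2)$. We handle the martingale by the Burkholder--Davis--Gundy inequality: its quadratic variation is at most $4\FB_0\int_t^{t+T}\|u_{\rem}\|^2$. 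After raising to the power $q$, we split it via Young's inequality into $\frac12\E\sup\|u_{\rem}\|^{2q}$, which is absorbed into the left-hand side, plus $C(T\sup_s\E\|u_{\rem}(s)\|^{2q})$-type terms controlled by \eqref{MIE1}. Taking $q$-th powers multiplies the exponential factors, and the factor $(1+T)^q$ comes from the forcing and the quadratic variation. Altogether the exponent is $\frac{2q}{\nu}\int_0^\infty\|w\|_\infty^2$.

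I expect the main obstacle to be the rigorous justification of Itô's formula and the absorption argument in the uniform-in-time moment bound. Absorbing the sup term requires finiteness a priori, so one localizes with $\tau_n:=\inf\{s:\|u_{\rem}(s)\|\ge n\}$ and passes $n\to\infty$ by Fatou. Note also that $u_{\rem}$ solves an equation with a time-dependent, non-decaying coefficient $w$, whose contribution enters only through the integrable quantity $\|w\|_\infty^2$. This is precisely why the integrating factor, rather than a direct Lyapunov estimate, is the right tool.
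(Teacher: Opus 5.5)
Your proposal is correct. Part 1 is essentially the paper's argument. The paper multiplies $\|u_{\rem}\|^2$ by $\exp(-\frac{2}{\nu}\int_0^t\|w(s)\|_\infty^2\,\ddd s)$, the reciprocal of your $\Theta$, which leaves a margin over the growth term. It then applies It\^o's formula to the $q$-th power and closes with Young, Poincar\'e and Gronwall, just as you do. Your exact-cancellation weight $\exp(-\frac{1}{\nu}\int_0^t\|w\|_\infty^2\,\ddd s)$ even gives the sharper factor $e^{q\Lambda_w}$ instead of $e^{2q\Lambda_w}$, where $\Lambda_w:=\frac{1}{\nu}\int_0^\infty\|w(s)\|_\infty^2\,\ddd s$.

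In part 2 you take a different route. The paper never returns to the unweighted inequality. It integrates the weighted It\^o identity \eqref{MIE3} over $[t,t+T]$, handles the weighted stochastic integral by Burkholder--Davis--Gundy together with the uniform weighted bound \eqref{MIE7}, and only at the end uses that the weight is at least $e^{-2\Lambda_w}$. There is no Gronwall step and no absorption, and the exponent in \eqref{MIE2} is the same as in \eqref{MIE1}.

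Your pathwise-Gronwall argument is valid, but it pays a factor $e^{q\Lambda_w}$ on top of the moment bound. It therefore reaches $\frac{2q}{\nu}$ only because you feed in your sharper part-1 bound $\E\|u_{\rem}(t)\|^{2q}\lesssim e^{q\Lambda_w}$; inserting \eqref{MIE1} as stated would give $\frac{3q}{\nu}$. So, contrary to your remark, the bookkeeping is exactly tight along your route, and two details matter.
\begin{itemize}
\item Apply Gronwall to $F(s):=\|u_{\rem}(s)\|^2+\frac{\nu}{2}\int_t^s\|\nabla u_{\rem}\|^2\,\ddd r$, with the nondecreasing random constant $\|u_{\rem}(t)\|^2+C(1+T)+\sup_{r\in[t,t+T]}|M_r-M_t|$. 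Gronwalling only $\|u_{\rem}\|^2$ and then reinserting it into the dissipation integral costs an extra polynomial factor in $\Lambda_w$.
\item Bound the Burkholder--Davis--Gundy term directly by $\E(\int_t^{t+T}\|u_{\rem}\|^2\,\ddd s)^{q/2}\le T^{q/2}\sup_s(\E\|u_{\rem}(s)\|^{2q})^{1/2}\lesssim T^{q/2}e^{q\Lambda_w/2}$. This needs no absorption and no stopping-time localization.
\end{itemize}
Had you reused your weight in part 2, as the paper does with its own, you would even obtain $(1+T)^q e^{q\Lambda_w}$.
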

\begin{proof}\textbf{Step I.} Write
\[\Theta(t):=\exp\left(-\frac{2}{\nu}\int_0^t\|w(s)\|_\infty^2\ddd s\right).\]
By applying the It\^o formula, one has
\begin{align}\label{MIE3}
    \ddd\|u_{\rem}(t)\|^2\Theta(t)&=-\frac{2}{\nu}\Theta(t)\|u_{\rem}(t)\|^2\|w(t)\|^2_\infty\ddd t\notag\\&\qquad+2\Theta(t)\langle u_{\rem},-(u_{\rem}\cdot\nabla)w+\nu\Delta u_{\rem}+h\rangle\ddd t\notag\\&\qquad+2\Theta(t)\ssum_{j\ge 1}b_j\langle u_{\rem},e_j\rangle\ddd\beta_j+\FB_0\Theta(t)\ddd t.
\end{align}
Together with the It\^o formula again, it follows that
\begin{align}\label{MIE4}
    \ddd &\left(\|u_{\rem}(t)\|^{2}\Theta(t)\right)^q=-\frac{2q}{\nu}\Theta^q(t)\|u_{\rem}(t)\|^{2q}\|w(t)\|^2_\infty\ddd t\notag\\&\qquad\qquad\qquad\quad+2q\Theta^q(t)\|u_{\rem}(t)\|^{2(q-1)}\langle u_{\rem},-(u_{\rem}\cdot\nabla)w+\nu\Delta u_{\rem}+h\rangle\ddd t\notag\\&\qquad\qquad\qquad\quad+2q\Theta^q(t)\|u_{\rem}(t)\|^{2(q-1)}\ssum_{j\ge 1}b_j\langle u_{\rem},e_j\rangle\ddd\beta_j\notag\\&\qquad\qquad\qquad\quad+\FB_0q\Theta^q(t)\|u_{\rem}(t)\|^{2(q-1)}\ddd t\notag\\&\qquad\qquad\qquad\quad+ 2q(q-1)\Theta^q(t)\|u_{\rem}(t)\|^{2(q-2)}\ssum_{j\ge 1}b_j^2\langle u_{\rem},e_j\rangle^2\ddd t, 
\end{align}
where, by the cancellation property of the convection term combined with the Young and Poincar\'e inequalities, one has
\begin{align}\label{MIE5}
2q\Theta^q(t)&\|u_{\rem}\|^{2(q-1)}\langle u_{\rem},-(u_{\rem}\cdot\nabla)w+\nu\Delta u_{\rem}+h\rangle\notag\\&=2q\Theta^q(t)\|u_{\rem}\|^{2(q-1)}\left(\langle (u_{\rem}\cdot\nabla)u_{\rem},w\rangle-\nu\|\nabla u_{\rem}\|^2+\langle u_{\rem},h\rangle\right)\notag\\&\le 2q\Theta^q(t)\|u_{\rem}\|^{2(q-1)}\left(\frac{1}{2\nu}\|u_{\rem}\|^2\|w\|^2_\infty-\frac{\nu}{4}\|\nabla u_{\rem}\|^2+\nu^{-1}\alpha_0^{-1}\|h\|^2\right)\notag\\&\le \frac{q}{\nu}\Theta^q(t)\|u_{\rem}\|^{2q}\|w\|^2_\infty-\frac{q\nu}{2}\Theta^q(t)\|u_{\rem}\|^{2(q-1)}\|\nabla u_{\rem}\|^2\notag\\&\quad+\frac{2q}{\nu\alpha_0}\Theta^q(t)\|u_{\rem}\|^{2(q-1)}\|h\|^2\notag\\&\le \frac{q}{\nu}\Theta^q(t)\|u_{\rem}\|^{2q}\|w\|^2_\infty-\frac{q\nu}{4}\Theta^q(t)\|u_{\rem}\|^{2(q-1)}\|\nabla u_{\rem}\|^2\notag\\&\quad+C_{q,\nu}\Theta^q(t)\|h\|^{2q},
\end{align}
and 
\begin{align}\label{MIE6}
     &\FB_0q\Theta^q(t)\|u_{\rem}\|^{2(q-1)}+2q(q-1)\Theta^q(t)\|u_{\rem}  \|^{2(q-2)}\ssum_{j\ge 1}b_j^2\langle u_{\rem},e_j\rangle^2
    \notag\\&\quad\le C_q\FB_0\Theta^q(t)\|u_{\rem} \|^{2(q-1)}\le \frac{q\nu}{8}\Theta^q(t)\|u_{\rem}\|^{2(q-1)}\|\nabla u_{\rem}\|^2+C_{q,\nu}\FB_0^q\Theta^q(t).
\end{align}
By taking expectation on both sides of \eqref{MIE4} and applying \eqref{MIE5}, \eqref{MIE6}, and the Gronwall inequality, one obtains
\begin{align}\label{MIE7}
    \Theta^q(t)\E\|u_{\rem}(t)\|^{2q}\le C_{q,\nu,h,\FB_0},\qquad\forall t\ge 0,
\end{align}
which ensures \eqref{MIE1}.

\noindent\textbf{Step I\!I.} Integrating on both sides of \eqref{MIE3} from $t$ to $t+T$ and applying \eqref{MIE5} with $q=1$, one has
\begin{align}
    \label{MIE8}\notag
    \|u_{\rem}(t+T)\|^2\Theta&(t+T)-\|u_{\rem}(t)\|^2\Theta(t)+\frac{\nu}{2}\int_t^{t+T}\Theta(s)\|\nabla u_{\rem}\|^2\ddd s\\&\le C_{\nu,h,\FB_0}\int_t^{t+T}\Theta(s)\ddd s+2\int_t^{t+T}\Theta(s)\ssum_{j\ge 1}b_j\langle u_{\rem},e_j\rangle\ddd\beta_j,
\end{align}
where, by the Burkholder--Davis--Gundy inequality, it follows that
\begin{align*}
    \E\sup_{t'\in[0,T]}\left|\int_t^{t+t'}\Theta(s)\ssum_{j\ge 1}b_j\langle u_{\rem},e_j\rangle\ddd\beta_j\right|^q\lesssim_q \FB_0^{\frac{q}{2}}\E\left(\int_t^{t+T}\Theta^2(s)\|u_{\rem}\|^2\ddd s\right)^{\frac{q}{2}}.
\end{align*}
Consequently, by combining this with \eqref{MIE7}, \eqref{MIE8}, and the H\"older inequality, one gets
\begin{align*}
    \E\sup_{t\le s\le t+T}\|u_{\rem}(s)&\|^{2q}\Theta^q(s)+\E\left(\nu\int_t^{t+T}\Theta(s)\|\nabla u_{\rem}\|^2\ddd s\right)^q\\&\lesssim_{q,\nu,h,\FB_0} T^q+\Theta^q(t)\E\|u_{\rem}(t)\|^{2q}+\E\left(\int_t^{t+T}\Theta^2(s)\|u_{\rem}\|^2\ddd s\right)^{\frac{q}{2}}\\&\lesssim_{q,\nu,h,\FB_0} T^q+1.
\end{align*}
Since $\Theta(t)\ge \exp\left(-2\nu^{-1}\int_0^\infty\|w(s)\|^2_\infty\ddd s\right)$, this implies \eqref{MIE2}.
\end{proof}

\subsection{The asymptotic behavior of the remainder}
In this subsection, we show that the remainder $u_{\rem}$ can be asymptotically approximated by a family of finite-energy solutions of \eqref{INTRO1}, as $t\to\infty$. To formulate the result, we define $\{u_{\app}(t;T)\}_{t\ge T}$ as the finite-energy solution of \eqref{INTRO1} satisfying the initial condition
\begin{align}
    \label{MIE9}u_{\app}(T;T)= u_{\rem}(T).
\end{align}
\begin{prop}\label{propMIE2}
    Suppose that $u_0\in H_b\cap L^\infty$ and $\FB_0<\infty$. Then, for any $R>0$, there is a constant $\alpha_2>0$ depending on $\nu, R$ such that for any $u_0$ satisfying $\|u_0\|_{L^2_b}\le R$, $q\in[1,\infty)$, and $T\ge 4$,
    \begin{align}
        \label{MIE10}
        \E d_{\loc}(u_{\app}(\FT;T),u_{\rem}(\FT))\lesssim_{q,R,\nu,h,\FB_0}\frac{(1+\|u_0\|_\infty)^{\alpha_2 q}}{(1+\FT)^{\frac{q}{2}}},
    \end{align}
    where $\FT:=T+\sqrt{T}.$
\end{prop}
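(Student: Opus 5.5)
The plan is a pathwise energy estimate for the difference $z(t):=u_{\rem}(t)-u_{\app}(t;T)$ on $[T,\FT]$, followed by splitting $\Omega$ into a good event and its complement. Subtracting the equation of $u_{\app}$ (i.e.\ \eqref{MFE1}, driven by the same noise) from \eqref{SP1}, the noise and $h$ cancel. We get
\[
\partial_t z-\nu Lz+B(z,u_{\rem})+B(u_{\app},z)+B(w,u_{\rem})+B(u_{\rem},w)=0,\qquad z(T)=0.
\]
Since $d_{\loc}(u,v)\le \ssum_k 2^{-k}\|u-v\|\le\|u-v\|$ and also $d_{\loc}\le 1$, it suffices to bound $\E\big(1\wedge\|z(\FT)\|\big)$.

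\textbf{Energy inequality.} Test against $z$ and use $\langle B(u_{\app},z),z\rangle=0$. Ladyzhenskaya's inequality gives $|\langle B(z,u_{\rem}),z\rangle|\le \tfrac{\nu}{4}\|\nabla z\|^2+C_\nu\|z\|^2\|\nabla u_{\rem}\|^2$. For the background terms:
\begin{align*}
|\langle (w\cdot\nabla)u_{\rem},z\rangle|&\le \|w\|_\infty\|\nabla u_{\rem}\|\|z\|,\\
|\langle (u_{\rem}\cdot\nabla) w,z\rangle|&=|\langle (u_{\rem}\cdot\nabla)z,w\rangle|\le\|w\|_\infty\|u_{\rem}\|\|\nabla z\|.
\end{align*}
Absorb $\|z\|^2$ and $\|\nabla z\|^2$ into the dissipation via the Poincar\'e inequality, so as not to create a term growing with the interval length $\sqrt T$. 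This yields
\[
\frac{\ddd}{\ddd t}\|z\|^2\le C_\nu\|\nabla u_{\rem}\|^2\|z\|^2+C_\nu\|w\|_\infty^2\big(\|u_{\rem}\|^2+\|\nabla u_{\rem}\|^2\big).
\]
Gronwall's inequality on $[T,\FT]$, together with \eqref{DE2} ($\|w(t)\|_\infty^2\le \CCC_1^2e^{-2\alpha_1 t}$ for $t\ge1$), then gives
\[
\|z(\FT)\|^2\le C e^{-2\alpha_1 T}\exp\Big(C_\nu\!\int_T^{\FT}\!\|\nabla u_{\rem}\|^2\ddd s\Big)\Big(\sup_{[T,\FT]}\|u_{\rem}\|^2\sqrt T+\int_T^{\FT}\!\|\nabla u_{\rem}\|^2\ddd s\Big).
\]

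\textbf{Good event and probability bound.} Let $G:=\{\nu\int_T^{\FT}\|\nabla u_{\rem}\|^2\ddd s\le \lambda T\}$ with $\lambda=\nu\alpha_1/C_\nu$. On $G$ the exponential factor is at most $e^{\alpha_1 T}$, so
\[
\|z(\FT)\|^2\lesssim e^{-\alpha_1 T}\,\Xi,\qquad \Xi:=\sqrt T\sup_{[T,\FT]}\|u_{\rem}\|^2+\int_T^{\FT}\|\nabla u_{\rem}\|^2\ddd s .
\]
By \eqref{MIE2} with $q=1$, the expectation $\E\,\Xi$ is bounded by $(1+T)\exp\big(\tfrac2\nu\int_0^\infty\|w\|_\infty^2\big)$. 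On $G^c$ we use $d_{\loc}\le1$ and Chebyshev's inequality with \eqref{MIE2} (time length $\sqrt T$, exponent $q$):
\[
\PP(G^c)\le \frac{\E\big(\nu\int_T^{\FT}\|\nabla u_{\rem}\|^2\big)^q}{(\lambda T)^q}\lesssim \frac{(1+\sqrt T)^q}{T^q}\exp\Big(\frac{2q}{\nu}\int_0^\infty\|w\|_\infty^2\ddd s\Big).
\]
By \eqref{DE1}, the exponential equals at most $e^{2q\CCC_1/\nu}(1+\|u_0\|_\infty)^{2q\CCC_1/\nu}$, which fixes $\alpha_2:=2\CCC_1/\nu$ (depending only on $R,\nu$). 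Hence $\PP(G^c)\lesssim T^{-q/2}(1+\|u_0\|_\infty)^{\alpha_2 q}$, and $T^{-q/2}\sim(1+\FT)^{-q/2}$ for $T\ge4$.

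\textbf{Conclusion and main obstacle.} Combining the two parts,
\[
\E\,d_{\loc}\le \E\big(\I_G\|z(\FT)\|\big)+\PP(G^c)\lesssim e^{-\alpha_1T/2}(1+T)^{1/2}(1+\|u_0\|_\infty)^{\alpha_2/2}+\PP(G^c).
\]
The first term is also dominated by the right-hand side of \eqref{MIE10}. The step I expect to be the main obstacle is the Gronwall factor $\exp(C\int\|\nabla u_{\rem}\|^2)$. Its exponential moments are not controlled uniformly by Proposition~\ref{propMIE1}, and over a window of length $\sqrt T$ it can be large. This is why it is handled by restricting to $G$, where it is traded against the exponential decay $e^{-2\alpha_1T}$ of $w$, and why only a polynomial rate appears. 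The choice of window length $\sqrt T$ is exactly what makes both the Chebyshev bound on $\PP(G^c)$ and the factor $\sqrt T$ in $\Xi$ harmless. One must also check that the Poincar\'e absorption is done carefully enough that no term proportional to $\int_T^{\FT}\|z\|^2$ without a decaying factor survives in the Gronwall exponent.
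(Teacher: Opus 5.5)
Your proposal is correct and follows essentially the paper's argument. The paper also derives a pathwise energy inequality and Gronwall bound for the difference on $[T,\FT]$, restricts to the event where $\int_T^{\FT}\|\nabla u_{\rem}\|^2\ddd t\lesssim T$ so that the Gronwall factor is beaten by the exponential decay \eqref{DE2} of $w$, and bounds the complementary event by Chebyshev using \eqref{MIE2} and \eqref{DE1}. On the good event you are slightly more careful than the paper: you keep the random factor $\Xi$ and control it in expectation via \eqref{MIE2}, whereas the paper asserts the deterministic bound $\|\fg(\FT)\|^2\lesssim_{R,\nu}Te^{-\alpha_1T}$ on $\A_T$ even though \eqref{MIE11} still contains $\|u_{\rem}\|^2$.
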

\begin{proof}
    \textbf{Step I.} Let $\fg(t):=u_{\rem}(t)-u_{\app}(t;T)$ for $t\ge T.$ Then, $\fg$ satisfies
    \begin{align*}
        \begin{cases}
            \partial_t \fg-\nu L\fg+ B(\fg,u_{\rem})+B(u_{\app},\fg)+B(w,u_{\rem})+B(u_{\rem},w)=0,\\
            \fg(T)=0,
        \end{cases}
    \end{align*}
    where the operators $L$ and $B(\cdot,\cdot)$ are defined in \eqref{MFE0}. By testing the above equation against $\fg$ and applying the cancellation property of the convection term and the Ladyzhenskaya inequality, it follows
    \begin{align*}
        \frac{1}{2}\frac{\ddd}{\ddd t}\|\fg(t)\|^2+\nu\|\nabla \fg\|^2&=-\langle B(\fg,u_{\rem})+B(w,u_{\rem})+B(u_{\rem},w),\fg\rangle\\&\lesssim \|\fg\|\|\nabla \fg\|\|\nabla u_{\rem}\|+\|w\|_\infty\|\nabla \fg\|\|u_{\rem}\|,
    \end{align*}
    which, together with Cauchy--Schwarz inequality, implies
    \begin{align*}
        \frac{\ddd}{\ddd t}\|\fg(t)\|^2+\nu\|\nabla \fg\|^2\lesssim_\nu\|\fg\|^2\|\nabla u_{\rem}\|^2+\|w\|_\infty^2\|u_{\rem}\|^2.
    \end{align*}
    An application of the Gronwall inequality yields
    \begin{align}
        \label{MIE11}
        \|\fg(\FT)\|^2\lesssim_\nu\int_T^{\FT}\|w(t)\|_\infty^2\|u_{\rem}(t)\|^2\exp\left(C_\nu\int_t^{\FT}\|\nabla u_{\rem}(s)\|^2\ddd s\right)\ddd t.
    \end{align}

    \noindent\textbf{Step I\!I.} Define 
    \[\II_T:=\int_T^\FT\|\nabla u_{\rem}(t)\|^2\ddd t, \qquad \A_T:=\{\II_T\le C_{\nu}^{-1}\alpha_1T\},\]
    where $\alpha_1$ is the constant given in Proposition \ref{propDE0} and $C_\nu$ is given in \eqref{MIE11}. Notice that on the event $\A_T$, one has 
    \begin{align}\label{MIE12}
        \|\fg(\FT)\|^2\lesssim_{R,\nu} Te^{-\alpha_1 T}\lesssim_{R,\nu}e^{-\frac{\alpha_1}{2} T}.
    \end{align}
    On the other hand, an application of \eqref{MIE2} and \eqref{DE1} implies
    \begin{align*}
        \E\II_T^q&\lesssim_{q,\nu,h,\FB_0} (1+\sqrt{T})^q \exp\left(\frac{2q}{\nu}\int_0^\infty\|w(s)\|^2_\infty \ddd s\right)\\&\lesssim_{q,R,\nu,h,\FB_0} (1+\sqrt{T})^q \left(1+\|u_0\|_\infty\right)^{\alpha_2q},
    \end{align*} 
    where $\alpha_2$ is a constant depending on $R,\nu$. Together with the Chebyshev inequality, this yields
    \begin{align*}
        \PP(\A_T^c)\lesssim_{R,\nu} \frac{\E\II_T^q}{T^q}\lesssim_{q,R,\nu,h,\FB_0} \frac{\left(1+\|u_0\|_\infty\right)^{\alpha_2q}}{(1+T)^{\frac{q}{2}}}.
    \end{align*}
    Consequently, by combining the above estimate with \eqref{MIE12}, it follows 
    \begin{align*}
        \notag\E(1\wedge\|\fg(\FT)\|)&\le\E(\|\fg(\FT)\|\I_{\A_T})+\PP(\A_T^c)\lesssim_{q,R,\nu,h,\FB_0} e^{-\frac{\alpha_1}{4} T}+\frac{\left(1+\|u_0\|_\infty\right)^{\alpha_2q}}{(1+T)^{\frac{q}{2}}}\\&\lesssim_{q,R,\nu,h,\FB_0} \frac{\left(1+\|u_0\|_\infty\right)^{\alpha_2q}}{(1+\FT)^{\frac{q}{2}}}.
    \end{align*}
    Since by the definition \eqref{INTRO5}, 
    \[d_{\loc}(u,v)\le 1\wedge\|u-v\|,\qquad\forall u,v\in H,\]
    this implies \eqref{MIE10}.
\end{proof}
\subsection{Proof of Theorem \ref{MT3}}
To prove Theorem \ref{MT3}, we need the following quantitative smoothing estimate for the infinite-energy solution $\{u(t)\}_{t\ge0}$.
\begin{lemma}\label{lemmaMIE1}
    Suppose that $u_0\in H_b$ and $\FB_1<\infty$. Then, for any $\LLLL,R>0$, there is a constant $\CCC_5$ depending on $\LLLL,R,\nu,h$ such that for any $u_0$ satisfying $\|u_0\|_{L^2_b}\le R$, 
    \begin{align}
        \label{MIE13}\|u(t)\|_{L^2_b}\le \CCC_5,\qquad \|u(t)\|_\infty\le \CCC_5t^{-\frac{1}{2}},\qquad \forall t\in(0,1]
    \end{align}
    on the random event
    \begin{align}\label{MIE13-1}\A_{t,\LLLL}:=\bigg\{\sup_{s\in [0,t]}\|\ZZ_s\|_{H^{\frac{3}{2}}}\le \LLLL\bigg\},\qquad \ZZ_t:=\ssum_{j\ge 1} b_j\int_0^t e^{\nu L(t-s)}e_j\ddd \beta_j.
    \end{align}
    Moreover, it follows that
    \begin{align}
        \label{MIE14}
        \PP(\A_{t,\LLLL}^c)\lesssim \exp\left(-\frac{\alpha_3\LLLL^2}{\FB_1\sqrt{t}}\right),
    \end{align}
    where the constant $\alpha_3$ depends only on $\nu$. 
\end{lemma}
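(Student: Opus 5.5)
The plan is to remove the noise through the Ornstein--Uhlenbeck process and apply deterministic uniformly local theory to what is left. Write $u(t)=\ZZ_t+z(t)$, where $\ZZ_t$ is the stochastic convolution in \eqref{MIE13-1}; it solves the linear Stokes equation $\ddd\ZZ=\nu L\ZZ\,\ddd t+\ddd W$ with $\ZZ_0=0$. The remainder then satisfies, pathwise, the perturbed deterministic NS equation
\[
\partial_t z-\nu Lz+B(z+\ZZ)=h,\qquad z(0)=u_0,
\]
in the uniformly local setting. On $\A_{t,\LLLL}$ the Sobolev embedding $H^{3/2}\hookrightarrow L^\infty\cap W^{1,4}$ in two dimensions gives
\[
\sup_{s\le t}\bigl(\|\ZZ_s\|_\infty+\|\ZZ_s\|_{W^{1,4}}+\|\ZZ_s\|_{L^2_b}\bigr)\lesssim \LLLL .
\]
This reduces the bounds \eqref{MIE13} to deterministic estimates for $z$ on $(0,1]$ with coefficients controlled by $R$ and $\LLLL$.

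\textbf{Step 1: the $L^2_b$ bound.} Test the $z$-equation against $\theta_{\varepsilon,\fz}^2z-v_{\varepsilon,\fz}$, with the flux-correcting auxiliary field $v_{\varepsilon,\fz}$ from \eqref{DE15}, exactly as in Step I of Proposition \ref{propDE4}. The new terms come from $\ZZ$:
\[
\langle (z\cdot\nabla)\ZZ,\theta^2z\rangle,\qquad \langle(\ZZ\cdot\nabla)\ZZ,\theta^2 z\rangle,\qquad \langle (\ZZ\cdot\nabla) z,\theta^2 z\rangle .
\]
After integration by parts and \eqref{DE5}, each is bounded by $C_\LLLL(\|\theta z\|^2+\|\theta z\|)$ plus a small multiple of $\|\theta\nabla z\|^2$. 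Gronwall on $[0,1]$ and \eqref{DE4} then give $\sup_{s\le 1}\|z(s)\|_{L^2_b}\le C(R,\LLLL,\nu,h)$. Adding the bound on $\ZZ$ yields the first estimate in \eqref{MIE13}. This step uses only estimates of Zelik/Anthony--Zelik type (\cite{Zel07,AZ14}).

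\textbf{Step 2: the $L^\infty$ smoothing.} For the bound $\|u(t)\|_\infty\le \CCC_5t^{-1/2}$, I would first try the energy route. The higher-order energy estimates of \cite{AZ14} (Theorem 4.1 / Corollary 6.4), applied to the $z$-equation with the smooth, bounded forcing $-B(z,\ZZ)-B(\ZZ,z)-B(\ZZ)+h$, give
\[
\|z(t)\|_{W^{2,2}_b}\lesssim t^{-1}.
\]
Interpolating with the $L^2_b$ bound via \eqref{DE13-1} then gives exactly $\|z(t)\|_\infty\lesssim t^{-1/2}$. If the perturbed version of those $W^{2,2}_b$ bounds is awkward to justify, the fallback is a Duhamel argument in the spirit of Step I of Proposition \ref{propDE3}, using \eqref{STOKES1}--\eqref{STOKES2}. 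The drawback is that this argument only runs on a short time interval set by $\|u_0\|_{L^4_b}$, which is not controlled by $R$. The energy route, uniform in $R$, is therefore preferred, and it is where I expect the main technical obstacle: propagating the weighted higher-order estimates with the extra $\ZZ$-terms while keeping constants depending only on $R,\LLLL,\nu,h$.

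\textbf{Step 3: the tail bound \eqref{MIE14}.} The process $\ZZ$ is a Gaussian process in $H^{3/2}$, and since $\FB_1<\infty$ it is essentially of $H^1$-regularity. I would use the factorization method (or a Kolmogorov/Fernique argument) to show
\[
\E\sup_{s\le t}\|\ZZ_s\|_{H^{3/2}}^2\lesssim \FB_1\sqrt t .
\]
The gain of half a derivative, from $H^1$ to $H^{3/2}$, costs a factor $\int_0^t (t-s)^{-1/2}\,\ddd s\sim\sqrt t$ through the analyticity of $e^{\nu Lt}$. Fernique's theorem, or the Borell--TIS inequality, applied to the Gaussian supremum then gives the tail $\exp(-\alpha_3\LLLL^2/(\FB_1\sqrt t))$.
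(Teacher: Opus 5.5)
Your Steps 1 and 3 are sound, but Step 2 has a genuine gap, and it is the step that carries the content of \eqref{MIE13}.

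The bound $\|w(t)\|_{W^{2,2}_b}\lesssim t^{-1}$ from \cite{AZ14} (Theorem~4.1, Corollary~6.4) is for the autonomous equation with a fixed $L^2_b$ forcing. It does not transfer to your $z$-equation by treating $-B(z,\ZZ)-B(\ZZ,z)-B(\ZZ)+h$ as ``smooth, bounded forcing'', for three reasons.
\begin{enumerate}
\item The terms $B(z,\ZZ)$ and $B(\ZZ,z)$ depend on the unknown. They are a first-order perturbation of the operator with coefficients $\ZZ$ and $\nabla\ZZ$, not a forcing.
\item On $\A_{t,\LLLL}$ you control $\ZZ$ only in $H^{3/2}$. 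So $\nabla\ZZ$ lies merely in $H^{1/2}$, and $B(\ZZ)$ is not smooth.
\item $\ZZ$ is only H\"older continuous in time. The standard route to pointwise-in-time $H^2$ bounds differentiates the equation in $t$ to estimate $\partial_t z$, and this produces $\partial_t\ZZ$, which does not exist.
\end{enumerate}
Boundedness of the forcing in $L^2_b$ is not enough either, since maximal regularity fails for forcings that are merely bounded in time. Rescuing your route would require a quantitative $H^1$-level smoothing of $z$ first, then a bootstrap in fractional spaces. That is exactly the step you leave open as the ``main technical obstacle''. As written, $\|u(t)\|_\infty\le\CCC_5t^{-1/2}$ is not proved.

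The missing idea, which also removes your objection to the Duhamel fallback, is to split the smoothing into two stages.

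\textbf{First stage: localized energy estimate.}
\begin{itemize}
\item Test the $z$-equation against $-\dive(\psi_\fz^2\nabla z)$ with a compactly supported cutoff $\psi_\fz$.
\item Control the commutator $[\Pi,\psi_\fz^2]$ by the weighted Leray bounds (Lemma~\ref{lemmaSTOKES1}, Corollary~\ref{coroSTOKES1}, Lemma~6.3 of \cite{AZ14}).
\item Multiply by $t$ and close with the convolution-in-$\fz$ argument from the uniqueness proof.
\end{itemize}
This uses only $\|\ZZ\|_\infty$, $\|\ZZ\|_{W^{1,4}}$ and $\|\ZZ\|_{H^1}$, which is exactly what is available on $\A_{t,\LLLL}$. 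It gives $\sup_{s\le t}s\|z(s)\|^2_{H^1_b}\le C(R,\LLLL,\nu,h)$. Hence $\|z(s)\|_{L^4_b}\lesssim s^{-1/4}$, $\|z(s)\|_{L^8_b}\lesssim s^{-3/8}$, and $\|u(s)\otimes u(s)\|_{L^4_b}\lesssim s^{-3/4}$.

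\textbf{Second stage: Duhamel from $t/2$.} Use the Duhamel formula on $[t/2,t]$ as an a priori estimate rather than as a contraction, so no smallness of $t$ relative to $\|u_0\|_{L^4_b}$ is needed. By \eqref{STOKES1}--\eqref{STOKES2} in $W^{3/4,4}_b$,
$\|z(t)\|_{W^{3/4,4}_b}\lesssim t^{-3/8}\|z(t/2)\|_{L^4_b}+t^{-3/4}\int_{t/2}^t(t-s)^{-7/8}\,\ddd s\lesssim t^{-5/8}$,
where the $h$-contribution is harmless. Then $\|f\|_\infty\lesssim\|f\|_{L^2_b}^{1/5}\|f\|_{W^{3/4,4}_b}^{4/5}$ gives $t^{-1/2}$.

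On the other steps: your Step 1 is \eqref{GWP11}, which the paper simply quotes. Besides the terms you list, the $\ZZ$-terms paired with $v_{\varepsilon,\fz}$ must also be absorbed through the bootstrap, as in \eqref{GWP8}--\eqref{GWP10}. Your Step 3 is a valid alternative to the paper's argument (stochastic integration by parts, Gaussian tails for $[W]_{C^{3/8}([0,1];V)}$, and Brownian scaling). The bound $\sup_{s\le t}\E\|\ZZ_s\|_{H^{3/2}}^2\lesssim\FB_1\sqrt t$ together with Borell--TIS yields \eqref{MIE14} directly.
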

We postpone the proof of this lemma to the end of Appendix \ref{GWP} and turn to the proof of Theorem \ref{MT3}.
\begin{proof}[Proof of Theorem \ref{MT3}] Let $(u^H_t,\PP_u^H)$ and $(u^X_t,\PP_u^X)$ be the Markov families associated with the finite-energy and infinite-energy solutions, respectively. We denote the corresponding transition operators by $\PPPPP^H_t$ and $\PPPPP^X_t$; see \eqref{Markov1} for the definition of $\PPPPP^H_t$ and $\PPPPP^X_t$ is defined in the same way. Let $\PPPPP^{H*}_t$ and $\PPPPP^{X*}_t$ be the dual operators of $\PPPPP^H_t$ and $\PPPPP^X_t$, respectively. The proof is divided into the following three steps.

\noindent\textbf{Step I.} Suppose that 
\begin{align*}u_0\in H_b\cap L^\infty,\qquad \|u_0\|_{L^2_b}\le R,\qquad R>0.
\end{align*}
Fix a function $f\in\lip(X)$ satisfying
\begin{align}
    \label{MIE14-1}\|f\|_{\lip(X)}\le 1.
\end{align}
Since $d_{\loc}(u,u')\le \|u-u'\|$, then $f|_H\in \lip(H)$ and the above condition also holds for $f|_H$. For simplicity, we do not distinguish $f$ and its restriction $f|_H$ on $H$. Let $u_{\app}$ be the finite-energy solution constructed in Proposition \ref{propMIE2} and $\FT:=T+\sqrt{T}.$ By the Markov property, the relation \eqref{MIE9}, Theorem \ref{MT2}, and Propositions \ref{propDE0} and \ref{propMIE1}, it follows
\begin{align*}
    |\E f(u_{\app}(\FT;T))-\langle f,\mu\rangle|&=|\E\PPPPP^H_{\sqrt{T}}f(u_{\rem}(T))-\langle f,\mu\rangle|\lesssim_q \frac{1+\E\|u_{\rem}(T)\|^2}{(1+\sqrt{T})^{q}}\\&\lesssim_{q,\nu,h,\{b_j\}_{j\ge 1}}\frac{\exp\left(\frac{2}{\nu}\int_0^\infty\|w(s)\|^2_\infty\ddd s\right)}{(1+\sqrt{T})^{q}}\\&\lesssim_{q,R,\nu,h,\{b_j\}_{j\ge 1}}\frac{(1+\|u_0\|_\infty)^{\alpha_4}}{(1+T)^{\frac{q}{2}}}
\end{align*}
for any $q\ge 1$ and $T\ge 4$, where $\alpha_4>0$ is a constant depending only on $R,\nu$. Together with Proposition \ref{propMIE2}, the decay estimate \eqref{DE2} in Proposition \ref{propDE0}, and the fact 
\[d_{\loc}(u,u')\le\ssum_{j\ge 1}\frac{1\wedge \sqrt{2j}\|u-u'\|_{L^2_b}}{2^j}\le 2(1\wedge \|u-u'\|_{L^2_b}),\]
this implies 
\begin{align}\label{MIE15}
    |\E f(u(\FT))-\langle f,\mu\rangle|&\lesssim_{q,R,\nu,h,\{b_j\}_{j\ge 1}}e^{-\alpha_1\FT}+\frac{(1+\|u_0\|_\infty)^{\alpha_2 q}}{(1+\FT)^{\frac{q}{2}}}+\frac{(1+\|u_0\|_\infty)^{\alpha_4}}{(1+\FT)^{\frac{q}{2}}}\notag\\&\lesssim_{q,R,\nu,h,\{b_j\}_{j\ge 1}}\frac{(1+\|u_0\|_\infty)^{\alpha_5 q}}{(1+\FT)^{\frac{q}{2}}}
\end{align}
for any $q\ge 1$ and $\FT\ge 6$. Here, the constant $\alpha_5:=\alpha_2\vee\alpha_4$ depends only on $R,\nu$.

\noindent\textbf{Step I\!I.} In this step, we establish \eqref{INTRO15}. Suppose that
\[u_0\in H_b,\qquad \|u_0\|_{L^2_b}\le R,\qquad R>0.\]
Define $t_0:=t^{-c}$ with $c>0$ to be determined in \eqref{MIE16}. Let $\A_{t,\LLLL}$ be the random event introduced in \eqref{MIE13-1}. Since $t_0\le 1$, an application of Lemma \ref{lemmaMIE1} gives
\[\|u(t_0)\|_{L^2_b}\le C_{R,\nu,h},\qquad\|u(t_0)\|_{\infty}\le C_{R,\nu,h}t_0^{-\frac{1}{2}}\]
on the event $\A_{t_0,1}$. Consequently, by applying \eqref{MIE15} to $u(t)$ with $t\ge 8$ combined with the Markov property, one has
\begin{align*}
     \left|\E\left(\I_{\A_{t_0,1}} \big(f(u(t))-\langle f,\mu\rangle\big)\right)\right|&\le\E\left(\I_{\A_{t_0,1}}| \PPPPP^X_{t-t_0}f(u(t_0))-\langle f,\mu\rangle|\right)\\&\lesssim_{q,R,\nu,h,\{b_j\}_{j\ge 1}}\frac{1+\E(\I_{\A_{t_0,1}}\|u(t_0)\|^{\alpha_6 q}_\infty)}{(1+t-t_0)^{\frac{q}{2}}}\\&\lesssim_{q,R,\nu,h,\{b_j\}_{j\ge 1}}\frac{1+t^{\frac{c\alpha_6q}{2}}}{(1+t-t_0)^{\frac{q}{2}}}
\end{align*}
for some constant $\alpha_6$ depending on $R,\nu,h$. Together with choosing
\begin{align}
    \label{MIE16}c=\frac{1}{2\alpha_6}
\end{align}
and using the fact $t-t_0\ge \frac{t}{2}$, this yields
\begin{align*}
    \left|\E\left(\I_{\A_{t_0,1}} \big(f(u(t))-\langle f,\mu\rangle\big)\right)\right|\lesssim_{q,R,\nu,h,\{b_j\}_{j\ge 1}}\frac{1}{(1+t)^{\frac{q}{4}}}.
\end{align*}
Combining this with \eqref{MIE14}, one gets
\begin{align*}
    |\E f(u(t))-\langle f,\mu\rangle|\lesssim_{q,R,\nu,h,\{b_j\}_{j\ge 1}}\frac{1}{(1+t)^{\frac{q}{4}}}
\end{align*}
for any $t\ge 8$. The case $t<8$ is bounded by the trivial bound 
\[|\E f(u(t))-\langle f,\mu\rangle|\le2.\]
Since $q$ is arbitrary, by replacing $q$ by $4q$, the convergence estimate \eqref{INTRO15} holds.

\noindent\textbf{Step I\!I\!I.} It remains to establish \eqref{INTRO16}, which ensures the uniqueness of the invariant measure. Let $\lambda\in \PPPP(X)$. For any function $f\in\lip(X)$ satisfying \eqref{MIE14-1}, by applying \eqref{INTRO15}, it follows 
\begin{align*}
    |\langle f,\DDD(u(t;\lambda))\rangle-\langle f,\mu\rangle|&=|\langle f,\PPPPP^{X*}_t\lambda\rangle-\langle f,\mu\rangle|=|\langle \PPPPP^{X}_tf,\lambda\rangle-\langle f,\mu\rangle|\\&\le \int_{B_{H_b}(R)}|\PPPPP^{X}_tf(u)-\langle f,\mu\rangle|\lambda(\ddd u)+2\lambda(B_{H_b}(R)^c)\\&\le C_{q,R}(1+t)^{-q}+2\lambda(B_{H_b}(R)^c).
\end{align*}
Since $\lim_{R\to\infty}\lambda(B_{H_b}(R)^c)= 0$, by taking supremum in $f$, sending $t\to\infty$, and then letting $R\to\infty$, the convergence \eqref{INTRO16} holds.
\end{proof}

\appendix
\section{A priori energy estimates}
This appendix collects basic energy estimates used throughout the paper.
\begin{prop}\label{propA1}
		Let $\{u(t)\}_{t\ge0}$ be the solution of \eqref{MFE1} issued from $u\in H$. Suppose that $\FB_0:=\ssum_{j\ge 1}b_j^2<\infty$ and $h\in H$. Then, the following estimates hold.
		\begin{enumerate}
		\item \textbf{Energy equality.} For any $t\ge0$, it follows
		\begin{align}
			\label{A1}&\E\|u(t)\|^2+2 \nu\E\int_0^t\|\nabla u(s)\|^2\ddd s=2\E\int_0^t\langle u(s),h\rangle\ddd s+\|u\|^2+\FB_0 t.
            \end{align}
		\item \textbf{Moment estimates.} Let $\kappa_0:=\frac{\nu\alpha_0}{16(1+\FB_0)}$ and 
        \begin{align}\label{A2}
        \EE(t):=\sup_{s\le t}\|u(s)\|^2+\nu\int_0^t\|\nabla u(s)\|^2\ddd s,
		\end{align}
        where $\alpha_0$ is the constant in the Poincar\'e inequality. Then, for any $0<\kappa\le \kappa_0$, $q\ge 1$, and $t\ge0$, it follows 
		\begin{align}
			\label{A3}&\E e^{\kappa\|u(t)\|^2}\le e^{-\kappa t}e^{\kappa\|u\|^2}+C_{\nu,h}+\FB_0,\\\label{A4}&\E\|u(t)\|^{2q}\le e^{-q\nu\alpha_0 t}\|u\|^{2q}+C_{q,\nu,h}(1+\FB_0)^q,\\\label{A5}&\E e^{\kappa\EE(t)}\le 2e^{\kappa(C_{\nu,h}+\FB_0)t}e^{2\kappa\|u\|^2},\\ \label{A6}&\E \EE^{q}(t)\lesssim_{q,\nu,h}  (1+\FB_0)^q(t+\|u\|^2)^q.
		\end{align}

        \item \textbf{Fractional regularity estimate.} For any $T>0$ and $\gamma\in (0,\frac{1}{2})$, it follows 
        \begin{align}
            \label{A7}\E\|u(\cdot)\|_{W^{\gamma,2}(0,T;V^*)}\lesssim_{\gamma,T,\nu,h,\FB_0}1+\|u\|^2.
        \end{align}
        \end{enumerate}
    \end{prop}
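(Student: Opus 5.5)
The plan is the standard Itô-calculus route for the 2D stochastic NS equation in a Poincaré domain, proving the items in the order stated. For the energy equality \eqref{A1}, apply Itô's formula to $\|u(t)\|^2$ (justified via Galerkin approximation or the Lions--Magenes/Krylov Itô formula for the triple $V\subset H\subset V^*$). The cancellation $\langle B(u),u\rangle=0$ gives
\begin{align*}
\ddd\|u\|^2=\left(-2\nu\|\nabla u\|^2+2\langle u,h\rangle+\FB_0\right)\ddd t+2\ssum_{j\ge1}b_j\langle u,e_j\rangle\ddd\beta_j .
\end{align*}
The stochastic integral is a true martingale, since its quadratic variation is bounded by $\FB_0\int_0^t\|u\|^2\ddd s$, which is integrable by a preliminary localization and Gronwall argument. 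Taking expectations yields \eqref{A1}.

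For \eqref{A4}, apply Itô's formula to $\|u\|^{2q}$, exactly as in \eqref{MIE4}--\eqref{MIE6} but with $w\equiv0$. Using the Poincaré inequality $\|\nabla u\|^2\ge\alpha_0\|u\|^2$ and Young's inequality to absorb the $h$-term and the Itô correction $C_q\FB_0\|u\|^{2(q-1)}$ into part of the dissipation, one obtains $\frac{\ddd}{\ddd t}\E\|u\|^{2q}\le -c\,q\nu\alpha_0\E\|u\|^{2q}+C_{q,\nu,h}(1+\FB_0)^q$. Gronwall then gives \eqref{A4}; the rate stated as $q\nu\alpha_0$ is obtained by keeping the full dissipation and paying with larger constants, or up to adjusting the constant in the exponent. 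For \eqref{A3}, apply Itô to $e^{\kappa\|u\|^2}$. The generator produces $\kappa e^{\kappa\|u\|^2}\big(-2\nu\|\nabla u\|^2+2\langle u,h\rangle+\FB_0+2\kappa\sum b_j^2\langle u,e_j\rangle^2\big)$, and the last term is at most $2\kappa\FB_0\|u\|^2\le\frac{\nu\alpha_0}{8}\|u\|^2$ when $\kappa\le\kappa_0$. This is absorbed into the dissipation, so the drift is at most $-\kappa e^{\kappa\|u\|^2}+C$, and Gronwall finishes.

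For the pathwise quantities \eqref{A5} and \eqref{A6}, integrate the Itô identity in time to get
\begin{align*}
\|u(t)\|^2+\nu\int_0^t\|\nabla u\|^2\le\|u\|^2+C_{\nu,h}t+\FB_0t+M_t-\nu\int_0^t\|\nabla u\|^2,
\end{align*}
with $M_t$ the martingale above. For \eqref{A5}, use the exponential supermartingale inequality $\PP\{\sup_t(M_t-\frac{\gamma}{2}\langle M\rangle_t)\ge r\}\le e^{-\gamma r}$. Here $\langle M\rangle_t\le4\FB_0\int_0^t\|u\|^2\le\frac{4\FB_0}{\alpha_0}\int_0^t\|\nabla u\|^2$, so the remaining dissipation controls $\frac{\gamma}{2}\langle M\rangle$ for $\gamma\sim\kappa$. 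Integrating the resulting tail bound gives \eqref{A5}, with the factor $2$ and $e^{2\kappa\|u\|^2}$ arising from the layer-cake formula. For \eqref{A6}, raise to the power $q$ and use BDG: $\E\sup|M|^q\lesssim\FB_0^{q/2}\E(\int_0^t\|u\|^2)^{q/2}\le\frac12\E\EE^q+C(\FB_0t)^q$. The main obstacle is bookkeeping in this absorption and in the exponential martingale step, namely getting constants uniform in $t$ with the stated dependence on $1+\FB_0$; the structure itself is routine.

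Finally, for \eqref{A7}, write $u(t)=u-\int_0^t(\nu Lu+B(u)-h)\ddd s+W(t)$. The deterministic integral lies in $W^{1,1}(0,T;V^*)$, hence in $W^{\gamma,2}(0,T;V^*)$ is replaced by the better estimate obtained as follows: $\|B(u)\|_{V^*}\le C\|u\|\|\nabla u\|$ by Ladyzhenskaya, so this part lies in $W^{1,2}(0,T;V^*)$ with norm at most $C\big(\int_0^T\|\nabla u\|^2+\sup\|u\|^2\int_0^T\|\nabla u\|^2\big)^{1/2}$, whose expectation is controlled by \eqref{A6}. The Wiener process satisfies $\E\|W\|_{W^{\gamma,2}(0,T;H)}\lesssim\FB_0^{1/2}$ for $\gamma<\frac12$ by the standard Gaussian moment computation of the Sobolev--Slobodeckii seminorm, as in \cite{FG95}. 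Summing the two contributions yields \eqref{A7}.
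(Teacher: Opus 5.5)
Your proposal is correct and follows the paper's approach. For \eqref{A1} and \eqref{A3}--\eqref{A6} the paper simply defers to Section~2.4 of \cite{KS12}, where exactly these It\^o-formula, Poincar\'e/Young absorption, BDG and exponential-supermartingale arguments are carried out; for \eqref{A7} it uses your splitting, with the deterministic integral bounded in $H^1(0,T;V^*)$ via \eqref{A6} ($q=1$) and the Wiener part by Lemma~2.1 of \cite{FG95}. The only slip is cosmetic: in your integral identity for $u(t)$ the term $\nu Lu$ should enter with a plus sign, which does not affect any of the norm estimates.
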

    \begin{proof}
        The proof of estimates \eqref{A1} and \eqref{A3}--\eqref{A6} can be found in Section 2.4 in \cite{KS12} and is thus omitted. We turn to the proof of \eqref{A7}. Notice that 
        \begin{align*}
            \E\|u(\cdot)\|_{W^{\gamma,2}(0,T;V^*)}&\le \E\left\|\int_0^\cdot( \nu Lu(t)-B(u(t))+h)\ddd t\right\|_{H^{1}(0,T;V^*)}\\&\quad+\E\left\|\int_0^\cdot \eta\ddd t\right\|_{W^{\gamma,2}(0,T;V^*)}+\sqrt{T}\|u\|,
        \end{align*}
        where, by applying \eqref{A6} with $q=1$, one has
        \begin{align*}
            \E\left\|\int_0^\cdot( \nu Lu-B(u)+h)\ddd t\right\|_{H^{1}(0,T;V^*)}&\lesssim_{T,\nu}\|h\|+\E\|u(\cdot)\|_{L^2(0,T;V)}\\&\quad+\E\left(\|u(\cdot)\|_{L^\infty(0,T;H)}\|u(\cdot)\|_{L^2(0,T;V)}\right)\\&\lesssim_{T,\nu,h,\FB_0}1+\|u\|^2.
        \end{align*}
        As for the moment estimate of the noise $\eta$, we use Lemma 2.1 in \cite{FG95} to obtain
        \begin{align*}
            \E\left\|\int_0^\cdot \eta\ddd t\right\|_{W^{\gamma,2}(0,T;V^*)}\lesssim_\gamma (1+T)\sqrt{\FB_0}.
        \end{align*}
        Combining the above estimates, we obtain \eqref{A7}.
    \end{proof}
    Next, we establish the continuity of the finite-energy solutions with respect to the initial data and noise coefficients.
    \begin{prop}Suppose that $\{u(t)\}_{t\ge 0}$ and $\{u'(t)\}_{t\ge 0}$ are the solutions of \eqref{MFE1} issued from $u,u'$ with the noise coefficients given by
    \[\BB:=\{b_j\}_{j\ge 1},\qquad \BB':=\{b'_j\}_{j\ge 1},\]
    respectively. Then, for any $R,\RR,T>0$, $u,u'\in B_H(R)$, and $\BB,\BB'\in B_{\ell^2}(\RR)$,
    \begin{align}
        \label{A8}\E\int_0^T\|\nabla(u(t)-u'(t))\|^2\ddd t\lesssim_{\RR,R,T,\nu,h} d+d^{c_0},
    \end{align}
    where $d:=\|u-u'\|+\|\BB-\BB'\|_{\ell^2}$, and $c_0$ is a constant depending on $\RR,\nu$.
    \end{prop}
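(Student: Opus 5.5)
We prove \eqref{A8} by an energy estimate for the difference, localized on a high-probability event through a stopping time. Write $g:=u-u'$. The two solutions are driven by the same Brownian motions $\{\beta_j\}$ but with different coefficients. Then $g$ satisfies
\[
\partial_t g-\nu Lg+B(g,u)+B(u',g)=\partial_t W_{\BB-\BB'},\qquad W_{\BB-\BB'}:=\ssum_{j\ge1}(b_j-b_j')e_j\beta_j ,
\]
with $g(0)=u-u'$.

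\textbf{Step 1: energy inequality for $g$.} By the It\^o formula, the cancellation $\langle B(u',g),g\rangle=0$, and the Ladyzhenskaya estimate $|\langle B(g,u),g\rangle|\le C\|g\|\|\nabla g\|\|\nabla u\|$ (as in \eqref{MFE10}), we obtain
\[
\ddd\|g\|^2+\nu\|\nabla g\|^2\ddd t\le C_\nu\|g\|^2\|\nabla u\|^2\ddd t+\|\BB-\BB'\|_{\ell^2}^2\ddd t+2\ssum_j(b_j-b_j')\langle g,e_j\rangle\ddd\beta_j .
\]
This cannot be closed in expectation directly, because $\exp(C_\nu\int_0^T\|\nabla u\|^2)$ has only small exponential moments (\eqref{A5} gives them only for $\kappa\le\kappa_0$, and $C_\nu$ may exceed $\kappa_0\nu$).

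\textbf{Step 2: localization.} For $L>0$ set $\tau_L:=\inf\{t: \nu\int_0^t\|\nabla u\|^2\ddd s\ge L\}\wedge T$. Multiplying the inequality by $\exp(-C_\nu\int_0^t\|\nabla u\|^2)$ and taking expectations up to $\tau_L$ (the stochastic integral is a martingale since $\|g\|$ has all moments by \eqref{A4}) yields
\[
\E\int_0^{\tau_L}\|\nabla g\|^2\ddd t\lesssim_\nu e^{C_\nu L/\nu}\big(\|u-u'\|^2+T\|\BB-\BB'\|^2_{\ell^2}\big)\lesssim e^{C L}d^2 .
\]
On the complement $\{\tau_L<T\}$ we use Cauchy--Schwarz, \eqref{A6} for both solutions with $q=2$ (uniform in $\BB,\BB'\in B_{\ell^2}(\RR)$ and $u,u'\in B_H(R)$), and \eqref{A5}:
\[
\E\Big(\I_{\{\tau_L<T\}}\int_0^T\|\nabla g\|^2\Big)\le \PP\{\EE(T)\ge L\}^{1/2}\big(\E\EE_u(T)^2+\E\EE_{u'}(T)^2\big)^{1/2}\lesssim_{\RR,R,T,\nu,h}e^{-\kappa_0L/2}.
\]

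\textbf{Step 3: optimization.} Combining the two bounds gives $\E\int_0^T\|\nabla g\|^2\lesssim e^{CL}d^2+e^{-\kappa_0 L/2}$. For $d<1$ we choose $L:=\frac{1}{C+\kappa_0/2}\log(1/d)$, which produces $d^{c_0}$ with $c_0=\kappa_0/(2C+\kappa_0)$. Here $c_0$ depends on $\RR,\nu$, since $\kappa_0$ depends on $\FB_0\le\RR^2$. For $d\ge1$ the left side is bounded by a constant via \eqref{A6}, which is $\lesssim d$. This gives \eqref{A8}.

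The main obstacle is that only small exponential moments of $\int\|\nabla u\|^2$ are available. The stopping-time localization trades them for the H\"older-type exponent $c_0$; this is why the bound is $d^{c_0}$ rather than linear in $d$. A further point needing care is that the constants in \eqref{A5} and \eqref{A6} must be uniform over $\BB\in B_{\ell^2}(\RR)$, which holds since they depend on $\BB$ only through $\FB_0$.
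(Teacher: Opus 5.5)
Your proof is correct and follows essentially the same route as the paper: an It\^o estimate for $\|u-u'\|^2$ weighted by $\exp\big(-C_\nu\int_0^t\|\nabla u\|^2\,\ddd s\big)$, then a split according to whether $\int_0^T\|\nabla u\|^2\,\ddd t$ exceeds a threshold, with the tail controlled by \eqref{A5} and the bad part by Cauchy--Schwarz and \eqref{A6}, and finally a threshold of order $\log d^{-1}$. The only difference is cosmetic: you localize with the stopping time $\tau_L$, whereas the paper takes the weighted estimate over all of $[0,T]$ and restricts to the event $\A_{M,T}=\{\int_0^T\|\nabla u\|^2\,\ddd t\le M\}$, on which the weight is bounded below, so no optional stopping is needed.
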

    \begin{proof}Notice that if $d\ge 1$, then \eqref{A8} is a direct consequence of \eqref{A6}. Thus, in what follows, we shall assume $d< 1$. The difference $w(t):=u(t)-u'(t)$ satisfies
    \[\partial_t w-\nu Lw+B(w,u(t))+B(u'(t),w)=\partial_t\ssum_{j\ge 1}(b_j-b_j')e_j\beta_j(t).\]
    Applying the It\^o formula, it follows
    \begin{align*}
        \ddd \|w(t)\|^2+2&\left(\nu\|\nabla w\|^2+\langle B(w,u(t)),w\rangle\right)\ddd t\\&\qquad\qquad\qquad\qquad=2\ssum_{j\ge 1}(b_j-b'_j)\langle e_j, w\rangle\ddd \beta_j+\|\BB-\BB'\|_{\ell^2}^2\ddd t.
    \end{align*}
    Introduce $\FJ(t):=\exp\left(-2c\int_0^t\|\nabla u\|^2\ddd s\right)$ with $c$ to be chosen later. By the It\^o product rule, one has 
    \begin{align*}
        \ddd (\FJ(t)\|w(t)\|^2)+2\FJ&\left(\nu\|\nabla w\|^2+\langle B(w,u(t)),w\rangle+c\|\nabla u\|^2\|w\|^2\right)\ddd t\\&\qquad\qquad\qquad\quad=2\FJ\ssum_{j\ge 1}(b_j-b'_j)\langle e_j, w\rangle\ddd \beta_j+\FJ\|\BB-\BB'\|_{\ell^2}^2\ddd t.
    \end{align*}
    Together with taking expectation on both sides and using the Ladyzhenskaya inequality, one gets
    \begin{align*}
        2\E\int_0^T\FJ(t)&\left(\nu\|\nabla w\|^2+c\|\nabla u\|^2\|w\|^2\right)\ddd t\\&\quad\le \|u-u'\|^2+\|\BB-\BB'\|^2_{\ell^2}T+2\E\int_0^T\FJ(t)|\langle B(w,u(t)),w\rangle|\ddd t\\&\quad\le \|u-u'\|^2+\|\BB-\BB'\|^2_{\ell^2}T+\E\int_0^T\FJ(t)\left(\nu\|\nabla w\|^2+C_\nu\|\nabla u\|^2\|w\|^2\right)\ddd t.
    \end{align*}
    Consequently, by taking $2c=C_\nu$, it follows that
    \begin{align*}
     \E\int_0^T\FJ(t)\|\nabla w\|^2\ddd t\lesssim_{T,\nu}d^2.
    \end{align*}
    Let
    \[\A_{M,T}:=\left\{\int_0^T\|\nabla u(t)\|^2\ddd t\le M\right\},\qquad M>0.\]
    Notice that on the event $\A_{M,T}$, it follows that
    \begin{align}\label{A9}
        \E\left(\I_{\A_{M,T}}\int_0^T\|\nabla w\|^2\ddd t\right)\lesssim_{T,\nu} e^{C_\nu M}d^2.
    \end{align}
    On the other hand, by applying \eqref{A5}, one has 
    \begin{align*}
        \PP(\A_{M,T}^c)\le e^{-\kappa_0\nu M}\E e^{\kappa_0 \EE(T)}\lesssim_{R,\RR,T,\nu,h}e^{-\kappa_0\nu M}.
    \end{align*}
    Together with \eqref{A6}, this implies 
    \begin{align*}
        \E\left(\I_{\A_{M,T}^c}\int_0^T\|\nabla w\|^2\ddd t\right)\le \left(\PP(\A^c_{M,T})\right)^{\frac{1}{2}}\left(\E\left(\int_0^T\|\nabla w\|^2\ddd t\right)^2\right)^{\frac{1}{2}}\lesssim_{\RR,R,T,\nu,h}e^{-\frac{\kappa_0\nu M}{2}}.
    \end{align*}
    Choosing $M:=C_{\nu}^{-1}\log d^{-1}$ and combining the above estimate with \eqref{A9}, one gets 
    \[ \E\int_0^T\|\nabla w\|^2\ddd t\lesssim_{\RR,R,T,\nu,h} d+d^{\frac{\kappa_0\nu C^{-1}_\nu}{2}}\lesssim_{\RR,R,T,\nu,h}d+d^{\frac{\nu^2\alpha_0 C^{-1}_\nu}{32(1+\RR^2)}}.\]
    This completes the proof.
    \end{proof}

    \section{Proof of Proposition \ref{propDE1}}\label{weight}
    In this section, we present the proof of Proposition \ref{propDE1}. To establish the first property, notice that 
    \[0<\theta_{\varepsilon,\fz}(x)\le 1,\qquad \inf_{|x-\fz|\le 1}\theta_{\varepsilon,\fz}(x)\ge \frac{1}{\sqrt{2}}.\]
    Consequently, for any $\fz\in\R$, it follows 
    \begin{align*}
        \|f\|_{L^p(D_{\fz})}\le 2^{\frac{p}{2}}\|\theta_{\varepsilon,\fz}f\|_{p},
    \end{align*}
    which, together with taking supremum in $\fz$ on both sides, gives 
    \begin{align}
        \label{weight1}\|f\|_{L^p_b}\lesssim_p \sup_{\fz\in \R}\|\theta_{\varepsilon,\fz} f\|_p.
    \end{align}
    On the other hand, for any $\fz\in\R$, one has
    \begin{align*}
        \|\theta_{\varepsilon,\fz} f\|^p_p&=\ssum_{l\in\Z}\int_{D_{\fz+l}}\theta^p_{\varepsilon,\fz} |f|^p\ddd x\ddd y\\&\le  \ssum_{l\ge 0} \frac{1}{(1+\varepsilon^2l^2)^{\frac{p}{2}}}\int_{D_{\fz+l}}|f|^p\ddd x\ddd y+\ssum_{l<0} \frac{1}{(1+\varepsilon^2(l+1)^2)^{\frac{p}{2}}}\int_{D_{\fz+l}}|f|^p\ddd x\ddd y\\&\le 2\left( 1+\int_0^\infty \theta^p_{\varepsilon,0}\ddd x\ddd y\right)\|f\|^p_{L^p_b}\lesssim_p \varepsilon^{-1}\|f\|^p_{L^p_b}.
    \end{align*}
    Together with \eqref{weight1}, this ensures \eqref{DE4}.

    The second property follows from direct calculation. Indeed, it follows that
    \begin{align}
        \label{weight2}\partial_x\theta_{\varepsilon,\fz}(x)=-\frac{\varepsilon^2(x-\fz)}{(1+\varepsilon^2(x-\fz)^2)^{\frac{3}{2}}},
    \end{align}
    which ensures \eqref{DE5}. We turn to the third property. Notice that 
    \begin{align}
        \label{weight3}\partial_x \log \theta_{\varepsilon,\fz}=\theta^{-1}_{\varepsilon,\fz}\partial_x\theta_{\varepsilon,\fz}=-\frac{\varepsilon^2(x-\fz)}{1+\varepsilon^2(x-\fz)^2}.
    \end{align}
    This yields 
    \[\|\partial_x \log \theta_{\varepsilon,\fz}\|_\infty\le \varepsilon.\]
    Since 
    \[\partial_x\log\theta_{\varepsilon,\fz}(x)=-\varepsilon\ff(\varepsilon(x-\fz)),\qquad \ff(x)=\frac{x}{1+x^2},\]
    and 
    \[\|\partial_x^j\ff\|_\infty\le C_j,\]
    by taking derivatives on both sides of \eqref{weight3}, one obtains \eqref{DE6}.

    \section{Estimate for the Stokes semigroup in the uniformly local spaces}\label{STEOKS}
    In this section, we establish estimates for the linear Stokes equation 
    \begin{align}
        \label{STOKES0}
        \begin{cases}
            \partial_t u+\nabla p=\nu\Delta u,\\
            \dive u=0,\\
            u|_{t=0}=u_0,\qquad u|_{\partial D}=0
        \end{cases}
    \end{align}
    in the uniformly local spaces $L^p_{b,\dive}$. These estimates are used in Section \ref{DE}.
    \begin{prop}\label{propSTOKES}
        Let $S(t):=e^{\nu t L}$ denote the Stokes semigroup. For any $p\in(1,\infty)$ and $t\in(0,1]$, it follows that 
        \begin{align}
            \label{STOKES1}
            \|S(t)u\|_{L^p_b}+t^{\frac{1}{2}}\|\nabla S(t)u\|_{L^p_b}\lesssim_{p,\nu}\|u\|_{L^p_b}
        \end{align}
        for any $u\in L^p_{b,\dive}$ and
        \begin{align}
            \label{STOKES2}
            t^{\frac{1}{2}}\|S(t)\Pi\dive f\|_{L^p_b}+t\|\nabla S(t)\Pi\dive f\|_{L^p_b}\lesssim_{p,\nu}\|f\|_{L^p_b}
        \end{align}
            for any $f\in L^p_{b}$.
    \end{prop}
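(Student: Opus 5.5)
The plan is to derive the uniformly local estimates from weighted $L^p$ estimates, using the weights $\theta_{\varepsilon,\fz}$ of Proposition \ref{propDE1}. By \eqref{DE4}, with a fixed $\varepsilon\in(0,1)$ depending only on $p,\nu$,
\[\|S(t)u\|_{L^p_b}\lesssim_p \sup_{\fz}\|\theta_{\varepsilon,\fz}S(t)u\|_p,\qquad \sup_{\fz}\|\theta_{\varepsilon,\fz}u\|_p\lesssim_p\varepsilon^{-1/p}\|u\|_{L^p_b}.\]
It therefore suffices to prove, uniformly in $\fz$ and for $t\in(0,1]$,
\[\|\theta S(t)u\|_p+t^{1/2}\|\theta\nabla S(t)u\|_p\lesssim\|\theta u\|_p,\]
and the analogous bound for $S(t)\Pi\dive f$, where $\theta=\theta_{\varepsilon,\fz}$. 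This is exactly the claim that the Stokes operator generates an analytic semigroup on the weighted space $L^p_{\dive}(\theta^p\,\ddd x)$, with constants uniform in $\fz$.

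\textbf{Step 1: conjugation.} Write $\tilde u=\theta u$. Then $\tilde u$ solves a perturbed Stokes resolvent problem
\[\lambda\tilde u-\nu\Delta\tilde u+\nabla(\theta p)=\theta F+\mathcal R(u,p),\qquad \dive\tilde u=(\partial_x\theta)u_1.\]
The commutator terms $\mathcal R$ involve $\partial_x\theta\,\partial u$, $\partial_x^2\theta\,u$ and $(\partial_x\theta)p$. By \eqref{DE5}--\eqref{DE6} each of these is $O(\varepsilon)$ relative to $\tilde u,\nabla\tilde u,\theta p$. I correct the divergence with a Bogovskii-type operator on the strip, or simply work with the stream-function correction $\JJ$ as in \eqref{DE15}; the zero-flux condition makes $\int_{-1}^1(\partial_x\theta)u_1\,\ddd y=0$, so the correction is admissible. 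Then I invoke the known resolvent estimate of the Stokes operator in $L^p_{\dive}$ on the strip,
\[|\lambda|\|v\|_p+|\lambda|^{1/2}\|\nabla v\|_p+\|\nabla^2v\|_p+\|\nabla q\|_p\lesssim\|F\|_p\]
for $\lambda$ in a sector shifted to the right (Abe--Shibata type results; these underlie \cite{AZ14}). For $|\lambda|\ge\lambda_0$, the $O(\varepsilon)$ perturbation is absorbed.

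\textbf{Step 2: the pressure commutator.} The term $(\partial_x\theta)p$ is the delicate one, because $p$ itself is only controlled through $\nabla p$. I would normalize $p$ locally by subtracting its cross-sectional means, and use a weighted Poincar\'e/Ne\v{c}as inequality on the strip to get
\[\|\theta\,\partial_x\theta\,p\|_{-1,p}\lesssim\varepsilon\|\theta\nabla p\|_{-1,p}+\ldots\]
Alternatively, I would estimate it in $W^{-1,p}$ via duality. This is the main obstacle: obtaining uniform-in-$\fz$ control of the nonlocal pressure in the weighted norm. Once this is done, the resolvent bound on the sector gives analyticity. The Dunford integral then yields the first estimate \eqref{STOKES1} for $t\le1$, absorbing $e^{\lambda_0t}\le e^{\lambda_0}$.

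\textbf{Step 3: the divergence-form estimate.} For \eqref{STOKES2}, I use duality. For $\varphi\in C_0^\infty$,
\[\langle S(t)\Pi\dive f,\varphi\rangle=-\langle f,\nabla S(t)\Pi\varphi\rangle.\]
Applying the gradient estimate of Step 1 in the dual weighted space $L^{p'}(\theta^{-p'})$ gives the $t^{-1/2}$ bound; the inverse weight is again admissible by \eqref{DE5}. Using the semigroup property $S(t)=S(t/2)S(t/2)$ and the gradient bound once more gives the $t^{-1}$ estimate for $\nabla S(t)\Pi\dive f$. Finally, I pass back to $L^p_b$ through \eqref{DE4}.
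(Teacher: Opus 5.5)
Your overall architecture matches the paper's: reduce via \eqref{DE4} to weighted bounds uniform in $\fz$, get analyticity from an $O(\varepsilon)$ perturbation of a known sectorial operator, then treat $\Pi\dive f$ by duality with the inverse weight. However, Step 2, which you yourself leave open, is a genuine gap, and the inequality you propose there cannot hold.

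Write $\pp$ for the pressure and $\bar\pp(x)=\frac12\int_{-1}^1\pp\,\ddd y$. Subtracting cross-sectional means does not remove the difficulty; it isolates it in the zero mode. The commutator $(\partial_x\theta_{\varepsilon,\fz})\pp\,e_1$ still contains $(\partial_x\theta_{\varepsilon,\fz})\bar\pp\,e_1$, and $\bar\pp$ is known only through $\bar\pp'$. (In the resolvent problem with data in $L^p_{\dive}$, averaging the first equation gives $\bar\pp'=\frac{\nu}{2}[\partial_yu_1]_{y=-1}^{y=1}$, which is generically nonzero.)

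Passing from $\bar\pp'$ to $\bar\pp$ on the strip costs exactly the factor $\varepsilon$ gained from \eqref{DE5}. Take $\pp=\Phi(\varepsilon(x-\fz))$ with nonzero $\Phi\in C_0^\infty(\R)$. Then $\inf_{c\in\R}\|(\partial_x\theta_{\varepsilon,\fz})(\pp-c)\|_{-1,p}\gtrsim\varepsilon^{1-1/p}$, whereas $\varepsilon\|\theta_{\varepsilon,\fz}\nabla\pp\|_{-1,p}\lesssim\varepsilon^{2-1/p}$, and the same scaling holds in $L^p$. So no bound of this term by $\varepsilon$ times a norm of $\theta_{\varepsilon,\fz}\nabla\pp$ exists, whether via weighted Poincar\'e, Ne\v{c}as, or duality. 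It is a perturbation of relative size $O(1)$ and cannot be absorbed as you intend.

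The missing idea is to remove the zero mode rather than estimate it. Staying with velocities, note that $\theta_{\varepsilon,\fz}\bar\pp'\,e_1=\nabla\int_0^x\theta_{\varepsilon,\fz}\bar\pp'$ is itself a gradient. Set $\tilde\pp:=\theta_{\varepsilon,\fz}(\pp-\bar\pp)+\int_0^x\theta_{\varepsilon,\fz}\bar\pp'$. Then $\theta_{\varepsilon,\fz}\nabla\pp=\nabla\tilde\pp-(\partial_x\theta_{\varepsilon,\fz})(\pp-\bar\pp)e_1$. The Poincar\'e--Wirtinger inequality on $(-1,1)$ bounds the remaining commutator by $C\varepsilon\|\partial_y\tilde\pp\|_p$, which is the smallness your Step 2 needs.

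The paper instead eliminates the pressure altogether, and this is where zero flux is really used. The stream function $\JJ u$ vanishes on both walls, so $\JJ:L^p_{\dive}\to W^{1,p}_0$ is an isomorphism, with $\|\theta_{\varepsilon,\fz}\JJ u\|_{1,p}$ comparable to $\|\theta_{\varepsilon,\fz}u\|_p$ uniformly in $\fz$. The Stokes operator becomes $\LLL=\JJ L\JJ^{-1}=\Delta_D^{-1}\Delta^2$, with no pressure. Conjugating by $\theta_{\varepsilon,\fz}$ then produces only two kinds of terms:
\begin{itemize}
\item commutators with the local operators $\Delta_D$ and $\Delta^2$, which are $O(\varepsilon)$ by \eqref{DE6};
\item the factor $\theta_{\varepsilon,\fz}\Delta_D^{-1}\theta_{\varepsilon,\fz}^{-1}$, controlled by a Neumann series since $\Delta_D$ is invertible on the strip.
\end{itemize}
This gives $\|(\LLL_{\varepsilon,\fz}-\LLL)\psi\|_{1,p}\lesssim\varepsilon(\|\psi\|_{1,p}+\|\LLL\psi\|_{1,p})$, and hence a resolvent bound on a fixed sector, uniform in $\fz$.

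Your Step 3 then coincides with the paper's Step V, with two additions. It also needs the weighted bound $\|\theta^{-1}_{\varepsilon,\fz}\Pi g\|_{p'}\lesssim\|\theta^{-1}_{\varepsilon,\fz}g\|_{p'}$ for the Leray projector (Lemma \ref{lemmaSTOKES1}). And the property of $\theta^{-1}_{\varepsilon,\fz}$ that matters is \eqref{DE6}, not \eqref{DE5}.
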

    \begin{proof}
        \textbf{Step I.} To begin with, we recall that the Stokes operator 
        \[L: \DDD(L)\to L^p_{\dive},\qquad \DDD(L):=L^p_{\dive}\cap W^{2,p}\cap W^{1,p}_0\]
        generates an analytic semigroup; see \cite{AW05}. We claim that 
         \[\JJ: u\mapsto \JJ u(x,y):=\int_{-1}^y u_1(x,y')\ddd y',\qquad L^p_{\dive }\to\ W^{1,p}_0\]
        is an isomorphism. Indeed, for any $u\in L^p_{\dive}$, by the divergence-free and no-penetration conditions, one has 
        \begin{align}
            \label{STOKES3}\nabla^\perp \JJ u=u,\qquad \nabla^\perp:=(\partial_y,-\partial_x).
        \end{align}
        Consequently, $\JJ u\in W^{1,p}$ and $\JJ$ is bounded from $L^p_{\dive}$ to $W^{1,p}$. On the other hand, notice that 
        \[\partial_x\JJ u(x,1)=\int_{-1}^1 \partial_x u_1(x,y')\ddd y'=-\int_{-1}^1 \partial_y u_2(x,y')\ddd y'=0,\]
        which implies that $\JJ u(x,1)=\int_{-1}^1 u_1(x,y')\ddd y'\equiv C$ for some constant $C$. Together with the fact that
        \[\left\|\int_{-1}^1 u_1(\cdot,y')\ddd y'\right\|_{L^p(\R)}\lesssim \|u\|_p,\]
        this implies that $\JJ u(x,1)\equiv 0$ and thus $\JJ u\in W^{1,p}_0$. Moreover, the relation \eqref{STOKES3} ensures that $\JJ$ is both one-to-one and onto. Therefore, $\JJ$ is an isomorphism as claimed.

        Define the Stokes operator on the space of stream functions
        \[\LLL: \DDD(\LLL)\to W^{1,p}_0,\qquad \psi\mapsto \JJ L \nabla^{\perp}\psi,\]
        where 
        \[\DDD(\LLL):=\{\psi \in W^{3,p}| \psi|_{\partial D}=\partial_y\psi|_{\partial D}=0\}.\]
        Since $\LLL=\JJ L\nabla^\perp=\JJ L \JJ^{-1}$ is similar to $L$, it is also sectorial and thus $\nu\LLL$ generates an analytic semigroup $\SSSS(t):=e^{\nu t\LLL}$. Moreover, notice that $\psi(t):=\SSSS(t)\psi_0$ satisfies
        \begin{align}
            \label{STOKES3-1}\nabla^{\perp}\psi(t)=\nabla^\perp\JJ S(t)\JJ^{-1}\psi_0=S(t)\JJ^{-1}\psi_0,
        \end{align}
        which, by taking $\curl$ on both sides of \eqref{STOKES0}, yields the following equation
        \begin{align}\label{STOKES4}
        \begin{cases}
            \partial_t\Delta \psi=\nu\Delta^2 \psi,\\
            \psi|_{\partial D}=\partial_y\psi|_{\partial D}=0,\\
            \psi|_{t=0}=\psi_0.
        \end{cases}
        \end{align}
        Consequently, one has the following representation formula
        \begin{align}
            \label{STOKES5}\LLL= \Delta_D^{-1}\Delta^2,
        \end{align}
        where $\Delta_D$ denotes the classical Dirichlet--Laplacian and $\Delta^2$ is realized as the bi-Laplacian with the boundary conditions \eqref{STOKES4}$_2$.

        \noindent\textbf{Step I\!I.} In view of \eqref{DE4}, it suffices to establish uniform-in-$\fz$ estimate for $S(t)$ in the weighted Lebesgue space $L^p(D;\theta^p_{\varepsilon,\fz}\ddd x\ddd y)$. To avoid the additional complications arising from the interaction between the divergence-free condition and weighted estimates, we consider the Stokes semigroup $\SSSS(t)$ in the stream-function formulation. 
        
        Fix arbitrary $\fz\in\R$ and $\varepsilon\in (0,1)$. Notice that $\phi_{\varepsilon,\fz}(t):=\theta_{\varepsilon,\fz}\SSSS(t)\psi_0$ solves the equation
        \begin{align}\label{STOKES6}
            \begin{cases}
                \partial_t\phi_{\varepsilon,\fz}=\nu\theta_{\varepsilon,\fz}\LLL\theta_{\varepsilon,\fz}^{-1}\phi_{\varepsilon,\fz}=:\nu\LLL_{\varepsilon,\fz} \phi_{\varepsilon,\fz},\\
                \phi_{\varepsilon,\fz}|_{t=0}=\theta_{\varepsilon,\fz}\psi_0.
            \end{cases}
        \end{align}
        This reduces the weighted estimates of $\SSSS(t)$ to the estimate of $\SSSS_{\varepsilon,\fz}(t):=\exp\left(\nu t\LLL_{\varepsilon,\fz}\right)$ in the usual Sobolev space $W^{1,p}_0$. We claim that there exist constants $C_p,\varepsilon$ independent of $\fz\in\R$ such that 
        \begin{align}
            \label{STOKES7}
            \|(\LLL_{\varepsilon,\fz}-\LLL)\psi\|_{1,p}\le C_p\varepsilon\left(\|\psi\|_{1,p}+\|\LLL \psi\|_{1,p}\right)
        \end{align}
        for any $\psi\in \DDD(\LLL)$. To see this, note that from \eqref{STOKES5}, one has
        \[\LLL_{\varepsilon,\fz}=\left(\theta_{\varepsilon,\fz}\Delta_D^{-1}\theta_{\varepsilon,\fz}^{-1}\right)\left(\theta_{\varepsilon,\fz}\Delta^{2}\theta_{\varepsilon,\fz}^{-1}\right),\]
        which implies 
        \begin{align}
            \label{STOKES8}
            \LLL_{\varepsilon,\fz}-\LLL&=\left(\theta_{\varepsilon,\fz}\Delta_D^{-1}\theta_{\varepsilon,\fz}^{-1}\right)\left[\theta_{\varepsilon,\fz}\Delta^{2}\theta_{\varepsilon,\fz}^{-1}-\left(\theta_{\varepsilon,\fz}\Delta_D\theta_{\varepsilon,\fz}^{-1}\right)\Delta_D^{-1}\Delta^2\right]\notag\\&=\left(\theta_{\varepsilon,\fz}\Delta_D^{-1}\theta_{\varepsilon,\fz}^{-1}\right)\left[\theta_{\varepsilon,\fz}\Delta^{2}\theta_{\varepsilon,\fz}^{-1}-\Delta^2-\left(\theta_{\varepsilon,\fz}\Delta_D\theta_{\varepsilon,\fz}^{-1}-\Delta_D\right)\LLL\right].
        \end{align}
        Consequently, it remains to estimate $\theta_{\varepsilon,\fz}\Delta_D\theta_{\varepsilon,\fz}^{-1}-\Delta_D$ and $\theta_{\varepsilon,\fz}\Delta^{2}\theta_{\varepsilon,\fz}^{-1}-\Delta^2$. Let $\psi\in W_0^{1,p}$. By direct calculation, it follows
        \begin{align*}
            (\theta_{\varepsilon,\fz}\Delta_D\theta_{\varepsilon,\fz}^{-1}-\Delta_D)\psi&=2\theta_{\varepsilon,\fz}\partial_x\theta_{\varepsilon,\fz}^{-1}\partial_x\psi+\theta_{\varepsilon,\fz}\partial_x^2\theta_{\varepsilon,\fz}^{-1}\psi\\&=2\partial_x(\log \theta_{\varepsilon,\fz}^{-1})\partial_x\psi+\partial_x(\theta_{\varepsilon,\fz}\partial_x\theta_{\varepsilon,\fz}^{-1})\psi-\partial_x\theta_{\varepsilon,\fz}\partial_x\theta_{\varepsilon,\fz}^{-1}\psi\\&=2\partial_x(\log \theta_{\varepsilon,\fz}^{-1})\partial_x\psi+\partial^2_x(\log \theta_{\varepsilon,\fz}^{-1})\psi+(\partial_x\theta_{\varepsilon,\fz}\theta_{\varepsilon,\fz}^{-1})^2\psi\\&=(\partial_x\log \theta_{\varepsilon,\fz})^2\psi-2\partial_x(\log \theta_{\varepsilon,\fz})\partial_x\psi-\partial^2_x(\log \theta_{\varepsilon,\fz})\psi.
        \end{align*}
        Together with \eqref{DE6}, this implies that for any $\varphi\in W^{1,p'}_0$, 
        \begin{align*}
            \langle (\theta_{\varepsilon,\fz}\Delta_D\theta_{\varepsilon,\fz}^{-1}-\Delta_D)\psi,\varphi\rangle&=\langle(\partial_x\log \theta_{\varepsilon,\fz})^2\psi-2\partial_x(\log \theta_{\varepsilon,\fz})\partial_x\psi-\partial^2_x(\log \theta_{\varepsilon,\fz})\psi,\varphi\rangle\\&\lesssim\varepsilon\|\psi\|_{1,p}\|\varphi\|_{1,p'},
        \end{align*}
        and thus that
        \begin{align}
            \label{STOKES9}
            \|(\theta_{\varepsilon,\fz}\Delta_D\theta_{\varepsilon,\fz}^{-1}-\Delta_D)\psi\|_{-1,p}\lesssim\varepsilon \|\psi\|_{1,p}.
        \end{align}
        Similarly, for any $\psi\in \DDD(\LLL)$, it follows that 
        \begin{align*}
            \theta_{\varepsilon,\fz}\Delta^{2}\theta_{\varepsilon,\fz}^{-1}\psi&=(\theta_{\varepsilon,\fz}\Delta\theta_{\varepsilon,\fz}^{-1})^2\psi\\&=\left[(\partial_x\log \theta_{\varepsilon,\fz})^2-2\partial_x(\log \theta_{\varepsilon,\fz})\partial_x-\partial^2_x(\log \theta_{\varepsilon,\fz})+\Delta\right]^2\psi,
        \end{align*}
        which, combined with \eqref{DE6}, leads to 
        \begin{align}
            \label{STOKES10}\|(\theta_{\varepsilon,\fz}\Delta^{2}\theta_{\varepsilon,\fz}^{-1}-\Delta^2)\psi\|_{-1,p}\lesssim\varepsilon \|\psi\|_{3,p}.
        \end{align}
        Moreover, by combining \eqref{STOKES9} with the Neumann series
        \begin{align*}\left(\theta_{\varepsilon,\fz}\Delta_D^{-1}\theta_{\varepsilon,\fz}^{-1}\right)&=\Delta_D^{-1}(I+(\theta_{\varepsilon,\fz}\Delta_D\theta_{\varepsilon,\fz}^{-1}-\Delta_D)\Delta_D^{-1})^{-1}\\&=\ssum_{j\ge 0}\Delta_D^{-1}\left(-(\theta_{\varepsilon,\fz}\Delta_D\theta_{\varepsilon,\fz}^{-1}-\Delta_D)\Delta_D^{-1}\right)^j,\end{align*}
        one gets
        \begin{align}
            \label{STOKES11}\left\|\left(\theta_{\varepsilon,\fz}\Delta_D^{-1}\theta_{\varepsilon,\fz}^{-1}\right)\right\|_{\LL(W^{-1,p};W^{1,p}_0)}\lesssim_p\ssum_{j\ge 0} \varepsilon^j\|\Delta_D^{-1}\|_{\LL(W^{-1,p};W^{1,p}_0)}^j<\infty,
        \end{align}
        provided that $\varepsilon\ll1$ is sufficiently small. 

        Now by combining \eqref{STOKES8}--\eqref{STOKES11}, it follows that for any $\psi\in\DDD(\LLL)$, 
        \begin{align*}
            \|(\LLL_{\varepsilon,\fz}-\LLL)\psi\|_{1,p}&=\left\|\left(\theta_{\varepsilon,\fz}\Delta_D^{-1}\theta_{\varepsilon,\fz}^{-1}\right)\left[\theta_{\varepsilon,\fz}\Delta^{2}\theta_{\varepsilon,\fz}^{-1}-\Delta^2-\left(\theta_{\varepsilon,\fz}\Delta_D\theta_{\varepsilon,\fz}^{-1}-\Delta_D\right)\LLL\right]\psi\right\|_{1,p}\\&\lesssim_p\varepsilon\|\psi\|_{3,p}+\varepsilon \|\LLL\psi\|_{1,p}\lesssim_p\varepsilon (\|\psi\|_{1,p}+\|\LLL\psi\|_{1,p})
        \end{align*}
        as claimed. Here in the last inequality, we used the fact that the graph norm of $\LLL$ is equivalent to $W^{3,p}$-norm on $\DDD(\LLL)$.

        \noindent\textbf{Step I\!I\!I.} In this step, we aim to show that $\SSSS_{\varepsilon,\fz}(t)$ is an analytic semigroup. For $\vartheta\in(\frac{\pi}{2},\pi)$, define 
        \[\Sigma_\vartheta:=\{z\in \C|z\neq0,\ |\arg z|<\vartheta\}.\]
        Since $\SSSS(t)$ is an analytic semigroup generated by $\nu\LLL$, there is $z_0\ge0$ and $\vartheta_0\in (\frac{\pi}{2},\pi)$ such that
        \[z_0+\Sigma_{\vartheta_0}\subset \rho(\LLL),
        \]
        and
        \begin{align}\label{STOKES12}
        \|(z-\LLL)^{-1}\|_{\LL(W_0^{1,p})}\lesssim_p \frac{1}{|z-z_0|},\qquad \forall z\in z_0+\Sigma_{\vartheta_0},
        \end{align}
        where $\rho(\LLL)$ is the resolvent set of $\LLL$. Fix arbitrary $z\in z_0+1+\Sigma_{\vartheta_0}$ and $\psi\in W^{1,p}_0$. By combining \eqref{STOKES7}, \eqref{STOKES12}, and the relation
        \[(z-\LLL_{\varepsilon,\fz})^{-1}=(z-\LLL)^{-1}(I+(\LLL-\LLL_{\varepsilon,\fz})(z-\LLL)^{-1})^{-1},\]
        one has
        \begin{align*}
            \|(z-\LLL_{\varepsilon,\fz})^{-1}\psi\|_{1,p}\lesssim_p \frac{1}{|z-z_0|}\|(I+(\LLL-\LLL_{\varepsilon,\fz})(z-\LLL)^{-1})^{-1}\psi\|_{1,p},
        \end{align*}
        where 
        \begin{align*}
            \|(\LLL-\LLL_{\varepsilon,\fz})(z-\LLL)^{-1}\psi\|_{1,p}&\lesssim_p \varepsilon \left(\|(z-\LLL)^{-1}\psi\|_{1,p}+\|\LLL(z-\LLL)^{-1}\psi\|_{1,p}\right)\\&\lesssim_p\varepsilon\left(\frac{1}{|z-z_0|}+1+\frac{|z|}{|z-z_0|}\right)\|\psi\|_{1,p}\\&\lesssim_p\varepsilon\|\psi\|_{1,p}.
        \end{align*}
        Consequently, by shrinking $\varepsilon$ if necessary, it follows that
        \begin{align}
            \label{STOKES13}\|(z-\LLL_{\varepsilon,\fz})^{-1}\|_{\LL(W^{1,p}_0)}\lesssim_p \frac{1}{|z-z_0|}\lesssim_p\frac{1}{|z-z_0-1|}.
        \end{align}
        This implies that $\SSSS_{\varepsilon,\fz}(t)$ is an analytic semigroup in $W_0^{1,p}$.

        \noindent\textbf{Step I\!V.} Since the sector $z_0+1+\Sigma_{\vartheta_0}$ and the constant in the resolvent estimate \eqref{STOKES13} are independent of the parameter $\fz$, by the analyticity of $\SSSS_{\varepsilon,\fz}(t)$, one has
        \begin{align}
            \label{STOKES14}\|\theta_{\varepsilon,\fz}\SSSS(t)\psi_0\|_{1,p}+t\|\theta_{\varepsilon,\fz}\LLL\SSSS(t)\psi_0\|_{1,p}\le C_{p,\nu}\|\theta_{\varepsilon,\fz}\psi_0\|_{1,p},\qquad \forall t\in(0,1],
        \end{align}
        where the constant $C_{p,\nu}$ does not depend on $\fz$; see Theorem 2.2 in \cite{Sch04} and Property $(10.5)$ in \cite{DPS87}. On the other hand, for any $\psi\in\DDD(\LLL)$, by applying the standard regularity theory of the Stokes operator combined with \eqref{STOKES7}, one has
        \begin{align*}
            \|\theta_{\varepsilon,\fz}\psi\|_{3,p}&\lesssim_p\|\theta_{\varepsilon,\fz}\psi\|_{1,p}+\|\LLL(\theta_{\varepsilon,\fz}\psi)\|_{1,p}\\&\lesssim_p\|\theta_{\varepsilon,\fz}\psi\|_{1,p}+\|\LLL_{\varepsilon,\fz}(\theta_{\varepsilon,\fz}\psi)\|_{1,p}+\|(\LLL_{\varepsilon,\fz}-\LLL)(\theta_{\varepsilon,\fz}\psi)\|_{1,p},
            \\&\lesssim_p\|\theta_{\varepsilon,\fz}\psi\|_{1,p}+\|\theta_{\varepsilon,\fz}\LLL\psi\|_{1,p}+\varepsilon(\|\theta_{\varepsilon,\fz}\psi\|_{1,p}+\|\LLL(\theta_{\varepsilon,\fz}\psi)\|_{1,p}),
        \end{align*}
        which, together with shrinking $\varepsilon$ if necessary, yields
        \begin{align*}
            \|\theta_{\varepsilon,\fz}\psi\|_{3,p}\lesssim_p\|\theta_{\varepsilon,\fz}\psi\|_{1,p}+\|\theta_{\varepsilon,\fz}\LLL\psi\|_{1,p}.
        \end{align*}
        As a result, for any $t\in (0,1]$, one has
        \[t\|\theta_{\varepsilon,\fz}\SSSS(t)\psi_0\|_{3,p}\lesssim_p t\|\theta_{\varepsilon,\fz}\SSSS(t)\psi_0\|_{1,p}+t \|\theta_{\varepsilon,\fz}\LLL\SSSS(t)\psi_0\|_{1,p}\le C_{p,\nu}\|\theta_{\varepsilon,\fz}\psi_0\|_{1,p}.\]
        This, combined with \eqref{STOKES14} and the standard Sobolev interpolation inequality, gives
        \begin{align}\label{STOKES14-1}
            t^{\frac{1}{2}}\|\nabla(\theta_{\varepsilon,\fz}\SSSS(t)\psi_0) \|_{1,p}\le C_{p,\nu}\|\theta_{\varepsilon,\fz}\psi_0\|_{1,p}.
        \end{align}
        Since 
        \[\nabla(\theta_{\varepsilon,\fz}\psi)=\theta_{\varepsilon,\fz}\nabla \psi+(\partial_x\theta_{\varepsilon,\fz}\psi,0)=\theta_{\varepsilon,\fz}\nabla \psi+(\partial_x\log \theta_{\varepsilon,\fz}(\theta_{\varepsilon,\fz}\psi),0),\]
        by applying \eqref{STOKES14-1} and \eqref{DE6}, it follows that
        \begin{align}
            \label{STOKES14-2}\|\theta_{\varepsilon,\fz}\SSSS(t)\psi_0\|_{1,p}+t^{\frac{1}{2}}\|\theta_{\varepsilon,\fz}\nabla\SSSS(t)\psi_0\|_{1,p}\le C_{p,\nu}\|\theta_{\varepsilon,\fz}\psi_0\|_{1,p},\qquad \forall t\in(0,1].
        \end{align}
        
        Now, we are in a position to prove \eqref{STOKES1}. Let $u_0\in L^p_{b,\dive}$, $\psi_0=\JJ u_0$, $u(t):=S(t)u_0$, and $\psi(t):=\SSSS(t)\psi_0$. From \eqref{STOKES3-1}, it follows that $\nabla^\perp \psi(t)=u(t)$, which implies
        \begin{align*}
            \theta_{\varepsilon,\fz}u(t)=\theta_{\varepsilon,\fz}\nabla^\perp\psi(t)=\nabla^\perp (\theta_{\varepsilon,\fz}\psi(t))+(0,\partial_x(\log\theta_{\varepsilon,\fz})\theta_{\varepsilon,\fz}\psi(t) ).
        \end{align*}
        Together with \eqref{DE6}, this implies 
        \begin{align}
            \label{STOKES15}
            \|\theta_{\varepsilon,\fz} u(t)\|_p\lesssim \| \theta_{\varepsilon,\fz} \psi(t)\|_{1,p}.
        \end{align}
        Similarly, it follows
        \begin{align}
            \label{STOKES16}\|\theta_{\varepsilon,\fz} \nabla u(t)\|_p\lesssim \| \theta_{\varepsilon,\fz} \nabla \psi(t)\|_{1,p}.
        \end{align}
        Plugging \eqref{STOKES15} and \eqref{STOKES16} into \eqref{STOKES14-2} and applying the Poincar\'e inequality, one gets
        \begin{align}\|\theta_{\varepsilon,\fz}S(t)u_0\|_{p}+t^{\frac{1}{2}}\|\theta_{\varepsilon,\fz}\nabla S(t)u_0\|_{p}\le C_{p,\nu}\|\theta_{\varepsilon,\fz}\psi_0\|_{1,p}\le C_{p,\nu}\|\theta_{\varepsilon,\fz}u_0\|_{p}\label{STOKES16-1}
        \end{align}
        for any $t\in(0,1]$, which, together with \eqref{DE4}, leads to the desired estimate \eqref{STOKES1}.

        \noindent\textbf{Step V.} We turn to the proof of \eqref{STOKES2}. To this end, we note that the stability estimate \eqref{STOKES7} relies only on \eqref{DE6}, which is satisfied for the inverse weight $\theta^{-1}_{\varepsilon,\fz}$. As a result, one has 
        \begin{align}\|\theta^{-1}_{\varepsilon,\fz}S(t)u_0\|_{p}+t^{\frac{1}{2}}\|\theta^{-1}_{\varepsilon,\fz}\nabla S(t)u_0\|_{p}\le C_{p,\nu}\|\theta^{-1}_{\varepsilon,\fz}u_0\|_{p},\qquad \forall t\in(0,1].\label{STOKES17}
        \end{align}
        We also need the following weighted estimate for the Leray projection $\Pi$, whose proof is postponed to the end of this section.
        \begin{lemma}
        \label{lemmaSTOKES1} For any $p\in(1,\infty)$ and integers $k\ge 0$, $l\in\Z$, there exists a sufficiently small $\varepsilon=\varepsilon(l,k,p)>0$ such that 
        \begin{align}
            \label{STOKES18}
            \|\theta^{l}_{\varepsilon,\fz}\Pi f\|_{k,p}\lesssim_{k,p} \|\theta^{l}_{\varepsilon,\fz} f\|_{k,p}
        \end{align}
        for any $\fz\in \R$ and $f$ satisfying $\theta^{l}_{\varepsilon,\fz}f\in W^{k,p}$.
        \end{lemma}
        Let $f,g\in C_0^\infty$. Notice that
        \begin{align*}
            \langle S(t)\Pi \dive f, g\rangle=-\langle f,\nabla S(t)\Pi g\rangle\le \|\theta_{\varepsilon,\fz}f\|_{p}\|\theta_{\varepsilon,\fz}^{-1}\nabla S(t)\Pi g\|_{p'},
        \end{align*}
        where, by applying \eqref{STOKES17} and \eqref{STOKES18} with $p$ replaced by $p'$ and $k=0$, $l=-1$, one has 
        \begin{align*}
            \|\theta_{\varepsilon,\fz}^{-1}\nabla S(t)\Pi g\|_{p'}\lesssim_{p,\nu}t^{-\frac{1}{2}}\|\theta_{\varepsilon,\fz}^{-1}\Pi g\|_{p'}\lesssim_{p,\nu}t^{-\frac{1}{2}}\|\theta_{\varepsilon,\fz}^{-1} g\|_{p'}.
        \end{align*}
        Consequently, it follows that
        \[\langle S(t)\Pi \dive f, g\rangle\lesssim_{p,\nu}t^{-\frac{1}{2}}\|\theta_{\varepsilon,\fz}f\|_{p}\|\theta_{\varepsilon,\fz}^{-1} g\|_{p'},\]
        which, by density, implies 
        \begin{align}
            \label{STOKES20}\sqrt{t}\|\theta_{\varepsilon,\fz} S(t)\Pi \dive f\|_p\lesssim_{p,\nu}\|\theta_{\varepsilon,\fz}f\|_{p},\qquad \forall t\in(0,1].
        \end{align}
        On the other hand, note that an application of \eqref{STOKES16-1} and \eqref{STOKES20} gives
        \begin{align*}
            t\|\theta_{\varepsilon,\fz}\nabla S(t)\Pi\dive f\|_{p}&=\sqrt{2t}\sqrt{\frac{t}{2}}\|\theta_{\varepsilon,\fz}\nabla S(t/2)S(t/2)\Pi\dive f\|_{p}\\&\lesssim_{p,\nu}\sqrt{t}\|\theta_{\varepsilon,\fz}S(t/2)\Pi\dive f\|_p\lesssim_{p,\nu} \|\theta_{\varepsilon,\fz}f\|_{p}.
        \end{align*}
        This, together with \eqref{STOKES20} and \eqref{DE4}, gives the estimate \eqref{STOKES2}.
    \end{proof}
   Notice that the squared inverse weight function $\theta_{\varepsilon,\fz}^{-2}$ also satisfies \eqref{DE6}. As discussed in the derivation of \eqref{STOKES17}, this and Lemma \ref{lemmaSTOKES1} with $k=0$ and $l=-2$ yield the following result.
    \begin{coro}\label{coroSTOKES1}
        Let $S(t):=e^{\nu t L}$ denote the Stokes semigroup. For any $p\in(1,\infty)$, there exists a sufficiently small $\varepsilon=\varepsilon(p)>0$ such that 
        \begin{align}
            \label{STOKES21}
            \|\theta^{-2}_{\varepsilon,\fz}S(t)\Pi u\|_{p}\lesssim_{p,\nu}\|\theta^{-2}_{\varepsilon,\fz}u\|_{p}
        \end{align}
        for any $t\in[0,1]$, $\fz\in \R$, and $u$ satisfying $\theta^{-2}_{\varepsilon,\fz}u\in L^p$.
    \end{coro}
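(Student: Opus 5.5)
The plan is to repeat the argument that produced \eqref{STOKES17}, this time with the weight $\theta^{-2}_{\varepsilon,\fz}$ in place of $\theta^{-1}_{\varepsilon,\fz}$, and then to remove the Leray projection with Lemma~\ref{lemmaSTOKES1} ($k=0$, $l=-2$). Concretely, I would prove the bound
\[\|\theta^{-2}_{\varepsilon,\fz}S(t)v\|_p\lesssim_{p,\nu}\|\theta^{-2}_{\varepsilon,\fz}v\|_p,\qquad v\in L^p_{\dive},\ \theta^{-2}_{\varepsilon,\fz}v\in L^p,\ t\in[0,1],\]
and then apply it to $v=\Pi u$. Combined with $\|\theta^{-2}_{\varepsilon,\fz}\Pi u\|_p\lesssim_p\|\theta^{-2}_{\varepsilon,\fz}u\|_p$, this gives \eqref{STOKES21}. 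The case $t=0$ is exactly Lemma~\ref{lemmaSTOKES1}.

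For the semigroup bound I would use the stream-function formulation of Step~I of Proposition~\ref{propSTOKES}. Writing $\vartheta:=\theta^{-2}_{\varepsilon,\fz}$, I would define $\LLL^{\vartheta}:=\vartheta\LLL\vartheta^{-1}$ on $W^{1,p}_0$. The only property of the weight that Step~II used was \eqref{DE6}, and it holds for $\log\vartheta=-2\log\theta_{\varepsilon,\fz}$ with twice the constant. So the commutator computations \eqref{STOKES9}--\eqref{STOKES10}, the Neumann series \eqref{STOKES11} and the perturbation estimate \eqref{STOKES7} go through verbatim and give $\|(\LLL^\vartheta-\LLL)\psi\|_{1,p}\le C_p\varepsilon(\|\psi\|_{1,p}+\|\LLL\psi\|_{1,p})$, uniformly in $\fz$. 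Step~III then yields a resolvent bound for $\LLL^\vartheta$ on a shifted sector, with constants independent of $\fz$. From analyticity on $t\in[0,1]$ I would get $\|\vartheta\SSSS(t)\psi_0\|_{1,p}\le C_{p,\nu}\|\vartheta\psi_0\|_{1,p}$.

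Transferring this to velocities goes as in \eqref{STOKES15}--\eqref{STOKES16-1}. Set $\psi_0=\JJ v$, so that $\vartheta S(t)v=\nabla^\perp(\vartheta\psi(t))-(0,\partial_x(\log\vartheta)\vartheta\psi(t))$. Using \eqref{DE6} and the Poincar\'e inequality,
\[\|\vartheta S(t)v\|_p\lesssim\|\vartheta\psi(t)\|_{1,p}\lesssim\|\vartheta\psi_0\|_{1,p}\lesssim\|\vartheta v\|_p.\]
The last inequality needs $\|\vartheta\JJ v\|_{1,p}\lesssim\|\vartheta v\|_p$. I expect this from $\nabla(\vartheta\psi_0)=\vartheta\nabla\psi_0+\partial_x(\log\vartheta)\,\vartheta\psi_0\,\mathbf{e}_1$, the identity $|\nabla\psi_0|=|v|$, and Poincar\'e in $y$ for $\psi_0$, which vanishes at $y=\pm1$ thanks to the zero-flux property of $L^p_{\dive}$ established in Step~I. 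Since $\vartheta$ depends only on $x$, the $y$-Poincar\'e inequality commutes with the weight.

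The main obstacle is functional-analytic rather than computational. The weight $\vartheta$ is unbounded, so $\vartheta v\in L^p$ does not place $v$ in a nice space a priori, and the conjugation $\LLL^\vartheta$ has to be justified. I would first carry out all estimates for $v$ in a dense class, namely compactly supported or rapidly decaying fields for which $\vartheta\psi_0\in W^{1,p}_0$. Then I would identify $\vartheta\SSSS(t)\vartheta^{-1}$ with $e^{\nu t\LLL^\vartheta}$ by uniqueness for the Cauchy problem \eqref{STOKES6}. Finally I would extend by density in the weighted norm. Here $\varepsilon$ may have to shrink relative to the $\theta^{-1}$ case, which is allowed since $\varepsilon=\varepsilon(p)$.
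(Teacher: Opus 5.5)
Your proposal is correct and takes the same route as the paper. The paper's justification is a one-line remark: it observes that $\theta^{-2}_{\varepsilon,\fz}$ satisfies \eqref{DE6}, reruns the argument behind \eqref{STOKES17} with this weight, and removes $\Pi$ using Lemma~\ref{lemmaSTOKES1} with $k=0$, $l=-2$.

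Two small remarks. Since $\theta^{-2}_{\varepsilon,\fz}\ge 1$, the hypothesis already gives $u,\Pi u\in L^p$, so the zero flux of $\Pi u$ and the definition of $S(t)\Pi u$ are automatic. Also, the correction term in $\vartheta\nabla^\perp\psi=\nabla^\perp(\vartheta\psi)+(0,\partial_x(\log\vartheta)\vartheta\psi)$ carries a plus sign rather than a minus, which does not affect the norm bound.
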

    Finally, we present the proof of Lemma \ref{lemmaSTOKES1}.
    \begin{proof}[Proof of Lemma \ref{lemmaSTOKES1}]
        According to Proposition 2.6 in \cite{AZ14}, it suffices to show that $\theta^{l}_{\varepsilon,\fz}$ with $l\in\Z$ is a weight function of exponential growth rate. Indeed, by applying \eqref{DE6}, one has
        \[\|\partial_x \log \theta^{l}_{\varepsilon,\fz}\|_\infty\lesssim |l|\varepsilon,\]
        which, together with the mean value theorem, shows that 
        \[\log \theta^{l}_{\varepsilon,\fz}(x)-\log \theta^{l}_{\varepsilon,\fz}(x')\lesssim |l|\varepsilon |x-x'|,\]
        and thus that 
        \[\theta^{l}_{\varepsilon,\fz}(x)\le e^{C|l|\varepsilon|x-x'|}\theta^{l}_{\varepsilon,\fz}(x').\]
        Therefore, $\theta^{l}_{\varepsilon,\fz}$ has an exponential growth rate $C|l|\varepsilon$, and estimate \eqref{STOKES18} holds.
    \end{proof}

    \section{Global well-posedness in the uniformly local spaces}\label{GWP}
    The goal of this section is to establish Theorem \ref{MT1}. We begin by proving the unique solvability of the stochastic NS equation \eqref{INTRO1} in the uniformly local spaces.
    \begin{prop}
        Under the assumption of Theorem \ref{MT1}, there is a unique infinite-energy solution of \eqref{INTRO1}, which exists globally in time.
    \end{prop}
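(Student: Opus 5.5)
We will construct the solution by splitting off the stochastic convolution and solving a random deterministic problem pathwise. Let $\ZZ_t$ be the Ornstein--Uhlenbeck process from \eqref{MIE13-1}, that is, the solution of the linear Stokes equation driven by $\eta$ with zero initial data. Since $\FB_1<\infty$ and the $e_j$ satisfy \eqref{INTRO9}, the factorization method combined with the analyticity of $S(t)$ shows that, almost surely, $\ZZ\in C([0,\infty);H^{3/2}\cap H)$. By Sobolev embedding this gives $\ZZ\in C([0,\infty);L^\infty\cap W^{1,4})$ pathwise. Because $\ZZ_t\in H$, it has zero flux and belongs to $H_b$. Writing $u=\ZZ+z$, the remainder $z$ solves the deterministic problem
\[
\partial_t z-\nu Lz+B(z+\ZZ)=h,\qquad z(0)=u_0\in H_b,
\]
with a random but locally bounded coefficient $\ZZ$. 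Existence and uniqueness of the infinite-energy solution then reduce to a pathwise statement for this perturbed NS equation in the uniformly local class.

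\textbf{Existence.} For existence I will follow the weighted-energy method of \cite{Zel07,AZ14}, as in Step I of Proposition~\ref{propDE4}. The plan is to test the equation against $\theta_{\varepsilon,\fz}^2 z-v_{\varepsilon,\fz}$, where $v_{\varepsilon,\fz}$ is the corrector from \eqref{DE15} that restores the divergence-free condition. The extra terms produced by $\ZZ$ are $\langle (z\cdot\nabla)\ZZ,\theta^2 z\rangle$, $\langle(\ZZ\cdot\nabla)z,\theta^2z\rangle$ and $\langle(\ZZ\cdot\nabla)\ZZ,\theta^2z\rangle$, together with their counterparts against $v_{\varepsilon,\fz}$. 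Each of these is bounded by $\|\ZZ\|_{L^\infty\cap W^{1,4}}$ times $\|\theta z\|\,\|\theta\nabla z\|$ plus lower-order terms. Gronwall's inequality then yields a bound on $\sup_t\|\theta_{\varepsilon,\fz}z\|^2+\int\|\theta_{\varepsilon,\fz}\nabla z\|^2$ that is uniform in $\fz$, on every interval $[0,T]$, with a constant depending on $\sup_{[0,T]}\|\ZZ\|_{H^{3/2}}$. To pass from a priori bounds to a solution, I will approximate $u_0$ by truncations $\chi_n u_0$, suitably corrected so that they lie in $H$, and use the finite-energy solutions. The uniform weighted bounds together with \eqref{DE4} and Aubin--Lions compactness on $L^2(0,T;L^2(I_k))$ for every $k$ then give a limit in $C([0,T];X)$ that satisfies the weak formulation. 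Progressive measurability follows because the construction is a measurable function of the path $\ZZ$ and of $u_0$.

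\textbf{Uniqueness.} For uniqueness, take two solutions $z^1,z^2$ and set $\bar z=z^1-z^2$. The same weighted test function applied to $\bar z$ gives a Gronwall estimate with coefficient $\|\theta\nabla z^i\|^2$ and $\|\ZZ\|_{W^{1,4}}^2$. This is where the zero-flux condition enters: it is what makes the stream function $\JJ\bar z$ admissible in \eqref{DE15}, following Remark 5.3 of \cite{Zel07}. I expect this to be the main obstacle. The nonlinearity $\langle (\bar z\cdot\nabla)z^1,\theta^2\bar z\rangle$ must be controlled in the weighted norm with only $L^2_b$-type regularity of $z^1$, and the natural bound $\|\theta\bar z\|_4^2\|\nabla z^1\|_{L^2_b}$ involves the non-weighted local norm of $z^1$. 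I would first try the weight-comparison trick of \cite{Zel07}, namely covering by $\theta_{\varepsilon,\fz'}$ with $\fz'$ ranging over $\Z$ and summing. Failing that, I would localize to bounded-time windows and use the instantaneous smoothing $z^i(t)\in L^\infty$ for $t>0$ from \cite{AZ14}.

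\textbf{Remaining statements.} Item 1 follows because $z$ is bounded in $L^\infty$ on $[\delta,T]$ by the parabolic smoothing of \cite{AZ14}, which is applicable since the forcing $h-B(\ZZ)-\dots$ is regular, while $\ZZ$ is bounded in $L^\infty$ by its $H^{3/2}$ regularity. For item 2, let $w$ solve \eqref{INTRO8} and put $u_{\rem}=u-w$. Then $u_{\rem}$ solves \eqref{SP1} with zero initial data. Since $w\in L^2_{\loc}(\R_+;L^\infty)$ by Proposition~\ref{propDE3}, the Itô argument of Proposition~\ref{propMIE1} applied to Galerkin approximations gives $u_{\rem}\in C([0,\infty);H)\cap L^2_{\loc}(\R_+;V)$. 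Uniqueness identifies this $u_{\rem}$ with $u-w$. Finally, item 3, the Markov property, follows from pathwise uniqueness together with continuous dependence on the initial data in $d_{\loc}$, which comes from the uniqueness estimate above, and from independence of Brownian increments. Continuous dependence gives measurability of $u_0\mapsto\PP^X_{u_0}$, and the flow property of pathwise solutions yields the Markov property, with regular conditional probabilities available because $X$ is standard measurable.
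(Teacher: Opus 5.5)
\textbf{The gap is uniqueness.} Your splitting $u=\ZZ+z$ and the weighted a priori bound are essentially the paper's Step~I; the paper puts $h$ in the Stokes part, which makes no difference. Existence via truncated finite-energy data plus compactness is a legitimate alternative route. Uniqueness, however, you locate correctly but do not close, and neither proposed remedy works as stated.

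With the global weight $\theta_{\varepsilon,\fz}$, the stretching term $\langle(\bar z\cdot\nabla)z^1,\theta_{\varepsilon,\fz}^2\bar z\rangle$ can only be bounded cell by cell. This gives $\frac{\nu}{4}\|\theta_{\varepsilon,\fz}\nabla\bar z\|^2+C\|\nabla z^1(t)\|_{L^2_b}^2\|\theta_{\varepsilon,\fz}\bar z\|^2$. The coefficient $\|\theta_{\varepsilon,\fz}\nabla z^1\|^2$ you first claim is not available: it would require measuring $\bar z$ with weight $\theta^{1/2}_{\varepsilon,\fz}$, and $\|\theta^{1/2}_{\varepsilon,\fz}\bar z\|$ is infinite for generic $L^2_b$ functions. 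Gronwall then needs $\int_0^T\sup_{\fz}\|\nabla z^1\|^2_{L^2(D_\fz)}\,dt<\infty$. But \eqref{GWP11}, and the definition of solution, only control $\sup_{\fz}\int_0^T\|\nabla z^1\|^2_{L^2(D_\fz)}\,dt$.

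Summing over translates $\fz'\in\Z$ does not help by itself. The factor multiplying $\|\nabla z^1\|^2_{L^2(D_k)}$ is the local mass $\|\bar z\|^2_{L^2(D_k)}$, not the weighted quantity on the left. Moreover, $\sup_k\int_0^T\|\nabla z^1\|^2_{L^2(D_k)}\,dt$ need not become small as $T\to0$, so the resulting convolution inequality cannot be closed.

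The smoothing fallback fails at $t=0$. For $L^2_b$ data one only has $\|\nabla z^i(t)\|^2_{L^2_b}+\|z^i(t)\|^2_\infty\lesssim t^{-1}$ (cf.\ \eqref{SE13} and Lemma~\ref{lemmaMIE1}). This is not integrable, so the Gronwall factor diverges exactly where uniqueness is decided. Smoothing would also have to be proved for an arbitrary solution in the class, not only the constructed one.

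\textbf{What the paper does instead.} It uses three ingredients your plan lacks.
\begin{enumerate}
\item It tests with the compactly supported cutoff $\psi_\fz^2$. The Gronwall coefficient then becomes the local quantity $\|\nabla u^i\|^2_{L^2(D^*_\fz)}$, with $u^i=z^i+\ZZ$, whose time integral is bounded uniformly in $\fz$.
\item Because $\partial_x\psi_\fz=O(1)$, the terminal datum of the divergence corrector is no longer $O(\varepsilon)$ as in \eqref{DE15}. The terminal pairing $\langle\bar z(T),v_\fz(T)\rangle$ is then nonlocal and of the same size as $\|\psi_\fz\bar z(T)\|^2$, so it cannot be absorbed. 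The paper subtracts $S(T-t)\Pi v_\fz(T)$, controlled by Corollary~\ref{coroSTOKES1}, so the terminal datum is a gradient and the pairing vanishes.
\item The remaining nonlocal errors carry a factor $\delta$ or $T$. They are closed by integrating the local estimate twice against $\theta^2_{\varepsilon,\tilde\fz}(\fz)\,d\fz$, using \eqref{GWP24}, on a short time interval, and then iterating. This yields the Lipschitz bound \eqref{GWP25}.
\end{enumerate}

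The same bound drives the paper's existence proof. The approximations form a Cauchy sequence in $C([0,T];L^2_{\loc})$ with uniform $L^2_b$ bounds, so the limit lies directly in $C([0,T];X)$ and is a measurable function of $(u_0,\ZZ)$. Your compactness argument instead selects $\omega$-dependent subsequences, so progressive measurability of the limit itself rests on the missing uniqueness. Also, convergence in $L^2(0,T;L^2(I_k))$ does not by itself give $u\in C([0,T];X)$ without a further argument.
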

    \begin{proof}The proof follows the idea in \cite{AZ14} with slight modifications. Introduce the following decomposition 
        \[u(t)=z(t)+u_{\det}(t),\]
        where $\{z(t)\}_{t\ge 0}$ is the solution of the stochastic Stokes equation
        \begin{align}
            \label{GWP1}
            \begin{cases}
                \partial_t z+\nabla \pp_1-\nu\Delta z=h+\eta,\\
                \dive z=0,\\
                z|_{t=0}=0,\qquad z|_{\partial D}=0,
            \end{cases}
        \end{align}
        and $\{u_{\det}(t)\}_{t\ge0}$ satisfies the following PDE with random parameter
        \begin{align}
            \label{GWP2}
            \begin{cases}
                \partial_t u_{\det}+(u_{\det}\cdot\nabla)u_{\det}+(u_{\det}\cdot\nabla)z+(z\cdot\nabla)u_{\det}+\nabla \pp_2-\nu\Delta u_{\det}=-(z\cdot\nabla )z,\\
                \dive u_{\det}=0,\\
                u_{\det}|_{t=0}=u_0,\qquad u_{\det}|_{\partial D}=0.
            \end{cases}
        \raisetag{30pt}
        \end{align}
        Since $\FB_1<\infty$, one has 
        \begin{align}
            \label{GWP3}z\in C([0,\infty);V)\cap L^2_{\loc}(0,\infty;H^2\cap V)
        \end{align}
        $\PP$-almost surely. Consequently, it remains to prove the unique solvability of \eqref{GWP2} for any given trajectory $z(\cdot)$ satisfying \eqref{GWP3}. For simplicity, in what follows, we shall omit the subscripts in $u_{\det},\pp_2$. This will not lead to any confusion.

        \noindent\textbf{Step I.} In this step, we establish a priori estimates in the uniformly local space $H_b$. Notice that such estimates have been established in the proof of Proposition \ref{propDE4}, provided that $z\equiv 0$; see \eqref{DE26}. To simplify the presentation, we only mention the necessary modifications required in the general case. Let $\theta_{\varepsilon,\fz}$ be the weight function defined in \eqref{DE3}. It suffices to estimate
        \begin{align*}
            \II_1:=-\langle(u\cdot\nabla)z+(z\cdot\nabla)u+(z\cdot\nabla )z,\theta^2_{\varepsilon,\fz}u\rangle,
        \end{align*}
        and 
        \begin{align*}
             \II_2:=\langle(u\cdot\nabla)z+(z\cdot\nabla)u+(z\cdot\nabla )z,&v_{\varepsilon,\fz}\rangle.
        \end{align*}
        Indeed, by the H\"older and Gagliardo--Nirenberg inequalities combined with \eqref{DE5}, it follows that
        \begin{align}
            \label{GWP4}
            |\langle(u\cdot\nabla)z,\theta^2_{\varepsilon,\fz}u\rangle|&\le \|\nabla z\|_4\|\theta_{\varepsilon,\fz}u\|_{\frac{8}{3}}^2\le\|\nabla z\|_4\|\theta_{\varepsilon,\fz}u\|^{\frac{3}{2}}\|\theta_{\varepsilon,\fz}\nabla u\|^{\frac{1}{2}}\notag\\&\le \delta\|\theta_{\varepsilon,\fz}\nabla u\|^2+C_\delta\|\nabla z\|_4^{\frac{4}{3}}\|\theta_{\varepsilon,\fz}u\|^{2}.
        \end{align}
        Moreover, by integrating by parts and using \eqref{DE5} and $\dive z=0$, one has
        \begin{align}
            \label{GWP5}
            |\langle(z\cdot\nabla)u,\theta^2_{\varepsilon,\fz}u\rangle|&=|\langle z\theta_{\varepsilon,\fz}\partial_x\theta_{\varepsilon,\fz},|u|^2\rangle|\le \|z\|_4\|\theta_{\varepsilon,\fz}u\|_{\frac{8}{3}}^2\notag\\&\le \delta\|\theta_{\varepsilon,\fz}\nabla u\|^2+C_\delta\| z\|_4^{\frac{4}{3}}\|\theta_{\varepsilon,\fz}u\|^{2}.
        \end{align}
        Similarly, an application of integrating by parts and the Poincar\'e inequality gives 
        \begin{align}
            \label{GWP6}
            |\langle(z\cdot\nabla)z,\theta^2_{\varepsilon,\fz}u\rangle|&=|\langle z\otimes z, \nabla(\theta^2_{\varepsilon,\fz}u)\rangle|\le C\|z\|_4^2 \|\theta_{\varepsilon,\fz}\nabla u\|\notag\\&\le \delta\|\theta_{\varepsilon,\fz}\nabla u\|^2+C_\delta\|z\|_4^4.
        \end{align}
        By combining \eqref{GWP4}--\eqref{GWP6}, one gets
        \begin{align}
            \label{GWP7}
            \II_1\le3\delta\|\theta_{\varepsilon,\fz}\nabla u\|^2+C_\delta\|\nabla z\|_4^{\frac{4}{3}}\|\theta_{\varepsilon,\fz}u\|^{2}+C_\delta\|z\|_4^4.
        \end{align}
        We turn to the estimate of $\II_2$. By applying \eqref{DE5}, the embedding $H^1\hookrightarrow L^6$, and the Poincar\'e inequality, it follows
        \begin{align}
            \label{GWP8}
            \notag\II_2&\le \|\theta_{\varepsilon,\fz}u\|_6\|\theta_{\varepsilon,\fz}\nabla z\|\|\theta_{\varepsilon,\fz}^{-2}v_{\varepsilon,\fz}\|_3\\&\quad+\|\theta_{\varepsilon,\fz}z\|_6\|\theta_{\varepsilon,\fz}\nabla u\|\|\theta_{\varepsilon,\fz}^{-2}v_{\varepsilon,\fz}\|_3+\|\theta_{\varepsilon,\fz}z\|_6\|\theta_{\varepsilon,\fz}\nabla z\|\|\theta_{\varepsilon,\fz}^{-2}v_{\varepsilon,\fz}\|_3\notag\\&\le \delta\|\theta_{\varepsilon,\fz}\nabla u\|^2+C_\delta\|\nabla z\|^2\|\theta_{\varepsilon,\fz}^{-2}v_{\varepsilon,\fz}\|^2_3+C\|\nabla z\|^2\|\theta_{\varepsilon,\fz}^{-2}v_{\varepsilon,\fz}\|_3,
        \end{align}
        where, by applying \eqref{DE16}, one has
        \begin{align}
            \label{GWP9}\int_0^T\left(\|\nabla z\|^2\|\theta_{\varepsilon,\fz}^{-2}v_{\varepsilon,\fz}\|^2_3+\|\nabla z\|^2\|\theta_{\varepsilon,\fz}^{-2}v_{\varepsilon,\fz}\|_3\right)\ddd t\lesssim_\nu \left(\varepsilon^2\MMM^2_T+\varepsilon\MMM_T\right)\int_0^T\|\nabla z\|^2\ddd t
        \end{align}
        with $\MMM_T:=\|\theta_{\varepsilon,\fz} u\|_{C([0,T];L^2)}$.
        Now, by combining \eqref{GWP7}--\eqref{GWP9} and repeating the derivation \eqref{DE22} with sufficiently small $\varepsilon>0$, one gets
        \begin{align}
            \label{GWP10}
            \|\theta_{\varepsilon,\fz}u(T)\|^2&+(\nu-C_\nu\varepsilon\MMM_T)\int_0^T\|\theta_{\varepsilon,\fz}\nabla u\|^2\ddd t\notag\\&\le C_\nu\varepsilon\MMM_T \|\theta_{\varepsilon,\fz} u_0\|+\|\theta_{\varepsilon,\fz} u_0\|^2+C_\nu \left(\varepsilon^2\MMM^2_T+\varepsilon\MMM_T\right)\int_0^T\|\nabla z\|^2\ddd t\notag\\&\quad+C_\nu\int_0^T\|\nabla z\|_4^{\frac{4}{3}}\|\theta_{\varepsilon,\fz}u\|^{2}\ddd t+C_\nu\int_0^T\|z\|_4^4\ddd t.
        \end{align}
        Notice that \eqref{GWP3} ensures
        \[\int_0^T\left(\|\nabla z\|_4^{\frac{4}{3}}+\|z\|^4_4+\|\nabla z\|^2\right)\ddd t<\infty.\]
        Consequently, by shrinking $\varepsilon>0$ if necessary, repeating the bootstrap argument in Step I\!I in the proof of Proposition \ref{propDE4}, and using \eqref{DE4}, it follows
        \begin{align}
            \label{GWP11}
            \sup_{0\le t\le T}\|u(t)&\|_{L^2_b}^{2}+\nu\sup_{\fz\in\mathbb R}\int_{0}^{T}\|\nabla u(t)\|_{L^{2}(D_{\fz})}^{2}\ddd t\notag\le C_{\nu}\exp\left(C_\nu\int_0^T\|\nabla z\|_4^{\frac{4}{3}}\ddd t\right)\\&\qquad\times\left(1+\|u_{0}\|_{L^2_b}^{2}+\int_{0}^{T}\|z(t)\|_4^{4} dt+\left(\int_{0}^{T}\|\nabla z(t)\|^{2} dt\right)^{2}\right)^{2}.
        \end{align}
        
        \noindent\textbf{Step I\!I.} Let $u(t),u'(t)$ be the solutions of \eqref{GWP2} corresponding to the same trajectory $z(\cdot)$, which start from $u_0$ and $u_0'$, respectively. The goal of this step is to establish an estimate for the difference $\tg(t):=u(t)-u'(t)$, which satisfies 
        \begin{align}
            \label{GWP12}
            \begin{cases}
                \partial_t \tg-\nu\Delta\tg+((u'+z)\cdot\nabla) \tg+(\tg\cdot\nabla)(u+z)+\nabla \tilde\pp=0\\
                \dive \tg=0,\\
                \tg|_{t=0}=u_0-u_0',\qquad \tg|_{\partial D}=0.
            \end{cases}
        \raisetag{30pt}
        \end{align}
        To this end, for $\fz\in\R$, we define a cut-off function
        \begin{align}
            \label{GWP12-1}\psi_\fz:=\psi(x-\fz),\qquad\psi\in C^\infty_0((-1,2)),\qquad  0\le \psi\le 1,\qquad \psi\equiv 1\mbox{ on $[0,1]$}.
        \end{align}
        Similar to Step I in the proof of Proposition \ref{propDE4}, we introduce an auxiliary function $v_{\fz}$ based on the weight $\psi_{\fz}$ through the backward Stokes equation
        \begin{align}
       \label{GWP13}\begin{cases}
        -\partial_t v_{\fz}+\nabla \pp_{\fz}=\nu\Delta v_{\fz},\\
        \dive v_{\fz}=2\psi_{\fz}\partial_x\psi_{\fz}\tg_1,\\
        v_{\fz}|_{t=T}=\left(0,2\psi_{\fz}\partial_x\psi_{\fz}\JJ \tg(T)\right),\qquad v_{\fz}|_{\partial D}=0,
    \end{cases}
    \end{align}
    where $\JJ \tg:=\int_{-1}^y \tg_1(x,y')\ddd y'$. By applying Proposition 2.2 and Theorem 3.2 in \cite{AZ14} with $\phi:=\theta^{-2}_{\varepsilon,\fz}$, the Sobolev embedding $H^{\frac{1}{3}}\hookrightarrow L^p$ with $p=2,3$, and the estimate 
    \begin{align}
        \label{GWP14}\|\theta^{-2}_{\varepsilon,\fz}\psi_{\fz}\partial_x\psi_{\fz}\JJ \tg\|_{H^1}\lesssim \|\theta_{\varepsilon,\fz} \tg\|,
    \end{align}
    one has
    \begin{align}
        \label{GWP15}
        \sup_{t\in[0,T]}\|\theta^{-2}_{\varepsilon,\fz}v_\fz(t)\|+ \sup_{t\in[0,T]}\|\theta^{-2}_{\varepsilon,\fz}v_\fz(t)\|_3\lesssim_\nu \sup_{t\in[0,T]}\|\theta_{\varepsilon,\fz} \tg(t)\|.
    \end{align}
    Testing \eqref{GWP12} against $\psi_\fz^2\tg-v_\fz$ and using \eqref{GWP13}$_1$ yield
    \begin{align*}
    \frac{\ddd}{\ddd t}\left(\frac{1}{2}\|\psi_{\fz}\tg\|^2-\langle \tg,v_{\fz}\rangle\right)+\nu&\langle \nabla \tg,\nabla (\psi_{\fz}^2 \tg)\rangle\\&=-\langle ((u'+z)\cdot\nabla) \tg+(\tg\cdot\nabla)(u+z), \psi_{\fz}^2 \tg-v_{\fz}\rangle.
    \end{align*}
    Comparing with \eqref{DE17}, since the terminal data $v_{\fz}(T)$ is of order $O(1)$, there is a troublesome term $\langle\tg(T),v_{\fz}(T)\rangle$ arising from integrating both sides over $[0,T]$. To overcome this difficulty, we modify the auxiliary function $v_{\fz}$ by extracting a divergence-free part so that the resulting auxiliary function $\hv_{\fz}$ has terminal data of the gradient type. As a result, the terminal pairing $\langle\tg(T),v_{\fz}(T)\rangle$ vanishes.

    To see this, we define $\tv_{\fz}(t):=S(T-t)\Pi v_{\fz}(T)$, where $\Pi$ is the Leray projection and $S(t):=e^{\nu t L}$ is the Stokes semigroup. Let $\hv_\fz:=v_\fz-\tv_\fz$, so $\hv_{\fz}$ satisfies
    \begin{align}
       \label{GWP16}\begin{cases}
        -\partial_t \hv_{\fz}+\nabla \hat\pp_{\fz}=\nu\Delta \hv_{\fz},\\
        \dive \hv_{\fz}=2\psi_{\fz}\partial_x\psi_{\fz}\tg_1,\\
        \hv_{\fz}|_{t=T}=(I-\Pi)\left(0,2\psi_{\fz}\partial_x\psi_{\fz}\JJ \tg(T)\right),\qquad \hat v_{\fz}|_{\partial D}=0.
    \end{cases}
    \end{align}
    By applying Corollary \ref{coroSTOKES1}, the Sobolev embedding $H^1\hookrightarrow L^p$ with $p=2,3$, and estimate \eqref{GWP14}, one has
    \begin{align*}
        \|\theta^{-2}_{\varepsilon,\fz}\tv_\fz(t)\|+\|\theta^{-2}_{\varepsilon,\fz}\tv_\fz(t)\|_3&\lesssim_\nu \|\theta^{-2}_{\varepsilon,\fz}v_{\fz}(T)\|+\|\theta^{-2}_{\varepsilon,\fz}v_{\fz}(T)\|_3\lesssim_\nu \|\theta_{\varepsilon,\fz}\tg (T)\|\\&\lesssim_\nu\sup_{t\in[0,T]}\|\theta_{\varepsilon,\fz} \tg(t)\|.
    \end{align*}
    Consequently, estimate \eqref{GWP15} holds for the new auxiliary function $\hv_{\fz}$, if $\varepsilon$ is sufficiently small. Furthermore, based on this auxiliary function, we have the following energy equality
    \begin{align}\label{GWP17}\notag
    \frac{\ddd}{\ddd t}\left(\frac{1}{2}\|\psi_{\fz}\tg\|^2-\langle \tg,\hv_{\fz}\rangle\right)+\nu&\langle \nabla \tg,\nabla (\psi_{\fz}^2 \tg)\rangle\\&=-\langle ((u'+z)\cdot\nabla) \tg+(\tg\cdot\nabla)(u+z), \psi_{\fz}^2 \tg-\hv_{\fz}\rangle,
    \end{align}
    where the terminal pairing $\langle\tg(T),\hv_{\fz}(T)\rangle$ disappears, since $\tg(T)$ is divergence-free and $\hv_{\fz}(T)$ is of the gradient type as indicated in \eqref{GWP16}$_3$.

    We turn to the estimate of each term in \eqref{GWP17}. Throughout the rest of the proof, we fix a sufficiently small $\varepsilon$ such that \eqref{GWP15} holds for $\hat v_{\fz}$. Since $\psi_{\fz}$ has compact support, the dissipation term satisfies
    \begin{align}
        \label{GWP18}
        \nu\langle \nabla \tg,\nabla (\psi_{\fz}^2 \tg)\rangle\ge \frac{\nu}{2}\|\nabla(\psi_\fz  \tg)\|^2-C_\nu\|\partial_x\psi_{\fz}\tg\|^2\ge  \frac{\nu}{2}\|\nabla(\psi_\fz  \tg)\|^2-C_{\nu}\|\theta_{\varepsilon,\fz}\tg\|^2.
    \end{align}
    Let 
    \begin{align}
        \label{GWP18-1}D^*_{\fz}:=(\fz-1,\fz+2)\times (-1,1).
    \end{align}
    Applying the H\"older and Ladyzhenskaya inequalities combined with the support property of $\psi_{\fz}$, one has
    \begin{align}
    \label{GWP19}
    |\langle(\tg\cdot\nabla)(u+z),\psi_{\fz}^2\tg\rangle|&\le C\|\psi_{\fz}\tg\|\|\nabla(\psi_{\fz}\tg)\|\|\nabla(u+z)\|_{L^2(D^*_{\fz})}\notag\\&\le \frac{\nu}{8}\|\nabla(\psi_{\fz}\tg)\|^2+C_\nu\|\psi_{\fz}\tg\|^2\|\nabla(u+z)\|_{L^2(D^*_{\fz})}^2.
    \end{align}
    Let $\FM_T\ge 1$ be a constant depending on $\nu,T,\|u_0\|_{L^2_b},\|u'_0\|_{L^2_b},\|z\|_{C([0,T];V)},\|z\|_{L^2(0,T;H^2)}$ such that
    \[\sup_{0\le t\le T}\left(\|u(t)\|_{L^2_b}^{2}+\|u'(t)\|_{L^2_b}^{2}\right)+\nu\sup_{\fz\in\mathbb R}\int_{0}^{T}\left(\|\nabla u(t)\|_{L^{2}(D_{\fz})}^{2}+\|\nabla u'(t)\|_{L^{2}(D_{\fz})}^{2}\right)\ddd t\notag\le \FM_T,\]
    and 
    \[\|z\|^2_{C([0,T];V)}+\|z\|^2_{L^2(0,T;H^2)}\le \FM_T.\]
    By applying the above bounds and the Ladyzhenskaya inequality, integrating by parts, and using the fact $\dive (u'+z)=0$, it follows
    \begin{align}
    \label{GWP20}
    \notag|\langle(u'+z)\cdot\nabla&\tg,\psi_{\fz}^2\tg\rangle|=|\langle (u'+z)_1\psi_{\fz}\partial_x\psi_{\fz},|\tg|^2\rangle\le \|u'+z\|_{L^4(D^*_\fz)}\|\psi_{\fz}\tg\|_4\|\partial_x\psi_\fz \tg\|\\&\le C\FM_T^{\frac{1}{4}}\|\nabla (u'+z)\|^{\frac{1}{2}}_{L^2(D^*_\fz)}\|\psi_{\fz}\tg\|^{\frac{1}{2}}\|\nabla(\psi_{\fz}\tg)\|^{\frac{1}{2}}\|\theta_{\varepsilon,\fz}\tg\|\notag\\&\le \frac{\nu}{8}\|\nabla(\psi_{\fz}\tg)\|^2+\FM_T\|\nabla (u'+z)\|^{2}_{L^2(D^*_\fz)}\|\psi_{\fz}\tg\|^{2}+C_{\nu}\|\theta_{\varepsilon,\fz}\tg\|^2.
    \end{align}
    We turn to the terms involving $\hv_{\fz}$. By \eqref{GWP15} with $v_\fz$ replaced by $\hv_\fz$, the Gagliardo--Nirenberg inequality, and estimate \eqref{DE4}, one has
    \begin{align}
        \label{GWP21}|\langle(\tg\cdot\nabla)(&u+z),\hv_\fz\rangle|\le \|\theta^{-2}_{\varepsilon,\fz}\hv_\fz\|_3\|\theta_{\varepsilon,\fz}\tg\|_6\|\theta_{\varepsilon,\fz}\nabla(u+z)\|\notag\\&\le C_\nu \sup_{t\in[0,T]}\|\theta_{\varepsilon,\fz}\tg(t)\|\|\theta_{\varepsilon,\fz}\tg\|^{\frac{1}{3}}\|\theta_{\varepsilon,\fz}\nabla \tg\|^{\frac{2}{3}}\|\theta_{\varepsilon,\fz}\nabla(u+z)\|\notag\\&\le \delta\|\theta_{\varepsilon,\fz}\nabla \tg\|^{2}+\delta\sup_{t\in[0,T]}\|\theta_{\varepsilon,\fz}\tg(t)\|^2\|\theta_{\varepsilon,\fz}\nabla(u+z)\|^2+C_{\nu,\delta}\|\theta_{\varepsilon,\fz}\tg\|^2,
    \end{align}
    and 
    \begin{align}
        \label{GWP22}|\langle(u'+z)\cdot\nabla\tg,\hv_\fz\rangle|&\le \|\theta^{-2}_{\varepsilon,\fz}\hv_\fz\|_3\|\theta_{\varepsilon,\fz}(u'+z)\|^{\frac{1}{3}}\|\theta_{\varepsilon,\fz}\nabla (u'+z)\|^{\frac{2}{3}}\|\theta_{\varepsilon,\fz}\nabla\tg\|\notag\\&\le C_{\nu}\FM_T^{\frac{1}{6}}\sup_{t\in[0,T]}\|\theta_{\varepsilon,\fz}\tg(t)\|\|\theta_{\varepsilon,\fz}\nabla (u'+z)\|^{\frac{2}{3}}\|\theta_{\varepsilon,\fz}\nabla\tg\|\notag\\& \le \delta\|\theta_{\varepsilon,\fz}\nabla \tg\|^{2}+\delta\sup_{t\in[0,T]}\|\theta_{\varepsilon,\fz}\tg(t)\|^2\|\theta_{\varepsilon,\fz}\nabla (u'+z)\|^{2}\notag\\&\quad+ C_{\nu,\delta}\FM_T\sup_{t\in[0,T]}\|\theta_{\varepsilon,\fz}\tg(t)\|^2.
    \end{align}
    As for the initial pairing, by \eqref{GWP15} with $v_\fz$ replaced by $\hv_\fz$, we have 
    \begin{align}
        \label{GWP23}
        |\langle\tg(0),\hv_\fz(0)\rangle|&\le C_\nu \sup_{t\in[0,T]}\|\theta_{\varepsilon,\fz} \tg(t)\|\|\theta_{\varepsilon,\fz}(u_0-u_0')\|\notag\\&\le C_{\nu,\delta}\|\theta_{\varepsilon,\fz}(u_0-u_0')\|^2+\delta\sup_{t\in[0,T]}\|\theta_{\varepsilon,\fz} \tg(t)\|^2.
    \end{align}
    Now, by combining \eqref{GWP17}--\eqref{GWP23} and using the fact that $\psi_\fz\equiv 1$ on $D_\fz$, it follows
    \begin{align*}
        \frac{1}{2}\|\psi_\fz \tg(T)\|^2 &+ \frac{\nu}{4}\int_0^T\|\nabla \tg\|^2_{L^2(D_\fz)}\ddd t\le C_{\nu,\delta}\|\theta_{\varepsilon,\fz}\tg(0)\|^2\notag+2\delta\int_0^T\|\theta_{\varepsilon,\fz}\nabla \tg\|^{2}\ddd t\\&\quad+\left(\delta C_{\nu}\FM_T+C_{\nu,\delta}T\FM_T\right)\sup_{t\in[0,T]}\|\theta_{\varepsilon,\fz} \tg(t)\|^2 \notag\\&\quad+(C_\nu+\FM_T)\int_0^T \|\psi_{\fz}\tg\|^2\left(\|\nabla(u'+z)\|_{L^2(D^*_{\fz})}^2+\|\nabla(u+z)\|_{L^2(D^*_{\fz})}^2\right)\ddd t,
    \end{align*}
    which, together with taking $T\le 1$ and applying the Gronwall inequality, yields
    \begin{align*}
        \|\psi_\fz \tg(T)\|^2 + \nu\int_0^T\|\nabla \tg\|^2_{L^2(D_\fz)}\ddd t&\le C_{\nu,\delta,\FM_1}\|\theta_{\varepsilon,\fz}\tg(0)\|^2\notag+\delta C_{\nu,\FM_1}\int_0^T\|\theta_{\varepsilon,\fz}\nabla \tg\|^{2}\ddd t\\&\quad+\left(\delta C_{\nu,\FM_1}+C_{\nu,\delta,\FM_1}T\right)\sup_{t\in[0,T]}\|\theta_{\varepsilon,\fz} \tg(t)\|^2.
    \end{align*}
    To close the estimate, we use a convolution-type argument. Multiplying on both sides with $\theta^2_{\varepsilon,\tilde\fz}(
    \fz)=(1+\varepsilon^2(\fz-\tilde\fz)^2)^{-1}$, integrating the resulting estimate with respect to $\fz\in \R$, and using Proposition 2.2 in \cite{AZ14} combined with the fact
    \begin{align}
        \label{GWP24}\int_\R\frac{1}{(1+\varepsilon^2(x'-x)^2)(1+\varepsilon^2 x^2)}\ddd x=\frac{2\pi}{\varepsilon(4+\varepsilon^2x'^2)},
    \end{align} 
    one gets
    \begin{align*}
        \|\theta_{\varepsilon,\tilde\fz} \tg(T)\|^2 + \nu\int_0^T\|&\theta_{\varepsilon,\tilde\fz}\nabla \tg\|^2\ddd t\le C_{\nu,\delta,\FM_1}\|\theta_{\varepsilon,\tilde\fz}\tg(0)\|^2\notag+\delta C_{\nu,\FM_1}\int_0^T\|\theta_{\varepsilon,\tilde\fz}\nabla \tg\|^{2}\ddd t\\&\quad+\left(\delta C_{\nu,\FM_1}+C_{\nu,\delta,\FM_1}T\right)\int_\R\theta^2_{\varepsilon,\tilde\fz}(
    \fz)\sup_{t\in[0,T]}\|\theta_{\varepsilon,\fz} \tg(t)\|^2\ddd \fz.
    \end{align*}
    Choose $\delta,T$ sufficiently small such that 
    \[\delta C_{\nu,\FM_1}\le \frac{\nu}{2},\qquad\delta C_{\nu,\FM_1}+C_{\nu,\delta,\FM_1}T\le \frac{\varepsilon}{16\pi},\]
    so one has
    \begin{align*}
        \sup_{t\in[0,T]}\|\theta_{\varepsilon,\tilde\fz} \tg(t)\|^2\le C_{\nu,\delta,\FM_1}\|\theta_{\varepsilon,\tilde\fz}\tg(0)\|^2+\frac{\varepsilon}{16\pi}\int_\R\theta^2_{\varepsilon,\tilde\fz}(
    \fz)\sup_{t\in[0,T]}\|\theta_{\varepsilon,\fz} \tg(t)\|^2\ddd \fz.
    \end{align*}
    Multiplying again on both sides of the above inequality with $\theta^2_{\varepsilon,\hat\fz}(
    \tilde\fz)$, integrating the resulting estimate with respect to $\tilde \fz\in \R$, and using \eqref{GWP24} yield
    \begin{align*}
        \int_\R\theta^2_{\varepsilon,\hat\fz}(
    \fz)\sup_{t\in[0,T]}\|\theta_{\varepsilon,\fz} \tg(t)\|^2\ddd \fz\le C_{\nu,\delta,\FM_1}\|\theta_{\varepsilon,\hat\fz}\tg(0)\|^2+\frac{1}{8}\int_\R\theta^2_{\varepsilon,\hat\fz}(
    \fz)\sup_{t\in[0,T]}\|\theta_{\varepsilon,\fz} \tg(t)\|^2\ddd \fz.
    \end{align*}
    Consequently, since $\hat \fz$ is arbitrary, one has 
    \begin{align}\label{GWP25-0}\sup_{t\in[0,T]}\|\theta_{\varepsilon,\fz} \tg(t)\|^2+\nu\int_0^T\|\theta_{\varepsilon,\fz}\nabla \tg\|^2\ddd t\lesssim_{\nu,\FM_1}\|\theta_{\varepsilon,\fz}\tg(0)\|^2.
    \end{align}
    Since $T$ does not depend on the initial data $\tg(0)$, by iterating the above estimate and using \eqref{DE4}, it follows 
    \begin{align}
        \label{GWP25}\sup_{t\in[0,T]}\|u(t)-u'(t)\|_{L^2_b}^2+\nu\sup_{\fz\in\R}\int_0^T\|\nabla (u-u')\|_{L^2(D_\fz)}^2\ddd t\le C_T\|u_0-u'_0\|_{L^2_b}^2,
    \end{align}
    where $C_T$ depends on $\nu,T,u_0,u_0'$ and the trajectory $z(\cdot)$.

    \noindent\textbf{Step I\!I\!I.} The uniqueness now follows from \eqref{GWP25}. The existence part requires additional treatment, since in general one can only approximate $u\in H_b$ by elements in $H$ under the weak metric $d_{\loc}$ defined in \eqref{INTRO5}. 
    
    Let $u_0,u_0'\in B_{H_b}(R)$. Notice that for any integer $k\ge 1$ and $\fz\in \R$, by applying \eqref{GWP25-0}, one has
    \begin{align*}
    \sup_{t\in[0,T]}\|u(t)-u'(t)\|_{L^2(D_\fz)}^2&\le C_T\|u_0-u_0'\|^2_{L^2(I_{k^2,\fz})}+C_T\int_{I_{k^2,\fz}^c}\frac{|u_0(x)-u'_0(x)|^2}{1+\varepsilon^2(x-\fz)^2}\ddd x\\&\lesssim C_T\|u_0-u_0'\|^2_{L^2(I_{k^2,\fz})} +C_T\|u_0-u_0'\|_{L^2_b}^2 \varepsilon^{-2} k^{-2},
    \end{align*}
    where $I_{k^2,\fz}:=(\fz-k^2,\fz+k^2)\times (-1,1)$. This implies 
    \begin{align*}
    \|u(t)-u'(t)\|_{L^2(I_k)}\lesssim C_T\sqrt{k}\|u_0-u_0'\|_{L^2(I_{2k^2})}+C_TR\varepsilon^{-1}k^{-\frac{1}{2}}
    \end{align*}
    with $I_k:=(-k,k)\times (-1,1)$. Consequently, for any integer $k\ge 1$ and $t\in[0,T]$, one has
    \begin{align}\label{GWP26}
        &d_{\loc}(u(t),u'(t))\le \left(\ssum_{l\le k}2^{-l}\right)\|u(t)-u'(t)\|_{L^2(I_{k})}+\ssum_{l> k}\frac{1\wedge\|u(t)-u'(t)\|_{L^2(I_l)}}{2^l}\notag\\&\lesssim C_{T}(\sqrt{k}\|u_0-u_0'\|_{L^2(I_{2k^2})}+R\varepsilon^{-1}k^{-\frac{1}{2}})+C_T\ssum_{l>k}\frac{\sqrt{l}(1\wedge\|u_0-u_0'\|_{L^2(I_{2l^2})})}{2^l}.
    \end{align}
    Now for any given $u_0\in H_b$, choose $\{u_{0,n}\}_{n\ge 1}\subset H$ such that
    \[\sup_{n\ge 1}\|u_{0,n}\|_{L^2_b}<\infty,\qquad \lim_{n\to\infty}d_{\loc}(u_0,u_{0,n})=0.\]
    Denote by $u_n(\cdot)$ the finite-energy solution issued from $u_{0,n}$. By choosing $k$ sufficiently large in \eqref{GWP26} and then sending $n,m\to\infty$, one obtains
    \[\lim_{n,m\to\infty}\sup_{t\in[0,T]} d_{\loc}(u_n(t),u_m(t))=0.\]
    This shows that $\{u_n(\cdot)\}_{n\ge 1}$ is a Cauchy sequence in $C([0,T];L^2_{\loc})$, and thus converges to some function $u$. Moreover, the a priori estimate \eqref{GWP11} shows that
    \[\sup_{n\ge 1}\sup_{0\le t\le T}\|u_n(t)\|_{L^2_b}^{2}\le C_T\]
    for some positive constant $C_T$. Since every closed $H_b$-ball is complete under $d_{\loc}$, we conclude that $u\in C([0,T];X)$.
    
    Finally, notice that the convergence under the metric of $C([0,T];L^2_{\loc})$ is sufficient to pass to the limit for the nonlinear term. This, together with the weak convergence in $L^2(0,T;L^2_{\loc})$ for the gradient term, ensures the existence of an infinite-energy solution of \eqref{GWP2}. 
    \end{proof}
    Next, we study the smoothing effect of the stochastic NS equation \eqref{INTRO1} in the infinite-energy setting. 
    \begin{prop}
        Let $\{u(t)\}_{\ge0}$ be the unique global infinite-energy solution of \eqref{INTRO1}. Then, for any $\delta,T>0$, $\sup_{t\in[\delta,T]}\|u(t)\|_{\infty}<\infty$ holds $\PP$-almost surely.
    \end{prop}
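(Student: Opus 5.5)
The plan is to work pathwise with the decomposition $u=z+u_{\det}$ from the preceding proof, where $z$ solves the stochastic Stokes problem \eqref{GWP1} and $u_{\det}$ solves \eqref{GWP2} for a fixed trajectory of $z$. Fix $\omega$ in the full-measure set on which \eqref{GWP3} holds, and fix $0<\delta\le T$. Then it suffices to bound $\sup_{t\in[\delta,T]}\|z(t)\|_\infty$ and $\sup_{t\in[\delta,T]}\|u_{\det}(t)\|_\infty$ separately.

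The linear part is the easy step. Since $\FB_1<\infty$ and $h\in H$, the additive noise gives $z\in C([0,T];V)$, but in two dimensions $V$ does not embed into $L^\infty$. I would therefore first upgrade the regularity. The Itô/Gaussian bound already used in Lemma~\ref{lemmaMIE1} shows that the stochastic convolution $\ZZ$ lies in $C([0,T];H^{3/2})$ almost surely. The deterministic part $\int_0^t S(t-s)h\,\ddd s$ belongs to $C([0,T];H^{2-})$ for $h\in H$. Hence $z\in C([0,T];H^{1+\gamma})$ for some $\gamma>0$, and the Sobolev embedding $H^{1+\gamma}\hookrightarrow L^\infty$ gives $\sup_{[0,T]}\|z\|_\infty<\infty$.

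For $u_{\det}$ I would use local parabolic smoothing in uniformly local spaces, in the spirit of Corollary 6.4 of \cite{AZ14}, which was already used in Section~\ref{DE} for $z\equiv0$. Step~I of the previous proof gives the uniformly local energy bound \eqref{GWP11}, so $u_{\det}\in L^\infty(0,T;L^2_b)$ and $\nabla u_{\det}\in L^2_b$ in space-time. On each window $D^*_\fz$, I would test \eqref{GWP2} against $-\psi_\fz^2\Pi\Delta u_{\det}$, or equivalently work with the localized vorticity, and multiply by the time weight $t$. The cross terms $(u_{\det}\cdot\nabla)z$, $(z\cdot\nabla)u_{\det}$ and $(z\cdot\nabla)z$ are controlled using $z\in L^2(0,T;H^2)\cap C([0,T];V)$ and the Ladyzhenskaya inequality. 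This should give the uniform-in-$\fz$ bound $\sup_{t\in[\delta/2,T]}\|u_{\det}(t)\|_{H^1_b}<\infty$, together with $\int_{\delta/2}^T\|u_{\det}\|_{H^2_b}^2\,\ddd t<\infty$.

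The main obstacle is the pressure under the no-slip condition: vorticity has no clean boundary condition, and the localized pressure is nonlocal. Instead of handling vorticity, I would cut off in $x$ only and use the $L^p_b$ Stokes estimates of Proposition~\ref{propSTOKES} through the mild formulation started at time $\delta/2$:
\[u_{\det}(t)=S(t-\tfrac{\delta}{2})u_{\det}(\tfrac{\delta}{2})-\int_{\delta/2}^tS(t-s)\Pi\dive F(s)\,\ddd s,\]
where $F=u_{\det}\otimes u_{\det}+u_{\det}\otimes z+z\otimes u_{\det}+z\otimes z$. By Ladyzhenskaya and the bounds above, $F\in L^\infty(\delta/2,T;L^2_b)\cap L^2(\delta/2,T;L^4_b)$. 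Applying \eqref{STOKES1}--\eqref{STOKES2} in $L^4_b$ and bootstrapping the integrability exponent as in Step~I of Proposition~\ref{propDE3} yields $u_{\det}\in L^\infty(\delta,T;L^4_b)$, with $t^{1/2}\nabla u_{\det}$ bounded in $L^4_b$. If the time singularity forces it, one more bootstrap should take $F$ to $L^\infty_tL^q_b$ for $q>4$ and hence put $\nabla u_{\det}$ in $L^q_b$. The Gagliardo--Nirenberg inequality \eqref{DE9} then gives $\sup_{t\in[\delta,T]}\|u_{\det}(t)\|_\infty<\infty$. This bootstrap is the delicate part, because it must be run on a short time step and iterated across $[\delta,T]$, and the length of that step is fixed by the $L^2_b$ bound \eqref{GWP11}.
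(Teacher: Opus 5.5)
Your architecture is the paper's. The statement is deduced from Lemma~\ref{lemmaMIE1}, whose proof uses the same splitting $u=z+u_{\det}$, the $H^{3/2}$ bound on $\ZZ$, a time-weighted localized $H^1$ estimate with an $x$-cutoff $\psi_\fz$, and a Duhamel estimate with the $L^4_b$ Stokes bounds. The paper gets the bound on $(0,1]$ on the event $\A_{1,\LLLL}$, lets $\LLLL\to\infty$, and covers $[\delta,T]$ via the Markov property. Your pathwise restarts are an acceptable substitute, since all constants depend only on \eqref{GWP11} and on norms of $z$ over $[0,T]$.

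\textbf{The final step fails as written.} Estimate \eqref{STOKES2} only gives $\|\nabla S(t-s)\Pi\dive F(s)\|_{L^q_b}\lesssim (t-s)^{-1}\|F(s)\|_{L^q_b}$. That kernel is non-integrable for every $q$, so neither $t^{1/2}\nabla u_{\det}\in L^4_b$ nor $\nabla u_{\det}\in L^q_b$ follows, and raising $q$ does not touch the time singularity. You have to interpolate before integrating. Applying \eqref{DE9} to each integrand gives $\|S(t-s)\Pi\dive F(s)\|_\infty\lesssim(t-s)^{-3/4}\|F(s)\|_{L^4_b}$; this is what Step~I of Proposition~\ref{propDE3} actually does. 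Alternatively, as in the paper, bound the integrand in $W^{3/4,4}_b$ with kernel $(t-s)^{-7/8}$ and conclude with \eqref{SE13-0}.

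This also requires $F\in L^\infty_tL^4_b$. The $L^2_tL^4_b$ you state is useless against a kernel $(t-s)^{-3/4}\notin L^2$. An $H^1_b$ bound gives $\|F\|_{L^4_b}\lesssim 1+\|u_{\det}\|_{H^1_b}^2$ directly. After that, a single estimate on $[t-\delta/2,t]$ suffices, with no bootstrap and no smallness.

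\textbf{The $H^1_b$ bound carries the whole difficulty, and you only assert it.} The mild formulation cannot replace it. At the $L^2_b$ level the Duhamel iteration is scale-critical in two dimensions: the $t^{1/4}L^4_b$ bootstrap gives $N\le CR+CN^2$ with no small factor. Moreover, \eqref{GWP11} controls $\sup_\fz\int_0^T\|\nabla u\|^2_{L^2(D_\fz)}\ddd t$, not $\int_0^T\|u\|^2_{H^1_b}\ddd t$. So you cannot pick a starting time at which $u_{\det}\in H^1_b$, and the pressure must be handled inside the energy estimate.

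The paper does this as follows.
\begin{enumerate}
\item It tests against $-\dive(\psi_\fz^2\nabla u)$ rather than $-\psi_\fz^2\Pi\Delta u$. Then the time-derivative term is exactly $\frac12\frac{\ddd}{\ddd t}\|\psi_\fz\nabla u\|^2$; with your test function it is not an exact derivative.
\item The pressure then reappears through the commutator $[\Pi,\psi_\fz^2]\Delta u$ and the term $\Pi(2\psi_\fz\psi_\fz'\partial_x u)$.
\item These are controlled by Lemma~6.3 of \cite{AZ14}, Lemma~\ref{lemmaSTOKES1}, Corollary~\ref{coroSTOKES1}, and a boundary trace estimate.
\item The price is a nonlocal error $\delta\|\theta_{\varepsilon,\fz}u\|_{H^2}^2$, which cannot be absorbed on one window.
\item It is closed by integrating the local inequality in $\fz$ against $\theta^2_{\varepsilon,\tilde\fz}(\fz)$ and using \eqref{GWP24}.
\end{enumerate}
This convolution-in-$\fz$ step is the idea missing from your sketch.
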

    \begin{proof}The proof is based on Lemma \ref{lemmaMIE1}, whose proof is postponed to the end of this section.
    
    \noindent\textbf{Step I.} To begin with, we show that
    \begin{align}\label{GWP26-1}
    \PP^X_u\Big\{\sup_{t\in[\delta,1]}\|u_t\|_\infty<\infty\Big\}=1
    \end{align}
     for every $u\in H_b$ and $\delta\in(0,1)$. Let $R:=\|u\|_{L^2_b}$. By applying \eqref{MIE13}, it follows that 
     \[\sup_{t\in[\delta,1]}\|u(t)\|_\infty\le \CCC_5 \delta^{-\frac{1}{2}}\]
     on the random event $\A_{1,\LLLL}$, which implies 
     \[\PP_u^X\Big\{\sup_{t\in[\delta,1]}\|u_t\|_\infty=\infty\Big\}\le \PP_u^X\Big\{\sup_{t\in[\delta,1]}\|u_t\|_\infty>\CCC_5\delta^{-\frac{1}{2}}\Big\}\le \PP(\A^c_{1,\LLLL})\to 0,\]
     as $\LLLL\to\infty$. Therefore, \eqref{GWP26-1} holds.  

     \noindent\textbf{Step I\!I.} Without loss of generality, we assume $0<\delta<\frac{1}{2}$. Introduce the following covering
    \[[\delta,T]\subset\cup_{j=0}^N[j/2+\delta,j/2+1].\]
    By the Markov property, it follows
    \[\PP_u^X\Big\{\sup_{t\in[\delta,1]}\|u_{j/2+t}\|_\infty<\infty\Big|\FF_{j/2}\Big\}=\PP^X_{u_{j/2}}\Big\{\sup_{t\in[\delta,1]}\|u(t)\|_\infty<\infty\Big\}=1.\]
    Consequently, by taking the intersection of these events, we obtain
    \[\sup_{t\in[\delta,T]}\|u(t)\|_\infty<\infty\]
    almost surely.
    \end{proof}

    We turn to the verification of the decomposition stated in Theorem \ref{MT1}.
    \begin{prop}
        Let $\{u(t)\}_{\ge0}$ be the unique global infinite-energy solution of \eqref{INTRO1} issued from $u_0\in H_b\cap L^\infty$. Then, 
        \[u(t)=w(t)+u_{\rem}(t),\]
        where $w(t)$ is an infinite-energy solution of the homogeneous NS equation \eqref{INTRO8} and $u_{\rem}$ satisfies the path regularity \eqref{INTRO8-0}.
    \end{prop}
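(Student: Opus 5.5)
We propose to define the decomposition directly. Let $w$ be the unique infinite-energy solution of the homogeneous equation \eqref{INTRO8} issued from $u_0$. Its existence and uniqueness follow from the preceding well-posedness proposition with $h=0$ and $z\equiv0$; equivalently, from Step I and Step II of that proof in the deterministic case. Then set $u_{\rem}:=u-w$. By uniqueness of infinite-energy solutions, $u_{\rem}$ is the unique infinite-energy solution of \eqref{SP1} with zero initial data. The content of the statement is therefore that this remainder is in fact a finite-energy object, i.e.\ satisfies \eqref{INTRO8-0}.

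To prove this, we will construct a finite-energy solution of \eqref{SP1} directly in $H$ and identify it with $u-w$. First we check that $w$ has enough regularity to serve as a coefficient. Since $u_0\in H_b\cap L^\infty$, Proposition \ref{propDE0} gives $\int_0^\infty\|w(s)\|_\infty^2\ddd s<\infty$, together with a uniform bound on $\|w\|_{L^2_b}$ and on $\sup_\fz\int_0^T\|\nabla w\|^2_{L^2(D_\fz)}$. We then solve \eqref{SP1} in $H$ by a Galerkin or truncation argument, or by splitting off the stochastic Stokes part $z$ exactly as in \eqref{GWP1}--\eqref{GWP2}. The a priori bound is the Itô computation \eqref{MIE3}--\eqref{MIE7}. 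In that computation the only new term, $\langle (u_{\rem}\cdot\nabla)w,u_{\rem}\rangle=-\langle (u_{\rem}\cdot\nabla)u_{\rem},w\rangle$, is controlled by $\|w\|_\infty\|u_{\rem}\|\|\nabla u_{\rem}\|$. This yields \eqref{MIE2}, in particular $u_{\rem}\in L^\infty(0,T;H)\cap L^2(0,T;V)$ almost surely, with constants involving the integrability of $\|w\|_\infty^2$.

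Pathwise uniqueness in this class follows from the difference estimate in Step I of Proposition \ref{propMIE2}, i.e.\ Gronwall with weight $\exp(C\int\|\nabla u_{\rem}\|^2+\|w\|_\infty^2)$. Continuity in $H$ follows from the Lions--Magenes lemma: $\partial_t u_{\rem}-$noise lies in $L^2(0,T;V^*)$, because $B(w,u_{\rem})$ and $B(u_{\rem},w)$ belong to $L^2(0,T;V^*)$ thanks to $\|w\|_\infty\in L^2_t$.

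Finally, let $\tilde u_{\rem}$ be the constructed finite-energy solution. Then $w+\tilde u_{\rem}$ is an infinite-energy solution of \eqref{INTRO1}: the $H$-valued part is in $L^2_b$, and continuity in $d_{\loc}$ is inherited from $C([0,T];H)$. By the uniqueness established in Step II of the previous proof (estimate \eqref{GWP25}), it coincides with $u$, which gives the decomposition.

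The main obstacle is the behaviour near $t=0$, where $\|w(t)\|_\infty$ is only square integrable and not bounded. This is exactly why Proposition \ref{propDE3}, which supplies the logarithmic bound via the Stokes semigroup estimates, is needed. We must check that the existence scheme and the Lions--Magenes continuity argument close using only $\|w\|_\infty\in L^2(0,T)$, together with local-in-$\fz$ gradient bounds of $w$ where required.
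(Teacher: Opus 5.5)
Your proposal is correct, but it is organized differently from the paper's proof.

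\textbf{What the paper does.} It constructs nothing. It sets $u_{\rem}:=u-w$ and applies It\^o's formula to $\|u_{\rem}\|^2$ directly. The cancellation $\langle (u_{\rem}\cdot\nabla)w,u_{\rem}\rangle=-\langle (u_{\rem}\cdot\nabla)u_{\rem},w\rangle$ then gives \eqref{GWP28}. It closes with Gronwall, using $\int_0^\infty\|w\|_\infty^2\,\ddd t<\infty$ from Proposition~\ref{propDE0}, and controls $\E\sup_{t\le T}\|u_{\rem}(t)\|^2$ by Burkholder--Davis--Gundy. This is an a priori estimate on a process not yet known to lie in the triple $V\subset H\subset V^*$. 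So the It\^o formula in $H$ is used somewhat formally: a rigorous version would need an approximation argument, and continuity in $H$ is left implicit.

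\textbf{What your route buys.} Your construct-and-identify argument removes this circularity. You build $\tilde u_{\rem}$ in $L^\infty(0,T;H)\cap L^2(0,T;V)$ with the same energy computation. You obtain continuity in $H$ from the bound $\|B(w,\tilde u_{\rem})\|_{V^*}+\|B(\tilde u_{\rem},w)\|_{V^*}\lesssim\|w\|_\infty\|\tilde u_{\rem}\|\in L^2(0,T)$. Only then do you invoke uniqueness of infinite-energy solutions, via \eqref{GWP25} applied pathwise after subtracting $z$, to conclude $u=w+\tilde u_{\rem}$.

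\textbf{The price.} You must check that $w+\tilde u_{\rem}$ meets every clause of the definition of an infinite-energy solution:
\begin{itemize}
\item progressive measurability of $\tilde u_{\rem}$, which your pathwise construction (after splitting off $z$) together with uniqueness provides;
\item the zero-flux condition, which holds automatically for elements of $H$, as shown in Step~I of Appendix~\ref{STEOKS};
\item the $L^2_b$ and local gradient bounds, which are trivial for an $H$-valued process.
\end{itemize}

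\textbf{A minor point.} Your side claim that $u-w$ is \emph{the unique} infinite-energy solution of \eqref{SP1} is not established in the paper. Your argument does not need it, since the identification uses only uniqueness for \eqref{INTRO1}.

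Both proofs rest on the same two inputs: the cancellation structure of the convection terms, and the square-integrability of $\|w(t)\|_\infty$ including near $t=0$.
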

    \begin{proof}
        It suffices to prove \eqref{INTRO8-0}. To this end, we test the remainder equation \eqref{SP1} against $u_{\rem}$ and apply the It\^o formula to derive 
        \begin{align}\label{GWP27}
            \notag\ddd \|u_{\rem}(t)\|^2+2\nu\|\nabla u_{\rem}\|^2\ddd t&=2\langle h-(u_{\rem}\cdot\nabla)w,u_{\rem}\rangle\ddd t\\&\quad+2\ssum_{j\ge 1}b_j\langle u_{\rem},e_j\rangle\ddd \beta_j+\FB_0\ddd t,
        \end{align}
        where, by the cancellation property of the convection term, one has
        \begin{align*}
            |\langle(u_{\rem}\cdot\nabla)w,u_{\rem}\rangle|\le \|u_{\rem}\|\|\nabla u_{\rem}\|\|w\|_\infty.
        \end{align*}
        As a result, by taking expectation on both sides of \eqref{GWP27}, it follows
        \begin{align}
            \label{GWP28}
            \frac{\ddd}{\ddd t}\E \|u_{\rem}(t)\|^2+\nu\E\|\nabla u_{\rem}\|^2\le \FB_0+C_\nu\|h\|^2+C_\nu\|w\|^2_\infty\E\|u_{\rem}\|^2
        \end{align}
        On the other hand, by Proposition \ref{propDE0}, one has
        \[\int_0^\infty\|w(t)\|^2_\infty\ddd t\le C_{\nu,u_0}.\]
        This, together with \eqref{GWP28} combined with the Gronwall inequality, yields
        \begin{align*}
            \E\|u_{\rem}(T)\|^2\le T(\FB_0+C_\nu\|h\|^2)\exp\left(C_\nu\int_0^\infty\|w(t)\|^2_\infty\ddd t\right)\le C_{\nu,u_0}T(\FB_0+\|h\|^2).
        \end{align*}
        Plugging this into \eqref{GWP28} and integrating the resulting estimate over $[0,T]$ imply
        \begin{align}
            \label{GWP29}
            \E\|u_{\rem}(T)\|^2+\nu\E\int_0^T\|\nabla u_{\rem}\|^2\ddd t\le  C_{\nu,u_0}T(\FB_0+\|h\|^2).
        \end{align}
        Finally, by the Burkholder--Davis--Gundy and Poincar\'e inequalities combined with \eqref{GWP29}, it follows
        \begin{align*}
            \E\sup_{t\in[0,T]}&\left|\ssum_{j\ge 1}b_j\int_0^t\langle u_{\rem},e_j\rangle\ddd\beta_j\right|\le C \E\left(\ssum_{j\ge 1}b_j^2\int_0^T\langle u_{\rem},e_j\rangle^2\ddd t \right)^{\frac{1}{2}}\\&\qquad\qquad\le C\FB_0^{\frac{1}{2}}\left(\E\int_0^T\|\nabla u_{\rem}\|^2\ddd t \right)^{\frac{1}{2}}\le C_{\nu,u_0}\FB_0^{\frac{1}{2}} T^{\frac{1}{2}}(\FB_0+\|h\|^2)^{\frac{1}{2}}.
        \end{align*}
        Together with integrating both sides in $t$ of \eqref{GWP27} and taking first supremum in $t\in[0,T]$ and then expectation, this implies 
        \begin{align*}
            \E\sup_{t\in[0,T]}\|u_{\rem}(t)\|^2\le C_{\nu,h,\FB_0,T,u_0}<\infty.
        \end{align*}
        Combining this and \eqref{GWP29}, we obtain the path regularity \eqref{INTRO8-0}.
    \end{proof}
    Now, we prove the last statement in Theorem \ref{MT1}. 
    \begin{prop}
        The infinite-energy solutions of \eqref{INTRO1} define a Markov family $(u_t,\PP^X_u)$.
    \end{prop}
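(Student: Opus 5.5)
The plan is to build the Markov family from the pathwise solution map given by the well-posedness result proved above. For $\omega\in\Omega$ and $u\in X$, let $\mathcal S_t(u,\omega)$ be the value at time $t$ of the unique infinite-energy solution issued from $u$. Via the decomposition $u=z+u_{\det}$, $\mathcal S_t(u,\omega)=z(t,\omega)+\Phi_t(u,z(\cdot,\omega))$, where $\Phi_t$ is the deterministic solution map of \eqref{GWP2} for a fixed trajectory $z$ satisfying \eqref{GWP3}. Define $P_t(u,\Gamma):=\PP\{\mathcal S_t(u,\cdot)\in\Gamma\}$ for $\Gamma\in\BB(X)$, and let $\PP^X_u$ be the law of $\mathcal S_\cdot(u,\cdot)$ on $C([0,\infty);X)$. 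Three things must be checked, in this order: joint measurability of $(u,\omega)\mapsto\mathcal S_\cdot(u,\omega)$, which makes $P_t(\cdot,\Gamma)$ measurable; the flow (cocycle) property; and the Markov property $\E(f(u_{t+s})\mid\FF_t)=\PPPPP^X_sf(u_t)$.

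\textbf{Measurability.} I will use the continuity estimate \eqref{GWP26}: on each ball $B_{H_b}(R)$, for fixed $z$, the map $u_0\mapsto\Phi_\cdot(u_0,z)$ is continuous from $(B_{H_b}(R),d_{\loc})$ into $C([0,T];X)$. The constants $C_T$ depend on $z$ only through $\FM_T$, so this continuity is locally uniform in $z$. Next, $z\mapsto\Phi(u_0,z)$ is continuous for the topology of \eqref{GWP3}. This follows by repeating Step II of the well-posedness proof with two trajectories $z,z'$: the extra source terms are controlled by $\|z-z'\|_{C([0,T];V)\cap L^2(0,T;H^2)}$. Since $X$ is a countable union of closed balls, both maps are Borel, and the map is jointly Carath\'eodory on each ball. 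Hence $(u,\omega)\mapsto\mathcal S(u,\omega)$ is $\BB(X)\otimes\FF$-measurable. For the time-shift argument it is also $\FF_t$-adapted, because $z$ is.

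\textbf{Cocycle and Markov property.} Let $\theta_t$ be the shift of the Brownian increments, $\beta_j(\cdot+t)-\beta_j(t)$. Pathwise uniqueness of infinite-energy solutions gives $\mathcal S_{t+s}(u,\omega)=\mathcal S_s(\mathcal S_t(u,\omega),\theta_t\omega)$. To see this, observe that both sides solve \eqref{INTRO1} on $[t,\infty)$ with the same noise and the same data at time $t$. For the right-hand side, the decomposition must be re-centred at time $t$: the new Stokes part $z^{(t)}$ is again of class \eqref{GWP3}, and uniqueness in Step II holds for any such $z$. Then $\theta_t\omega$ is independent of $\FF_t$ while $\mathcal S_t(u,\cdot)$ is $\FF_t$-measurable. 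The freezing lemma for a jointly measurable functional then gives $\E\big(f(\mathcal S_{t+s}(u))\mid\FF_t\big)=\PPPPP^X_sf(\mathcal S_t(u))$ for bounded measurable $f$. The same argument applied to finite-dimensional cylinder functionals yields the Markov property on path space, since $X$ is a standard measurable space. Regular conditional probabilities therefore exist, and the framework of Section 1.3 of \cite{KS12} applies.

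\textbf{Main obstacle.} I expect the hardest step to be the measurability issue: $X$ is not Polish and $d_{\loc}$ is a weak metric. The key is to work on closed $H_b$-balls, which are complete and separable under $d_{\loc}$ as closed subsets of $(H_{\loc},d_{\loc})$, and to make sure the $z$-dependence of the stability constant in \eqref{GWP26} is quantitative. As a fallback, I would instead approximate $u\in X$ by finite-energy data $u_{0,n}\in H$, exactly as in the existence step. The Markov property is known for the finite-energy family $(u^H_t,\PP^H_u)$, and \eqref{GWP26} gives $\PPPPP^H_tf(u_{0,n})\to\PPPPP^X_tf(u)$ for $f\in\lip(X)$. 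Passing to the limit in the Markov identity then yields it for $\lip(X)$, and by a monotone class argument for all bounded Borel $f$.
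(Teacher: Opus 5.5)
Your proposal is correct and follows essentially the same route as the paper. Both build a pathwise solution map from the decomposition $u=z+u_{\det}$ and get its Borel measurability in the initial datum and the noise trajectory from the stability estimates \eqref{GWP25}--\eqref{GWP26}, extended to two different trajectories $z,z'$. The only difference is that you write out the cocycle identity and the freezing-lemma derivation of the Markov property, which the paper delegates to Example~1.3.6 of \cite{KS12}.
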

    \begin{proof} As presented in Example 1.3.6 in \cite{KS12}, it suffices to show that \eqref{INTRO1} admits a solution map, which is Borel measurable in the initial data and noise trajectory. Indeed, since $\FB_1<\infty$, there is a measurable map
    \[\bfPsi_1: C([0,\infty);V)\to  C([0,\infty);V)\cap L_{\loc}^2(0,\infty;V\cap H^2)\]
    such that 
    \[\bfPsi_1 W=z,\qquad\PP\mbox{-a.s.,}\]
    where $W(t):=\int_0^t\eta(s)\ddd s$ is the Wiener process and $\{z(t)\}_{t\ge0 }$ is the solution of the stochastic Stokes equation \eqref{GWP1}. Introduce
    \[\bfPsi_2: (u,z)\mapsto u_{\det}(\cdot),\qquad X\times \left(C([0,\infty);V)\cap L^2_{\loc}(0,\infty;V\cap H^2)\right)\to C([0,\infty);X)\]
    as the solution map of \eqref{GWP2}. Define
    \[\bfPhi: (u,W)\mapsto \bfPsi_2(u,\bfPsi_1 W)+\bfPsi_1 W,\qquad X\times C([0,\infty);V)\to C([0,\infty);X).\]
    By the unique solvability of the stochastic NS equation \eqref{INTRO1} in the infinite-energy setting, one has 
    \[\bfPhi(u,W)=\bfPsi_2(u,\bfPsi_1 W)+\bfPsi_1 W=u(\cdot),\qquad\PP\mbox{-a.s.,}\]
    where $\{u(t)\}_{t\ge 0}$ is the infinite-energy solution of \eqref{INTRO1}. Moreover, the derivation of \eqref{GWP25} and \eqref{GWP26} can be extended to two solutions with different noise trajectories, which ensures the Borel measurability of $\bfPsi_2$ and thus the Borel measurability of the solution map $\bfPhi$.
    \end{proof}
    Finally, we present the proof of Lemma \ref{lemmaMIE1}.
    \begin{proof}[Proof of Lemma \ref{lemmaMIE1}]Fix arbitrary $t_0\in(0,1]$. Introduce the decomposition 
    \[u(t)=u_{\det}(t)+z(t),\]
    where $\{z(t)\}_{t\ge0}$ satisfies the stochastic Stokes equation \eqref{GWP1} and $\{u_{\det}(t)\}_{t\ge0}$ is the solution of \eqref{GWP2}. Throughout the proof, the analysis is carried out on the random event $\A_{t_0,\LLLL}$. Notice that the Stokes semigroup $S(t):=e^{t\nu L}$ satisfies
    \[\left\|\int_0^\cdot S(\cdot-s)h \ddd s\right\|_{L^\infty\left(0,t;H^{3/2}\right)}\lesssim_\nu t^{\frac{1}{4}}\|h\|,\]
    which, together with the Sobolev embeddings $H^{\frac{3}{2}}\hookrightarrow L^\infty$ and $H^{\frac{3}{2}}\hookrightarrow W^{1,4}$, yield
    \begin{align}\label{SE1}
    \sup_{s\in[0,t_0]}\left(\|z(s)\|_\infty+\|z(s)\|_{H^1}+\|z(s)\|_{W^{1,4}}\right)\le C_{\nu,h,\LLLL}.
    \end{align}
    Therefore, it remains to bound $u_{\det}$. As before, to simplify the notation, we shall drop the subscript in $u_{\det}$ and omit the dependence of the constants on $h,\LLLL,R$ in what follows.

    \noindent\textbf{Step I.} Let $\theta_{\varepsilon,\fz}, \psi_{\fz}$ be the weight functions introduced in \eqref{DE3} and \eqref{GWP12-1}, respectively. Denote by $D^*_\fz$ the rectangle introduced in \eqref{GWP18-1}. By applying \eqref{GWP11} combined with \eqref{SE1}, one has
    \begin{align}
        \label{SE2}
        \sup_{0\le t\le t_0}\|u(t)&\|_{L^2_b}^{2}+\nu\sup_{\fz\in\mathbb R}\int_{0}^{t_0}\|\nabla u(t)\|_{L^{2}(D_{\fz})}^{2}\ddd t\le C_\nu.
    \end{align}
    Testing the equation \eqref{GWP2} against $-\dive (\psi_{\fz}^2\nabla u)$ gives
    \begin{align}\label{SE3}
    \frac{1}{2}\frac{\ddd }{\ddd t}\|\psi_{\fz}\nabla u\|^2+\nu\langle\Pi\Delta u,\dive (\psi_{\fz}^2\nabla u)\rangle= \langle \Pi((u+z)\cdot\nabla)(u+z),\dive (\psi_{\fz}^2\nabla u)\rangle.
    \end{align}
    Each term in the above energy equality is estimated in the following way. We begin with the dissipation term. Since
    \[\dive (\psi_{\fz}^2\nabla u)=\psi_{\fz}^2\Delta u+2\psi_{\fz}\psi_{\fz}'\partial_xu,\]
    it follows
    \begin{align}\label{SE4}
        \langle\Pi\Delta u,\dive (\psi_{\fz}^2\nabla u)\rangle&\notag=\langle\Pi\Delta u,\psi_{\fz}^2\Delta u\rangle+\langle\Pi\Delta u,2\psi_{\fz}\psi_{\fz}'\partial_xu\rangle\\&=\|\psi_{\fz}\Pi\Delta u\|^2+\langle \Pi\Delta u,[\Pi,\psi_{\fz}^2]\Delta u\rangle +\langle\Pi\Delta u,2\psi_{\fz}\psi_{\fz}'\partial_xu\rangle,
    \end{align}
    where $[\Pi,\psi_{\fz}^2]:=\Pi\psi_{\fz}^2-\psi_{\fz}^2\Pi$. The second term on the right-hand side of \eqref{SE4} is estimated by using Lemma 6.3\footnote{This lemma does not rely on the smallness assumption of $(\psi^2)''$, which is not assumed in \eqref{GWP12-1}.} in \cite{AZ14}, Lemma \ref{lemmaSTOKES1}, Corollary \ref{coroSTOKES1}, the trace and interpolation inequalities combined with integrating by parts, which gives
    \begin{align*}|\langle \Pi\Delta u,[\Pi,\psi_{\fz}^2]\Delta u\rangle|&\le |\langle \nabla u,\nabla\Pi[\Pi,\psi_{\fz}^2]\Delta u\rangle|+|\langle \partial_nu,\Pi[\Pi,\psi_{\fz}^2]\Delta u\rangle _{\partial D}|\notag\\& \le C\|\theta_{\varepsilon,\fz}\nabla u\|\|\theta^{-2}_{\varepsilon,\fz}[\Pi,\psi_{\fz}^2]\Delta u\|_{H^1}+\|\theta_{\varepsilon,\fz}\partial_y u\|_{H^\frac{3}{4}}\|\theta^{-2}_{\varepsilon,\fz}[\Pi,\psi_{\fz}^2]\Delta u\|_{H^1}\notag \\&\le C\|\theta_{\varepsilon,\fz}u\|_{H^2}^{\frac{3}{2}}\|\theta_{\varepsilon,\fz}u\|^{\frac{1}{2}}+\|\theta_{\varepsilon,\fz}u\|_{H^2}^{\frac{15}{8}}\|\theta_{\varepsilon,\fz}u\|^{\frac{1}{8}}\notag\\&\le \delta\nu^{-1}\|\theta_{\varepsilon,\fz}u\|_{H^2}^{2}+C_{\delta,\nu}\|\theta_{\varepsilon,\fz}u\|^{2}.
    \end{align*}
    Together with \eqref{SE4}, Lemma 6.3 in \cite{AZ14}, and
    \[2|\langle\Pi\Delta u,\psi_{\fz}\psi_{\fz}'\partial_xu\rangle|\le\frac{1}{4}\|\psi_{\fz}\Pi\Delta u\|^2+C\|\nabla u\|_{L^2(D^*_\fz)}^2,\]
    this implies 
    \begin{align}\label{SE5}
        \nu\langle\Pi\Delta u,\dive (\psi_{\fz}^2\nabla u)\rangle&\ge \frac{3\nu}{4}\|\psi_{\fz}\Pi\Delta u\|^2-\delta\|\theta_{\varepsilon,\fz}u\|_{H^2}^{2}-C_{\delta,\nu}\|\theta_{\varepsilon,\fz}u\|^{2}-C_\nu\|\nabla u\|_{L^2(D^*_\fz)}^2\notag\\&\ge \frac{3\nu}{8}\left(\|\psi_{\fz}\Pi\Delta u\|^2+c\|\psi_\fz u\|^2_{H^2}\right)\notag\\&\qquad\qquad-\delta\|\theta_{\varepsilon,\fz}u\|_{H^2}^{2}-C_{\delta,\nu}\|\theta_{\varepsilon,\fz}u\|^{2}-C_\nu\|\nabla u\|_{L^2(D^*_\fz)}^2.
    \end{align}
    We turn to the nonlinear terms. To this end, notice that
    \begin{align*}
    \langle\Pi f,\dive (\psi_{\fz}^2\nabla u)\rangle&=\langle\psi_{\fz}f,\psi_{\fz}\Pi\Delta u\rangle+\langle f,[\Pi,\psi_{\fz}^2]\Delta u\rangle+\langle f,\Pi(2\psi_{\fz}\psi_{\fz}'\partial_xu)\rangle,
    \end{align*}
    where, by Lemma 6.3 in \cite{AZ14} and Lemma \ref{lemmaSTOKES1}, one has
    \begin{align}
    \|\theta_{\varepsilon,\fz}^{-2}\nabla[\Pi,\psi_{\fz}^2]\Delta u\|+\|\theta_{\varepsilon,\fz}^{-2}\nabla\Pi(2\psi_{\fz}\psi_{\fz}'\partial_xu)\|\lesssim\|\theta_{\varepsilon,\fz}u\|_{H^2}.\label{SE6}
    \end{align}
    We begin with 
    \begin{align}
        \label{SE7}
        \II_3:&=\langle\Pi (u\cdot \nabla) u,\dive (\psi_{\fz}^2\nabla u)\rangle\notag\\&=\langle (u\cdot\nabla)(\psi_{\fz}u),\psi_{\fz}\Pi\Delta u\rangle-\langle u_1\psi_{\fz}'u,\psi_{\fz}\Pi\Delta u\rangle\notag\\&\qquad\qquad+\langle(u\cdot\nabla)u,[\Pi,\psi_{\fz}^2]\Delta u\rangle +\langle (u\cdot\nabla)u,\Pi(2\psi_{\fz}\psi_{\fz}'\partial_xu)\rangle\notag\\&=:\II_{3,1}+\II_{3,2}+\II_{3,3}+\II_{3,4}.
    \end{align}
    For $\II_{3,1}$ and $\II_{3,2}$, by the Ladyzhenskaya and Young inequalities combined with the support property of $\psi_\fz$, it follows
    \begin{align*}
    |\II_{3,1}|&\le C\|u\|_{L^4(D_{\fz}^*)}\|\nabla(\psi_{\fz}u)\|_4\|\psi_{\fz}\Pi\Delta u\|\\&\le C\|u\|_{L^4(D_{\fz}^*)}\|\nabla(\psi_{\fz}u)\|^{\frac{1}{2}}\|\psi_{\fz}u\|_{H^2}^{\frac{1}{2}}\|\psi_{\fz}\Pi\Delta u\|\\&\le \fa\left(\|\psi_{\fz}\Pi\Delta u\|^2+\|\psi_{\fz}u\|_{H^2}^2\right)+C_\fa\|u\|_{L^4(D_{\fz}^*)}^4\|\nabla(\psi_{\fz}u)\|^2\\&\le\fa\left(\|\psi_{\fz}\Pi\Delta u\|^2+\|\psi_{\fz}u\|_{H^2}^2\right)+C_\fa\|u\|_{L^2(D_{\fz}^*)}^2\|u\|_{H^1(D_{\fz}^*)}^2\left(\|\psi_{\fz}\nabla u\|^2+\|u\|_{L^2(D_{\fz}^*)}^2\right),
    \end{align*}   
    and 
    \begin{align*}
    |\II_{3,2}|&\le C\|u\|_{L^4(D_{\fz}^*)}^2\|\psi_{\fz}\Pi\Delta u\|\le\fa\|\psi_{\fz}\Pi\Delta u\|^2+C_\fa\|u\|_{L^4(D_{\fz}^*)}^4\\&\le\fa\|\psi_{\fz}\Pi\Delta u\|^2+C_\fa\|u\|_{L^2(D_{\fz}^*)}^2\|u\|_{H^1(D_{\fz}^*)}^2.\end{align*}
    The terms $\II_{3,3},\II_{3,4}$ are bounded by integrating by parts and using \eqref{SE6} and the Gagliardo--Nirenberg inequality
    \[\|f\|_4\lesssim \|f\|^{\frac{3}{4}}\|f\|_{H^2}^{\frac{1}{4}},\]
    which yields
    \begin{align*}
        |\II_{3,3}|&=|\langle u\otimes u,\nabla [\Pi,\psi_{\fz}^2]\Delta u\rangle|\le C\|\theta_{\varepsilon,\fz} u\|^2_4\|\theta_{\varepsilon,\fz}  u\|_{H^2}\\& \le C\|\theta_{\varepsilon,\fz} u\|^{\frac{3}{2}}\|\theta_{\varepsilon,\fz}  u\|^{\frac{3}{2}}_{H^2}\le \delta\|\theta_{\varepsilon,\fz}  u\|^{2}_{H^2} + C_\delta \|\theta_{\varepsilon,\fz} u\|^6,
    \end{align*}
    and 
    \begin{align*}
        |\II_{3,4}|&=\langle u\otimes u,\nabla \Pi(2\psi_{\fz}\psi_{\fz}'\partial_xu)\rangle\le C\|\theta_{\varepsilon,\fz} u\|^2_4\|\theta_{\varepsilon,\fz}  u\|_{H^2}\le \delta\|\theta_{\varepsilon,\fz}u\|^{2}_{H^2} + C_\delta \|\theta_{\varepsilon,\fz} u\|^6.
    \end{align*}
    Plugging the above estimates into \eqref{SE7} gives
    \begin{align}
        \label{SE8}
        |\II_3|&\le \fa\left(\|\psi_{\fz}\Pi\Delta u\|^2+\|\psi_{\fz}u\|_{H^2}^2\right)+\delta\|\theta_{\varepsilon,\fz}  u\|^{2}_{H^2}\notag\\&\qquad+C_\fa\|u\|_{L^2(D_{\fz}^*)}^2\|u\|_{H^1(D_{\fz}^*)}^2\left(1+\|\psi_{\fz}\nabla u\|^2+\|u\|_{L^2(D_{\fz}^*)}^2\right)+C_\delta \|\theta_{\varepsilon,\fz} u\|^6.
    \end{align}
    Next, we estimate 
    \begin{align*}\II_4:&=\langle\Pi (z\cdot \nabla) u,\dive (\psi_{\fz}^2\nabla u)\rangle\notag\\&=\langle \psi_{\fz}(z\cdot\nabla)u,\psi_{\fz}\Pi\Delta u\rangle+\langle(z\cdot\nabla)u,[\Pi,\psi_{\fz}^2]\Delta u\rangle +\langle (z\cdot\nabla)u,\Pi(2\psi_{\fz}\psi_{\fz}'\partial_xu)\rangle\notag\\&=:\II_{4,1}+\II_{4,2}+\II_{4,3},
    \end{align*}
    where, by proceeding similarly as in the derivation of \eqref{SE8}, one has 
    \begin{align*}
        |\II_{4,1}|\le\|z\|_\infty\|\psi_{\fz}\nabla u\|\|\psi_{\fz}\Pi\Delta u\|\le\fa\|\psi_{\fz}\Pi\Delta u\|^2+C_\fa\|z\|_\infty^2\|\psi_{\fz}\nabla u\|^2,
    \end{align*}
    and 
    \begin{align*}
        |\II_{4,2}|&=\left|\langle u\otimes z,\nabla[\Pi,\psi_{\fz}^2]\Delta u\rangle \right|\le C\|z\|_\infty\|\theta_{\varepsilon,\fz}u\|\|\theta_{\varepsilon,\fz}u\|_{H^2}\\&\le\delta\|\theta_{\varepsilon,\fz}u\|_{H^2}^2+C_\delta\|z\|_\infty^2\|\theta_{\varepsilon,\fz}u\|^2,
    \end{align*}
    and 
    \begin{align*}
       |\II_{4,3}|&=\left|\langle u\otimes z,\nabla\Pi(2\psi_{\fz}\psi_{\fz}'\partial_xu)\rangle \right|\le C\|z\|_\infty\|\theta_{\varepsilon,\fz}u\|\|\theta_{\varepsilon,\fz}u\|_{H^2}\\&\le\delta\|\theta_{\varepsilon,\fz}u\|_{H^2}^2+C_\delta\|z\|_\infty^2\|\theta_{\varepsilon,\fz}u\|^2.
    \end{align*}
    Consequently, it follows
    \begin{align}
        \label{SE9}
        |\II_4|\le \delta\|\theta_{\varepsilon,\fz}u\|_{H^2}^2+\fa\|\psi_{\fz}\Pi\Delta u\|^2+C_\delta\|z\|_\infty^2\|\theta_{\varepsilon,\fz}u\|^2+C_\fa\|z\|_\infty^2\|\psi_{\fz}\nabla u\|^2.
    \end{align}
    The term
    \begin{align*}
        \II_5:&=\langle\Pi (u\cdot \nabla) z,\dive (\psi_{\fz}^2\nabla u)\rangle\notag\\&=\langle \psi_{\fz}(u\cdot\nabla)z,\psi_{\fz}\Pi\Delta u\rangle+\langle(u\cdot\nabla)z,[\Pi,\psi_{\fz}^2]\Delta u\rangle +\langle (u\cdot\nabla)z,\Pi(2\psi_{\fz}\psi_{\fz}'\partial_xu)\rangle\notag\\&=:\II_{5,1}+\II_{5,2}+\II_{5,3}
    \end{align*}
    is estimated in a similar way, which gives
    \begin{align*}
        |\II_{5,1}|&\le \|\nabla z\|_4\|\psi_{\fz}u\|_4\|\psi_{\fz}\Pi\Delta u\|\le\fa\|\psi_{\fz}\Pi\Delta u\|^2+C_\fa\|\nabla z\|_4^2\|\psi_{\fz}u\|_4^2\\&\le\fa\|\psi_{\fz}\Pi\Delta u\|^2+C_\fa\|\nabla z\|_4^2\|\psi_{\fz}u\|\left(\|\psi_{\fz}\nabla u\|+\|u\|_{L^2(D_{\fz}^*)}\right),
    \end{align*}
    and 
    \begin{align*}
        |\II_{5,2}|&=\left|\langle z\otimes u,\nabla[\Pi,\psi_{\fz}^2]\Delta u\rangle \right|\le C\|z\|_\infty\|\theta_{\varepsilon,\fz}u\|\|\theta_{\varepsilon,\fz}u\|_{H^2}\\&\le\delta\|\theta_{\varepsilon,\fz}u\|_{H^2}^2+C_\delta\|z\|_\infty^2\|\theta_{\varepsilon,\fz}u\|^2,
    \end{align*}
    and 
    \begin{align*}
        |\II_{5,3}|&=\left|\langle z\otimes u,\nabla\Pi(2\psi_{\fz}\psi_{\fz}'\partial_xu)\rangle\right|\le C\|z\|_\infty\|\theta_{\varepsilon,\fz}u\|\|\theta_{\varepsilon,\fz}u\|_{H^2}\\&\le\delta\|\theta_{\varepsilon,\fz}u\|_{H^2}^2+C_\delta\|z\|_\infty^2\|\theta_{\varepsilon,\fz}u\|^2.
    \end{align*}
    Therefore, one has
    \begin{align}
        \label{SE10}
        |\II_5|\notag\le  \delta&\|\theta_{\varepsilon,\fz}u\|_{H^2}^2+\fa\|\psi_{\fz}\Pi\Delta u\|^2+C_\delta\|z\|_\infty^2\|\theta_{\varepsilon,\fz}u\|^2\\&\qquad\qquad\qquad+C_\fa\|\nabla z\|_4^2\|\psi_{\fz}u\|\left(\|\psi_{\fz}\nabla u\|+\|u\|_{L^2(D_{\fz}^*)}\right).
    \end{align}
    The last term 
    \begin{align*}
        \II_6:&=\langle\Pi (z\cdot \nabla) z,\dive (\psi_{\fz}^2\nabla u)\rangle\notag\\&=\langle \psi_{\fz}(z\cdot\nabla)z,\psi_{\fz}\Pi\Delta u\rangle+\langle(z\cdot\nabla)z,[\Pi,\psi_{\fz}^2]\Delta u\rangle +\langle (z\cdot\nabla)z,\Pi(2\psi_{\fz}\psi_{\fz}'\partial_xu)\rangle\notag\\&=:\II_{6,1}+\II_{6,2}+\II_{6,3}
    \end{align*}
    is bounded in the same way, which gives
    \begin{align*}
        |\II_{6,1}|&\le \|\psi_{\fz}\nabla z\|\|z\|_\infty\|\psi_{\fz}\Pi\Delta u\|\le\fa\|\psi_{\fz}\Pi\Delta u\|^2+C_\fa\|\psi_{\fz}\nabla z\|^2\|z\|^2_\infty,
    \end{align*}
    and 
    \begin{align*}
        |\II_{6,2}|&=\left|\langle z\otimes z,\nabla[\Pi,\psi_{\fz}^2]\Delta u\rangle \right|\le C\|\theta_{\varepsilon,\fz}z\|_4^2\|\theta_{\varepsilon,\fz}u\|_{H^2}\le\delta\|\theta_{\varepsilon,\fz}u\|_{H^2}^2+C_\delta\|\theta_{\varepsilon,\fz}z\|_4^4,
    \end{align*}
    and 
    \begin{align*}
        |\II_{6,3}|&=\left|\langle z\otimes z,\nabla\Pi(2\psi_{\fz}\psi_{\fz}'\partial_xu)\rangle\right|\le C\|\theta_{\varepsilon,\fz}z\|_4^2\|\theta_{\varepsilon,\fz}u\|_{H^2}\le\delta\|\theta_{\varepsilon,\fz}u\|_{H^2}^2+C_\delta\|\theta_{\varepsilon,\fz}z\|_4^4.
    \end{align*}
    As a result, it follows
    \begin{align}
    \label{SE11}
    |\II_6|\le\delta \|\theta_{\varepsilon,\fz}u\|_{H^2}^2+\fa\|\psi_{\fz}\Pi\Delta u\|^2+C_\delta\|\theta_{\varepsilon,\fz}z\|_4^4+C_\fa\|\psi_{\fz}\nabla z\|^2\|z\|^2_\infty.
    \end{align}
    Summing \eqref{SE8}--\eqref{SE11} and applying \eqref{SE1} and \eqref{SE2}, one gets
    \begin{align}\label{SE12}
    \ssum_{i=3}^6|\II_i|&\le \fa\left(\|\psi_{\fz}\Pi\Delta u\|^2+\|\psi_{\fz}u\|_{H^2}^2\right)+\delta\|\theta_{\varepsilon,\fz}  u\|^{2}_{H^2}+C_{\fa,\delta}\notag\\&\qquad\qquad+C_\fa\|u\|_{H^1(D_{\fz}^*)}^2\left(1+\|\psi_{\fz}\nabla u\|^2\right)+C_\fa\left(1+\|\psi_{\fz}\nabla u\|^2\right).
    \end{align}
    Plugging \eqref{SE5} and \eqref{SE12} into \eqref{SE3} and choosing $\fa$ sufficiently small imply
    \begin{align*}\frac{\ddd }{\ddd t}\|\psi_{\fz}\nabla u\|^2+\frac{\nu}{4}\left(\|\psi_{\fz}\Pi\Delta u\|^2+c\|\psi_\fz u\|^2_{H^2}\right)&\le \delta\|\theta_{\varepsilon,\fz}  u\|^{2}_{H^2}+C_{\nu,\delta}\left(1+\| u\|^2_{H^1(D^*_\fz)}\right)\\&\qquad +C_\nu\left(1+\| u\|^2_{H^1(D^*_\fz)}\right)\|\psi_{\fz}\nabla u\|^2,
    \end{align*}
    which gives
    \begin{align*}
        \frac{\ddd }{\ddd t}\left(t\|\psi_{\fz}\nabla u\|^2\right)+\frac{c\nu t}{4}\|\psi_\fz u\|^2_{H^2}&\le \delta t\|\theta_{\varepsilon,\fz}  u\|^{2}_{H^2}+C_{\nu,\delta}\left(1+\| u\|^2_{H^1(D^*_\fz)}\right)\\&\qquad \qquad+C_\nu\left(1+\| u\|^2_{H^1(D^*_\fz)}\right)t\|\psi_{\fz}\nabla u\|^2
    \end{align*}
    for any $t\in(0,t_0]$. An application of the Gronwall inequality then yields
    \begin{align}
         t\|\psi_{\fz}\nabla u(t)\|^2 +\frac{c\nu}{4}\int_0^t s\|\psi_\fz u\|^2_{H^2}\ddd s\le C_1\delta \int_0^t s\|\theta_{\varepsilon,\fz} u\|^2_{H^2}\ddd s+C_2,
    \end{align}
    where $C_1$ depends only on $\nu$ and the bound in \eqref{SE2}, and $C_2$ depends additionally on $\delta$. Together with repeating the convolution-type argument used in the derivation of \eqref{GWP25} and choosing $\delta$ sufficiently small, this gives
    \begin{align}
        \label{SE13}\sup_{s\in(0,t_0]}s\|u(s)\|_{H^1_b}^2+\sup_{\fz\in\R}\int_0^{t_0}s\|u(s)\|_{H^2(D_{\fz})}^2 \ddd s\le C.
    \end{align}

    \noindent\textbf{Step I\!I.} Next, we establish $L^\infty$-bounds. Due to the Gagliardo--Nirenberg inequality
    \begin{align}
        \label{SE13-0}\|f\|_\infty\lesssim \|f\|_{L^2_b}^{\frac{1}{5}}\|f\|_{W^{3/4,4}_b}^{\frac{4}{5}},
    \end{align}
    it suffices to bound the $W^{3/4,4}_b$-norm. To this end, we apply \eqref{STOKES1} and \eqref{STOKES2} combined with the interpolation inequality
    \begin{align}
        \label{SE13-1}\|f\|_{W^{3/4,4}_b}\lesssim \|f\|_{L^4_b}^{\frac14}\|f\|_{W^{1,4}_b}^{\frac34},
    \end{align}
    which yields
    \begin{align}\label{SE14}
    \|S(t)u\|_{W^{3/4,4}_b}\lesssim_\nu t^{-\frac38}\|u\|_{L^4_b},\qquad\|S(t)\Pi\dive f\|_{W^{3/4,4}_b}\lesssim_\nu t^{-\frac78}\|f\|_{L^4_b}.
    \end{align}
    Moreover, by \eqref{SE2}, \eqref{SE13}, and the Gagliardo--Nirenberg inequalities
    \[\|f\|_{L^4_b}\lesssim \|f\|_{L^2_b}^{\frac{1}{2}}\|f\|_{H^{1}_b}^{\frac{1}{2}},\qquad\|f\|_{L^8_b}\lesssim \|f\|_{L^2_b}^{\frac{1}{4}}\|f\|_{H^{1}_b}^{\frac{3}{4}},\]
    it follows
    \begin{align}\label{SE15}
    \|u(t)\|_{L^4_b}\lesssim_\nu t^{-\frac{1}{4}},\qquad\|u(t)\|_{L^8_b}\lesssim_\nu t^{-\frac{3}{8}},
    \end{align}
    which, together with \eqref{SE1}, yields
    \begin{align}\label{SE16}
    \|(u(t)+z(t))\otimes(u(t)+z(t))\|_{L^4_b}\lesssim_\nu t^{-\frac34}.
    \end{align}
    Now applying the estimates \eqref{SE14}--\eqref{SE16} to the Duhamel formula
    \[u(t)=S(t/2)u(t/2)-\int_{t/2}^t S(t-s)\Pi\dive\big((u(s)+z(s))\otimes(u(s)+z(s))\big)\ddd s\]
    yields
    \begin{align}\label{SE17}
    \|u(t)\|_{W^{3/4,4}_b}&\lesssim t^{-\frac38}\|u(t/2)\|_{L^4_b}+t^{-\frac34}\int_{t/2}^t(t-s)^{-\frac78}\ddd s\lesssim t^{-\frac58}.
    \end{align}
    Together with \eqref{SE2} and \eqref{SE13-0}, this implies 
    \[\|u(t)\|_\infty\lesssim t^{-\frac{1}{2}},\qquad\forall t\in(0,t_0].\]
    Combining this with \eqref{SE1} and \eqref{SE2}, we obtain \eqref{MIE13}.

    \noindent\textbf{Step I\!I\!I.} It remains to prove \eqref{MIE14}. To this end, we first derive some probability estimates for the Wiener process $W(t):=\int_0^t\eta(s)\ddd s$. Since $\FB_1<\infty$, for any $s\le t$, $W(t)-W(s)$ is a Gaussian random variable valued in $V$. This yields
    \[\PP \left\{\|W(t)-W(s)\|_V>\rho\right\}\lesssim \exp\left(-\frac{\rho^2}{4\FB_1|t-s|}\right).\]
    Applying this estimate to dyadic increments gives
    \begin{align}\label{SE20}
    \PP\left\{[W]_{C^{3/8}([0,1];V)}>\rho\right\}\lesssim \exp\left(-\frac{C\rho^2}{\FB_1}\right).
    \end{align}

    We turn to the proof of \eqref{MIE14}. By the stochastic integration by parts formula, see (2.102) in \cite{KS12}, one has
    \[\ZZ_t=e^{\nu tL}W(t)-\nu\int_0^t Le^{\nu(t-s)L}(W(t)-W(s)) \ddd s,\]
    which, together with the classical Stokes semigroup estimate
    \[\|L^\gamma e^{\nu t L}\|_{\LL(H)}\le C_{\gamma}t^{-\gamma},\]
    and the embedding $D(L^{3/4})\hookrightarrow H^{\frac32}$, leads to
    \begin{align*}
    \notag\|\ZZ_t\|_{H^{\frac32}}&\lesssim_{\nu} \|L^{3/4}e^{\nu tL}W(t)\|+\int_0^t\| L^{7/4}e^{\nu(t-s)L}(W(t)-W(s))\| \ddd s\\\notag
    &\lesssim_\nu t^{-\frac{1}{4}}\|W(t)\|_V+\int_0^t (t-s)^{-\frac54}\|W(t)-W(s)\|_V \ddd s
    \\&\lesssim_\nu  [W]_{C^{3/8}([0,t];V)}\left(t^{\frac18}+\int_0^t(t-s)^{-\frac78}\ddd s\right)\lesssim_\nu t^{\frac18}[W]_{C^{3/8}([0,t];V)}.
    \end{align*}
    Consequently,
    \begin{align}
        \label{SE21}\sup_{0\le s\le t}\|\ZZ_s\|_{H^{\frac32}}\lesssim_\nu t^{\frac18}[W]_{C^{3/8}([0,t];V)}.
    \end{align}
    By rescaling, $[W]_{C^{3/8}([0,t];V)}$ has the same law as $t^{1/8}[W]_{C^{3/8}([0,1];V)}.$
    Combining this identity with \eqref{SE20} and \eqref{SE21}, one gets \eqref{MIE14}.
    \end{proof}
    \vspace{.2in}
    \bibliographystyle{alpha}
    \bibliography{reference}

@article{KS17,
  author  = {Kuksin, Sergei and Shirikyan, Armen},
  title   = {Rigorous results in space-periodic two-dimensional turbulence},
  journal = {Physics of Fluids},
  volume  = {29},
  number  = {12},
  pages   = {125106},
  year    = {2017},
  doi     = {10.1063/1.4996545}
}

@article {NZ24,
    AUTHOR = {Nersesyan, V. and Zhao, M.},
     TITLE = {Exponential mixing for the white-Forced complex
              {G}inzburg--{L}andau equation in the whole space},
   JOURNAL = {SIAM J. Math. Anal.},
    VOLUME = {56},
      YEAR = {2024},
    NUMBER = {3},
     PAGES = {3646--3678}
}

@article {FM95,
    AUTHOR = {Flandoli, F. and Maslowski, B.},
     TITLE = {Ergodicity of the {$2$}-{D} {N}avier--{S}tokes equation under
              random perturbations},
   JOURNAL = {Comm. Math. Phys.},
    VOLUME = {172},
      YEAR = {1995},
    NUMBER = {1},
     PAGES = {119--141}
}

@article {KS02,
    AUTHOR = {Kuksin, S. and Shirikyan, A.},
     TITLE = {Coupling approach to white-forced nonlinear {PDE}s},
   JOURNAL = {J. Math. Pures Appl. (9)},
    VOLUME = {81},
      YEAR = {2002},
    NUMBER = {6},
     PAGES = {567--602}
}

@article{HM08,
  author  = {Hairer, M. and Mattingly, J. C.},
  title   = {Spectral gaps in {Wasserstein} distances and the {2D}
             stochastic {Navier--Stokes} equations},
  journal = {Ann. Probab.},
  volume  = {36},
  number  = {6},
  pages   = {2050--2091},
  year    = {2008},
  doi     = {10.1214/08-AOP392}
}

@article{BMO17,
  author  = {Brze\'{z}niak, Z. and Motyl, E. and Ondrejat, M.},
  title   = {Invariant measure for the stochastic {Navier--Stokes}
             equations in unbounded {2D} domains},
  journal = {Ann. Probab.},
  volume  = {45},
  number  = {5},
  pages   = {3145--3201},
  year    = {2017},
  doi     = {10.1214/16-AOP1133}
}

@article{EMS01,
    AUTHOR = {E, W. and Mattingly, J. C. and Sinai, Ya.},
     TITLE = {Gibbsian dynamics and ergodicity for the stochastically forced
              {N}avier--{S}tokes equation},
   JOURNAL = {Comm. Math. Phys.},
     VOLUME = {224},
      YEAR = {2001},
    NUMBER = {1},
     PAGES = {83--106}
}

@article {HM06,
    AUTHOR = {Hairer, M. and Mattingly, J. C.},
     TITLE = {Ergodicity of the 2{D} {N}avier--{S}tokes equations with
              degenerate stochastic forcing},
   JOURNAL = {Ann. of Math. (2)},
     VOLUME = {164},
      YEAR = {2006},
    NUMBER = {3},
     PAGES = {993--1032}
}

@book {KS12,
    AUTHOR = {Kuksin, S. and Shirikyan, A.},
     TITLE = {Mathematics of two-dimensional turbulence},
    SERIES = {Cambridge Tracts in Mathematics},
    VOLUME = {194},
 PUBLISHER = {Cambridge University Press, Cambridge},
      YEAR = {2012},
     PAGES = {xvi+320}
}

@incollection {D13,
    AUTHOR = {Debussche, A.},
     TITLE = {Ergodicity results for the stochastic {N}avier--{S}tokes
              equations: an introduction},
 BOOKTITLE = {Topics in mathematical fluid mechanics},
    SERIES = {Lecture Notes in Math.},
    VOLUME = {2073},
     PAGES = {23--108},
 PUBLISHER = {Springer, Heidelberg},
      YEAR = {2013}
}

@book {KS91,
    AUTHOR = {Karatzas, I. and Shreve, S. E.},
     TITLE = {Brownian motion and stochastic calculus},
    SERIES = {Graduate Texts in Mathematics, vol. 113},
 PUBLISHER = {Springer-Verlag, New York, Edition 2},
      YEAR = {2014},
}

@incollection {Shi08,
    AUTHOR = {Shirikyan, A.},
     TITLE = {Exponential mixing for randomly forced partial differential equations: method of coupling},
    SERIES = {Int. Math. Ser.},
     BOOKTITLE = {Instability in models connected with fluid flows {II}},
    VOLUME = {7},
     PAGES = {155--188},
 PUBLISHER = {Springer, New York},
      YEAR = {2008},
}

@article {BL19,
    AUTHOR = {Bakhtin, Y. and Li, L.},
     TITLE = {Thermodynamic limit for directed polymers and stationary solutions of the {B}urgers equation},
   JOURNAL = {Comm. Pure Appl. Math.},
    VOLUME = {72},
      YEAR = {2019},
    NUMBER = {3},
     PAGES = {536--619},
}

@article {BCK14,
    AUTHOR = {Bakhtin, Y. and Cator, E. and Khanin, K.},
     TITLE = {Space-time stationary solutions for the {B}urgers equation},
   JOURNAL = {J. Amer. Math. Soc.},
    VOLUME = {27},
      YEAR = {2014},
    NUMBER = {1},
     PAGES = {193--238},
}

@article {DGR21,
    AUTHOR = {Dunlap, A. and Graham, C. and Ryzhik, L.},
     TITLE = {Stationary solutions to the stochastic {B}urgers equation on
              the line},
   JOURNAL = {Comm. Math. Phys.},
    VOLUME = {382},
      YEAR = {2021},
    NUMBER = {2},
     PAGES = {875--949},
}

@article {N22,
    AUTHOR = {Nersesyan, V.},
     TITLE = {Ergodicity for the randomly forced {N}avier--{S}tokes system in
              a two-dimensional unbounded domain},
   JOURNAL = {Ann. Henri Poincar\'{e}},
     VOLUME = {23},
      YEAR = {2022},
    NUMBER = {6},
     PAGES = {2277--2294},
}

@article {BL06,
    AUTHOR = {Brze\'{z}niak, Z. and Li, Y.},
     TITLE = {Asymptotic compactness and absorbing sets for 2{D} stochastic
              {N}avier--{S}tokes equations on some unbounded domains},
   JOURNAL = {Trans. Amer. Math. Soc.},
    VOLUME = {358},
      YEAR = {2006},
    NUMBER = {12},
     PAGES = {5587--5629},
}

@article {KV14,
    AUTHOR = {Kukavica, I. and Vicol, V.},
     TITLE = {Moments for strong solutions of the 2{D} stochastic
              {N}avier--{S}tokes equations in a bounded domain},
   JOURNAL = {Asymptot. Anal.},
     VOLUME = {90},
      YEAR = {2014},
    NUMBER = {3-4},
     PAGES = {189--206},
}

@article{Zel07,
  author  = {Zelik, Sergey},
  title   = {Spatially nondecaying solutions of the {2D} {Navier--Stokes} equation in a strip},
  journal = {Glasgow Mathematical Journal},
  volume  = {49},
  number  = {3},
  pages   = {525--588},
  year    = {2007},
  doi     = {10.1017/S0017089507003849}
}

@article{AZ14,
  author  = {Anthony, Peter and Zelik, Sergey},
  title   = {Infinite-energy solutions for the {Navier--Stokes} equations in a strip revisited},
  journal = {Communications on Pure and Applied Analysis},
  volume  = {13},
  number  = {4},
  pages   = {1361--1393},
  year    = {2014},
  doi     = {10.3934/cpaa.2014.13.1361}
}

@article {FG95,
    AUTHOR = {Flandoli, F. and G\k{a}tarek, D.},
     TITLE = {Martingale and stationary solutions for stochastic
              {N}avier-{S}tokes equations},
   JOURNAL = {Probab. Theory Related Fields},
  FJOURNAL = {Probability Theory and Related Fields},
    VOLUME = {102},
      YEAR = {1995},
    NUMBER = {3},
     PAGES = {367--391},
}

@book {Bog07,
    AUTHOR = {Bogachev, V. I.},
     TITLE = {Measure theory. {V}ol. {I}, {II}},
 PUBLISHER = {Springer-Verlag, Berlin},
      YEAR = {2007},
}

@article{Jak98,
  author  = {Jakubowski, Adam},
  title   = {{The almost sure Skorokhod representation for subsequences in nonmetric spaces}},
  journal = {Theory of Probability and Its Applications},
  volume  = {42},
  number  = {1},
  pages   = {167--174},
  year    = {1998},
  doi     = {10.1137/S0040585X97976052}
}

@book{Str10,
  author    = {Stroock, Daniel W.},
  title     = {Probability Theory: An Analytic View},
  edition   = {2},
  publisher = {Cambridge University Press},
  address   = {Cambridge},
  year      = {2010},
  doi       = {10.1017/CBO9780511974243}
}

@misc{NZ24-1,
  author        = {Nersesyan, Vahagn and Zhao, Meng},
  title         = {Polynomial mixing for the white-forced
                   {Navier--Stokes} system in the whole space},
  year          = {2024},
  eprint        = {2410.15727},
  archivePrefix = {arXiv},
  primaryClass  = {math.AP},
  note          = {arXiv:2410.15727},
  url           = {https://arxiv.org/abs/2410.15727}
}

@article{AW05,
  author  = {Abels, Helmut and Wiegner, Michael},
  title   = {Resolvent estimates for the {Stokes} operator on an infinite layer},
  journal = {Differential and Integral Equations},
  volume  = {18},
  number  = {10},
  pages   = {1081--1110},
  year    = {2005},
  doi     = {10.57262/die/1356060107}
}

@article{Zel03,
  author  = {Zelik, Sergey V.},
  title   = {Attractors of Reaction-Diffusion Systems in Unbounded Domains and Their Spatial Complexity},
  journal = {Communications on Pure and Applied Mathematics},
  volume  = {56},
  number  = {5},
  pages   = {584--637},
  year    = {2003},
  doi     = {10.1002/cpa.10068}
}

@article{DPS87,
  author  = {Da Prato, Giuseppe and Sinestrari, Eugenio},
  title   = {Differential operators with non dense domain},
  journal = {Ann. Scuola Norm. Sup. Pisa Cl. Sci. (4)},
  year    = {1987},
  volume  = {14},
  number  = {2},
  pages   = {285--344},
  url     = {https://www.numdam.org/item/ASNSP_1987_4_14_2_285_0/}
}

@incollection{Sch04,
  author    = {Schnaubelt, Roland},
  title     = {Asymptotic Behaviour of Parabolic Nonautonomous Evolution Equations},
  booktitle = {Functional Analytic Methods for Evolution Equations},
  editor    = {Iannelli, Mimmo and Nagel, Rainer and Piazzera, Susanna},
  series    = {Lecture Notes in Mathematics},
  volume    = {1855},
  pages     = {401--472},
  publisher = {Springer},
  address   = {Berlin, Heidelberg},
  year      = {2004},
  doi       = {10.1007/978-3-540-44653-8_5}
}

@article{Ros98,
  author  = {Rosa, Ricardo},
  title   = {The global attractor for the {2D} {Navier--Stokes} flow on some unbounded domains},
  journal = {Nonlinear Analysis: Theory, Methods \& Applications},
  volume  = {32},
  number  = {1},
  pages   = {71--85},
  year    = {1998},
  doi     = {10.1016/S0362-546X(97)00453-7}
}

@article{BCLLLR13,
  author  = {Brze{\'z}niak, Z. and Caraballo, T. and Langa, J. A.
             and Li, Y. and {\L}ukaszewicz, G. and Real, J.},
  title   = {Random attractors for stochastic {2D}-{Navier--Stokes}
             equations in some unbounded domains},
  journal = {Journal of Differential Equations},
  volume  = {255},
  number  = {11},
  pages   = {3897--3919},
  year    = {2013},
  doi     = {10.1016/j.jde.2013.07.043}
}

@misc{CXZ26,
  author        = {Chen, Yuxuan and Xiang, Shengquan and Zhang, Zhifei},
  title         = {Exponential mixing for nonlinear {Schr{\"o}dinger}
                   equations perturbed by bounded degenerate noise},
  year          = {2026},
  eprint        = {2604.05911},
  archivePrefix = {arXiv},
  primaryClass  = {math.AP},
  note          = {arXiv:2604.05911},
  doi           = {10.48550/arXiv.2604.05911}
}

@article{CXZZ26,
  author  = {Chen, Yuxuan and Xiang, Shengquan and Zhang, Zhifei
             and Zhao, Jia-Cheng},
  title   = {Exponential mixing for the randomly forced {NLS} equation},
  journal = {Communications in Mathematical Physics},
  volume  = {407},
  pages   = {211},
  year    = {2026},
  doi     = {10.1007/s00220-026-05732-z}
}

@misc{LWXZZ24,
  author        = {Liu, Ziyu and Wei, Dongyi and Xiang, Shengquan
                   and Zhang, Zhifei and Zhao, Jia-Cheng},
  title         = {Exponential mixing for random nonlinear wave equations:
                   weak dissipation and localized control},
  year          = {2024},
  eprint        = {2407.15058},
  archivePrefix = {arXiv},
  primaryClass  = {math.AP},
  note          = {arXiv:2407.15058},
  doi           = {10.48550/arXiv.2407.15058}
}

@article{AM05,
  author  = {Afendikov, Andrei L. and Mielke, Alexander},
  title   = {Dynamical properties of spatially non-decaying {2D}
             {Navier--Stokes} flows with {Kolmogorov} forcing
             in an infinite strip},
  journal = {J. Math. Fluid Mech.},
  volume  = {7},
  number  = {suppl. 1},
  pages   = {S51--S67},
  year    = {2005},
  doi     = {10.1007/s00021-004-0131-9}
}

@article{Rou02,
  author  = {Rougemont, Jacques},
  title   = {Space-time invariant measures, entropy, and dimension for
             stochastic {Ginzburg--Landau} equations},
  journal = {Communications in Mathematical Physics},
  volume  = {225},
  number  = {2},
  pages   = {423--448},
  year    = {2002},
  doi     = {10.1007/s002200100586}
}

@article{EH01,
  author  = {Eckmann, Jean-Pierre and Hairer, Martin},
  title   = {Invariant measures for stochastic {PDE}'s in unbounded domains},
  journal = {Nonlinearity},
  volume  = {14},
  number  = {1},
  pages   = {133--151},
  year    = {2001},
  doi     = {10.1088/0951-7715/14/1/308}
}

@article{DDGLR23,
  author  = {Drivas, Theodore D. and Dunlap, Alexander and Graham, Cole
             and La, Joonhyun and Ryzhik, Lenya},
  title   = {Invariant measures for stochastic conservation laws on the line},
  journal = {Nonlinearity},
  volume  = {36},
  number  = {9},
  pages   = {4553--4584},
  year    = {2023},
  doi     = {10.1088/1361-6544/acdb3a}
}

@article{Zel13,
  author  = {Zelik, Sergey},
  title   = {Infinite Energy Solutions for Damped {Navier--Stokes}
             Equations in {$\mathbb{R}^2$}},
  journal = {Journal of Mathematical Fluid Mechanics},
  volume  = {15},
  number  = {4},
  pages   = {717--745},
  year    = {2013},
  doi     = {10.1007/s00021-013-0144-3}
}
\end{document}